\newtheorem{thm}{Theorem}
\newtheorem{cor}[thm]{Corollary}
\newtheorem{lem}[thm]{Lemma}
\newtheorem{prop}[thm]{Proposition}
\newtheorem{defn}[thm]{Definition}
\theoremstyle{definition}
\newtheorem{rem}{Remark}
\newcommand{\rr}{\mathbb{R}}
\newcommand{\nn}{\mathbb{N}}
\newcommand{\ee}{\varepsilon}
\newcommand{\dd}{\delta}
\begin{document}

\title[Hausdorff  measures and functions of bounded quadratic variation]{Hausdorff  measures and functions of
 bounded quadratic variation}
\author{D. Apatsidis, S.A. Argyros and V. Kanellopoulos}
\address{National Technical University of Athens, Faculty of Applied Sciences,
Department of Mathematics, Zografou Campus, 157 80, Athens,
Greece} \email{dapatsidis@hotmail.com, sargyros@math.ntua.gr,
bkanel@math.ntua.gr} \footnotetext[1]{2000 \textit{Mathematics
Subject Classification}: 28A78, 46B20, 46B26}\footnotetext[2]{Research
supported by PEBE 2007.} \keywords{Hausdorff measures, functions
of bounded $p$- variation, Banach spaces with non separable dual,
James Function space}


\begin{abstract} To each function $f$ of bounded quadratic variation we
associate a Hausdorff measure $\mu_f$. We show that the map
$f\to\mu_f$ is locally Lipschitz and onto the positive cone of
$\mathcal{M}[0,1]$. We use the measures $\{\mu_f:f\in V_2\}$ to
determine the structure of the subspaces of $V_2^0$ which either
contain $c_0$ or the square stopping time space $S^2$.
\end{abstract}

\maketitle
\section{Introduction.}
 The functions of bounded quadratic variation,
introduced by N. Wiener in \cite{W},  have been extensively
studied  in their own right as well as for their applications. For
example, related results can be found in \cite{BLS1}, \cite{BLS2},
\cite{G}, \cite{P-W} and also in the monograph \cite{DN} where
several applications are included.

Our intention in the present work is to study the structure of the
subspaces of $V_2^0$. In the sequel we shall denote by $V_2$ the
space of all real valued functions $f$ with  bounded quadratic
variation, defined on the unit interval and satisfying $f(0)=0$.
The space $V_2$ endowed with the quadratic variation norm is a
Banach space. The aforementioned space  $V_2^0$ is a separable
subspace of $V_2$ of significant importance; it is defined as  the
closed subspace of $V_2$ containing all the square absolutely
continuous functions, a concept introduced by R. E. Love in the
early 50's (cf. \cite{L}).

The space $V_2^0$ was introduced  by S. V. Kisliakov in \cite{K}
as an isometric version of the Lindenstrauss' space $JF$. His aim
was to provide easier proofs of the fundamental  properties of
$JF$. The space $V_2^0$ is  separable, not containing $\ell_1$ and
with non-separable dual. These properties were the most
distinctive ones for $JF$, as such a space answers  in the
negative a problem posed by S. Banach. Earlier R.C. James \cite{J}
had presented the James Tree ($JT$) space which is the analogue of
$V_2^0$ in the frame of the sequence spaces. It is notable that
the class of the separable Banach spaces not containing $\ell_1$
and with non-separable dual, which  appears as an exotic subclass
of Banach spaces, includes spaces like $V_2^0$ naturally arising
from other branches of Analysis.

There exists a fruitful relation between the spaces $V_2^0$ and
$V_2$ pointed out by Kisliakov (cf.  \cite{K}). Namely $V_2$
naturally coincides with the second dual of $V_2^0$ and moreover
the w$^*$-topology on the bounded subsets of $V_2$ coincides with
the topology of the pointwise convergence. Among the consequences
of the preceding remarkable property is that every $f\in V_2$ is
the pointwise limit of a bounded sequence $(f_n)_n$ from $V_2^0$
(cf. \cite{OR}). The variety of the classical Banach spaces which
are isomorphic to a subspace of $V_2^0$ is large and rather
unexpected. Indeed, beyond the space $\ell_2$ which among the
classical spaces, is the most relative to $V_2^0$, as it was
stated  in \cite{LS}, the space $c_0$ is also isomorphic to a
subspace of $V_2^0$. Moreover for all $2\leq p<+\infty$, the space
$\ell_p$ shares the same property (cf. \cite{B}).

In our previous work (cf.\cite{AAK}), but also in  \cite{AMP}, we
have started studying the structure of the subspaces  of $V_2^0$.
>From our point of view a sufficient understanding of the structure
of $V_2^0$ requires answers to the following problems.

 \textit{Problem 1.} Let $X$ be a reflexive subspace of
$V_2^0$. Does there exist some $2\leq p<\infty$ such that $\ell_p$
is isomorphically embedded into $X$?

 As we have mentioned earlier  all $\ell_p$,  $2\leq p<\infty$, are
embedded into $V_2^0$. Also,  in \cite{B} it was shown that no
$\ell_p$ $1\leq p< 2$ is isomorphic to a subspace of $V_2^0$. It
is worth pointing out that the embedding of $\ell_p$ $2\leq
p<\infty$ is rather indirect and uses the space $S^2$ which is one
of the central objects of the present paper. The space $S^2$ is
the quadratic stopping time space and is the square
convexification of the space $S^1$. The latter space was defined
by H. P. Rosenthal as the unconditional analogue of the space
$L^1(\lambda)$. Both spaces (i.e. $S^1$, $S^2$) belong to the
wider class of the spaces $S^p$, $1\leq p<\infty$ which we are
about to define. We denote by $2^{<\nn}$ the dyadic tree and by
$c_{00}(2^{<\nn})$ the vector space of all real valued functions
defined on $2^{<\nn}$ with finite support. For $1\leq p<\infty$ we
define the $\|\cdot\|_{S^p}$ on $c_{00}(2^{<\nn})$ as follows. For
$x\in c_{00}(2^{<\nn})$, we set
\[\|x\|_{S^p}=\sup\Big(\sum_{s\in A}|x(s)|^p\Big)^{1/p}\] where
the supremum is taken over all antichains $A$ of $2^{<\nn}$. The
space $S^p$ is the completion of $(c_{00}(2^{<\nn}),
\|\cdot\|_{S^p})$. As we mentioned above the space $S^1$ (the
stopping time space) was defined by Rosenthal and the spaces $S^p$
($1<p<\infty$) appeared in S. Buechler's Ph.D. Thesis \cite{B}.
The space $S^1$  has an unconditional basis and G. Schechtman, in
an unpublished work, showed that it contains all $\ell_p$ $1\leq
p<\infty$. This result was extended in \cite{B} to all $S^p$
spaces by showing that for every $p\leq q$, $\ell_q$ is embedded
into $S^p$. An excellent and detailed study of the stopping time
space $S^1$, in fact in a more general setting, is included in N.
Dew's Ph.D. Thesis \cite{D}. The interested reader will also find
there,  among other things, a proof of Schechtman's unpublished
result. Let us also point out that the analogous problem to
Problem 1 for the spaces $S^p,\ 1\leq p<\infty$ remains also open.
An important result in \cite{B} shows that $S^2$ is isomorphic to
a subspace of $V^0_2$ and this actually yields that $V_2^0$
contains isomorphs of all $\ell_p,\ 2\leq p<\infty$. Before
closing our discussion for Problem 1, let us also note that for
every infinite chain $C$ of $2^{<\nn}$ the subspace of $S^2$
generated by $\{e_s:s\in C\}$ is isomorphic to $c_0$, while for
every infinite antichain $A$ the corresponding one is isomorphic
to $\ell_2$. Thus, if a subspace $X$ of $V_2^0$ contains an
isomorph of $S^2$ then it contains all possible classical spaces
that are embedded to $V_2^0$.

Our next two  problems concern non-reflexive subspaces of $V_2^0$.
Let us begin with a result from \cite{AMP} which asserts that
every non reflexive subspace $X$ of $V_2^0$ contains an isomorph
of $\ell_2$ or $c_0$. To see this we start with some $f\in
X^{**}\setminus X$, where $X^{**}$ is considered as a subspace of
$V_2$. Since $X$ does not contain $\ell_1$ Odell-Rosenthal's
theorem (cf. \cite{OR}) yields that there exists a bounded
sequence $(f_n)_n$ in $X$ pointwise converging to $f$. If $f$ is
discontinuous then there exists a sequence $(g_k)_k,\
g_k=f_{n_k}-f_{m_k}$ equivalent to the $\ell_2$ basis and hence
$\ell_2$ is embedded into $X$. The case of a continuous $f$ is
more interesting. As  it is shown in \cite{AMP}, such an $f$ is a
difference of bounded semicontinuous functions (DBSC) when $f$ is
considered as a function with domain $(B_{(V_2^0)^*},w^*)$. A
result of Haydon, Odell and Rosenthal (cf. \cite{HOR}), yields
that the sequence $(f_n)_n$ has a convex block subsequence
$(g_n)_n$ equivalent to the summing basis of $c_0$. Let us note
that the existence of a continuous function $f$ in
$X^{**}\setminus X$ is actually equivalent to the embedding of
$c_0$ into $X$.

 The second problem concerns subspaces of $V_2^0$
with non- separable dual and it is stated as follows.

\textit{Problem 2.} Is it true that every subspace $X$ of $V_2^0$
with $X^{*}$ non separable  contains an isomorph of $V_2^0$
itself? Moreover, is every complemented subspace $X$ of $V_2^0$
with non separable dual isomorphic to $V_2^0$?

An affirmative answer to the second part of Problem 2, yields that
$V_2^0$ is a primary space. In \cite{AAK} it has been shown that
the corresponding problem to the preceding one in James' space
$JT$ has an affirmative answer, an evident supporting the
possibility for a positive solution to Problem 2. It is worth
mentioning that as is shown in \cite{AAK}, every subspace $X$ of
$V_2^0$ with non-separable dual contains the space $TF$. The space
$TF$ is a sequence space with non separable dual, introduced in
\cite{AAK}. It is isomorphic to any subspace of $V_2^0$ generated
by a tree family $(f_s)_{s\in 2^{<\nn}}$ of trapezoids. The latter
spaces were considered in \cite{B}, for showing that $V_2^0$ does
not contain isomorphs of $JT$. In \cite{AAK} it is also  stated
without proof that the space $S^2$ is embedded into $TF$ which, as
we have previously mentioned, yields that every subspace of
$V_2^0$ with non-separable dual contains isomorphs of all possible
classical spaces that are embedded in $V_2^0$. In the present
paper we give a proof of the embedding of $S^2$ into $TF$,
granting that $c_0$ is embedded into $TF$ from \cite{B}.

The third problem concerns subspaces of $V_2^0$ with non separable
second dual and it is stated as follows.

\textit{Problem 3.} Is it true that every subspace $X$ of $V_2^0$
with $X^{**}$ non separable contains $c_0$?

The main goal of the present work is to provide a positive
solution to this problem. Before start explaining our solution, we
point out that the preceding results on subspaces $X$ with non
separable dual reduce the problem to those $X$ with $X^*$
separable and $X^{**}$ non separable. Also, as we noted above, the
embedding of $c_0$ into $X$ is equivalent to the existence of a
function $f\in (X^{**}\setminus X)\cap C[0,1]$. In the early
stages of our engagement to this problem, we observed that when
$X^*$ is separable, the set $D_{X^{**}}=\{t\in[0,1]: \exists f\in
X^{**}, oscf(t)>0\}$ is at most countable, a fact supporting an
affirmative solution to the problem. However we had no further
progress, until the moment where we discovered a new concept which
plays a key role to our approach. This is a Hausdorff type measure
$\mu_f$ associated to every $f\in V_2$. The measure $\mu_f$ is
defined as follows. First we introduce some notation.

 Given
$f:[0,1]\to\rr$ and $\mathcal{P}=\{ t_0<\ldots<t_p\}\subseteq
[0,1]$, with $p\geq 1$, let
$\|\mathcal{P}\|_{\max}=\max\{t_{i+1}-t_i:0\leq i\leq p-1\}$ and
$\textit{v}_2^2(f,\mathcal{P})=\sum_{i=0}^{p-1}(f(t_{i+1})
-f(t_{i}))^2$.
 For every  $f\in V_2$ and  for every
interval $I$ of $[0,1]$ we set
\[\widetilde{\mu}_f(I)=\inf_{\delta>0}\widetilde{\mu}_{f,\delta}(I),\]
where for each $\delta>0$,
$\widetilde{\mu}_{f,\delta}(I)=\sup\{\textit{v}_2^2(f,\mathcal{P}):
\mathcal{P}\subseteq I
\;\text{and}\;\|\mathcal{P}\|_{\max}<\dd\}$.\\ The collection
$\{\widetilde{\mu}_f(I): I\;\text{is an interval of}\;[0,1]\}$
defines an outer measure and $\mu_f$ is the regular measure
induced by $\widetilde{\mu}_{f}$ on the Borel subsets of $[0,1]$.
We should mention that N. Wiener himself had also considered the
quantity $\sqrt{\widetilde{\mu}_f[0,1]}$, pointing out that it is
a seminorm on $V_2$. The measure $\mu_f$ incorporates a sufficient
amount of information concerning the function $f$. Thus $\mu_f=0$
if and only if $f\in V_2^0$, $\mu_f$ is continuous (diffuse) if
and only if $f$ is continuous and also the discrete (atomic) part
of $\mu_f$ is supported by the points of discontinuity of $f$.
Furthermore the following hold.

\textit{Proposition 1.} Let $f\in V_2$. Then the set of the points
of differentiability of $f$ has $\mu_f$-measure zero.

As a consequence we obtain the following.

\textit{Corollary.} Let $f$ be a continuous function in $V_2$. If
the set of all non differentiability points of $f$ is at most
countable then $f$ belongs to $V_2^0$. Moreover if $f\in
(V_2\setminus V_2^0)\cap C[0,1]$ then the set of all non
differentiability points of $f$ contains a perfect set.

The second result concerns the variety of the elements of $V_2$.

\textit{Proposition 2.} Let $\Phi: V_2\to \mathcal{M}^+[0,1]$ be
the function that maps $f$ to $\mu_f$. Then $\Phi$ is locally
Lipschitz and onto. In particular for every continuous positive
measure $\mu\in\mathcal{M}^+[0,1]$ there exists $f\in
(V_2\setminus V_2^0)\cap C[0,1]$ such that $\mu_f=\mu$.

The measure $\mu_f$ has a central role in the solution of Problem
3. In particular the following inequality is the key ingredient.
For every $f\in V_2$ the following holds.
 \begin{equation}\label{basicineq}\sqrt{\|\mu_f\|}\leq dist (f, V_2^0)\leq
\|\widetilde{osc}_\mathcal{K}
f\|_{\infty}\leq\sqrt{\|\mu_f\|}+2\sqrt{\|\mu_f^d\|}\end{equation}
where $\mathcal{K}$ is a w$^*$-closed subset of $B_{(V_2^0)^*}$,
$1$-norming $V_2^0$, $\widetilde{osc} f$ is as in \cite{R} or
\cite{AK} and was introduced in \cite{KL} and also $\mu_f^d$ is
the discrete part of $\mu_f$. Note that when $f$ is continuous
then the inequality (\ref{basicineq}) becomes equality and hence
$dist (f, V_2^0)=\sqrt{\|\mu_f\|}$. Furthermore the measures
$\{\mu_f:f\in V_2\}$ permit us to have a better and more precise
understanding of the structure of $X$ when $X^{**}$ is non
separable. Thus we prove the following.

\textit{Theorem.} Let $X$ be a closed subspace of $V_2^0$. Then
the following hold.
\begin{enumerate}\item The space $X$
contains an isomorphic copy of  $c_0$  if and only if $X^{**}$ is
non separable.\item The space $X$ contains an isomorphic copy of
$S^2$  if and only if $\mathcal{M}_{X^{**}}=\{\mu_f:f\in X^{**}\}$
is non-separable.
\end{enumerate}

Note that when $X^*$ is non separable the stronger case (case (2))
of the above theorem occurs. When   $X$ is isomorphic to $c_0$
then $X^*$ is separable and $\mathcal{M}_{X^{**}}$ is separable.
On the other hand, any subspace $X$ of $V_2^0$ isomorphic to $S^2$
is an example of a subspace $X$ with separable dual and
$\mathcal{M}_{X^{**}}$ non separable.

In the rest of the introduction we shall describe the basic steps
towards a proof of the main theorem. Let us start by saying that a
function $h\in V_2^0$ is $(C,\ee)-$dominated by a measure
$\mu\in\mathcal{M}^+[0,1],$ if for every finite family
$\mathcal{I}=([a_i,b_i])_{i=1}^n$ of non overlapping intervals it
holds
\[\sum_{i=1}^n\Big(h(b_i)-h(a_i)\Big)^2\leq C\mu(\cup\mathcal{I})+\ee\]
This domination property permits us to engage measures with
sequences $(h_n)_n$ which are equivalent to the usual basis of
$c_0$ as follows.

\textit{Proposition 3.} Let $(h_n)_{n}$
 be  a seminormalized sequence of functions of $V_2^0$,
$(\ee_n)_n$ be a null sequence of positive real numbers and
$\mu\in \mathcal{M}^+[0,1]$ such that for some $C>0$ each $h_n$ is
$(C,\ee_n)-$ $\mu$ dominated   and  $\lim_n\|h_n\|_{\infty}= 0$.
Then there is a subsequence of $(h_n)_{n}$ equivalent to the usual
basis of $c_0$.

The next result explains how we pass data from an $f\in
X^{**}\setminus X$ to elements of $X$ itself.

\textit{Proposition 4.} Let $X$ be a subspace of $V_2^0$, $f\in
X^{**}\setminus X$ and $(f_n)_n$ be a bounded sequence in $X$
pointwise convergent to $f$.  Then for every $0<\delta<dist(f,X)$
and
 every sequence $(\ee_n)_n$ of positive real numbers there exists
a  convex block sequence $(h_n)_n$ of  $(f_n)_n$ such that for all
$n<m$ the following properties are satisfied.
\begin{enumerate}
\item [(i)] $\delta< \|h_m-h_n\|_{V_2}\leq 2M$, where $M=\sup_n
\|f_n\|_{V_2} $. \item[(ii)] $\|h_m-h_n\|_{\infty}\leq
2\|\widetilde{osc}_{[0,1]} f\|_{\infty}+\ee_n\leq
4\|f\|_\infty+\ee_n.$  \item [(iii)] $h_m- h_n$ is
$(4,\widetilde{\ee}_n)$-$\mu_f$ dominated, where
$\widetilde{\ee}_n=32\|f\|_{V_2}\sqrt{\|\mu_f^d\|}+\ee_n$.
\end{enumerate}

The proof of the proposition uses inequality (\ref{basicineq}) and
also optimal sequences pointwise convergent to the function $f$
(cf. \cite{AK}). Note that if we additionally assume that $f$ is
continuous, in which case $\mu_f^d=\widetilde{osc}_{[0,1]}f=0$,
Propositions 3 and 4 almost immediately yield that the space $X$
contains $c_0$ , a result initially proved with a different method
in \cite{AMP}.

The proof of the  main result is divided into two cases. In the
first case we consider subspaces $X$ of $V_2^0$ with $X^*$
separable, $X^{**}$ non separable  and $\mathcal{M}_{X^{**}}$
separable. Then using Proposition 4 and the separability of
$X^{*}$, we may select a seminormalized sequence $(H_n)_n$ in
$V_2^0$ and a norm converging sequence of measures $(\mu_n)_n$ in
$\mathcal{M}_{X^{**}}$
 such that
 each $H_n$  is $(4, \ee_n)-\mu_n$
 dominated. Then an easy modification of Proposition 3 yields that there exists a
 subsequence of $(H_n)_{n}$  equivalent to $c_0$ basis.

The second case, namely when $\mathcal{M}_{X^{**}}$ is non
separable, is  more involved. Here, we first give  sufficient
conditions for the embedding of the space  $S^2$ into a subspace
$X$ of $V_2^0$ . Moreover, using again Proposition 4, we construct
a seminormalized tree family  of functions $(H_s)_{s\in 2^{<\nn}}$
of elements of $X$ and a bounded family of measures $(\mu_s)_{s\in
2^{<\nn}}$. For each $s\in 2^n$ we define a finite subset
$L_s\subseteq 2^{2n}$ with $card(L_s)=2^n$ and we set
$G_s=2^{-n/2}\sum_{t\in L_s}H_t$ and $\nu_s=2^{-n}\sum_{t\in
L_s}\mu_{t}$. The proof ends by proving that these new tree
families satisfy the requirements
 for containing a tree subfamily equivalent to the  $S^2$ basis.

We consider the present work as a  step towards the understanding
of the structure of $V_2^0$. Our approach has revealed a new
component, the Hausdorff measure $\mu_f$ associated to a function
$f$  of bounded quadratic variation, which is of independent
interest and could be useful to a further investigation of $V_2$
as well as in applications.

 We close this introduction by pointing
out that all the results contained here remain valid under obvious
modifications for the space $V_p^0$, for all $1<p<\infty$.

\section{Preparatory work on $V_2$.} This section is divided into three  subsections. First we fix the
 notation that we shall use. In the second subsection  we prove  that  the set
  of discontinuity points of the elements of $X^{**}$ when  $X^{*}$ is separable is countable
   and also in this case $(X^{*},\|\cdot\|_\infty)$ is separable.
   Finally,
   we introduce  the biorthogonal
   families of functions of $V_2^0$. Such  families  share
   nice properties and as we will see  they play a critical role in the proofs of almost all of our  results.
\subsection{Preliminaries.}
 We start with the notation which concerns intervals
as well as families of intervals of $[0,1]$. The length of an
interval $I$ will be denoted by $|I|$.   For a finite family
$\mathcal{I}$ of intervals,
$\|\mathcal{I}\|_{\max}=\max\{|I|:I\in\mathcal{I}\}$ and
$\|\mathcal{I}\|_{\min}=\min\{|I|:I\in\mathcal{I}\}$.

 By $\mathcal{A}$  we
denote the set of all finite families of intervals of $[0,1]$ with
pairwise disjoint interiors.  A sequence $(\mathcal{I}_i)_{i}$ in
$\mathcal{A}$, will be called \textit{disjoint}, if for every
$i\neq j$, $I\in\mathcal{I}_i$ and $J\in\mathcal{I}_j$, the
interiors of $I$ and $J$ are disjoint.
 Also by
$\mathcal{F}$ we denote the set of all finite families of pairwise
disjoint closed intervals of $[0,1]$. More generally, given a
subset $S\subseteq [0,1]$, $\mathcal{F}(S)$ is the set of all
$\mathcal{I}\in\mathcal{F}$ such that the endpoints of  every
$I\in \mathcal{I}$ belong to $S$.

 Given $f:[0,1]\to\rr$ and
$\mathcal{P}=\{ t_0<\ldots<t_p\}\subseteq [0,1]$, with $p\geq 1$,
the quadratic variation of $f$ on $\mathcal{P}$ is the quantity
\[\textit{v}_2(f,\mathcal{P})=\Big(\sum_{i=0}^{p-1}(f(t_{i+1})
-f(t_{i}))^2\Big)^{1/2}\] Similarly for a
$\mathcal{I}=(I_i)_{i=1}^k$ in $\mathcal{A}$, we set
$\textit{v}_2(f,\mathcal{I})=(\sum_{i=0}^{k}(f(b_i)
-f(a_i))^2)^{1/2},$ where for each $1\leq i\leq k$, $a_i,b_i$ are
the endpoints of $I_i$ (if $\mathcal{I}$ is the empty sequence
 then we define $\textit{v}_2(f,\varnothing)=0$). The quantity
 $\textit{v}_2(f,\mathcal{I})$ has also been defined in \cite{P-W}
 where  the exponent $1/2$ is omitted.

 Notice that every   $\mathcal{P}$ as
above, determines the  family
$\mathcal{I}_\mathcal{P}=((t_i,t_{i+1}))_{i=0}^{p-1}$ in
$\mathcal{A}$ and
$\textit{v}_2(f,\mathcal{P})=\textit{v}_2(f,\mathcal{I}_\mathcal{P})$.
It is easy to see that for every $f,g:[0,1]\to \rr$ and every
$\mathcal{I}\in\mathcal{A}$, we have that
\begin{equation}\label{P1} |\textit{v}_2(f,\mathcal{I})-\textit{v}_2(g,\mathcal{I})|\leq
\textit{v}_2(f+g, \mathcal{I})\leq
\textit{v}_2(f,\mathcal{I})+\textit{v}_2(g,\mathcal{I})
\end{equation}
Moreover for every  disjoint partition
$\mathcal{I}=\cup_i\mathcal{I}_i$ of a $\mathcal{I}\in
\mathcal{A}$,
\begin{equation}\textit{v}_2(f,\mathcal{I})\leq\sum_i\textit{v}_2(f,\mathcal{I}_i)\;\;\;\text{and}\;\;\;
\textit{v}_2^2(f,\mathcal{I})=\sum_i\textit{v}_2^2(f,\mathcal{I}_i)\end{equation}

For $\widetilde{\mathcal{I}}, \mathcal{I}$ in $\mathcal{A}$, we
write $\widetilde{\mathcal{I}}\preceq\mathcal{I}$ if for every
$\widetilde{I}\in\widetilde{\mathcal{I}}$ there is
$I\in\mathcal{I}$ such that $\widetilde{I}\subseteq I$.

For every  $\ee>0$, $D\subseteq [0,1]$ and $H_1,...,H_k$ in
$V_2^0$ we will say that $D$ $\ee-$\textit{determines the
quadratic variation of the linear span} $<H_1,...,H_k>$, if for
every $\mathcal{I}\in\mathcal{A}$ there is
$\widetilde{\mathcal{I}}\preceq \mathcal{I}$ in $\mathcal{F}(D)$
such that
 \[\Big|\textit{v}_2^2\Big(\sum_{i=1}^k\lambda_i H_i,\mathcal{I}\Big)-
\textit{v}_2^2\Big(\sum_{i=1}^k\lambda_i
H_i,\widetilde{\mathcal{I}}\Big)\Big|\leq
\Big(\sum_{i=1}^k|\lambda_i|^2\Big)\ee,\]  for every sequence of
scalars $(\lambda_i)_{i=1}^k$. Using standard approximation
arguments the following is easily proved.
\begin{prop}\label{det}
Let $k\in\nn$, $H_1,\ldots,H_k$ in $V_2^0$ and $\ee>0$. Then there
exists $\delta>0$ such that every $D\subseteq [0,1]$ which is
$\delta-$dense in $[0,1]$,
 $\ee-$determines the quadratic variation of  $<H_1,\ldots,H_k>$.
\end{prop}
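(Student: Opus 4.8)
The plan is to route everything through the single scalar quantity
\[
q_\delta(f):=\sqrt{\widetilde\mu_{f,\delta}([0,1])}=\sup\big\{\textit{v}_2(f,\mathcal{P}):\mathcal{P}\subseteq[0,1],\ \|\mathcal{P}\|_{\max}<\delta\big\},\qquad\delta>0.
\]
For each fixed $\delta$, $q_\delta$ is a seminorm on $V_2$ (a supremum of the seminorms $f\mapsto\textit{v}_2(f,\mathcal{P})$, cf.\ \eqref{P1}), it is nondecreasing in $\delta$, it satisfies $q_\delta(f)\le\|f\|_{V_2}$, and for $f\in V_2^0$ one has $q_\delta(f)\downarrow 0$ as $\delta\downarrow0$; this last fact --- i.e.\ $\widetilde\mu_f([0,1])=0$ on $V_2^0$ --- is exactly the statement that $V_2^0$ lies in the kernel of Wiener's seminorm, and it extends from square absolutely continuous functions to all of $V_2^0$ by density, using $q_\delta(f)\le q_\delta(g)+\|f-g\|_{V_2}$. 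Put $C_0=\big(\sum_{i=1}^k\|H_i\|_{V_2}^2\big)^{1/2}$ and $\Theta(\delta)=\big(\sum_{i=1}^k q_\delta(H_i)^2\big)^{1/2}$, so that $\Theta(\delta)\to0$ as $\delta\to0$; then Cauchy--Schwarz gives, for every $S=\sum_{i=1}^k\lambda_iH_i$,
\[
\|S\|_{V_2}\le C_0\Big(\sum_{i=1}^k|\lambda_i|^2\Big)^{1/2},\qquad q_\delta(S)\le\Theta(\delta)\Big(\sum_{i=1}^k|\lambda_i|^2\Big)^{1/2}.
\]
I would then fix $\delta>0$ so small that $5\,\Theta(5\delta)^2+4C_0\,\Theta(5\delta)\le\ee$, and prove that this $\delta$ works: every $\delta$-dense $D\subseteq[0,1]$ then $\ee$-determines the quadratic variation of $<H_1,\ldots,H_k>$.

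To this end, fix a $\delta$-dense $D$ and an $\mathcal{I}=(I_j)_j\in\mathcal{A}$ with $I_j$ having endpoints $a_j<b_j$. I would \emph{drop} every index $j$ with $b_j-a_j<5\delta$ and, for each remaining (\emph{kept}) $j$, use the $\delta$-density of $D$ to pick $a_j',b_j'\in D$ with $a_j<a_j'<b_j'<b_j$, $a_j'-a_j<3\delta$ and $b_j-b_j'<3\delta$; then $[a_j',b_j']\subseteq I_j$, the closed intervals $[a_j',b_j']$ (kept $j$) are pairwise disjoint, so $\widetilde{\mathcal{I}}:=([a_j',b_j'])_{j\ \mathrm{kept}}\in\mathcal{F}(D)$ and $\widetilde{\mathcal{I}}\preceq\mathcal{I}$ --- a choice that does not depend on any scalars. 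Writing $d_j=S(b_j)-S(a_j)$ and $d_j'=S(b_j')-S(a_j')$ for $S=\sum_i\lambda_iH_i$,
\[
\textit{v}_2^2(S,\mathcal{I})-\textit{v}_2^2(S,\widetilde{\mathcal{I}})=\sum_{j\ \mathrm{dropped}}d_j^{\,2}+\sum_{j\ \mathrm{kept}}\big(d_j^{\,2}-(d_j')^2\big).
\]

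The dropped intervals are pairwise disjoint of length $<5\delta$, so after inserting into the complement of their union finitely many points with consecutive gaps $<5\delta$ one obtains a partition of mesh $<5\delta$ among whose steps they occur; hence $\sum_{j\ \mathrm{dropped}}d_j^{\,2}\le\widetilde\mu_{S,5\delta}([0,1])=q_{5\delta}(S)^2\le\Theta(5\delta)^2\sum_i|\lambda_i|^2$. For the other sum, write $d_j=d_j'+e_j$ with $e_j=\big(S(b_j)-S(b_j')\big)+\big(S(a_j')-S(a_j)\big)$, so that $d_j^{\,2}-(d_j')^2=2d_j'e_j+e_j^2$ and, by Cauchy--Schwarz,
\[
\Big|\sum_{j\ \mathrm{kept}}\big(d_j^{\,2}-(d_j')^2\big)\Big|\le 2\Big(\sum_j(d_j')^2\Big)^{1/2}\Big(\sum_j e_j^2\Big)^{1/2}+\sum_j e_j^2.
\]
Here $\sum_j(d_j')^2=\textit{v}_2^2(S,\widetilde{\mathcal{I}})\le\|S\|_{V_2}^2\le C_0^2\sum_i|\lambda_i|^2$; and $e_j^2\le 2(S(b_j)-S(b_j'))^2+2(S(a_j')-S(a_j))^2$, where the families $([b_j',b_j])_{j\ \mathrm{kept}}$ and $([a_j,a_j'])_{j\ \mathrm{kept}}$ are disjoint of length $<3\delta$, so the same insertion argument yields $\sum_j e_j^2\le 4\,q_{5\delta}(S)^2\le 4\,\Theta(5\delta)^2\sum_i|\lambda_i|^2$. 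Combining the two sums,
\[
\big|\textit{v}_2^2(S,\mathcal{I})-\textit{v}_2^2(S,\widetilde{\mathcal{I}})\big|\le\big(5\,\Theta(5\delta)^2+4C_0\,\Theta(5\delta)\big)\sum_{i=1}^k|\lambda_i|^2\le\ee\sum_{i=1}^k|\lambda_i|^2,
\]
which is the required inequality; since $\delta$ was chosen depending only on $\ee$ and on $H_1,\ldots,H_k$ (and not on $D$, $\mathcal{I}$ or the scalars), this proves the proposition.

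The genuinely delicate point is that the estimate must be uniform \emph{simultaneously} in the scalars $(\lambda_i)$ and in the (unbounded) number of intervals of $\mathcal{I}$: this rules out controlling the individual perturbed differences $d_j^{\,2}-(d_j')^2$ through a modulus of continuity of $S$, and forces one to keep the global quantities $\|S\|_{V_2}$ and $q_\delta(S)$ intact and to invoke Cauchy--Schwarz exactly once, as above. The remaining ingredients --- that $q_\delta$ is a seminorm, that $q_\delta(f)\downarrow0$ on $V_2^0$, and that a disjoint family of short intervals is realised among the steps of a partition of small mesh --- are routine.
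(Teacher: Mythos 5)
Your proof is correct, and it is precisely the ``standard approximation argument'' that the paper invokes for Proposition \ref{det} without writing it out: the split into dropped short intervals (absorbed by the vanishing of Wiener's seminorm $q_\delta$ on $V_2^0$) plus endpoint perturbations of the long ones (controlled by a single Cauchy--Schwarz against $\|S\|_{V_2}$ and $q_\delta(S)$), with the essential point --- that $\widetilde{\mathcal{I}}$ must be chosen independently of the scalars $(\lambda_i)$ --- correctly identified and handled. The one quibble is the dropping threshold: for an interval of length exactly $5\delta$ a $\delta$-dense $D$ may meet its interior in a single point, so the windows $(a_j,a_j+3\delta)$ and $(b_j-3\delta,b_j)$ need not produce $a_j'<b_j'$; dropping all intervals of length $\le 5\delta$ and using $q_{6\delta}$ throughout removes this edge case without altering anything else.
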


Next we state some notation for the dyadic tree. For every $n\geq
0$, we set $2^n=\{0,1\}^n$ (where $2^0=\{\emptyset\}$). Hence for
$n\geq 1$, every $s\in 2^n$ is of the form $s=(s(1),...,s(n))$.
For $0\leq m<n$ and $s\in 2^n$, $s|m=(s(1),...,s(m))$, where if
$m=0$, $s|0=\emptyset$. Also, $2^{\leqslant n}=\cup_{i=0}^ n 2^i$
and $2^{<\nn}=\cup_{n=0}^\infty 2^n$. The \textit{length} $|s|$ of
an $s\in 2^{<\nn}$, is the unique $n\geq 0$ such that $s\in 2^n$.
The initial segment partial ordering on $2^{<\nn}$ will be denoted
by $\sqsubseteq$ (i.e. $s\sqsubseteq t$ if $m=|s|\leq |t|$ and
$s=t|m$). For $s, t\in 2^{<\nn}$, $s\perp t$ means that $s,t$ are
$\sqsubseteq$-incomparable (that is neither $s\sqsubseteq t$ nor
$t\sqsubseteq s$). For an $s\in 2^{<\nn}$, $s^\smallfrown 0$ and
$s^\smallfrown 1$ denote the two immediate successors of $s$ which
end with $0$ and $1$ respectively. More generally for $s,u\in
2^{<\nn}$, $s^\smallfrown u$ denotes the concatenation of $s$ and
$u$, namely the element $t\in 2^{<\nn}$ with $|t|=|s|+|u|$,
$t(i)=s(i)$ for all $1\leq i\leq |s|$ and $t(|s|+i)=u(i)$ for all
$1\leq i\leq |u|$.

An \textit{antichain} of $2^{<\nn}$, is a subset of $2^{<\nn}$
such that for every $s,t\in A$, $s\perp t$. A \textit{branch} of
$2^{< \nn}$ is a maximal totally ordered subset of $2^{< \nn}$.  A
\textit{dyadic subtree} is a subset $T$ of $2^{<\nn}$ such that
there is an order isomorphism $\phi:2^{<\nn}\to T$. In this case
$T$ is denoted by $T=(t_s)_{s\in 2^{<\nn}}$, where $t_s=\phi(s)$.

In the sequel by the term \textit{subspace} we always mean closed
infinite dimensional subspace. We also use the standard notation
for Banach spaces from \cite{LT}.

\subsection{The discontinuities of $X^{**}$ for subspaces $X$ of $V_2^0$.}
For every  $f\in V_2$, by  $D_f$   we denote the set of all points
of discontinuity of $f$. For all $t\in [0,1]$ let
$f(t^+)=\lim_{s\rightarrow t^+}f(s)$ and $
f(t^-)=\lim_{s\rightarrow t^-}f(s)$ (where by convention we set
$f(0^-)=f(0)$ and $f(1^+)=f(1)$). It is easily shown  that  for
every  $f\in V_2$, the set $D_f$  is at most countable an so $f$
is a Baire-1 function. Moreover for every $t\in D_f$, $f(t^-)$ and
$f(t^+)$ always exist and  $ \sum_{t\in
D_f}|f(t)-f(t^-)|^2+|f(t)-f(t^+)|^2\leq \|f\|^2_{V_2}$.

In this subsection we will study the set  $D_{X^{**}}=\cup_{f\in
X^{**}}D_f$, for subspaces $X$ of $V_2^0$ with $X^{*}$ separable
and $X^{**}$ non-separable. We will show that $D_{X^{**}}$ is a
countable subset of $[0,1]$ which as we will see implies that the
space $(X^{**}, \|\cdot\|_\infty)$ is separable.  We start with a
characterization of the subspaces $X$ of $V_2^0$ with separable
dual through the discontinuity points of all $f\in X^{**}$.
\begin{prop}\label{pcd}
Let $X$ be a subspace of $V_2^0$. Then $X^{*}$ is separable if and
only if $D_{X^{**}}$ is countable.
\end{prop}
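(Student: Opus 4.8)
The proof splits into the two implications of the equivalence, and the interesting direction is that countability of $D_{X^{**}}$ forces $X^*$ to be separable.

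\smallskip

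\textbf{The easy direction.} Suppose $X^*$ is separable. I would argue by contraposition on the ``hard'' side or, more directly, recall that $X^{**}$ embeds into $(V_2, w^*)=(V_2, \text{pointwise})$ and that separability of $X^*$ means $(B_{X^{**}}, w^*)$ is metrizable. The obstruction to $X^*$ being separable in this setting is the presence of an uncountable ``tree-like'' or ``$\ell_1$-like'' structure. Concretely, if $D_{X^{**}}$ were uncountable, pick for each $t$ in an uncountable subset a function $f_t\in X^{**}$ with $\mathrm{osc}\, f_t(t)>0$; using the jump relations $\sum_{t\in D_f}|f(t)-f(t^-)|^2+|f(t)-f(t^+)|^2\le\|f\|_{V_2}^2$ and a pigeonhole/$\Delta$-system argument, I would extract an uncountable family realizing jumps at distinct points of comparable size, and from it build an uncountable biorthogonal-type system in $X^{**}$ witnessing non-separability of $X^*$ — contradicting separability of $X^*$. (Alternatively this direction may just be the contrapositive of what is proved below, so I would organize the write-up to prove one implication directly and get the other for free.)

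\smallskip

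\textbf{The main direction.} Assume $D_{X^{**}}$ is countable; I must show $X^*$ is separable. The plan is to first show, as the subsection promises, that $(X^{**},\|\cdot\|_\infty)$ is separable, and then to bootstrap to norm-separability of $X^*$. For the first step: each $f\in X^{**}\subseteq V_2$ is determined, up to an element that is continuous everywhere off the countable set $D_{X^{**}}$, by its jump data on $D_{X^{**}}$ together with a ``continuous part.'' Enumerate $D_{X^{**}}=\{t_k:k\in\nn\}$. The map sending $f$ to its sequence of one-sided jumps $(f(t_k)-f(t_k^-), f(t_k)-f(t_k^+))_k$ lands in $\ell_2(\nn)\oplus\ell_2(\nn)$ by the jump inequality above, and is bounded; the ``jump part'' $j_f$ of $f$ — a suitable superposition of step functions supported near the $t_k$ — thus ranges over a separable subset of $V_2$, and in fact over a separable subset in $\|\cdot\|_\infty$. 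The residual function $f-j_f$ is continuous on $[0,1]$; I then invoke that on $X^{**}$ the sup-oscillation is controlled by $\|\widetilde{osc}f\|_\infty$, which is supported precisely on discontinuity points and hence is $\|\cdot\|_\infty$-small for the continuous residuals after subtracting finitely many jumps — giving norm-$\|\cdot\|_\infty$ total boundedness of $X^{**}$ modulo a separable piece, i.e. $(X^{**},\|\cdot\|_\infty)$ separable.

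\smallskip

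\textbf{From $\|\cdot\|_\infty$-separability of $X^{**}$ to separability of $X^*$.} This is where I expect the real work. The idea is that a separable norming set for $X^*$ suffices: since $V_2=(V_2^0)^{**}$ with the $w^*$-topology equal to pointwise convergence, and since on bounded sets of $X^{**}$ the $w^*$-topology is exactly that of pointwise convergence, a countable $\|\cdot\|_\infty$-dense subset $\{g_n\}$ of $B_{X^{**}}$ pointwise-approximates every element of $B_{X^{**}}$. For $\phi\in B_{X^*}$, its canonical extension $\widehat{\phi}\in B_{X^{***}}$ restricted to $B_{X^{**}}$ is $w^*$-continuous, hence determined by its values on $\{g_n\}$; but that is not yet enough for \emph{norm} separability of $X^*$. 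To upgrade, I would use that $X$ does not contain $\ell_1$ (it sits in $V_2^0$), so by the Odell–Rosenthal theorem $B_{X^{**}}$ is the $w^*$-sequential closure of $B_X$ and, combined with a result on spaces not containing $\ell_1$ whose bidual is ``small'' on the sup norm, deduce $X^*$ separable. The cleanest route: show $(B_{X^{**}}, w^*)$ is metrizable. A $w^*$-metric is furnished by any sequence in $B_X\subseteq B_{X^*}^*$ that separates points of $X^{**}$; since $X^{**}\hookrightarrow V_2=C$-functions-plus-jumps with jumps on a \emph{countable} set, the countably many evaluation functionals $\{\delta_q:q\in\mathbb{Q}\cap[0,1]\}\cup\{\delta_{t_k}, \delta_{t_k^-}, \delta_{t_k^+}:k\}$ separate points of $V_2$ and a fortiori of $X^{**}$, and each such evaluation, being a $w^*$-limit of elements of $X^*$ (as $\delta_q\in V_2^*=(X^{**})^*\!\restriction$, suitably interpreted), shows $(B_{X^{**}},w^*)$ is metrizable. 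Metrizability of the dual ball $(B_{X^{**}},w^*)$ of $X^*$ is equivalent to separability of $X^*$.

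\smallskip

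\textbf{Main obstacle.} The delicate point is the last implication: extracting honest \emph{norm}-separability of $X^*$ rather than just $w^*$-metrizability-on-bounded-sets or $\|\cdot\|_\infty$-data. I expect to need the interplay, emphasized in the introduction, between $V_2$, its $w^*$-topology and pointwise convergence, the failure of $\ell_1\hookrightarrow X$, and a careful accounting that the \emph{only} non-separability in $X^{**}$ relative to the quadratic-variation norm can come from either discontinuities (controlled here, being countable) or from the measures $\mu_f$ — but for \emph{dual} separability the discontinuity set is exactly the right invariant. Getting this bookkeeping clean, and making precise how jump-data plus a countable point-separating family of evaluations yields $w^*$-metrizability of $B_{X^{**}}$, is the crux; the rest is standard approximation as the text already signals with ``standard approximation arguments'' and Proposition~\ref{det}.
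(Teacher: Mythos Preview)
Your labeling of the two directions is inverted relative to the paper. The paper spends almost all of its effort on what you call the ``easy direction'' (i.e.\ $D_{X^{**}}$ uncountable $\Rightarrow$ $X^*$ non-separable), and disposes of your ``main direction'' in one line by citing Proposition~23 of \cite{AAK}: if $X^*$ is non-separable then $X^{**}$ contains a non-separable family inside $V_2^d=\overline{\langle \chi_t:t\in(0,1]\rangle}$, which forces $D_{X^{**}}$ to be uncountable.

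For your ``easy'' direction your sketch has a real gap. Picking, for each $t$ in an uncountable set, some $f_t\in B_{X^{**}}$ with a jump at $t$ of size $>\delta$ does \emph{not} by itself produce anything separated in $X^*$; a biorthogonal-type system in $X^{**}$ would only witness non-separability of $X^{**}$. The paper's key move, which you do not mention, is to localize: after pigeonholing to make all jumps one-sided and of uniform size $>\delta$, one finds a single open interval $I\subseteq(0,1)$ containing all the $t_\xi$ on which each $f_\xi$ is $\varepsilon$-close to its one-sided limits on either side of $t_\xi$. Only then can one compare $\delta_{t_\xi}(f_\xi)$ with $\delta_{t_{\xi'}}(f_\xi)$ and conclude that the evaluation functionals $\delta_{t_\xi}|_X$ are $(\delta-\varepsilon)$-separated in $X^*$. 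Your $\Delta$-system/pigeonhole remark does not capture this.

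For your ``main'' direction, your eventual argument is correct and is a genuine alternative to the paper's citation: since every $f\in X^{**}\subseteq V_2$ is continuous off the countable set $D_{X^{**}}$, the countable family $\{\delta_q|_X:q\in\mathbb Q\cap[0,1]\}\cup\{\delta_t|_X:t\in D_{X^{**}}\}\subseteq X^*$ separates points of $X^{**}$; hence $(B_{X^{**}},w^*)$ is metrizable, which is equivalent to separability of $X^*$. This is self-contained and avoids invoking \cite{AAK}. Note, however, that the whole paragraph on $\|\cdot\|_\infty$-separability of $X^{**}$ is a detour you never use (in the paper this is a \emph{consequence} recorded separately as Corollary~\ref{corps}, not an ingredient of the present proof), and the one-sided evaluations $\delta_{t_k^\pm}$ are unnecessary. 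Your worry in the ``main obstacle'' paragraph is misplaced: the equivalence ``$X^*$ separable $\iff (B_{X^{**}},w^*)$ metrizable'' is exactly the standard fact you already stated, and nothing more is needed.
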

\begin{proof}
Suppose  that  $D_{X^{**}}$ is uncountable. Then, since for every
$f\in X^{**}$, $D_f$ is countable, we may choose uncountable sets
$\mathcal{F}=\{f_\xi\}_{\xi<\omega_1}\subseteq B_{X^{**}}$ and
$A=\{t_\xi\}_{\xi<\omega_1}\subseteq [0,1]$ such that the
following are satisfied.
\begin{enumerate}
\item For every $\xi<\omega_1$, $f_\xi$ is discontinuous at
$t_\xi$. \item Exactly one of the following hold.
\begin{enumerate}
\item [(2a)] For all $\xi<\omega_1$, $f_\xi(t_\xi^+)\neq
f_\xi(t_\xi)$. \item [(2b)] For all $\xi<\omega_1$,
$f_\xi(t_\xi^-)\neq f_\xi(t_\xi)$.
\end{enumerate}
\end{enumerate}
Suppose that  (2a) holds (the other case is similar). Passing to
an uncountable  subset of $\mathcal{F}$ we may assume that there
exists $\dd>0$ such that $|f_\xi(t_\xi^+)-f_\xi(t_\xi)|>\dd$, for
every $\xi<\omega_1$. Moreover, by passing to a further
uncountable subset, we can suppose that there exist $0<\ee<\dd$
and an open interval $I$ of $(0,1)$ such that for every
$\xi<\omega_1$ we have that (i) $t_\xi\in I$, (ii) for every $t\in
I$ and $t<t_\xi$, $|f_\xi(t)-f_\xi(t_\xi^-)|<\ee$ and (iii) for
every $t\in I$ and $t>t_\xi$, $|f_\xi(t)-f_\xi(t_\xi^+)|<\ee$.

Let $\xi<\xi^{'}$.  If $t_\xi<t_{\xi^{'}}$ then we have that
\[|\dd_{t_\xi}(f_\xi)-\dd_{t_{\xi^{'}}}(f_\xi)|\geq |f_\xi(t_\xi)-f_\xi(t_\xi^+)|-
|f_\xi(t_\xi^+)-f_\xi(t_{\xi^{'}})|>\dd-\ee.\] and  if
$t_{\xi^{'}}<t_\xi$ then similarly $|\dd_{t_\xi}(f_{\xi^{'}})-\dd_{t_{\xi^{'}}}(f_{\xi^{'}})|>\dd-\ee$.\\
This implies that
$\|\dd_{t_\xi}|_{X}-\dd_{t_{\xi^{'}}}|_{X}\|\geq\dd-\ee$ for every
$\xi\neq\xi^{'}$ and therefore $X^*$ is nonseparable. Finally for
the converse, suppose that  $X^*$ is non-separable. Then by
Proposition 23 of \cite{AAK} we have that $X^{**}$ contains a non
separable  family $\mathcal{H}\subseteq V_2^d=\overline
{<\{\chi_t:t\in(0,1)\}>}$ and therefore $D_{X^{**}}$ must be
uncountable.
\end{proof}

\begin{prop}\label{ps}
Let $\mathcal{F}$ be a subset of $V_2$ such that
$D_\mathcal{F}=\bigcup_{f\in\mathcal{F}} D_f$ is countable. Then
the  space $(\mathcal{F},\|\cdot\|_\infty)$ is separable.
\end{prop}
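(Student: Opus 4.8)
Set $D:=D_\fff$, which is countable by hypothesis, and fix a countable dense subset $Q\subseteq[0,1]$ with $D\cup\{0,1\}\subseteq Q$ (e.g. $Q=D\cup(\mathbb Q\cap[0,1])$). Let $\mathcal G$ be the countable family of all "$Q$-step functions with rational values": those $g:[0,1]\to\rr$ for which there are $0=q_0<q_1<\cdots<q_N=1$ in $Q$ and rationals $r_1,\dots,r_N$, $s_0,\dots,s_N$ with $g\equiv r_i$ on $(q_{i-1},q_i)$ and $g(q_i)=s_i$. The plan is to prove that $\fff\subseteq\overline{\mathcal G}^{\,\|\cdot\|_\infty}$; once this is done, separability of $(\fff,\|\cdot\|_\infty)$ follows formally, since a subset of a metric space containing a countable dense set (here $\overline{\mathcal G}$) is separable — concretely, for each $g\in\mathcal G$ and $k\in\nn$ with $B_{\|\cdot\|_\infty}(g,1/k)\cap\fff\neq\varnothing$ pick one point of that intersection; the resulting countable set is dense in $\fff$.

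So fix $f\in\fff$ and $\ee>0$; I will produce $g\in\mathcal G$ with $\|f-g\|_\infty\leq 2\ee$. Recall that every $f\in V_2$ is regulated (one-sided limits exist everywhere), is bounded, has $D_f$ countable, and satisfies $\sum_{t\in D_f}\bigl(|f(t)-f(t^-)|^2+|f(t)-f(t^+)|^2\bigr)\leq\|f\|_{V_2}^2$. Since $\mathrm{osc}_f(t)=\mathrm{diam}\{f(t^-),f(t),f(t^+)\}\leq|f(t)-f(t^-)|+|f(t)-f(t^+)|$, the set $F:=\{t\in[0,1]:\mathrm{osc}_f(t)\geq\ee\}$ is finite, and moreover $F\subseteq D_f\subseteq D_\fff\subseteq Q$ (as $D_f\subseteq\bigcup_{h\in\fff}D_h$). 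Adjoining $0$ and $1$ to $F$ if necessary, pick for each $t\in F$ — using the existence of one-sided limits at $t$ and the density of $Q$ — points $\alpha_t<t<\beta_t$ in $Q$ (with one-sided modifications at $0$ and $1$) such that the open intervals $(\alpha_t,\beta_t)$ are pairwise disjoint and $\mathrm{diam}\,f\bigl((\alpha_t,t)\bigr)<\ee$ and $\mathrm{diam}\,f\bigl((t,\beta_t)\bigr)<\ee$.

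The set $[0,1]\setminus\bigcup_{t\in F}(\alpha_t,\beta_t)$ is a finite union of closed intervals with endpoints in $Q$, each containing no point of $F$ in its interior. The heart of the argument — and the main obstacle — is to show that every such $[c,d]$ admits a partition $c=p_0<p_1<\cdots<p_j=d$ with all $p_i\in Q$ and $\mathrm{diam}\,f\bigl((p_{i-1},p_i)\bigr)<\ee$ for every $i$. I would prove this by a connectedness/supremum argument: let $b^*$ be the supremum of the $x\in[c,d]$ such that $[c,x]$ has such a $Q$-partition. Since $b^*\in[c,d]$ has $\mathrm{osc}_f(b^*)<\ee$, $f$ has diameter $<\ee$ on some neighbourhood of $b^*$; if $b^*<d$ one can append to a $Q$-partition terminating just below $b^*$ a further piece $(p_j,p_{j+1})$ with $p_{j+1}\in Q\cap(b^*,d)$ (nonempty by density), contradicting the definition of $b^*$; hence $b^*=d$, and the same local argument lets the partition reach $d$ exactly. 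The point that makes this work is that the only places where one is forced to ``cut exactly at a given point'' are the discontinuity points of $f$, which lie in $D\subseteq Q$; at every continuity point we have room to perturb, and density of $Q$ does the rest.

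Concatenating these partitions of the complementary closed intervals with the two-piece partitions $\alpha_t<t<\beta_t$ of the intervals $(\alpha_t,\beta_t)$, $t\in F$, yields a single partition $0=q_0<q_1<\cdots<q_N=1$ with all $q_i\in Q$ and $\mathrm{diam}\,f\bigl((q_{i-1},q_i)\bigr)<\ee$ for each $i$. Finally define $g\in\mathcal G$ by letting $g$ on $(q_{i-1},q_i)$ equal a rational within $\ee$ of $\inf f\bigl((q_{i-1},q_i)\bigr)$, and $g(q_i)$ a rational within $\ee$ of $f(q_i)$; then $\|f-g\|_\infty\leq 2\ee$. As $\ee>0$ was arbitrary, $f\in\overline{\mathcal G}$, which finishes the proof.
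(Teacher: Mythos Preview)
Your argument is correct. In fact you are essentially reproving the classical fact that the space of regulated functions on $[0,1]$ with discontinuities contained in a prescribed countable set $D$ is $\|\cdot\|_\infty$-separable, via approximation by step functions with breakpoints in a countable dense set containing $D$. The supremum argument for the existence of a $Q$-partition with small $f$-diameter on each piece is cleanly carried out, and the crucial observation that the only ``mandatory'' breakpoints are discontinuity points of $f$ (hence in $D\subseteq Q$) is exactly what makes the density of $Q$ suffice.

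The paper takes a quite different, more structural route: it invokes Theorem~15 of \cite{AAK} to embed $\mathcal{F}$ into $\overline{Y}^{\|\cdot\|_{V_2}}\oplus\ell_2(D_\mathcal{F})$, where $Y=V_2^c+\langle\chi_{[t,1]}:t\in D_\mathcal{F}\cap(0,1]\rangle$, and then observes that each summand is $\|\cdot\|_\infty$-separable (the first because $V_2^c\subseteq C[0,1]$ and the span is countably generated, the second because $D_\mathcal{F}$ is countable). Your proof is longer but entirely self-contained and elementary --- it uses nothing beyond the fact that functions in $V_2$ are regulated with square-summable jumps --- whereas the paper's proof is a one-line consequence of a nontrivial decomposition theorem proved elsewhere. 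Your approach also makes transparent \emph{why} countability of $D_\mathcal{F}$ is exactly what is needed: it is precisely the set of breakpoints one cannot avoid.
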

\begin{proof} We set
$Y=V_2^c+<\{\chi_{[t,1]}:t\in D_{\mathcal{F}}\cap (0,1]\}>$ (where
$V_2^c=V_2\cap C[0,1]$).  By exploiting the proof of Theorem 15 of
\cite{AAK}, we have that
$\mathcal{F}\subseteq\overline{Y}^{\|\cdot\|_{V_2}}\oplus\ell_2(D_\mathcal{F})$.
As $\|\cdot\|_{\infty}\leq \|\cdot\|_{V_2}$, we get that
$\overline{Y}^{\|\cdot\|_{V_2}}\subseteq
\overline{Y}^{\|\cdot\|_{\infty}}$. Moreover since $D_\mathcal{F}$
is countable, $(\ell_2(D_{\mathcal{F}}), \|\cdot\|_{\infty})$ is
separable. Since $(Y, \|\cdot\|_\infty)$ is also separable, the
result follows.\end{proof}

\begin{cor}\label{corps} Let $X$ be a subspace of $V_2^0$ such
that $X^*$ is separable. Then the space
$(X^{**},\|\cdot\|_\infty)$ is separable.
\end{cor}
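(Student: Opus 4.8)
The plan is to obtain this immediately by chaining the two preceding propositions, once the bidual is placed in the right ambient space. Recall from Kisliakov's identification (discussed in the introduction) that $V_2$ is canonically the second dual of $V_2^0$ and that on bounded sets its w$^*$-topology is the topology of pointwise convergence. Consequently, for a subspace $X\subseteq V_2^0$ the bidual $X^{**}$ embeds canonically into $(V_2^0)^{**}=V_2$, so we may and do regard $\mathcal{F}:=X^{**}$ as a subset of $V_2$; with this identification the set $D_\mathcal{F}=\bigcup_{f\in X^{**}}D_f$ is precisely the set $D_{X^{**}}$ appearing in Proposition \ref{pcd}.

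First I would apply Proposition \ref{pcd}: the hypothesis that $X^{*}$ is separable gives at once that $D_{X^{**}}$ is countable. Then I would apply Proposition \ref{ps} with $\mathcal{F}=X^{**}\subseteq V_2$: since $D_\mathcal{F}=D_{X^{**}}$ is countable, the conclusion of that proposition is exactly that $(X^{**},\|\cdot\|_\infty)$ is separable, which is the assertion of the corollary.

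There is essentially no obstacle here; the corollary is a formal consequence of Propositions \ref{pcd} and \ref{ps}. The only point deserving an explicit line is the embedding $X^{**}\hookrightarrow V_2$ together with the observation that the discontinuity sets match under this embedding, so that both propositions speak about the same countable set $D_{X^{**}}$; this is the standard Kisliakov identification used repeatedly in the paper.
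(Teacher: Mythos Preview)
Your proposal is correct and is precisely the intended argument: the paper states Corollary~\ref{corps} without proof immediately after Proposition~\ref{ps}, leaving it as the evident consequence of Propositions~\ref{pcd} and~\ref{ps} that you describe. The only remark you add beyond what the paper makes explicit is the identification $X^{**}\hookrightarrow V_2$, which is indeed the standing convention used throughout.
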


\subsection{Biorthogonal families in $V_2^0$.}
\subsubsection{Definition and existence.}
In this subsection we introduce the concept of biorthogonality
for families of functions in $V_2^0$.
\begin{defn}\label{B}
Let $(H_i)_{i\in S}$ be a family of functions of $V_2^0$ and $(\varepsilon_i)_{i\in S}$
a family of positive real numbers, where $S$ is a
countable set. We will say that $(H_i)_{i\in S}$ is
$(\varepsilon_i)_{i\in S}-$ biorthogonal, if for every $\mathcal{I}\in \mathcal{A}$
there is a disjoint partition $\mathcal{I}=\cup_{i\in S}\mathcal{I}^{(i)}$ such that for every $j\in S$,
\begin{equation}\label{biorth.}\sum_{\{i\in S: i\neq j\}}\textit{v}_2(H_i,\mathcal{I}^{(j)})\leq\varepsilon_j \end{equation}
\end{defn}
\begin{prop}\label{pb}
Let $(H_n)_{n\in\nn}$ be a sequence of functions of $V_2^0$ with
$\lim \|H_n\|_{\infty}= 0$. Then for every sequence
$(\ee_i)_{i\in\nn}$ of positive real numbers there exists a
subsequence $(H_{n_i})_{i\in\nn}$ such that $(H_{n_i})_{i\in\nn}$
is $(\ee_i)_{i\in\nn}-$ biorthogonal.
\end{prop}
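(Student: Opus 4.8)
The plan is to build the subsequence $(H_{n_i})$ recursively, at each stage exploiting that a function in $V_2^0$ has uniformly small quadratic variation on families of intervals of small total length, together with the hypothesis $\|H_n\|_\infty\to 0$. The key quantitative fact I would isolate first is the following approximate continuity of the quadratic variation: for each fixed $H\in V_2^0$ and each $\eta>0$ there is $\theta>0$ such that $v_2(H,\mathcal I)<\eta$ whenever $\mathcal I\in\mathcal A$ has $\sum_{I\in\mathcal I}|I|<\theta$. Indeed, membership in $V_2^0$ means $H$ can be approximated in $\|\cdot\|_{V_2}$ by square absolutely continuous functions; for such a function the square variation over a family of intervals of small total length is small, and the error term is controlled by~(\ref{P1}). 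Symmetrically, since each $H_n$ has finite quadratic variation, for every $\theta>0$ one can choose a single finite family $\mathcal J_n\in\mathcal A$ with $\sum_{J\in\mathcal J_n}|J|<\theta$ that "captures" almost all of the variation of $H_n$, i.e. $v_2(H_n,\mathcal I)<\eta$ for every $\mathcal I\in\mathcal A$ whose intervals have interiors disjoint from all of $\mathcal J_n$; this is just the statement that $H_n$ is uniformly continuous off a small neighborhood of the (finite set of) places where its variation concentrates.

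With these two facts in hand I would run the following construction. Enumerate and fix the target $(\ee_i)$. Having chosen $H_{n_1},\dots,H_{n_{k}}$, apply the first fact to the finite list $H_{n_1},\dots,H_{n_k}$ to get a threshold $\theta_k>0$ so small that any of these functions has $v_2<\ee_j 2^{-k-2}$ on any family of intervals of total length $<\theta_k$ (uniformly in $j\le k$). Then use the second fact together with $\|H_n\|_\infty\to 0$: for all large $n$ there is a family $\mathcal J_n$ of total length $<\theta_k$ outside which $H_n$ varies by less than $\ee_{k+1}2^{-k-2}$, and moreover $\|H_n\|_\infty$ is so small that the contribution of the finitely many endpoints of the intervals in the $\mathcal J$'s chosen at earlier stages is negligible. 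Pick such an $n=n_{k+1}$. In parallel, for the newly chosen $H_{n_{k+1}}$, record the "bad" family $\mathcal J_{k+1}$ of total length $<\theta_k$.

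Now given an arbitrary $\mathcal I\in\mathcal A$, I define the disjoint partition $\mathcal I=\bigcup_i\mathcal I^{(i)}$ by a greedy / nearest-neighbor rule: assign each interval $I\in\mathcal I$ to the index $i$ for which $I$ (essentially) meets $\mathcal J_i$, breaking ties by smallest $i$ and dumping any leftover intervals into $\mathcal I^{(1)}$ (or any fixed index). Fix $j$. For $i\neq j$ with $i<j$: the intervals of $\mathcal I^{(j)}$ meet $\mathcal J_j$, which has total length $<\theta_{i}$ when $i$... more carefully, by construction the intervals assigned to index $j$ all sit near $\mathcal J_j$, and $\mathcal J_j$ has total length smaller than the threshold $\theta_{i}$ relevant to $H_i$ whenever $i<j$, so $v_2(H_i,\mathcal I^{(j)})<\ee_j 2^{-j-2}$, plus a term of size $O(\|H_i\|_\infty\cdot\#\mathcal J_j)$ which we arranged to be $<\ee_j 2^{-j-2}$ as well. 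For $i>j$: the intervals of $\mathcal I^{(j)}$ are (essentially) disjoint from $\mathcal J_i$ since they were assigned to $j$ not $i$, so by the defining property of $\mathcal J_i$ we get $v_2(H_i,\mathcal I^{(j)})<\ee_{i}2^{-i-2}\le \ee_j\cdot 2^{-i}$ after a harmless reindexing of the target bounds. Summing the geometric series over all $i\ne j$ gives $\sum_{i\ne j}v_2(H_i,\mathcal I^{(j)})\le\ee_j$, which is~(\ref{biorth.}).

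The main obstacle, and the place where the argument needs care, is the bookkeeping of the two competing smallness requirements and making the greedy assignment genuinely produce a \emph{disjoint} partition of $\mathcal I$ while simultaneously keeping each piece $\mathcal I^{(j)}$ close to $\mathcal J_j$ and far from all $\mathcal J_i$, $i>j$ — closed intervals can touch at endpoints, so "near/far" must be phrased in terms of interiors and one must absorb the finitely many shared endpoints using the $\|H_n\|_\infty\to 0$ hypothesis (this is exactly why that hypothesis, rather than mere boundedness, is needed). Once the geometry of the assignment rule is set up correctly, the estimates themselves are routine applications of~(\ref{P1}), the additivity of $v_2^2$ over disjoint subfamilies, and the two continuity facts above.
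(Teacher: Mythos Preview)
Your greedy assignment has a genuine gap. With the rule ``assign $I$ to the smallest $j$ such that $I$ meets $\mathcal{J}_j$'', an interval placed in $\mathcal{I}^{(j)}$ is disjoint from $\mathcal{J}_1,\dots,\mathcal{J}_{j-1}$ but may still meet $\mathcal{J}_i$ for $i>j$, so your ``$i>j$'' case cannot appeal to the second fact. More seriously, in the ``$i<j$'' case you need $\mathcal{I}^{(j)}$ itself---not $\mathcal{J}_j$---to have small total length in order to invoke the square absolute continuity of $H_{n_i}$; but the intervals of $\mathcal{I}^{(j)}$ merely \emph{meet} $\mathcal{J}_j$ and may be arbitrarily long, so their total length is completely uncontrolled. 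Since $i<j$, the function $H_{n_i}$ was chosen early and its sup norm is not small, so the $\|\cdot\|_\infty$ correction you mention cannot absorb this error. (Your heuristic justification of the second fact---that the variation of $H_n$ ``concentrates'' on a short set---is also incorrect for functions such as the $R_n$ of Remark~\ref{Rad}, whose variation is uniformly spread over $[0,1]$; the second fact is nonetheless true, but only for the unrelated reason that one may force all the \emph{gaps} of $\mathcal{J}$ to be short, which already is the mesh property below.)

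The paper bypasses all of this by partitioning $\mathcal{I}$ according to interval \emph{length} rather than location: one fixes $1=\delta_0>\delta_1>\cdots$ and sets $\mathcal{I}^{(j)}=\{I\in\mathcal{I}:\delta_j<|I|\le\delta_{j-1}\}$. The two directions are then clean. For $j>i$ the family $\mathcal{I}^{(j)}$ has $\|\mathcal{I}^{(j)}\|_{\max}\le\delta_{j-1}\le\delta_i$, and $\delta_i$ is chosen (after $H_{n_1},\dots,H_{n_i}$ are fixed) so that all of these functions have small $v_2$ on any family of mesh $\le\delta_i$; this is exactly the $V_2^0$ property $\widetilde\mu_{H,\delta}[0,1]\to 0$ as $\delta\to 0$. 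For $j<i$ the intervals in $\mathcal{I}^{(j)}$ have length $>\delta_j\ge\delta_{i-1}$, so there are fewer than $\delta_{i-1}^{-1}$ of them, and then $H_{n_i}$ is chosen with $\|H_{n_i}\|_\infty$ so small that $v_2(H_{n_i},\mathcal{I}^{(j)})\le 2\delta_{i-1}^{-1/2}\|H_{n_i}\|_\infty$ is as small as required. This two-sided mechanism---short mesh controls the earlier functions, few long intervals plus small sup norm controls the later ones---is the missing idea, and it is precisely where the hypothesis $\|H_n\|_\infty\to 0$ enters.
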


For the proof of the above proposition we will need  some specialized forms of biorthogonality.
 Let $k\geq 1$, $(\ee_i)_{i=1}^k$ and $(\delta_i)_{i=0}^{k-1}$ be  finite sequences of positive real numbers
 such that
 $0<\delta_{k-1}<...<\delta_1<\delta_0=1$.
We say that a sequence $(H_i)_{i=1}^k$ in
 $ V_2^0$ is $((\ee_i)_{i=1}^k, (\delta_i)_{i=0}^{k-1})-$
biorthogonal if the inequality (\ref{biorth.}) of Definition
\ref{B} is satisfied for $S=\{1,...,k\}$ and
\begin{equation}\label{mal1}\mathcal{I}^{(j)}=\{I\in\mathcal{I}:
\delta_j<|I|\leq\delta_{j-1}\},\end{equation}  for all $1\leq
j\leq k$ (where for $j=k$, we set  $\delta_k=0$).

 We will also
use the following notation. For  a sequence of positive real
numbers $(\ee_i)_{i\in\nn}$ and every $1\leq i\leq k$,  we set
$\ee_i^k=(\sum_{r=1}^{k-i+1}2^{-r})\ee_i$. Clearly for all $1\leq
i\leq k$, $\ee_i^k<\ee_i^{k+1}$ and $\lim_k \ee_i^k=\ee_i$.

The proof of Proposition \ref{pb}  is based on  the next lemma.

\begin{lem}\label{bio}
 Let
$(H_i)_{i=1}^k$ be an $((\ee^k_i)_{i=1}^k,
(\delta_i)_{i=0}^{k-1})-$ biorthogonal sequence of $V_2^0$. Let
$0<\delta_k<\delta_{k-1}$ be such that for every
$\mathcal{I}\in\mathcal{A}$ with
$\|\mathcal{I}\|_{\max}\leq\delta_k$, the following holds.
\begin{equation}\label{mal}\sum_{i=1}^{k}\textit{v}_2(H_i,\mathcal{I})\leq\frac{\ee_{k+1}}{2}\end{equation}
 Then there is  $\ee>0$ such
that  for every $H_{k+1}\in V_2^0$ with
$\|H_{k+1}\|_{\infty}<\epsilon$, the sequence $(H_i)_{i=1}^{k+1}$
is $((\ee_i^{k+1})_{i=1}^{k+1}, (\delta_i)_{i=0}^{k})-$
biorthogonal.
\end{lem}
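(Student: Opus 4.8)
The plan is to verify the defining inequality of $((\ee_i^{k+1})_{i=1}^{k+1},(\delta_i)_{i=0}^{k})$-biorthogonality directly, exploiting that passing from $k$ to $k+1$ changes only one block of the relevant partition. So, fix $\mathcal{I}\in\mathcal{A}$. The $k$-biorthogonality of $(H_i)_{i=1}^k$, applied to this $\mathcal{I}$, refers to the blocks $\mathcal{I}^{(j)}=\{I\in\mathcal{I}:\delta_j<|I|\le\delta_{j-1}\}$, $1\le j\le k$, with $\delta_k=0$, as in (\ref{mal1}); whereas for the $(k+1)$-version I must use $\mathcal{J}^{(j)}=\{I\in\mathcal{I}:\delta_j<|I|\le\delta_{j-1}\}$, $1\le j\le k+1$, now with $\delta_k$ the newly chosen number and $\delta_{k+1}=0$. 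These coincide for $j\le k-1$, and the old block $\mathcal{I}^{(k)}$ splits disjointly as $\mathcal{I}^{(k)}=\mathcal{J}^{(k)}\cup\mathcal{J}^{(k+1)}$, where every interval in $\mathcal{J}^{(k)}$ has length $>\delta_k$, while $\|\mathcal{J}^{(k+1)}\|_{\max}\le\delta_k$.

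Next I would record the only estimate needed for $H_{k+1}$, a crude one. If $\mathcal{J}\in\mathcal{A}$ consists of intervals of length $>\delta_k$, then, their interiors being pairwise disjoint in $[0,1]$, $card(\mathcal{J})<1/\delta_k$; since $|H_{k+1}(b_I)-H_{k+1}(a_I)|\le 2\|H_{k+1}\|_\infty$ for each $I$, this gives $\textit{v}_2(H_{k+1},\mathcal{J})\le 2\|H_{k+1}\|_\infty\sqrt{card(\mathcal{J})}\le 2\|H_{k+1}\|_\infty/\sqrt{\delta_k}$. Because $\ee_j^{k+1}-\ee_j^k=2^{-(k-j+2)}\ee_j>0$ for every $1\le j\le k$, I would then set
\[\ee:=\frac{\sqrt{\delta_k}}{2}\,\min_{1\le j\le k}2^{-(k-j+2)}\ee_j>0,\]
so that $\|H_{k+1}\|_\infty<\ee$ forces $\textit{v}_2(H_{k+1},\mathcal{J})<\ee_j^{k+1}-\ee_j^k$ for all $j\le k$ and all such ``long-interval'' families $\mathcal{J}$.

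It then remains to check the inequality (\ref{biorth.}) of Definition~\ref{B} for the partition $(\mathcal{J}^{(j)})_{j=1}^{k+1}$, for each $j$. For $j\le k-1$ we have $\mathcal{J}^{(j)}=\mathcal{I}^{(j)}$, so the $k$-biorthogonality gives $\sum_{i\le k,\,i\ne j}\textit{v}_2(H_i,\mathcal{J}^{(j)})\le\ee_j^k$, and adding $\textit{v}_2(H_{k+1},\mathcal{J}^{(j)})$ — legitimate, since the intervals of $\mathcal{J}^{(j)}$ have length $>\delta_{k-1}>\delta_k$ — keeps the total below $\ee_j^{k+1}$. For $j=k$ one uses that $\mathcal{J}^{(k)}\subseteq\mathcal{I}^{(k)}$ together with the monotonicity of $\textit{v}_2(H,\cdot)$ under passing to subfamilies (its square is a sum of nonnegative terms), to get $\sum_{i=1}^{k-1}\textit{v}_2(H_i,\mathcal{J}^{(k)})\le\sum_{i=1}^{k-1}\textit{v}_2(H_i,\mathcal{I}^{(k)})\le\ee_k^k$; adding the $H_{k+1}$-term then keeps the sum below $\ee_k^{k+1}$. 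Finally, for $j=k+1$ there is no $H_{k+1}$-term, and since $\|\mathcal{J}^{(k+1)}\|_{\max}\le\delta_k$, hypothesis (\ref{mal}) yields exactly $\sum_{i=1}^k\textit{v}_2(H_i,\mathcal{J}^{(k+1)})\le\ee_{k+1}/2=\ee_{k+1}^{k+1}$, which closes the argument.

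I do not expect any genuine obstacle: the proof is essentially bookkeeping. The two points that must be handled with care are (a) that $\ee_j^{k+1}-\ee_j^k=2^{-(k-j+2)}\ee_j$ is a fixed positive amount of slack, into which a sufficiently small $H_{k+1}$ fits on every long-interval block — this is what dictates the choice of $\ee$ in terms of $\delta_k$ and the $\ee_j$; and (b) that hypothesis (\ref{mal}) is tailored precisely to absorb the single new block $\mathcal{J}^{(k+1)}$ of short intervals, on which the inductive hypothesis says nothing about $H_1,\dots,H_k$. The only mildly non-mechanical step is the appeal to monotonicity of $\textit{v}_2$ under subfamilies for the block $\mathcal{J}^{(k)}$.
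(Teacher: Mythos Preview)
Your proof is correct and follows essentially the same approach as the paper's: both arguments choose $\ee$ so that $\textit{v}_2(H_{k+1},\mathcal{J})$ is bounded by $2^{-(k-j+2)}\ee_j=\ee_j^{k+1}-\ee_j^k$ on any family of intervals of length $>\delta_k$ (using the cardinality bound $card(\mathcal{J})<1/\delta_k$), then add this to the inductive hypothesis for $j\le k$ and invoke (\ref{mal}) for $j=k+1$. You are in fact more explicit than the paper about the one subtle point---that the new block $\mathcal{J}^{(k)}$ is only a subfamily of the old $\mathcal{I}^{(k)}$, so the inductive bound $\ee_k^k$ transfers via monotonicity of $\textit{v}_2$---which the paper's proof handles silently.
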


\begin{proof} Notice that for every  $\mathcal{J}\in\mathcal{A}$ with
$\delta_{k}<\|\mathcal{J}\|_{\min}$,
 $card(\mathcal{J})<\delta_k^{-1}$. Set
 $\epsilon=\sqrt{\dd_k}2^{-(k+3)}\min\{\ee_i\}_{i=1}^k$ and
let  $H_{k+1}\in V_2^0$ be such that
$\|H_{k+1}\|_{\infty}<\varepsilon$. Then for every  $1\leq j\leq
k$ and $\mathcal{I}\in\mathcal{A}$, we have that
$\dd_k<\dd_j<\|\mathcal{I}^{(j)}\|_{\min}$ and therefore
\[\textit{v}_2(H_{k+1},\mathcal{I}^{(j)})\leq
\Big(\delta_k^{-1}(2\|H_{k+1}\|_{\infty})^2\Big)^{1/2}\leq
\frac{\ee}{2^{k+2}}\leq \frac{\ee_j}{2^{k-j+2}}\]  Hence for each
$1\leq j\leq k$ and for every $\mathcal{I}\in \mathcal{A}$,
\[\begin{split}\sum_{\{i:1\leq i\leq k+1, i\neq j\}}\textit{v}_2(H_i, \mathcal{I}^{(j)})
&=\sum_{\{i:1\leq i\leq k, i\neq j\}}\textit{v}_2(H_i,
\mathcal{I}^{(j)})+ \textit{v}_2(H_{k+1},\mathcal{I}^{(j)})\\&\leq
\ee_j^k+\frac{\ee_j}{2^{k-j+2}}=
\Big(\sum_{r=1}^{k-j+2}2^{-r}\Big)\ee_j=\ee_j^{k+1}\end{split}\]
Finally  $\|\mathcal{I}^{(k+1)}\|_{\max}\leq\delta_{k}$ and so by
(\ref{mal}), we get that  \[\sum_{i=1}^k\textit{v}_2(H_i,
\mathcal{I}^{(k+1)})\leq\frac{\ee_{k+1}}{2}=\ee_{k+1}^{k+1}\]\end{proof}

\noindent\textbf{Proof of Proposition \ref{pb}.} We inductively
construct an increasing sequence $n_1<n_2<...$ of natural numbers
and a decreasing sequence of positive real numbers
$0<...<\delta_2<\delta_1<1=\delta_0$, such that for every $k\geq
1$, the sequence $(H_{n_i})_{i=1}^k$ is
$((\ee_i^k)_{i=1}^k,(\delta_i)_{i=1}^{k-1})$-biorthogonal. We
claim that $(H_{n_i})_i$ is $(\ee_i)_i$-biorthogonal. Indeed, let
$\mathcal{I}\in\mathcal{A}$ and let
$\mathcal{I}=\cup_i\mathcal{I}^{(i)}$ be the partition of
$\mathcal{I}$ induced by (\ref{mal1}). Let also $k_0\geq 1$ be
such that $\delta_{k_0}<\|\mathcal{I}\|_{\min}$.  Then for each
$j\in\nn$ with  $j\geq k_0$, $\mathcal{I}^{(j)}=\emptyset$ and so
(\ref{biorth.}) trivially holds. Otherwise for all $k\geq 1$,
$\sum_{\{1\leq i\leq k:\; i\neq
j\}}\textit{v}_2(H_{n_i},\mathcal{I}^{(j)})<\ee^k_j$ and so
$\sum_{\{ i\neq j\}}\textit{v}_2(H_{n_i},\mathcal{I}^{(j)})\leq
\ee_j.$

\bigskip

We will also need the analogue of the above  in the case where
$S=2^{<\nn}$. We omit the proof since it is an easy modification
of the one  of Proposition \ref{pb}.

\begin{prop}\label{treebio}
Let $(H_s)_{s\in 2^{<\nn}}$ be a family of functions in $V_2^0$
such that for every $\sigma\in 2^{\nn}$,
 $\lim_n\|H_{\sigma\mid n}\|_{\infty}= 0$. Then for every family $(\ee_s)_{s\in 2^{<\nn}}$ of positive real numbers,
there exists a dyadic subtree $(t_s)_{s\in 2^{<\nn}}$ of
$2^{<\nn}$ such that $(H_{t_s})_{s\in 2^{<\nn}}$ is $(\ee_s)_{s\in
2^{<\nn}}-$ biorthogonal.
\end{prop}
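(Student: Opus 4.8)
\noindent\emph{Proof plan.} The plan is to transcribe the proof of Proposition \ref{pb} to the dyadic tree, with the per-branch hypothesis $\lim_n\|H_{\sigma\mid n}\|_\infty=0$ playing, along individual branches $\sigma\in 2^{\nn}$, the role that $\lim_n\|H_n\|_\infty=0$ plays there along a single sequence. First I would fix a bijection $k\mapsto s_k$ of $\nn$ onto $2^{<\nn}$ respecting levels, so that the $\sqsubseteq$-immediate predecessor of every $s_k$ with $|s_k|\ge 1$ is some $s_j$ with $j<k$; and for $s=s_i$ I would set $\ee^{(k)}_{s}=\big(\sum_{r=1}^{k-i+1}2^{-r}\big)\ee_s$ for $k\ge i$, the exact analogue of the numbers $\ee_i^k$ of the excerpt. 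The length bands of Definition \ref{B} will again be governed by a decreasing sequence $1=\delta_0>\delta_1>\delta_2>\cdots$, constructed alongside the subtree, through the partition $\mathcal{I}^{(s_k)}=\{I\in\mathcal{I}:\delta_k<|I|\le\delta_{k-1}\}$.

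The first step is the tree version of Lemma \ref{bio}: if $(H_{t_{s_1}},\dots,H_{t_{s_k}})$ is $((\ee^{(k)}_{s_i})_{i=1}^k,(\delta_i)_{i=0}^{k-1})$-biorthogonal and $0<\delta_k<\delta_{k-1}$ is chosen so that $\sum_{i=1}^k\textit{v}_2(H_{t_{s_i}},\mathcal{I})\le\ee_{s_{k+1}}/2$ whenever $\|\mathcal{I}\|_{\max}\le\delta_k$, then there is $\eta>0$ such that appending any $H\in V_2^0$ with $\|H\|_\infty<\eta$ as the $(k+1)$-st function produces an $((\ee^{(k+1)}_{s_i})_{i=1}^{k+1},(\delta_i)_{i=0}^{k})$-biorthogonal system. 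This is proved verbatim as Lemma \ref{bio}: the intervals in each old band have length $>\delta_k$, hence cardinality $<\delta_k^{-1}$, so $\textit{v}_2(H,\mathcal{I}^{(j)})\le\delta_k^{-1/2}\cdot 2\|H\|_\infty$ can be made as small as desired, and on the new band one uses the condition imposed on $\delta_k$; nothing there is sensitive to whether the index set is a segment of $\nn$ or a tree.

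Next I would run the induction. Suppose $t_{s_1},\dots,t_{s_{k-1}}\in 2^{<\nn}$ and $1=\delta_0>\cdots>\delta_{k-1}>0$ have been found so that $s_i\mapsto t_{s_i}$ preserves $\sqsubseteq$ and $\perp$, satisfies $t_{s_i\con\iota}\sqsupseteq t_{s_i}\con\iota$ whenever the successor $s_i\con\iota$ has already been enumerated, and makes $(H_{t_{s_1}},\dots,H_{t_{s_{k-1}}})$ $((\ee^{(k-1)}_{s_i})_{i=1}^{k-1},(\delta_i)_{i=0}^{k-2})$-biorthogonal. Since each $H_{t_{s_i}}\in V_2^0$, one may pick $\delta_k\in(0,\delta_{k-1})$ with $\sum_{i=1}^{k-1}\textit{v}_2(H_{t_{s_i}},\mathcal{I})\le\ee_{s_k}/2$ for all $\mathcal{I}$ with $\|\mathcal{I}\|_{\max}\le\delta_k$; let $\eta>0$ be the number furnished by the lemma above. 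Now choose $t_{s_k}$: if $|s_k|=0$ take $t_{s_k}=\emptyset$; otherwise $s_k=s_j\con\iota$ for some $j<k$ and $\iota\in\{0,1\}$, so fix any branch $\sigma\in 2^{\nn}$ with $t_{s_j}\con\iota\sqsubset\sigma$ and, using $\lim_m\|H_{\sigma\mid m}\|_\infty=0$, take $m>|t_{s_j}|$ with $\|H_{\sigma\mid m}\|_\infty<\eta$, setting $t_{s_k}=\sigma\mid m$. This respects all the tree-order constraints and, by the lemma, preserves biorthogonality at level $k$.

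Finally I would verify the two conclusions. The family $(t_s)_{s\in 2^{<\nn}}$ is a dyadic subtree: the relations $t_{s\con\iota}\sqsupseteq t_s\con\iota$ force $s\mapsto t_s$ to be an order isomorphism onto its image, since comparable nodes go to strictly comparable ones and, for incomparable $s,t$ with meet $v$, $t_s\sqsupseteq t_v\con 0\perp t_v\con 1\sqsubseteq t_t$. And $(H_{t_s})_{s}$ is $(\ee_s)_{s}$-biorthogonal: given $\mathcal{I}\in\mathcal{A}$, the partition $\mathcal{I}^{(s_k)}=\{I\in\mathcal{I}:\delta_k<|I|\le\delta_{k-1}\}$ has only finitely many nonempty bands, so choosing $k_0$ with $\delta_{k_0}<\|\mathcal{I}\|_{\min}$ gives $\mathcal{I}^{(s_j)}=\emptyset$ for $j\ge k_0$, while for $j<k_0$ the finite-stage estimates yield $\sum_{\{i\le k:\ i\ne j\}}\textit{v}_2(H_{t_{s_i}},\mathcal{I}^{(s_j)})\le\ee^{(k)}_{s_j}<\ee_{s_j}$ for every $k$, hence $\sum_{\{i\ne j\}}\textit{v}_2(H_{t_{s_i}},\mathcal{I}^{(s_j)})\le\ee_{s_j}$, which is what Definition \ref{B} demands. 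All the estimates here are routine; the one genuinely new point relative to Proposition \ref{pb} is that one cannot pass to a single subsequence once and for all but must, at each step, locate a sufficiently deep node \emph{above a prescribed predecessor} with arbitrarily small sup-norm --- precisely what $\lim_n\|H_{\sigma\mid n}\|_\infty=0$ supplies along any branch $\sigma$ through that predecessor. The part needing the most care is keeping the thresholds $\delta_k$ synchronised with the fixed level enumeration of $2^{<\nn}$.
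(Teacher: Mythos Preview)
Your proposal is correct and is precisely the ``easy modification of the proof of Proposition~\ref{pb}'' that the paper invokes in lieu of a written argument: enumerate $2^{<\nn}$ level by level, carry over Lemma~\ref{bio} verbatim, and at each step use the branch hypothesis $\lim_n\|H_{\sigma\mid n}\|_\infty=0$ to locate a node above the required immediate predecessor with sup-norm below the threshold $\eta$. The only blemish is a harmless off-by-one in your inductive bookkeeping (your hypothesis lists $\delta_{k-1}$ before it has been chosen, and you then pick ``$\delta_k$'' where the lemma calls for $\delta_{k-1}$); shifting the index down by one restores exact alignment with Lemma~\ref{bio} and with the paper's proof of Proposition~\ref{pb}.
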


\subsubsection{Estimations on biorthogonal sequences.}
In the next two lemmas and proposition,  $S$ stands for  a
countable set and $(H_i)_{i\in S}$ is an $(\varepsilon_i)_{i\in
S}-$biorthogonal family in $V_2^0$ such that $\sum_{i\in
S}\ee_i=\ee<\infty.$
\begin{lem}\label{lprep} Let   $\mathcal{I}\in
\mathcal{A}$, $F\subseteq S$ finite and $(\lambda_i)_{i\in F}$ be
a  sequence of real numbers. Then
\begin{enumerate}
\item [(i)] For every $i\in S$,
$\textit{v}_2(H_i,\mathcal{I}\setminus \mathcal{I}^{(i)})<\ee.$
\item[(ii)] For every $j\notin F$, $\textit{v}_2(\sum_{i\in
F}\lambda_i
 H_i,\mathcal{I}^{(j)})
 \leq \max_{i\in F}|\lambda_i
 |\ee_j$.
 \item [(iii)] For every $j\in
F$, $\textit{v}_2(\sum_{i\in F}\lambda_i
 H_i,\mathcal{I}^{(j)})
 \leq |\lambda_j|\textit{v}_2(H_j,\mathcal{I}^{(j)})+\max_{i\in F}|\lambda_i
 |\ee_j$.
 \item [(iv)] For every $j\in F$,
$\textit{v}_2(\sum_{i\in F}\lambda_i
 H_i,\mathcal{I}^{(j)})\geq\Big||\lambda_j|\textit{v}_2(H_j,\mathcal{I}^{(j)})-\max_{i\in
F}|\lambda_i
 |\ee_j\Big|
 $.
\end{enumerate}
\end{lem}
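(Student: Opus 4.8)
The plan is to deduce all four inequalities from three already-available elementary facts: the homogeneity $v_2(\lambda f,\mathcal{J})=|\lambda|\,v_2(f,\mathcal{J})$ together with the triangle and reverse-triangle inequalities of (\ref{P1}), which by induction give $v_2\bigl(\sum_k\lambda_k g_k,\mathcal{J}\bigr)\leq\sum_k|\lambda_k|\,v_2(g_k,\mathcal{J})$ for finite sums; the identity $v_2^2(f,\mathcal{J})=\sum_k v_2^2(f,\mathcal{J}_k)$ for a disjoint partition $\mathcal{J}=\cup_k\mathcal{J}_k$ of a member of $\mathcal{A}$ (hence also $v_2(f,\mathcal{J})\leq\sum_k v_2(f,\mathcal{J}_k)$); and the defining inequality of $(\ee_i)_{i\in S}$-biorthogonality, $\sum_{\{i\in S:i\neq j\}}v_2(H_i,\mathcal{I}^{(j)})\leq\ee_j$ for every $j\in S$. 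Throughout, $\mathcal{I}=\cup_{i\in S}\mathcal{I}^{(i)}$ is the disjoint partition of the given $\mathcal{I}\in\mathcal{A}$ furnished by biorthogonality; only finitely many pieces $\mathcal{I}^{(i)}$ are nonempty, as $\mathcal{I}$ is finite.

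For (i), write $\mathcal{I}\setminus\mathcal{I}^{(i)}=\cup_{\{j\in S:j\neq i\}}\mathcal{I}^{(j)}$. From $v_2^2(H_i,\mathcal{I}\setminus\mathcal{I}^{(i)})=\sum_{j\neq i}v_2^2(H_i,\mathcal{I}^{(j)})$ and $\bigl(\sum a_j^2\bigr)^{1/2}\leq\sum a_j$ (for $a_j\geq0$) we get $v_2(H_i,\mathcal{I}\setminus\mathcal{I}^{(i)})\leq\sum_{j\neq i}v_2(H_i,\mathcal{I}^{(j)})$; for each $j\neq i$ the single term $v_2(H_i,\mathcal{I}^{(j)})$ appears among the summands of the biorthogonality estimate for the index $j$, so $v_2(H_i,\mathcal{I}^{(j)})\leq\ee_j$, and summing over $j\neq i$ yields $v_2(H_i,\mathcal{I}\setminus\mathcal{I}^{(i)})\leq\sum_{j\neq i}\ee_j=\ee-\ee_i<\ee$.

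For (ii)--(iv) I would fix $j$ and work inside the single family $\mathcal{I}^{(j)}$, using $v_2\bigl(\sum_{i\in F}\lambda_iH_i,\mathcal{I}^{(j)}\bigr)\leq\sum_{i\in F}|\lambda_i|\,v_2(H_i,\mathcal{I}^{(j)})\leq\bigl(\max_{i\in F}|\lambda_i|\bigr)\sum_{i\in F}v_2(H_i,\mathcal{I}^{(j)})$. If $j\notin F$, every $i\in F$ satisfies $i\neq j$, so $\sum_{i\in F}v_2(H_i,\mathcal{I}^{(j)})\leq\sum_{\{i\in S:i\neq j\}}v_2(H_i,\mathcal{I}^{(j)})\leq\ee_j$, which is (ii). If $j\in F$, isolate the $j$-th summand: by (\ref{P1}), $v_2\bigl(\sum_{i\in F}\lambda_iH_i,\mathcal{I}^{(j)}\bigr)$ differs from $|\lambda_j|\,v_2(H_j,\mathcal{I}^{(j)})$ by at most $v_2\bigl(\sum_{\{i\in F:i\neq j\}}\lambda_iH_i,\mathcal{I}^{(j)}\bigr)$, which is $\leq\bigl(\max_{i\in F}|\lambda_i|\bigr)\sum_{\{i\in F:i\neq j\}}v_2(H_i,\mathcal{I}^{(j)})\leq\max_{i\in F}|\lambda_i|\,\ee_j$ by biorthogonality. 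The upper bound gives (iii), and the lower bound, together with $v_2\geq0$, gives (iv).

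The proof involves no genuine obstacle: it is bookkeeping with the two triangle inequalities of (\ref{P1}) and the single defining inequality of biorthogonality. The points that most require care — and the closest thing to a difficulty — are keeping the index sets $F$ and $F\setminus\{j\}$ inside $\{i\in S:i\neq j\}$, so that the biorthogonality bound applies verbatim in (ii) and (iii), and observing that in (iv) the reverse-triangle step yields $v_2\bigl(\sum_{i\in F}\lambda_iH_i,\mathcal{I}^{(j)}\bigr)\geq|\lambda_j|\,v_2(H_j,\mathcal{I}^{(j)})-\max_{i\in F}|\lambda_i|\,\ee_j$, which together with the trivial bound $v_2\geq0$ gives the stated estimate in the relevant regime (when its right-hand side is nonnegative, which is the only case in which (iv) carries information).
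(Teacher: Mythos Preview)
Your proof is correct and matches the paper's approach line for line: apply the triangle and reverse-triangle inequalities (\ref{P1}) together with the biorthogonality bound (\ref{biorth.}), splitting off the $j$-th term in (iii) and (iv). Your caution about (iv) is in fact warranted---the version with the outer absolute value can fail (take $F=\{j\}$, $\lambda_j=1$, and an $\mathcal{I}^{(j)}$ with $\textit{v}_2(H_j,\mathcal{I}^{(j)})=0$), and the paper's own argument, like yours, only delivers the unsigned lower bound $\textit{v}_2(\sum_{i\in F}\lambda_iH_i,\mathcal{I}^{(j)})\geq|\lambda_j|\textit{v}_2(H_j,\mathcal{I}^{(j)})-\max_{i\in F}|\lambda_i|\ee_j$, which is all that is used downstream in Lemma~\ref{lul2}.
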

\begin{proof} (i) Let $i\in S$. Then \[\textit{v}_2(H_i,\mathcal{I}\setminus
\mathcal{I}^{(i)}) =\textit{v}_2(H_i,\cup_{j\neq
i}\mathcal{I}^{(j)}) \leq \sum_{j\neq
i}\textit{v}_2(H_i,\mathcal{I}^{(j)})\leq \sum_{j\neq
i}\ee_j<\ee\] (ii) Let $j\notin F$. Then
\[\textit{v}_2(\sum_{i\in F}\lambda_i
 H_i,\mathcal{I}^{(j)})
 \leq\sum_{i\in F}\textit{v}_2(\lambda_i H_i,\mathcal{I}^{(j)})=
\sum_{i\in F}|\lambda_i|\textit{v}_2( H_i,\mathcal{I}^{(j)})\leq
\max_{i\in F}|\lambda_i|\ee_j\] (iii) Let $j\in F$. Then using
(ii) we get that
\[\begin{split}\textit{v}_2(\sum_{i\in F}\lambda_i
 H_i,\mathcal{I}^{(j)})& \leq
\textit{v}_2(\lambda_j H_j,\mathcal{I}^{(j)})+\sum_{i\neq
j}\textit{v}_2(\lambda_i H_i,\mathcal{I}^{(j)})\\&\leq
|\lambda_j|\textit{v}_2( H_j,\mathcal{I}^{(j)})+\max_{i\in
F}|\lambda_i|\ee_j
\end{split}\]
(iv) Since $ \textit{v}_2(\sum_{i\in F}\lambda_i
 H_i,\mathcal{I}^{(j)})\geq |\textit{v}_2(\lambda_j H_j,\mathcal{I}^{(j)})-\sum_{i\neq
j}\textit{v}_2(\lambda_i H_i,\mathcal{I}^{(j)})|$, the proof is
similar to that of (ii).
\end{proof}
\begin{lem}\label{lul2}
Let $M>0$ and suppose that  $\|H_i\|_{V_2}\leq M$, for all $i\in
S$. Then for all finite subsets $F\subseteq G\subseteq  S,$ every
sequence of scalars $(\lambda_i)_{i\in G}$ and  every
$\mathcal{I}\in\mathcal{A}$ the following are satisfied.
\begin{enumerate}
\item [(i)] $\textit{v}_2^2\Big(\sum_{i\in F}\lambda_i
H_i,\mathcal{I}\Big)\leq\sum_{i\in
F}|\lambda_i|^2\textit{v}_2^2(H_i,\mathcal{I}^{(i)}) +\max_{i\in
F}|\lambda_i|^2(2M+\ee)\ee$. \item
[(ii)]$\textit{v}_2^2\Big(\sum_{i\in
G}\lambda_iH_i,\mathcal{I}\Big)> \sum_{i\in
F}|\lambda_i|^2\textit{v}_2^2(H_i,\mathcal{I}^{(i)})- \max_{i\in
G}|\lambda_i|^22M\ee$. \item [(iii)]
$\textit{v}_2^2\Big(\sum_{i\in F}\lambda_i
H_i,\mathcal{I}\big)\leq\textit{v}_2^2\Big(\sum_{i\in G}\lambda_i
H_i,\mathcal{I}\Big)+ \max_{i\in G}|\lambda_i|^2(4M+\ee)\ee$.
\end{enumerate}
\end{lem}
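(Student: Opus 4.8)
The plan is to use the biorthogonal partition $\mathcal{I}=\cup_{i\in S}\mathcal{I}^{(i)}$ associated to $\mathcal{I}$, and to estimate $\textit{v}_2^2(\sum\lambda_iH_i,\mathcal{I})$ by splitting it across the pieces $\mathcal{I}^{(j)}$ using the additivity relation $\textit{v}_2^2(g,\mathcal{I})=\sum_j\textit{v}_2^2(g,\mathcal{I}^{(j)})$. On each piece $\mathcal{I}^{(j)}$, Lemma \ref{lprep} gives two-sided control: for $j\in F$ one has $\textit{v}_2(\sum_{i\in F}\lambda_iH_i,\mathcal{I}^{(j)})$ within $\max_i|\lambda_i|\ee_j$ of $|\lambda_j|\textit{v}_2(H_j,\mathcal{I}^{(j)})$, while for $j\notin F$ the whole term is at most $\max_i|\lambda_i|\ee_j$. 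Squaring and summing over $j$ will produce the diagonal main term $\sum_{i\in F}|\lambda_i|^2\textit{v}_2^2(H_i,\mathcal{I}^{(i)})$ plus cross terms that must be absorbed into the stated error $\max|\lambda_i|^2(\text{const}\cdot M+\ee)\ee$.

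For (i): write $\textit{v}_2^2(\sum_{i\in F}\lambda_iH_i,\mathcal{I})=\sum_{j\in F}\textit{v}_2^2(\cdot,\mathcal{I}^{(j)})+\sum_{j\notin F}\textit{v}_2^2(\cdot,\mathcal{I}^{(j)})$. On the first sum use Lemma \ref{lprep}(iii) and the inequality $(a+b)^2\le a^2+b^2+2ab$ with $a=|\lambda_j|\textit{v}_2(H_j,\mathcal{I}^{(j)})\le\max|\lambda_i|\cdot M$ (since $\textit{v}_2(H_j,\mathcal{I}^{(j)})\le\|H_j\|_{V_2}\le M$) and $b=\max|\lambda_i|\ee_j$; this yields a main term $\sum_{j\in F}|\lambda_j|^2\textit{v}_2^2(H_j,\mathcal{I}^{(j)})$ plus error $\le\max|\lambda_i|^2\sum_{j\in F}(2M\ee_j+\ee_j^2)\le\max|\lambda_i|^2(2M+\ee)\ee$, using $\sum_j\ee_j=\ee$ and $\ee_j\le\ee$. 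On the second sum, Lemma \ref{lprep}(ii) gives each term $\le\max|\lambda_i|^2\ee_j^2\le\max|\lambda_i|^2\ee\ee_j$, summing to at most $\max|\lambda_i|^2\ee^2$, which is again absorbed. For (ii): restrict attention to the pieces $\mathcal{I}^{(j)}$ with $j\in F\subseteq G$ and drop the others; on each such piece use Lemma \ref{lprep}(iv) with $(a-b)^2\ge a^2-2ab$, the same bound $a\le\max_{i\in G}|\lambda_i|M$, and sum, noting the cross term is $\le\max_{i\in G}|\lambda_i|^2\cdot 2M\sum_{j\in F}\ee_j\le\max_{i\in G}|\lambda_i|^2 2M\ee$. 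Part (iii) then follows by combining (i) applied to $F$ with (ii) applied to the pair $F\subseteq G$: $\textit{v}_2^2(\sum_F\lambda_iH_i,\mathcal{I})\le\sum_F|\lambda_i|^2\textit{v}_2^2(H_i,\mathcal{I}^{(i)})+\max_{i\in G}|\lambda_i|^2(2M+\ee)\ee<\textit{v}_2^2(\sum_G\lambda_iH_i,\mathcal{I})+\max_{i\in G}|\lambda_i|^2 2M\ee+\max_{i\in G}|\lambda_i|^2(2M+\ee)\ee$, and $2M\ee+(2M+\ee)\ee=(4M+\ee)\ee$.

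The one point requiring slight care — and the main (mild) obstacle — is bookkeeping the cross terms uniformly in the index achieving the maximum: when I expand $(a+b)^2$ on a piece $\mathcal{I}^{(j)}$ I must factor out $\max_{i\in F}|\lambda_i|^2$ (not $|\lambda_j|^2$) so that the error sums cleanly over $j$ against $\sum_j\ee_j=\ee$; this is exactly why the $\textit{v}_2(H_j,\mathcal{I}^{(j)})\le M$ bound is invoked rather than a bound involving $\lambda_j$. Everything else is the routine combination of the additivity of $\textit{v}_2^2$ over disjoint partitions, the elementary inequalities $(a\pm b)^2\lessgtr a^2\pm 2ab+b^2$, and Lemma \ref{lprep}; no new idea is needed beyond organizing these estimates.
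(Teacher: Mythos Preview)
Your proposal is correct and follows essentially the same approach as the paper: split $\textit{v}_2^2$ across the biorthogonal partition $\mathcal{I}=\cup_j\mathcal{I}^{(j)}$, apply parts (ii)--(iv) of Lemma~\ref{lprep} on each piece, expand the squares, and control the cross terms via $\textit{v}_2(H_j,\mathcal{I}^{(j)})\le M$ and $\sum_j\ee_j=\ee$; part (iii) is then the sum of the errors in (i) and (ii). The only cosmetic point is that your ``absorbed'' remark in (i) should be read as combining the $\ee_j^2$ contributions from both $j\in F$ and $j\notin F$ into the single sum $\sum_{j\in S}\ee_j^2\le\ee^2$, which is exactly how the paper organizes the bookkeeping.
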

\begin{proof} (i) By (ii) and (iii) of Lemma \ref{lprep}, we have
that
\[\begin{split}  \;\;\textit{v}_2^2\Big(\sum_{i\in F}\lambda_i H_i,\mathcal{I}\Big)
&= \sum_{j\in F}\textit{v}_2^2\Big(\sum_{i\in F}\lambda_i
H_i,\mathcal{I}^{(j)}\Big) +\sum_{j\in S\setminus
F}\textit{v}_2^2\Big(\sum_{i\in F}\lambda_i
H_i,\mathcal{I}^{(j)}\Big)\\& \leq\sum_{j\in
F}\Big(|\lambda_j|\textit{v}_2( H_j,\mathcal{I}^{(j)})+\max_{i\in
F}|\lambda_i|\ee_j \Big)^2+\sum_{j\in S\setminus F}\max_{i\in
F}|\lambda_i|^2\ee_j^2
\\&
\leq\sum_{j\in
F}|\lambda_j|^2\textit{v}_2^2(H_j,\mathcal{I}^{(j)}) +\max_{j\in
F}|\lambda_j|^2(2M+\ee)\ee.\end{split}\] (ii) Using (iv) of Lemma
\ref{lprep}, we obtain that
\[\begin{split}
&\textit{v}_2^2\Big(\sum_{i\in
G}\lambda_iH_i,\mathcal{I}\Big)\geq\textit{v}_2^2(\sum_{i\in
G}\lambda_i H_i,\cup_{j\in F}\mathcal{I}^{(j)})= \sum_{j\in
F}\textit{v}_2^2(\sum_{i\in G}\lambda_i H_i,\mathcal{I}^{(j)})
\\&\geq\sum_{j\in
F}\Big||\lambda_j|\textit{v}_2(H_j,\mathcal{I}^{(j)})-\max_{i\in
G}|\lambda_i
 |\ee_j\Big|^2\geq\sum_{j\in F}|\lambda_j|^2\textit{v}_2^2(H_j,\mathcal{I}^{(j)})-
\max_{i\in G}|\lambda_i|^22M\ee.
\end{split}\]
Finally (iii) follows easily from (i) and (ii).\end{proof}
\begin{prop}\label{punc} 
 Let
$M>\theta>2\ee>0$ and suppose that $\theta<\|H_i\|_{V_2}\leq M$,
for all $i\in S$. Then $(H_i)_{i\in S}$ is an unconditional
family.\end{prop}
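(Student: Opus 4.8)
The plan is to establish the single inequality
\[\Big\|\sum_{i\in F}\lambda_i H_i\Big\|_{V_2}\leq K\Big\|\sum_{i\in G}\lambda_i H_i\Big\|_{V_2}\]
with a constant $K\geq 1$ independent of the finite sets $F\subseteq G\subseteq S$ and of the scalars $(\lambda_i)_{i\in G}$; since the $H_i$ are nonzero ($\|H_i\|_{V_2}>\theta>0$) and the inequality is insensitive to any reordering of $S$, this shows that $(H_i)_{i\in S}$ is an unconditional family. Lemma \ref{lul2}(iii) already gives, for every $\mathcal{I}\in\mathcal{A}$,
\[\textit{v}_2^2\Big(\sum_{i\in F}\lambda_i H_i,\mathcal{I}\Big)\leq \textit{v}_2^2\Big(\sum_{i\in G}\lambda_i H_i,\mathcal{I}\Big)+\max_{i\in G}|\lambda_i|^2(4M+\ee)\ee ,\]
so everything reduces to absorbing the factor $\max_{i\in G}|\lambda_i|^2$ into $\|\sum_{i\in G}\lambda_i H_i\|_{V_2}^2$.

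The main step — and the one point where the hypothesis $\theta>2\ee$ is genuinely used — is therefore the lower estimate $\max_{i\in G}|\lambda_i|\leq c_0^{-1}\|\sum_{i\in G}\lambda_i H_i\|_{V_2}$, where $c_0:=\sqrt{\theta^2-\ee^2}-\ee$. To prove it I would fix $j\in G$ with $|\lambda_j|=\max_{i\in G}|\lambda_i|$, use $\|H_j\|_{V_2}>\theta$ to pick a partition $\mathcal{P}$ with $\textit{v}_2(H_j,\mathcal{P})>\theta$, set $\mathcal{I}=\mathcal{I}_\mathcal{P}\in\mathcal{A}$, and let $\mathcal{I}=\cup_{i\in S}\mathcal{I}^{(i)}$ be the partition provided by biorthogonality for this $\mathcal{I}$. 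By Lemma \ref{lprep}(i) (applied to $H_j$) and additivity of $\textit{v}_2^2$ over disjoint subfamilies, $\textit{v}_2^2(H_j,\mathcal{I}^{(j)})=\textit{v}_2^2(H_j,\mathcal{I})-\textit{v}_2^2(H_j,\mathcal{I}\setminus\mathcal{I}^{(j)})>\theta^2-\ee^2$. Then, applying (\ref{P1}) on $\mathcal{I}^{(j)}$ to split off $\lambda_j H_j$ and estimating the remaining term by Lemma \ref{lprep}(ii) (with the role of $F$ played by $G\setminus\{j\}$, using $\max_{i\neq j}|\lambda_i|\leq|\lambda_j|$ and $\ee_j\leq\ee$),
\[\Big\|\sum_{i\in G}\lambda_i H_i\Big\|_{V_2}\geq \textit{v}_2\Big(\sum_{i\in G}\lambda_i H_i,\mathcal{I}^{(j)}\Big)\geq |\lambda_j|\big(\textit{v}_2(H_j,\mathcal{I}^{(j)})-\ee\big)>|\lambda_j|\,c_0 .\]
Positivity of $c_0$ is precisely the statement $\theta^2-\ee^2>\ee^2$, which follows from $\theta>2\ee$.

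To finish, fix $F\subseteq G\subseteq S$ finite and scalars $(\lambda_i)_{i\in G}$: in the first display bound, for each $\mathcal{I}$, the first right-hand term by $\|\sum_{i\in G}\lambda_i H_i\|_{V_2}^2$, take the supremum over $\mathcal{I}\in\mathcal{A}$ on the left, and insert $\max_{i\in G}|\lambda_i|^2\leq c_0^{-2}\|\sum_{i\in G}\lambda_i H_i\|_{V_2}^2$ from the previous paragraph; this gives
\[\Big\|\sum_{i\in F}\lambda_i H_i\Big\|_{V_2}^2\leq\Big(1+\frac{(4M+\ee)\ee}{c_0^2}\Big)\Big\|\sum_{i\in G}\lambda_i H_i\Big\|_{V_2}^2 ,\]
so $(H_i)_{i\in S}$ is unconditional with constant $K=\big(1+(4M+\ee)\ee/c_0^2\big)^{1/2}$. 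I expect the delicate point to be exactly the lower estimate: one must test $\|\sum_{i\in G}\lambda_i H_i\|_{V_2}$ on a family $\mathcal{I}$ tailored to be near-optimal for the single coordinate carrying the largest coefficient, and then rely on biorthogonality (via Lemma \ref{lprep}) to guarantee both that almost all of $\textit{v}_2(H_j,\mathcal{I})$ survives on the block $\mathcal{I}^{(j)}$ and that the other coordinates contribute at most $|\lambda_j|\ee$ there; the remainder is bookkeeping on top of Lemma \ref{lul2}.
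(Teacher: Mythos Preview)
Your proof is correct and follows essentially the same route as the paper's: reduce to Lemma~\ref{lul2}(iii), then absorb $\max_{i\in G}|\lambda_i|^2$ by testing $\sum_{i\in G}\lambda_iH_i$ on a family $\mathcal{I}$ near-optimal for the coordinate $j$ with largest coefficient and using biorthogonality on the block $\mathcal{I}^{(j)}$. The only difference is cosmetic: where the paper bounds $\textit{v}_2(H_j,\mathcal{I}^{(j)})$ from below via the triangle inequality $\textit{v}_2(H_j,\mathcal{I})\leq \textit{v}_2(H_j,\mathcal{I}^{(j)})+\textit{v}_2(H_j,\mathcal{I}\setminus\mathcal{I}^{(j)})$ to get $\textit{v}_2(H_j,\mathcal{I}^{(j)})>\theta-\ee$ and hence the constant $(\theta-2\ee)^{-1}$, you use the exact additivity of $\textit{v}_2^2$ over the disjoint partition to get $\textit{v}_2(H_j,\mathcal{I}^{(j)})>\sqrt{\theta^2-\ee^2}$ and the (slightly better) constant $c_0^{-1}=(\sqrt{\theta^2-\ee^2}-\ee)^{-1}$.
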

\begin{proof}
Let $F\subseteq G$ be finite subsets of $S$ and let
$|\lambda_{i_0}|=\max_{i\in G}|\lambda_i|$. By (iii) of Lemma
\ref{lul2},  we easily get that
\begin{equation}\label{punc1}
\Big\|\sum_{i\in F}\lambda_iH_i\Big\|^2_{V_2}\leq \Big\|\sum_{i\in
G}\lambda_iH_i\Big\|^2_{V_2}+|\lambda_{i_0}|^2(4M+\ee)\ee,
\end{equation}
Let $\mathcal{I}_0\in\mathcal{A}$ such that
$\textit{v}_2(H_{i_0},\mathcal{I}_0)>\theta$. Since $(H_i)_{i\in
S}$ is $(\ee_i)_{i\in S}-$biorthogonal, we get that
\begin{equation}\label{punc2}\textit{v}_2(H_{i_0},\mathcal{I}_0^{(i_0)})\geq\textit{v}_2(H_{i_0},\mathcal{I}_0)-
\textit{v}_2(H_{i_0},\mathcal{I}\setminus\mathcal{I}_0^{(i_0)})>\theta-\ee\end{equation}
Moreover by (ii) of Lemma \ref{lprep}, we have that
\[\begin{split}
\Big|\textit{v}_2(\lambda_{i_0}H_{i_0},\mathcal{I}_0^{(i_0)})-
\textit{v}_2\Big(\sum_{i\in
G}\lambda_iH_i,\mathcal{I}_0^{(i_0)}\Big)\Big|&\leq
\textit{v}_2\Big(\sum_{i\in G,i\neq i_0}\lambda_i
H_i,\mathcal{I}_0^{(i_0)}\Big)\leq|\lambda_{i_0}|\ee,
\end{split}\] and so
\begin{equation}\label{uncbio}|\lambda_{i_0}|\textit{v}_2(H_{i_0},\mathcal{I}_0^{(i_0)})
\leq\textit{v}_2\Big(\sum_{i\in
G}\lambda_iH_i,\mathcal{I}_0^{(i_0)}\Big)+|\lambda_{i_0}|\ee\leq
\Big\|\sum_{i\in G}\lambda_iH_i\Big\|_{V_2}+|\lambda_{i_0}|\ee
\end{equation}
By  (\ref{punc2}) and (\ref{uncbio}), we get that
\[|\lambda_{i_0}|\leq \frac{1}{\theta-2\ee}\Big\|\sum_{i\in
G}\lambda_iH_i\Big\|_{V_2}\] Hence by (\ref{punc1}), we have that
\[\Big\|\sum_{i\in F}\lambda_iH_i\Big\|_{V_2}\leq \Big(1+\frac{(4M+\ee)\ee}{(\theta-2\ee)^2} \Big)^{1/2}\Big\|\sum_{i\in
G}\lambda_iH_i\Big\|_{V_2}\] and the proof of the proposition is
complete.
\end{proof}

\section{ Hausdorff   measures  associated to  functions of  bounded quadratic variation.}
The aim of  this section is to  introduce and study the
fundamental properties of the measure $\mu_f$ corresponding to a
function  $f\in V_2$. It is divided into three subsections. The
first includes the  definition and initial  properties of the
measure $\mu_f$. The second is mainly devoted to the proof of
Theorem \ref{tm} and the last contains a study of the points of
non differentiability of a function $f\in V_2$.

\subsection{Definition and elementary properties.}
 For every  $f\in V_2$ and  for every
interval $I$ of $[0,1]$ we set
\[\widetilde{\mu}_f(I)=\inf_{\delta>0}\widetilde{\mu}_{f,\delta}(I)\]
where for each $\delta>0$,
$\widetilde{\mu}_{f,\delta}(I)=\sup\{\textit{v}_2^2(f,\mathcal{P}):
\mathcal{P}\subseteq I
\;\text{and}\;\|\mathcal{P}\|_{\max}<\dd\}$.

We also  define the function $\widetilde{F}_f:\rr\rightarrow\rr$
by
\begin{equation*}
\widetilde{F}_f(x)=
\begin{cases}
0,   & \text{if}\,\,\,x\leq 0\\
\widetilde{\mu}_f[0,x], &  \text{if}\,\,\, 0<x<1\\
\widetilde{\mu}_f[0,1],  & \text{if}\,\,\, x\geq 1
\end{cases}
\end{equation*}

Notice  that $\widetilde{F}_f$ is a non-negative increasing
function on $\rr$    and so taking the upper envelope
$F_f(x)=\widetilde{F}_f(x^+)$ of $\widetilde{F}_f$,  we have that
$F_f$ is in addition a right continuous function. Moreover since
$\lim_{x\to-\infty} F_f(x)=0$ and $\lim_{x\to+\infty}
F_f(x)=\widetilde{\mu}_f[0,1]$, $F_f$ is the distribution function
of a finite positive Borel measure on $\rr$ which we will denote
by $\mu_f$. Notice   that $\mu_f=0$ if and only if $f\in V_2^0$
and  also that $\mu_f\leq \|f\|_{V_2}$. Actually, it is easy to
see that  defining for any function $f:[0,1]\to\rr$, the measure
$\mu_f$ as above, then $\mu_f$ is finite if and only if $f\in
V_2$. Since $\mu_f(-\infty,0)=\mu_f(1,+\infty)=0,$ in the sequel
we will identify $\mu_f$ with its restriction on $[0,1]$.

\begin{rem} The definition of the measure $\mu_f$ is generalized
as follows. Let $(X,\rho)$ be a metric space and $f:X\to \rr$ be a
real valued function. Following C.A. Rogers in \cite{Rog}, for a
function $h:[0,+\infty]\to[0,+\infty]$ satisfying the conditions
of p.50 of \cite{Rog} and every open subset $G$ of $X$, we define
the premeasure $h_f(G)=h(diam f[G])$. Next following Method II
(see \cite{Rog}), we induce the measure $\mu_f^h$ defined on the
Borel subsets of $X$. It is easy to see that in the case of $f\in
V_2$ and for $h(x)=x^2$, the measure $\mu_f^h$ coincides with the
measure $\mu_f$ defined above. Although the measures $\mu_f^h$ are
not mentioned as Hausdorff measures in  the literature, their
definition and geometrical properties motivate us  to include them
in the latter class. It seems interesting to examine the
regularity conditions that a function $f:X\to\rr$ must satisfy so
that the corresponding measure $\mu_f^h$ is a finite Borel
measure. For example, this easily yields that $f$ has at most
countably many discontinuities. Hence if $X$ is a Polish space $f$
is a Baire-1 function.
\end{rem}
The following two lemmas are easily proved. The second one is
essentially contained in \cite{AAK} (Lemma 18).

\begin{lem} \label{simple1}Let  $0\leq a<x<b\leq 1$, such that $f$ is continuous at $x$.
 Then for every $I=I_1\cup I_2$, where $I$ is an interval with  endpoints $a,b$ and
  $I_1, I_2$ are intervals with $\sup I_1=x= \inf
 I_2$, we have that
\[\widetilde{\mu}_f(I)=\widetilde{\mu}_f(I_1)+
\widetilde{\mu}_f(I_2)\]
\end{lem}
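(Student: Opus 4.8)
The plan is to establish the two inequalities $\widetilde{\mu}_f(I)\geq\widetilde{\mu}_f(I_1)+\widetilde{\mu}_f(I_2)$ and $\widetilde{\mu}_f(I)\leq\widetilde{\mu}_f(I_1)+\widetilde{\mu}_f(I_2)$ separately, each time unwinding $\widetilde{\mu}_f(J)=\inf_{\dd>0}\widetilde{\mu}_{f,\dd}(J)$. Throughout I will use three elementary facts: all the quantities involved are finite (since $f\in V_2$, $\widetilde{\mu}_{f,\dd}\leq\|f\|_{V_2}^2$); $I_1$ and $I_2$ are non-degenerate because $a<x<b$; and, since $I=I_1\cup I_2$ with $\sup I_1=x=\inf I_2$, any point of $I$ lying strictly below $x$ belongs to $I_1$, any point strictly above $x$ belongs to $I_2$, and $x$ itself (which lies in $I$) belongs to at least one of $I_1,I_2$.

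For the inequality $\geq$, which in fact does not use continuity of $f$ at $x$, I would fix $\ee>0$ and $\dd>0$ and, using $\widetilde{\mu}_{f,\dd}(I_j)\geq\widetilde{\mu}_f(I_j)$, pick for $j=1,2$ a set $\mathcal{P}_j\subseteq I_j$ with $\|\mathcal{P}_j\|_{\max}<\dd$ and $\textit{v}_2^2(f,\mathcal{P}_j)>\widetilde{\mu}_f(I_j)-\ee$. If $\max\mathcal{P}_1<x$, I adjoin to $\mathcal{P}_1$ finitely many points of the interval $[\max\mathcal{P}_1,x)\subseteq I_1$ so that the enlarged set $\mathcal{P}_1'$ still has mesh $<\dd$ and moreover $x-\max\mathcal{P}_1'<\dd$; since these points are adjoined to the right of the current maximum, only nonnegative summands are added, so $\textit{v}_2^2(f,\mathcal{P}_1')\geq\textit{v}_2^2(f,\mathcal{P}_1)$ (take $\mathcal{P}_1'=\mathcal{P}_1$ if $\max\mathcal{P}_1=x$), and symmetrically at the left end of $\mathcal{P}_2$. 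Then $\mathcal{P}:=\mathcal{P}_1'\cup\{x\}\cup\mathcal{P}_2'\subseteq I$ has $\|\mathcal{P}\|_{\max}<\dd$, and because the at most two summands of $\textit{v}_2^2(f,\mathcal{P})$ adjacent to $x$ are nonnegative, $\textit{v}_2^2(f,\mathcal{P})\geq\textit{v}_2^2(f,\mathcal{P}_1')+\textit{v}_2^2(f,\mathcal{P}_2')>\widetilde{\mu}_f(I_1)+\widetilde{\mu}_f(I_2)-2\ee$. Hence $\widetilde{\mu}_{f,\dd}(I)>\widetilde{\mu}_f(I_1)+\widetilde{\mu}_f(I_2)-2\ee$ for every $\dd$ and $\ee$, and taking $\inf_\dd$ and then $\ee\to0$ yields $\geq$.

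For the inequality $\leq$, continuity at $x$ is the crucial hypothesis. Given $\ee>0$, I choose $\eta>0$ with $|f(t)-f(x)|<\ee$ whenever $|t-x|<\eta$, and $\dd_1,\dd_2>0$ with $\widetilde{\mu}_{f,\dd_j}(I_j)<\widetilde{\mu}_f(I_j)+\ee$. Set $\dd=\min\{\eta,\dd_1,\dd_2\}$ and let $\mathcal{P}=\{t_0<\dots<t_p\}\subseteq I$ be arbitrary with $\|\mathcal{P}\|_{\max}<\dd$. I split $\mathcal{P}$ at $x$ into the block $\mathcal{Q}_1$ of its points $<x$ (adjoining $x$ if $x\in\mathcal{P}$ and $x\in I_1$) and the block $\mathcal{Q}_2$ of its points $>x$ (adjoining $x$ if $x\in\mathcal{P}$ and $x\in I_2$); by the trichotomy above, $\mathcal{Q}_1$ is a partition contained in $I_1$ of mesh $<\dd_1$ and $\mathcal{Q}_2$ one contained in $I_2$ of mesh $<\dd_2$. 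The difference $\textit{v}_2^2(f,\mathcal{P})-\textit{v}_2^2(f,\mathcal{Q}_1)-\textit{v}_2^2(f,\mathcal{Q}_2)$ is then at most one squared increment $(f(u)-f(u'))^2$ with $u,u'$ within $\eta$ of $x$ (namely the consecutive pair of $\mathcal{P}$ straddling $x$, or the pair produced when $x\in\mathcal{P}$ is deleted from the side not containing it), hence at most $(2\ee)^2$. Therefore $\textit{v}_2^2(f,\mathcal{P})\leq\widetilde{\mu}_{f,\dd_1}(I_1)+\widetilde{\mu}_{f,\dd_2}(I_2)+4\ee^2<\widetilde{\mu}_f(I_1)+\widetilde{\mu}_f(I_2)+2\ee+4\ee^2$; the supremum over all such $\mathcal{P}$ bounds $\widetilde{\mu}_{f,\dd}(I)$, hence $\widetilde{\mu}_f(I)$, by the same quantity, so letting $\ee\to0$ finishes the proof.

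The one genuinely fiddly step is the case analysis in the third paragraph: one must track whether $x$ belongs to $\mathcal{P}$ and which of $I_1,I_2$ contains $x$ (as well as the degenerate situations where $x$ lies below $t_0$ or above $t_p$), so as to be certain that the two leftover blocks really sit inside $I_1$ and $I_2$ with mesh $<\dd_j$ while only finitely many, uniformly small, crossing increments are discarded. This is precisely where the continuity of $f$ at $x$ is used, and it is worth writing out cleanly since the very same ``split a partition at a continuity point'' device will recur for families in $\mathcal{A}$ later on. I do not foresee any obstacle beyond this bookkeeping.
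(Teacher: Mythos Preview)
Your proof is correct. The paper does not actually give a proof of this lemma: it simply states that ``the following two lemmas are easily proved'' and moves on, so there is no approach to compare against. Your two-inequality argument, splitting a partition of $I$ at $x$ and using continuity of $f$ at $x$ to control the single crossing increment, is exactly the natural way to fill in the details the authors omit, and your bookkeeping of the cases (whether $x\in\mathcal{P}$, and which of $I_1,I_2$ contains $x$) is accurate.
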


\begin{lem}\label{simple2}
\text{(a)} For every  $x\in(0,1]$ and  every $\ee>0$ there exists
$0<\dd<x$ such that $\sup\{\textit{v}_2^2(f,\mathcal{P})
:\mathcal{P}\subseteq [x-\delta,x)\}\leq\ee.$ In particular
$\widetilde{\mu}_f[x-\delta,x)\leq\ee.$

(b) Similarly for every $x\in[0,1)$  and  every $\ee>0$ there
exists $0<\delta<1-x$ such that
$\sup\{\textit{v}_2^2(f,\mathcal{P}) :\mathcal{P}\subseteq (x,
x+\delta]\leq\ee.$ In particular
$\widetilde{\mu}_f(x, x+\delta]\leq\ee.$ \end{lem}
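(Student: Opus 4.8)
The plan is to prove part (a); part (b) is entirely symmetric. The key observation is that since $f\in V_2$, its restriction to a left-neighborhood $[x-\eta,x)$ has small quadratic variation when $\eta$ is small, and this is precisely what must be quantified. More precisely, I would argue by contradiction: suppose that for some $x\in(0,1]$ and some $\ee>0$ there is no $\delta$ with the stated property. Then for every $n\in\nn$ we can choose a partition $\mathcal{P}_n\subseteq[x-\tfrac1n,x)$ with $\textit{v}_2^2(f,\mathcal{P}_n)>\ee$. By passing to a subsequence and relabelling, we may assume the partitions $\mathcal{P}_n$ are pairwise ``almost disjoint'' in the following sense: since each $\mathcal{P}_n$ is contained in the shrinking interval $[x-\tfrac1n,x)$, after passing to a subsequence we can find indices $n_1<n_2<\cdots$ so that $\max \mathcal{P}_{n_{k+1}} < \min \mathcal{P}_{n_k}$ for every $k$ — that is, the blocks of points sit in pairwise disjoint sub-intervals accumulating to $x$ from the left.

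Once the partitions are arranged this way, I would amalgamate: for each $N$, form $\mathcal{P}=\mathcal{P}_{n_1}\cup\cdots\cup\mathcal{P}_{n_N}$, which is again a finite subset of $[x-\tfrac1{n_1},x)\subseteq[0,1]$. Because the point-blocks occupy pairwise disjoint intervals, the second displayed identity in the Preliminaries (additivity of $\textit{v}_2^2$ over a disjoint partition of a family in $\mathcal{A}$) gives
\[
\textit{v}_2^2(f,\mathcal{P})\ \geq\ \sum_{k=1}^N \textit{v}_2^2(f,\mathcal{P}_{n_k})\ >\ N\ee,
\]
modulo at most one ``bridging'' term between consecutive blocks, which only adds a nonnegative contribution and may be absorbed. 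Letting $N\to\infty$ forces $\textit{v}_2^2(f,\mathcal{P})$ to be arbitrarily large, contradicting $\sup_{\mathcal{P}\subseteq[0,1]}\textit{v}_2^2(f,\mathcal{P})=\|f\|_{V_2}^2<\infty$. This proves the existence of $\delta$ with $\sup\{\textit{v}_2^2(f,\mathcal{P}):\mathcal{P}\subseteq[x-\delta,x)\}\leq\ee$, and since $\|\mathcal{P}\|_{\max}$ is automatically at most $\delta<x$ for such $\mathcal{P}$, this also gives $\widetilde{\mu}_f[x-\delta,x)\leq\ee$ directly from the definition of $\widetilde{\mu}_f$ as $\inf_{\delta'>0}\widetilde{\mu}_{f,\delta'}$.

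The main obstacle is the bookkeeping in the amalgamation step: one must be careful that concatenating the partitions $\mathcal{P}_{n_k}$ into a single increasing sequence of points introduces only the ``bridge'' differences $(f(\min\mathcal{P}_{n_k})-f(\max\mathcal{P}_{n_{k+1}}))^2$ between consecutive blocks, and that these are harmless — indeed, by \eqref{P1}-type reasoning (subadditivity of $\textit{v}_2^2$ is false, but here we only need a \emph{lower} bound, and dropping terms only decreases $\textit{v}_2^2$), one can simply discard the bridging differences and keep $\textit{v}_2^2(f,\mathcal{P})\geq\sum_k\textit{v}_2^2(f,\mathcal{P}_{n_k})$. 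Alternatively, and perhaps more cleanly, one avoids contradiction entirely: since $f\in V_2$, the function $\eta\mapsto \sup\{\textit{v}_2^2(f,\mathcal{P}):\mathcal{P}\subseteq[x-\eta,x)\}$ is nondecreasing and bounded by $\|f\|_{V_2}^2$, hence has a limit $\ell\geq0$ as $\eta\downarrow0$; if $\ell>0$ one extracts, for each small $\eta$, a partition with $\textit{v}_2^2$ close to $\ell$ inside an arbitrarily thin left-neighborhood of $x$, and the same disjointification-and-amalgamation argument contradicts finiteness of $\|f\|_{V_2}$. Either route is routine once the disjoint placement of blocks near $x$ is secured.
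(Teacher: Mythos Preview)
Your argument is correct and is the standard one; the paper does not actually write out a proof of this lemma (it says the lemma ``is easily proved'' and is ``essentially contained in \cite{AAK} (Lemma 18)''), so there is nothing substantive to compare.

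One small slip to fix: in your disjointification step the inequality is reversed. Since $\mathcal{P}_n\subseteq[x-\tfrac1n,x)$, once $\mathcal{P}_{n_k}$ has been chosen you pick $n_{k+1}$ so large that $x-\tfrac{1}{n_{k+1}}>\max\mathcal{P}_{n_k}$, which gives $\min\mathcal{P}_{n_{k+1}}>\max\mathcal{P}_{n_k}$ (the new block lies to the \emph{right} of the old one, closer to $x$), not $\max\mathcal{P}_{n_{k+1}}<\min\mathcal{P}_{n_k}$ as you wrote. With that correction the amalgamation works exactly as you indicate: the union $\mathcal{P}=\mathcal{P}_{n_1}\cup\cdots\cup\mathcal{P}_{n_N}$, listed in increasing order, has $\textit{v}_2^2(f,\mathcal{P})=\sum_{k=1}^N\textit{v}_2^2(f,\mathcal{P}_{n_k})+(\text{bridge terms})\geq N\ee$, contradicting $\|f\|_{V_2}^2<\infty$.
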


\begin{prop}\label{pm}  \begin{enumerate} \item[(a)] $D_f=D_{F_f}=D_{\widetilde{F}_f}$ and
so $f$ is continuous if and only if $\mu_f$ is continuous. Also  for $x\in [0,1]\setminus D_f$,
$\widetilde{F}_f(x)=F_f(x)$.
\item [(b)] If $f$ is continuous at $x$ then $\mu_f[0,x]=\widetilde{\mu}_f[0,x]$ and
$\mu_f[x,1]=\widetilde{\mu}_f[x,1].$
\item [(c)]  For all continuity points  $x<y$ of $f$, $\mu_f[x,y]=\widetilde{\mu}_f[x,y]$.
\item [(d)]  For every
open interval $(\alpha,\beta)$ of $[0,1]$, $\mu_f(\alpha,\beta)=\widetilde{\mu}_f(\alpha,\beta)$.
\end{enumerate}
\end{prop}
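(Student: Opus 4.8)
The plan is to prove the four parts more or less in the order stated, extracting everything from Lemmas~\ref{simple1} and \ref{simple2} together with the definition of $F_f$ as the right-continuous envelope of $\widetilde F_f$.

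For part~(a), I would first observe that $\widetilde F_f$ is increasing, so its only discontinuities are jumps, and at each continuity point $F_f(x)=\widetilde F_f(x^+)=\widetilde F_f(x)$. To identify $D_f$ with $D_{\widetilde F_f}$: if $f$ is continuous at $x\in(0,1)$, Lemma~\ref{simple1} gives $\widetilde\mu_f[0,y]=\widetilde\mu_f[0,x]+\widetilde\mu_f[x,y]$ for $y>x$ near $x$, and by Lemma~\ref{simple2}(b) the tail $\widetilde\mu_f[x,y]$ (which dominates $\widetilde\mu_f(x,y]$; here one uses that a single point contributes nothing, or more carefully that $\widetilde\mu_f[x,y]\le\widetilde\mu_f\{x\}$-type estimate plus $\widetilde\mu_f(x,y]\to 0$) can be made arbitrarily small, so $\widetilde F_f$ is right-continuous at $x$; Lemma~\ref{simple2}(a) similarly handles left-continuity. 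Hence $f$ continuous at $x$ implies $\widetilde F_f$ continuous at $x$. Conversely, if $f$ is discontinuous at $x$, then $|f(x^+)-f(x^-)|>0$ or one of the one-sided jumps is nonzero, and picking partitions straddling $x$ shows $\widetilde\mu_f$ has a genuine jump at $x$ (the square of the jump size is a lower bound for $\widetilde F_f(x^+)-\widetilde F_f(x^-)$), so $x\in D_{\widetilde F_f}$. Thus $D_f=D_{\widetilde F_f}$, and since $F_f$ and $\widetilde F_f$ differ only by taking right limits, $D_{F_f}=D_{\widetilde F_f}$ as well; $f$ continuous $\iff$ $F_f$ continuous $\iff$ $\mu_f$ continuous (diffuse).

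For part~(b), at a continuity point $x$ of $f$ we have from part~(a) that $\widetilde F_f$ is continuous at $x$, so $\mu_f[0,x]=F_f(x)=\widetilde F_f(x)=\widetilde\mu_f[0,x]$. For $\mu_f[x,1]$, write $\mu_f[x,1]=\mu_f[0,1]-\mu_f[0,x)=F_f(1)-F_f(x^-)=\widetilde\mu_f[0,1]-\widetilde\mu_f[0,x]$ (using continuity of $\widetilde F_f$ at $x$ so $F_f(x^-)=\widetilde F_f(x)$), and then Lemma~\ref{simple1} with $a=0$, $b=1$ gives $\widetilde\mu_f[0,1]=\widetilde\mu_f[0,x]+\widetilde\mu_f[x,1]$, yielding $\mu_f[x,1]=\widetilde\mu_f[x,1]$. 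Part~(c) follows immediately: for continuity points $x<y$, $\mu_f[x,y]=\mu_f[0,y]-\mu_f[0,x)=\widetilde\mu_f[0,y]-\widetilde\mu_f[0,x]=\widetilde\mu_f[x,y]$ by Lemma~\ref{simple1} again.

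For part~(d), given an arbitrary open interval $(\alpha,\beta)$, I would exhaust it from inside by continuity points: since $D_f$ is countable, pick sequences $x_n\downarrow\alpha$ and $y_n\uparrow\beta$ of continuity points of $f$ with $x_n<y_n$. Then $\mu_f(\alpha,\beta)=\lim_n\mu_f[x_n,y_n]=\lim_n\widetilde\mu_f[x_n,y_n]$ by part~(c), and on the other side $\widetilde\mu_f(\alpha,\beta)=\sup_n\widetilde\mu_f[x_n,y_n]$ directly from the definition of $\widetilde\mu_f$ as a sup over partitions inside the interval (any finite partition in $(\alpha,\beta)$ lies in some $[x_n,y_n]$, and conversely), so the two limits agree. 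The main obstacle is part~(a): one must be careful that ``$f$ continuous at an endpoint or interior point'' interacts correctly with the definitions of $\widetilde\mu_f$ on half-open versus closed intervals, and that the jump of $\widetilde F_f$ at a discontinuity of $f$ is correctly bounded below — this requires choosing partitions whose mesh shrinks while still capturing the oscillation of $f$ at $x$, and checking that the $\inf_\delta$ in the definition of $\widetilde\mu_f$ does not wash out that contribution; Lemma~\ref{simple2} is exactly what prevents the small neighbouring contributions from interfering. Once part~(a) is secure, parts (b)--(d) are routine bookkeeping with distribution functions.
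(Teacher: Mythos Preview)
Your plan for parts (a)--(c) is essentially the paper's own argument: Lemmas~\ref{simple1} and \ref{simple2} do all the work, and the bookkeeping with distribution functions is exactly as you describe.

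There is, however, a gap in your treatment of (d). You claim $\widetilde\mu_f(\alpha,\beta)=\sup_n\widetilde\mu_f[x_n,y_n]$ ``directly from the definition of $\widetilde\mu_f$ as a sup over partitions inside the interval.'' But $\widetilde\mu_f(I)$ is \emph{not} a supremum over partitions: it is $\inf_{\delta>0}\widetilde\mu_{f,\delta}(I)$, an infimum of such suprema. Your observation that every finite partition in $(\alpha,\beta)$ sits inside some $[x_n,y_n]$ yields only $\widetilde\mu_{f,\delta}(\alpha,\beta)=\sup_n\widetilde\mu_{f,\delta}[x_n,y_n]$ for each fixed $\delta$, and the interchange $\inf_\delta\sup_n=\sup_n\inf_\delta$ is not automatic (the partition witnessing a large value for small $\delta$ may be forced ever closer to the endpoints as $\delta\to 0$, so no single $n$ captures it). The paper closes this gap by using Lemma~\ref{simple1} at the continuity points $x_n,y_n$ to write
\[
\widetilde\mu_f(\alpha,\beta)=\widetilde\mu_f(\alpha,x_n]+\widetilde\mu_f[x_n,y_n]+\widetilde\mu_f[y_n,\beta),
\]
and then Lemma~\ref{simple2} to drive the two tail terms to zero. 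Since you already have both lemmas in hand for (a)--(c), the fix is immediate; but the shortcut as you stated it does not work.
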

\begin{proof} (a) By the monotonicity of $\widetilde{F}_f$ we have that for all $x_0\in\rr$,
$\widetilde{F}_f(x_0^+)=F_f(x_0^+)$ and $\widetilde{F}_f(x_0^-)=F_f(x_0^-)$.
Therefore $D_{F_f}=D_{\widetilde{F}_f}$ and  for every $x_0\in [0,1]\setminus D_{\widetilde{F}_{f}}$,
 $\widetilde{F}_f(x_0)=F_f(x_0)$. It remains to show that $D_f=D_{\widetilde{F}_{f}}$.
Let $x_0\in[0,1]$ be a continuity point of $f$. If $x_0<1$, by Lemma \ref{simple1}  we have that
 for every $x_0<y\leq 1$,
  $\widetilde{F}_f(y)-\widetilde{F}_f(x_0)=\widetilde{\mu}_f[0,y]-\widetilde{\mu}_f[0,x_0]=\widetilde{\mu}_f(x_0, y]$
 and so  by Lemma \ref{simple2}(b),
$\widetilde{F}_f(x_0^+)=\widetilde{F}_f(x_0)$. If $0<x_0$,  again
by Lemma \ref{simple1},
 for every $0\leq y<x_0$ such that $y$ is a continuity point of $f$ we have that
$\widetilde{F}_f(x_0)-\widetilde{F}_f(y)=\widetilde{\mu}_f(y,x_0]$. Since $[0,1]\setminus D_f$ is dense in $[0,1]$,
 by Lemma \ref{simple2}(b)
we get  that $\widetilde{F}_f$ is continuous at $x_0$. Conversely suppose that $x_0\in D_f$.
 Then either $f(x_0^+)\neq f(x_0)$ or $f(x_0^-)\neq f(x_0)$.
  Suppose that $f(x_0^+)\neq f(x_0)$ (the other case is similarly treated). Then it is easy to see that
for every $0<\delta<1-x_0$,  $\widetilde{\mu}_{f}[x_0,x_0+\delta]\geq |f(x_0^+)- f(x_0)|^2$
and using   Lemma \ref{simple1}, we obtain that
\[\begin{split}\widetilde{F}_f(x_0^+)=\lim_{\dd\to 0}\widetilde{F}_f(x_0+\dd)&\geq\lim_{\dd\to 0}(\widetilde{\mu}_f[0,x_0]+\widetilde{\mu}_f[x_0,x_0+\dd])\\&\geq
 \widetilde{F}_f(x_0)+|f(x_0^+)- f(x_0)|^2>\widetilde{F}_f(x_0)\end{split}\] Hence $x_0\in D_{\widetilde{F}_f}$.\\
(b) If $f$ is continuous at $x$ then by  (a) we have that  $\widetilde{F}_f(x)=F_f(x)$
or  $\mu_f[0,x]=\widetilde{\mu}_f[0,x]$. Again by (a) we have that $F_f$ is continuous at $x$
and so $\mu_f(\{x\})=0$. Hence $\mu_f[x,1]=\mu_f[0,1]-\mu_f[0,x)=\mu_f[0,1]-\mu_f[0,x]=\widetilde{\mu}_f[0,1]-\widetilde{\mu}_f[0,x]=
\widetilde{\mu}_f[x,1]$, where the last equality follows from Lemma \ref{simple1}.\\
(c) Indeed, using (b) and Lemma \ref{simple1}, $\mu_f[x,y]=\mu_f[0,y]-\mu_f[0,x)=
\mu_f[0,y]-\mu_f[0,x]=\widetilde{\mu}_f[0,y]-\widetilde{\mu}_f[0,x]=\widetilde{\mu}_f[x,y]$.\\
(d) Let $a<a_n<b_n<b$ such that $a_n,b_n$ are continuity points of $f$, $\lim_n a_n=a$ and $\lim_n b_n=b$.
By Lemma \ref{simple2}, $\lim_n\widetilde{\mu}_f(a,a_n]=\lim_n\widetilde{\mu}_f[b_n,b)=0$ and
 by Lemma \ref{simple1}, $\widetilde{\mu}_f(a,b)=\widetilde{\mu}_f(a,a_n]+\widetilde{\mu}_f[a_n,b_n]+\widetilde{\mu}_f[b_n,b)$.
Hence
$\widetilde{\mu}_f(a,b)=\lim_n\widetilde{\mu}_f[a_n,b_n]=\lim_n\mu_f[a_n,b_n]=
\mu_f(a,b)$. \end{proof}

For the following we need the next notation. For every $f\in V_2$
and $x_0\in [0,1]$, let
\[\tau_f(x_0)=\max\{|f(x_0^+)-f(x_0^-)|^2,
|f(x_0^+)-f(x_0)|^2+|f(x_0^-)-f(x_0)|^2\}\] where $f(0^-)=f(0)$
and $f(1^+)=f(1)$. Moreover,  for every  $\delta>0$, let
 \[\tau_{f,\dd}(x_0)=\sup\{|f(y)-f(x_0)|^2+|f(x_0)-f(z)|^2, |f(y)-f(z)|^2\},\] where the $\max$ is taken for all $0\leq y\leq
 x_0\leq z\leq 1$ with $|y-z|\leq
\dd$. Clearly $\lim_{\dd\to 0}\tau_{f,\dd}(x_0)=\tau_f(x_0)$ and
$f$ is continuous at $x_0$ if and only if $\tau_f(x_0)=0$.

 The
quantity $\tau_f(x_0)$, is defined (with different notation) in
\cite{P-W}, p.1464.  An equivalent definition was introduced
earlier by L.C. Young \cite{Y}, in order to characterise the class
$\mathcal{W}^*_2$. The proof of the next proposition uses similar
arguments to  those in  \cite{P-W}.
\begin{prop} \label{discr.} For every $f\in V_2$ and $x_0\in [0,1]$,  $\mu_f(\{x_0\})=\tau_f(x_0)$.
Therefore $\mu_f^d [0,1]=\sum_{x\in D_f}\tau_f(x)$, where
$\mu_f^d$ denotes the discrete part of $\mu_f$.
\end{prop}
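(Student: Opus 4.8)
The plan is to evaluate the atom $\mu_f(\{x_0\})$ directly and identify it with $\tau_f(x_0)$. First I reduce to $\widetilde{\mu}_f$ on small intervals centred at $x_0$. For $x_0\in(0,1)$ the sets $(x_0-\delta,x_0+\delta)$ are relatively open in $[0,1]$, decrease to $\{x_0\}$ and have finite $\mu_f$-measure, so continuity of $\mu_f$ from above together with Proposition \ref{pm}(d) gives
\[\mu_f(\{x_0\})=\lim_{\delta\to0^+}\mu_f(x_0-\delta,x_0+\delta)=\inf_{\delta>0}\widetilde{\mu}_f(x_0-\delta,x_0+\delta).\]
For $x_0\in\{0,1\}$ one argues identically with the one-sided intervals $[0,\delta)$, $(1-\delta,1]$, now invoking Proposition \ref{pm}(b) and Lemma \ref{simple1} (splitting at a continuity point $\delta$, resp. $1-\delta$) to rewrite the $\mu_f$-measure of such an interval as its $\widetilde{\mu}_f$-measure. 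Thus it remains to prove $\inf_{\delta>0}\widetilde{\mu}_f(x_0-\delta,x_0+\delta)=\tau_f(x_0)$, which I do by two matching inequalities.

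For $\geq$: I claim $\widetilde{\mu}_{f,\rho}(x_0-\delta,x_0+\delta)\geq\tau_f(x_0)$ for every $\delta>0$ and every mesh threshold $\rho>0$, which yields $\geq$ after taking $\inf_\rho$ and then $\inf_\delta$. Choose $y_k\uparrow x_0$ and $z_k\downarrow x_0$ with $f(y_k)\to f(x_0^-)$ and $f(z_k)\to f(x_0^+)$ (possible since the one-sided limits exist; for $x_0\in\{0,1\}$ one side is trivial by the convention on $f(0^-),f(1^+)$). For $k$ large both points lie in $(x_0-\delta,x_0+\delta)$ with $x_0-y_k,z_k-x_0<\rho/2$; testing $\widetilde{\mu}_{f,\rho}$ against the partition $\{y_k,x_0,z_k\}$ gives $\widetilde{\mu}_{f,\rho}(x_0-\delta,x_0+\delta)\geq (f(x_0)-f(y_k))^2+(f(z_k)-f(x_0))^2\to|f(x_0)-f(x_0^-)|^2+|f(x_0)-f(x_0^+)|^2$, and against $\{y_k,z_k\}$ it gives $\geq(f(z_k)-f(y_k))^2\to|f(x_0^+)-f(x_0^-)|^2$; taking the larger of the two limits produces exactly $\tau_f(x_0)$.

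For $\leq$: fix $\ee>0$ and use Lemma \ref{simple2} to pick $\delta^{\ast}>0$ with $\widetilde{\mu}_f[x_0-\delta^{\ast},x_0)\leq\ee$ and $\widetilde{\mu}_f(x_0,x_0+\delta^{\ast}]\leq\ee$ (for $x_0\in\{0,1\}$ only one side is present). Fix $\delta\leq\delta^{\ast}$ and a mesh threshold $\rho>0$, and take an arbitrary $\mathcal{P}\subseteq(x_0-\delta,x_0+\delta)$ with $\|\mathcal{P}\|_{\max}<\rho$. Write $\textit{v}_2^2(f,\mathcal{P})$ as the subsum over the points of $\mathcal{P}$ strictly left of $x_0$, plus the term (or two terms) adjacent to $x_0$, plus the subsum over the points strictly right of $x_0$. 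The left subsum is $\textit{v}_2^2$ of a partition contained in $(x_0-\delta^{\ast},x_0)$, hence $\leq\ee$ by Lemma \ref{simple2}(a); the right subsum is $\leq\ee$ likewise. The middle part is either a single straddling term $(f(t_k)-f(t_{k-1}))^2$ with $t_{k-1}<x_0<t_k$ and $t_k-t_{k-1}<\rho$ — which is $\leq\tau_{f,\rho}(x_0)$ by the ``$|f(y)-f(z)|^2$'' alternative in the definition of $\tau_{f,\rho}$ — or, when $x_0\in\mathcal{P}$, the two terms $(f(x_0)-f(a))^2+(f(b)-f(x_0))^2$ with $a<x_0<b$ the neighbours of $x_0$ in $\mathcal{P}$ and $b-a<2\rho$, which is $\leq\tau_{f,2\rho}(x_0)$ by the other alternative. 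Hence $\widetilde{\mu}_{f,\rho}(x_0-\delta,x_0+\delta)\leq 2\ee+\tau_{f,2\rho}(x_0)$; letting $\rho\to0$ and using $\tau_{f,2\rho}(x_0)\to\tau_f(x_0)$ gives $\widetilde{\mu}_f(x_0-\delta,x_0+\delta)\leq 2\ee+\tau_f(x_0)$ for all $\delta\leq\delta^{\ast}$, hence $\inf_{\delta>0}\widetilde{\mu}_f(x_0-\delta,x_0+\delta)\leq\tau_f(x_0)$ after $\ee\to0$. I expect this upper bound to be the only real obstacle: one must cleanly separate the contribution of a small-mesh partition near $x_0$ from the rest, allowing the straddling block to span up to $2\rho$ (harmless, since $\tau_{f,2\rho}(x_0)\to\tau_f(x_0)$) while the two ``far'' blocks are crushed to $0$ by Lemma \ref{simple2}.

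Combining the two inequalities yields $\mu_f(\{x_0\})=\tau_f(x_0)$. The final assertion is then immediate: the discrete part $\mu_f^d$ satisfies $\mu_f^d(B)=\sum_{x\in B}\mu_f(\{x\})$, the sum effectively running over the atoms of $\mu_f$; since $\tau_f(x)>0$ precisely when $f$ is discontinuous at $x$, the set of atoms is exactly $D_f$, and therefore $\mu_f^d[0,1]=\sum_{x\in D_f}\mu_f(\{x\})=\sum_{x\in D_f}\tau_f(x)$.
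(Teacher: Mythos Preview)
Your argument is correct. The paper itself does not supply a proof of this proposition; it only remarks that the argument ``uses similar arguments to those in \cite{P-W}'' and records the formula for $\tau_f$. Your write-up is a clean, self-contained proof built from exactly the tools the paper provides (Proposition~\ref{pm} and Lemma~\ref{simple2}), so in spirit it matches what the authors intend the reader to reconstruct.

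Two small expository points worth tightening. First, in the reduction step for $x_0\in\{0,1\}$, Proposition~\ref{pm}(d) as stated covers genuine open intervals $(\alpha,\beta)$; the cleanest way to handle $[0,\delta)$ is to take $\delta$ ranging over continuity points of $f$, so that $\mu_f[0,\delta]=\widetilde{\mu}_f[0,\delta]$ by Proposition~\ref{pm}(b), and then sandwich as $\delta\downarrow 0$. You gesture at this (``splitting at a continuity point $\delta$''), and it works, but it is worth saying explicitly rather than invoking (d). Second, in the upper bound you first quote only the weaker conclusion $\widetilde{\mu}_f[x_0-\delta^{\ast},x_0)\leq\ee$ from Lemma~\ref{simple2}, but then (correctly) use the full strength of that lemma, namely $\sup\{\textit{v}_2^2(f,\mathcal{P}):\mathcal{P}\subseteq[x_0-\delta^{\ast},x_0)\}\leq\ee$, to bound the left and right subsums. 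State the stronger form up front so the reader does not pause there. Apart from these cosmetic fixes, the decomposition of an arbitrary small-mesh partition into a left block, a straddling term (of width $<\rho$ or $<2\rho$), and a right block is exactly the right idea, and the passage to $\tau_{f,2\rho}(x_0)\to\tau_f(x_0)$ is harmless.
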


\begin{rem} Under  the current terminology, it follows that
$\mathcal{W}^*_2=\{f\in V_2:\mu_f=\mu_f^d\}$.\end{rem}

 \begin{lem}\label{la}
Let $f_1,f_2\in V_2$ and $\tau$ be a positive Borel measure on
$[0,1]$  such that $\mu_{f_1}\perp\tau$ and $\tau\leq \mu_{f_2}$.
Then $\tau\leq \mu_{f_2-f_1}$.
\end{lem}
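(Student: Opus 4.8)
The plan is to derive $\tau\le\mu_{f_2-f_1}$ from two ingredients: a subadditivity estimate $\sqrt{\mu_{g+h}(U)}\le\sqrt{\mu_g(U)}+\sqrt{\mu_h(U)}$, valid for $g,h\in V_2$ and every relatively open $U\subseteq[0,1]$, together with the mutual singularity of $\tau$ and $\mu_{f_1}$, which allows us to trap $\tau$ inside open sets on which $\mu_{f_1}$ is arbitrarily small. Note first that all measures occurring are finite, since $f_1,f_2,f_2-f_1\in V_2$ and $\tau\le\mu_{f_2}$, so every manipulation below is legitimate. For the subadditivity estimate I would first treat a single open interval $J$: by (\ref{P1}) we have $\textit{v}_2(g+h,\mathcal{P})\le\textit{v}_2(g,\mathcal{P})+\textit{v}_2(h,\mathcal{P})$ for every $\mathcal{P}\subseteq J$, hence $\sqrt{\widetilde{\mu}_{g+h,\dd}(J)}\le\sqrt{\widetilde{\mu}_{g,\dd}(J)}+\sqrt{\widetilde{\mu}_{h,\dd}(J)}$ for every $\dd>0$; letting $\dd\to0$ and invoking Proposition \ref{pm}(d) (and Proposition \ref{pm}(b) to handle the at most two half-open endpoint intervals) gives $\sqrt{\mu_{g+h}(J)}\le\sqrt{\mu_g(J)}+\sqrt{\mu_h(J)}$. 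For a general relatively open $U=\bigsqcup_n J_n$ I then write $\mu_{g+h}(U)=\sum_n\mu_{g+h}(J_n)\le\sum_n\big(\sqrt{\mu_g(J_n)}+\sqrt{\mu_h(J_n)}\big)^2$ and control the cross term by Cauchy--Schwarz, $\sum_n\sqrt{\mu_g(J_n)\,\mu_h(J_n)}\le\big(\sum_n\mu_g(J_n)\big)^{1/2}\big(\sum_n\mu_h(J_n)\big)^{1/2}=\sqrt{\mu_g(U)}\,\sqrt{\mu_h(U)}$, which gives the asserted estimate for $U$.

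With this in hand, fix a Borel set $E\subseteq[0,1]$ and $\ee>0$, and put $g=f_2-f_1$. Choose a Borel set $A$ with $\mu_{f_1}(A)=0$ and $\tau([0,1]\setminus A)=0$. By inner regularity of $\tau$ there is a compact $K\subseteq E\cap A$ with $\tau(K)>\tau(E)-\ee$; since $\mu_{f_1}(K)=0$, outer regularity of $\mu_{f_1}$ yields an open $V_0\supseteq K$ with $\mu_{f_1}(V_0)<\ee$, while outer regularity of $\mu_g$ yields an open $W\supseteq E$ with $\mu_g(W)<\mu_g(E)+\ee$; put $V=V_0\cap W$, so that $K\subseteq V$, $\mu_{f_1}(V)<\ee$ and $\mu_g(V)<\mu_g(E)+\ee$. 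Then, using $\tau\le\mu_{f_2}$ and the subadditivity estimate applied to $f_2=f_1+g$,
\[
\tau(E)-\ee<\tau(K)\le\tau(V)\le\mu_{f_2}(V)\le\big(\sqrt{\mu_g(V)}+\sqrt{\mu_{f_1}(V)}\big)^2<\big(\sqrt{\mu_g(E)+\ee}+\sqrt{\ee}\big)^2,
\]
and letting $\ee\to0$ gives $\tau(E)\le\mu_g(E)=\mu_{f_2-f_1}(E)$. Since $E$ was arbitrary, $\tau\le\mu_{f_2-f_1}$.

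Most of this is routine regularity bookkeeping; the step that requires a little care is the passage from intervals to arbitrary open sets in the subadditivity estimate, where the Cauchy--Schwarz bound on the cross term $\sum_n\sqrt{\mu_g(J_n)\mu_h(J_n)}$ is essential (it is precisely this bound that makes the cross term small once $\mu_{f_1}(U)$ is small). The hypothesis $\mu_{f_1}\perp\tau$ enters only through the $\mu_{f_1}$-null Borel set $A$ carrying $\tau$, which is what makes $\mu_{f_1}(K)=0$, and hence $\mu_{f_1}(V)<\ee$, available in the displayed chain.
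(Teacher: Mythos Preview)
Your proof is correct and rests on the same core ideas as the paper's: exploit $\mu_{f_1}\perp\tau$ to pass to an open set on which $\mu_{f_1}$ is small, then bound $\mu_{f_2}$ there via a triangle-type estimate for $f_2=(f_2-f_1)+f_1$, with the cross term controlled by Cauchy--Schwarz. The packaging differs, however. You first isolate a general measure-level subadditivity $\sqrt{\mu_{g+h}(U)}\le\sqrt{\mu_g(U)}+\sqrt{\mu_h(U)}$ for open $U$ (proved via Proposition~\ref{pm}(d) on each interval component and Cauchy--Schwarz across components), and then apply it once. The paper instead stays at the partition level throughout: it reduces to open $V$, chooses a finite union $V_\ee=\bigcup_{i=1}^k I_i\subseteq V$ with $\mu_{f_1}(V_\ee)<\ee$ and $\tau(V_\ee)>\tau(V)-\ee$, picks small-mesh partitions $\mathcal{P}_i\subseteq I_i$ that nearly realize $\mu_{f_2}(V_\ee)$, and expands $\sum_i\big(\textit{v}_2(f_2,\mathcal{P}_i)-\textit{v}_2(f_1,\mathcal{P}_i)\big)^2$ directly. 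Your route yields a reusable lemma and avoids juggling the $\widetilde{\mu}_{f,\delta}$ approximations; the paper's route is more self-contained in that it never needs to identify $\widetilde{\mu}_f$ with $\mu_f$ on intervals. One small point: your appeal to Proposition~\ref{pm}(b) for the endpoint components $[0,b)$ or $(a,1]$ presumes continuity of the relevant function at $b$ (resp.\ $a$), which need not hold; the simplest fix is to approximate the endpoint from inside by a common continuity point of $g$, $h$, and $g+h$ (such points are dense since the union of their discontinuity sets is countable) and pass to the limit.
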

\begin{proof}
Let $F=f_1-f_2$. It suffices to show that $\mu_{F}(V)\geq\tau(V)$,
for every open subset $V$ of $[0,1]$. So fix   an open subset $V$
of $[0,1]$ and let $\ee>0$. Since $\mu_{f_1}\perp\tau$, there is
$V_{\ee}\subseteq V$ such that $V_{\ee}=\cup_{i=1}^kI_i$, where
$(I_i)_{i=1}^k$ is a finite family of pairwise disjoint open
intervals of $[0,1]$, $\mu_{f_1}(V_{\ee})<\ee$ and
$\tau(V_{\ee})>\tau(V)-\ee$. Let  $\delta>0$ be such that
$|\widetilde{\mu}_{f_1,\,
\delta}(V_{\ee})-\mu_{f_1}(V_{\ee})|<\ee,$
$|\widetilde{\mu}_{f_2,\,
\delta}(V_{\ee})-\mu_{f_2}(V_{\ee})|<\ee$  and
$|\widetilde{\mu}_{F,\, \delta}(V_{\ee})-\mu_{F}(V_{\ee})|<\ee$.
Also for all $0\leq i\leq k$, let $\mathcal{P}_i\subseteq I_i$
with $|\mathcal{P}_i|<\delta$ and
$\Big|\sum_{i=1}^k\textit{v}_2^2(f_2,\mathcal{P}_i)-\mu_{f_2}(V_{\ee})\Big|<\ee$.
Hence $\sum_{i=1}^k\textit{v}_2^2(f_1,\mathcal{P}_i)<2\ee$ and
\[\begin{split}
\mu_{F}(V)\geq\mu_{F}(V_{\ee})&>\widetilde{\mu}_{F,\delta}(V_{\ee})-\ee
\geq \sum_{i=1}^k\textit{v}_2^2(F,\mathcal{P}_i)-\ee\\&\geq
\sum_{i=1}^k[\textit{v}_2(f_2,\mathcal{P}_i)-\textit{v}_2(f_1,\mathcal{P}_i)]^2-\ee
\\
&\geq
\mu_{f_2}(V_{\ee})-2\Big(\sum_{i=1}^k\textit{v}_2^2(f_2,\mathcal{P}_i)\Big)^{1/2}
\Big(\sum_{i=1}^k\textit{v}_2^2(f_1,\mathcal{P}_i)\Big)^{1/2}-2\ee\\&\geq
\mu_{f_2}(V_{\ee})-2\|f_2\|_{V_2}\sqrt{2\ee}-2\ee\geq
\tau(V_{\ee})-2\|f_2\|_{V_2}\sqrt{2\ee}-2\ee\\&\geq\tau(V)-2\|f_2\|_{V_2}\sqrt{2\ee}-3\ee.
\end{split}\]
Hence, letting $\ee\rightarrow 0$, we get that
$\mu_{F}(V)\geq\tau(V)$ and the proof is
complete.\end{proof}

\subsection{The correspondence between the  functions of $V_2$ and the measures on the unit interval.}
By $\mathcal{M}[0,1]$ we denote the space of all Borel measures on
$[0,1]$ endowed by the norm $\|\mu\|=\sup\{|\mu(B)|: B\;\text{is a
 Borel subset of }\; [0,1]\}$. The positive cone of $\mathcal{M}[0,1]$
  will be denoted by $\mathcal{M}^+[0,1]$. Recall that for every
  $\mu\in\mathcal{M}^+[0,1]$,
  $\|\mu\|=\mu[0,1]$. In this subsection we study the properties
  of the function $\Phi:V_2\to \mathcal{M}[0,1]$, defined by $
  \Phi(f)=\mu_f$, for all $f\in V_2$. We start with the
 following easily established proposition.
 \begin{prop}\label{basineq} The next hold.
\begin{enumerate} \item[(i)] For every
$f_1,f_2\in V_2$, $\mu_{f_1+f_2}\leq 2\mu_{f_1}+2\mu_{f_2}$.\item
[(ii)] For every $f\in V_2$ and $\lambda\in\rr$,  $\mu_{(\lambda
f)}=\lambda^2\mu_f$.\item [(iii)] For every $f\in V_2$ and every
$g\in V_2^0$, $\mu_{f+g}=\mu_f$. \item [(iv)] The map
$\Phi:V_2\to\mathcal{M}[0,1]$, defined by $\Phi(f)=\mu_f$ is
locally Lipschitz. More precisely
\[\|\mu_{f_1}-\mu_{f_2}\|\leq
(\|f_1\|_{V_2}+\|f_2\|_{V_2})\|f_1-f_2\|_{V_2}\]
\end{enumerate}
 \end{prop}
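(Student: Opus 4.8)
The plan is to reduce parts (i), (iii) and (iv) to a single \emph{interval inequality}:
\[\sqrt{\widetilde{\mu}_{f_1+f_2}(I)}\ \leq\ \sqrt{\widetilde{\mu}_{f_1}(I)}+\sqrt{\widetilde{\mu}_{f_2}(I)}\qquad\text{for all }f_1,f_2\in V_2\text{ and every interval }I\subseteq[0,1].\]
To prove it, fix $\dd>0$; for any $\mathcal{P}\subseteq I$ with $\|\mathcal{P}\|_{\max}<\dd$, inequality (\ref{P1}) gives $\textit{v}_2(f_1+f_2,\mathcal{P})\leq\textit{v}_2(f_1,\mathcal{P})+\textit{v}_2(f_2,\mathcal{P})\leq\sqrt{\widetilde{\mu}_{f_1,\dd}(I)}+\sqrt{\widetilde{\mu}_{f_2,\dd}(I)}$. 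Taking the supremum over such $\mathcal{P}$ gives $\sqrt{\widetilde{\mu}_{f_1+f_2,\dd}(I)}\leq\sqrt{\widetilde{\mu}_{f_1,\dd}(I)}+\sqrt{\widetilde{\mu}_{f_2,\dd}(I)}$, and letting $\dd\downarrow0$ finishes it, since each $\widetilde{\mu}_{\cdot,\dd}(I)$ decreases to $\widetilde{\mu}_{\cdot}(I)$. Along the way I shall use three elementary observations: $\widetilde{\mu}_f$ is monotone with respect to inclusion of intervals; $\widetilde{\mu}_f[0,1]=\mu_f[0,1]=\|\mu_f\|\leq\|f\|_{V_2}^2$ (because $\textit{v}_2(f,\mathcal{P})\leq\|f\|_{V_2}$ for every $\mathcal{P}$); and $\textit{v}_2^2(-f,\mathcal{P})=\textit{v}_2^2(f,\mathcal{P})$, so $\widetilde{\mu}_{-f}=\widetilde{\mu}_f$.

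Part (ii) does not need the interval inequality: $\textit{v}_2^2(\lambda f,\mathcal{P})=\lambda^2\textit{v}_2^2(f,\mathcal{P})$ for every $\mathcal{P}$ gives $\widetilde{F}_{\lambda f}=\lambda^2\widetilde{F}_f$, hence $F_{\lambda f}=\lambda^2F_f$ and $\mu_{\lambda f}=\lambda^2\mu_f$. For part (i), the interval inequality together with $(a+b)^2\leq2a^2+2b^2$ gives $\widetilde{\mu}_{f_1+f_2}(I)\leq2\widetilde{\mu}_{f_1}(I)+2\widetilde{\mu}_{f_2}(I)$ for every interval, in particular, via Proposition~\ref{pm}(d), $\mu_{f_1+f_2}(\alpha,\beta)\leq2\mu_{f_1}(\alpha,\beta)+2\mu_{f_2}(\alpha,\beta)$ for open intervals. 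Writing an arbitrary open $U\subseteq[0,1]$ as a countable disjoint union of relatively open intervals together with, possibly, the points $0$ and $1$ (at which Proposition~\ref{discr.} and $|(f_1+f_2)(0^+)|^2\leq2|f_1(0^+)|^2+2|f_2(0^+)|^2$, and similarly at $1$, do the job), one gets the inequality for all open $U$, and a routine outer-regularity argument extends it to all Borel sets. For part (iii), $\mu_g=0$ forces $\widetilde{\mu}_g[0,1]=\|\mu_g\|=0$, hence $\widetilde{\mu}_g(I)=0$ for every interval $I$ by monotonicity; applying the interval inequality to $f+g$ and to $f=(f+g)+(-g)$ yields $\widetilde{\mu}_{f+g}(I)=\widetilde{\mu}_f(I)$ for all $I$, so $\widetilde{F}_{f+g}=\widetilde{F}_f$ and $\mu_{f+g}=\mu_f$.

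The real work is part (iv). Since $\mu_{f_1}-\mu_{f_2}$ is a finite signed Borel measure on the compact metric space $[0,1]$, it is regular, and a standard approximation reduces the estimate of $\|\mu_{f_1}-\mu_{f_2}\|=\sup_B|\mu_{f_1}(B)-\mu_{f_2}(B)|$ to bounding $|\mu_{f_1}(U)-\mu_{f_2}(U)|$ for open $U$. Decompose such a $U$ into a countable family $(J_k)_k$ of pairwise disjoint Borel cells: the relatively open subintervals of $U$ contained in $(0,1)$, together with the singletons $\{0\},\{1\}$ when they belong to $U$. On a cell $J_k$ that is an open interval, Proposition~\ref{pm}(d) and the interval inequality applied in both directions (using $\widetilde{\mu}_{-g}=\widetilde{\mu}_g$) give $\big|\sqrt{\mu_{f_1}(J_k)}-\sqrt{\mu_{f_2}(J_k)}\big|\leq\sqrt{\mu_{f_1-f_2}(J_k)}$, so that
\[|\mu_{f_1}(J_k)-\mu_{f_2}(J_k)|\ \leq\ \sqrt{\mu_{f_1-f_2}(J_k)}\,\Big(\sqrt{\mu_{f_1}(J_k)}+\sqrt{\mu_{f_2}(J_k)}\Big);\]
for a singleton cell the same bound comes from Proposition~\ref{discr.} (which identifies $\mu_h(\{t\})=\tau_h(t)$) together with $\big||a|-|b|\big|\leq|a-b|$ applied to the jumps of $f_1,f_2$. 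Now sum over $k$, split the right-hand side into the two terms involving $\sqrt{\mu_{f_1}}$ and $\sqrt{\mu_{f_2}}$, and apply the Cauchy--Schwarz inequality in $\ell_2$ to the sequences $(\sqrt{\mu_{f_1-f_2}(J_k)})_k$ and $(\sqrt{\mu_{f_i}(J_k)})_k$; using $\sum_k\mu_h(J_k)=\mu_h(U)\leq\|\mu_h\|\leq\|h\|_{V_2}^2$ for $h\in\{f_1,f_2,f_1-f_2\}$ we arrive at $|\mu_{f_1}(U)-\mu_{f_2}(U)|\leq\|f_1-f_2\|_{V_2}(\|f_1\|_{V_2}+\|f_2\|_{V_2})$, which is exactly the asserted bound, and local Lipschitzness of $\Phi$ follows. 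The main obstacle is precisely this last estimate: the interval inequality alone only controls each cell, and it is the choice of the Cauchy--Schwarz splitting applied to the ``square-root'' set functions $J\mapsto\sqrt{\mu_h(J)}$, rather than a cell-by-cell bound, that produces the sharp constant $\|f_1\|_{V_2}+\|f_2\|_{V_2}$; a minor additional nuisance is the bookkeeping for the two endpoint atoms, handled through Proposition~\ref{discr.}.
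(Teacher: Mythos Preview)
Your proof is correct. The paper itself omits the argument entirely, stating only that the proposition is ``easily established,'' so there is no proof in the paper to compare against. Your route via the single interval inequality $\sqrt{\widetilde{\mu}_{f_1+f_2}(I)}\leq\sqrt{\widetilde{\mu}_{f_1}(I)}+\sqrt{\widetilde{\mu}_{f_2}(I)}$, followed by the Cauchy--Schwarz splitting over the cells of an open set, is clean and gives the sharp constant in (iv); the use of Propositions~\ref{pm}(d) and~\ref{discr.} is legitimate since both precede Proposition~\ref{basineq} in the paper and neither depends on it. One small cosmetic point: in part (i) your passage from open sets to arbitrary Borel sets via outer regularity should, strictly speaking, use the outer regularity of $\mu_{f_1}$ and $\mu_{f_2}$ (intersecting two approximating open supersets) rather than that of $\mu_{f_1+f_2}$ alone, but this is routine and you clearly intend it.
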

 \begin{rem}\label{rem1} One could not expect that $\Phi$ is
 a linear map as its range is a subset of the positive cone  $\mathcal{M}^+[0,1]$ of
 $\mathcal{M}[0,1]$ ( next we shall show that $\Phi$ is actually onto
 $\mathcal{M}^+[0,1]$). However there are special cases where the
 additivity of the function $\Phi$ is established. For example it
 can be shown that for every pair $f_1,f_2\in V_2$ with
 $\mu_{f_1}\perp\mu_{f_2}$ we have that
 $\mu_{f_1+f_2}=\mu_{f_1}+\mu_{f_2}$. Finally the map $\Phi$ is
 not w$^*$-w$^*$ continuous. For example let $f\in V_2$ such that
 $f=\sum_ng_n$, where  $(g_n)_n$ is a  sequence in $V_2^0$. Then setting
  $f_n=\sum_{k\geq n}g_k$, we have that $(f_n)_n$ pointwise
  converges to $0$, however by (iii) of Proposition \ref{basineq}, $\mu_{f_n}=\mu_f$, for all $n\in\nn$.
 \end{rem}

\begin{lem}\label{lddd} Let $\mu$ be a finite positive discrete
measure on $[0,1]$. Then there is $h\in V_2^d$ such that
$\mu_h=\mu$.\end{lem}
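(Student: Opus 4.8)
The plan is to settle first the case of a finitely supported $\mu$ by an explicit step function lying in $V_2^d$, and then to obtain the general case by approximation, exploiting that $V_2^d$ is a closed subspace of $V_2$ and that the map $\Phi\colon f\mapsto\mu_f$ is locally Lipschitz (Proposition~\ref{basineq}(iv)); in the limiting argument the latter is precisely what turns convergence in $V_2$ into convergence of the associated measures.

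\emph{The finitely supported case.} Let $\mu=\sum_{i=1}^{k}b_i\delta_{t_i}$ with $b_i>0$ and $0<t_1<\dots<t_k<1$, and put $h=\sum_{i=1}^{k}\sqrt{b_i}\,\chi_{[t_i,1]}\in V_2^d$. Since $h$ is a step function whose only jump in a neighbourhood of $t_i$ has absolute value $\sqrt{b_i}$, and which is locally constant off $\{t_1,\dots,t_k\}$, we get $\tau_h(t_i)=b_i$ for every $i$ and $\tau_h(x)=0$ otherwise, so $\mu_h^{d}=\mu$ by Proposition~\ref{discr.}. Moreover one checks $\widetilde\mu_h[0,1]=\sum_{i=1}^{k}b_i$ (for partitions of small enough mesh each jump is separated and contributes exactly $b_i$), so $\|\mu_h\|=\sum_i b_i=\|\mu_h^{d}\|$; hence $\mu_h$ has no continuous part and $\mu_h=\mu_h^{d}=\mu$.

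\emph{The general case.} Write $\mu=\sum_{n\ge1}b_n\delta_{t_n}$ with $b_n>0$, the $t_n$ distinct points of $(0,1)$, and $m:=\|\mu\|=\sum_n b_n<\infty$. For $j\ge1$ set $\mathcal B_j=\{\,n:4^{-j}\le b_n<4^{-j+1}\,\}$; each $\mathcal B_j$ is finite, $\mathrm{card}(\mathcal B_j)\le4^{j}\beta_j$ where $\beta_j:=\sum_{n\in\mathcal B_j}b_n$, and $\sum_j\beta_j=m$. Enumerating $\mathcal B_j$ along the order of the points $t_n$ and choosing signs $\eta_n\in\{-1,1\}$ recursively so that every partial sum $\sigma$ of $\sum_{n\in\mathcal B_j}\eta_n\sqrt{b_n}$ has $|\sigma|\le\max\{\sqrt{b_n}:n\in\mathcal B_j\}$, set $h_j=\sum_{n\in\mathcal B_j}\eta_n\sqrt{b_n}\,\chi_{[t_n,1]}$. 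Then $h_j$ is a step function with $\|h_j\|_{\infty}<2^{-j+1}$, hence $\mathrm{osc}(h_j)<2^{-j+2}$, while $\mathrm{Var}(h_j)=\sum_{n\in\mathcal B_j}\sqrt{b_n}<2^{-j+1}\,\mathrm{card}(\mathcal B_j)\le2^{j+1}\beta_j$, so $\|h_j\|_{V_2}^{2}\le\mathrm{osc}(h_j)\,\mathrm{Var}(h_j)<8\beta_j$. Granting that $h:=\sum_{j\ge1}h_j$ converges in $V_2$, we have $h\in V_2^d$, and the partial sums $H_J=\sum_{j\le J}h_j$ satisfy $\mu_{H_J}=\sum_{n\in\bigcup_{j\le J}\mathcal B_j}b_n\delta_{t_n}$ by the finite case, so $\|\mu-\mu_{H_J}\|=\sum_{j>J}\beta_j\to0$; since $\|\mu_h-\mu_{H_J}\|\le(\|h\|_{V_2}+\|H_J\|_{V_2})\|h-H_J\|_{V_2}\to0$ by the local Lipschitzness of $\Phi$, comparing the two limits yields $\mu_h=\mu$.

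\emph{The point requiring work} is the $V_2$-convergence of $\sum_j h_j$: since $\sum_j\|h_j\|_{V_2}$ may diverge, this cannot come from absolute summability and the geometric separation of the amplitude scales $2^{-j}$ must be used. I would prove that $\bigl\|\sum_{j>J}h_j\bigr\|_{V_2}^{2}\le 24\sum_{j>J}\beta_j$, which (as $\sum_j\beta_j<\infty$) makes $(H_J)$ Cauchy. Fix a partition $\mathcal P$ and write $\Delta_j^{(i)}$ for the increment of $h_j$ over the $i$-th interval of $\mathcal P$; then
\[v_2^2\Bigl(\sum_{j>J}h_j,\mathcal P\Bigr)=\sum_{j>J}v_2^2(h_j,\mathcal P)+2\sum_{j'>j>J}\sum_i\Delta_j^{(i)}\Delta_{j'}^{(i)}.\]
The diagonal sum is $\le\sum_{j>J}\|h_j\|_{V_2}^{2}<8\sum_{j>J}\beta_j$. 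For a cross term with $j<j'$, using $|\Delta_{j'}^{(i)}|\le\mathrm{osc}(h_{j'})<2^{-j'+2}$ and $\sum_i|\Delta_j^{(i)}|\le\mathrm{Var}(h_j)<2^{j+1}\beta_j$ we obtain $\bigl|\sum_i\Delta_j^{(i)}\Delta_{j'}^{(i)}\bigr|<2^{\,j-j'+3}\beta_j$, and summing first over $j'>j$ (a convergent geometric series) and then over $j>J$ gives $2\sum_{j'>j>J}\bigl|\sum_i\Delta_j^{(i)}\Delta_{j'}^{(i)}\bigr|<16\sum_{j>J}\beta_j$. Hence $v_2^2(\sum_{j>J}h_j,\mathcal P)<24\sum_{j>J}\beta_j$ for every $\mathcal P$, as required. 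Note that the recursive sign choice is exactly what keeps $\mathrm{osc}(h_j)$ small, and this smallness is what controls the cross terms against the possibly large variations $\mathrm{Var}(h_j)$.
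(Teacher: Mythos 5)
Your argument is correct, but it takes a genuinely different and considerably more laborious route than the paper's. The paper realizes the atoms by the generators of $V_2^d$ themselves, setting $h=\sum_n\sqrt{\lambda_n}\,\chi_{t_n}$ with $\chi_{t}$ the singleton-type jump at $t$: for such functions any partition meets each $t_n$ in at most two increments, so $\|\sum_{k>N}\sqrt{\lambda_k}\chi_{t_k}\|_{V_2}^2\leq 4\sum_{k>N}\lambda_k$, the partial sums are automatically $\|\cdot\|_{V_2}$-Cauchy, and the local Lipschitz property of $\Phi$ finishes the proof in three lines (an $\ell_2$-type estimate replaces all of your analysis). You instead build $h$ out of Heaviside jumps $\chi_{[t_n,1]}$, and you correctly diagnosed the resulting obstruction: for positive coefficients these behave like a summing basis along the order of $[0,1]$, so $\|\sum_{n>N}\sqrt{b_n}\chi_{[t_n,1]}\|_{V_2}=\sum_{n>N}\sqrt{b_n}$, and the naive series need not even define an element of $V_2$ when $\sum_n\sqrt{b_n}=\infty$. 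Your fix --- grouping the atoms by the dyadic scale of $\sqrt{b_n}$, balancing signs so that $\|h_j\|_\infty$ stays at scale $2^{-j}$, and controlling the off-diagonal terms by $\mathrm{osc}(h_{j'})\cdot\mathrm{Var}(h_j)<2^{\,j-j'+3}\beta_j$ --- is carried out correctly, and the resulting bound $\|\sum_{j>J}h_j\|_{V_2}^2\leq 24\sum_{j>J}\beta_j$ does make $(H_J)_J$ Cauchy. What the extra work buys is a slightly stronger statement (the measure is realized by a function all of whose jumps are left-sided, i.e.\ of the form $f(t)-f(t^-)$), at the cost of a page where the paper needs three lines. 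Two small loose ends: your finite case is stated for positive coefficients while $H_J$ has signed ones (the computation $\tau_{H_J}(t_n)=b_n$ is unchanged, but this should be said), and atoms at the endpoints $0$ and $1$ are excluded by your normalization $t_n\in(0,1)$; they are handled by adding one more jump function.
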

\begin{proof} Let $S=\{t_n\}_n$ be an enumeration of the support of $\mu$. Then
$\mu^d=\sum_{n}\lambda_n\delta_{t_n}$, where
$\lambda_n=\mu^d(\{t_n\})$. We define
$h=\sum_{n}\sqrt{\lambda_n}\chi_{t_n}$ and let
$h_n=\sum_{k=1}^n\lambda_k\chi_{t_k}$. Then $h\in
V_2^d$, $(h_n)_n$ $\|\cdot\|_{V_2}$-converges to $h$ and so
$(\mu_{h_n})_n$ norm  converges to $\mu_h$ in $\mathcal{M}[0,1]$.
Since $\mu_{h_n}=\sum_{k=1}^n\lambda_k\delta_{t_k}$,
$\mu_h=\sum_n\lambda_n\delta_{t_n}=\mu$.\end{proof}

\begin{thm}\label{tm} For every finite  positive Borel measure $\mu$ on
$[0,1]$ there is $f\in V_2$ such that $\mu=\mu_f$.
\end{thm}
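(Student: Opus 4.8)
The plan is to decompose an arbitrary finite positive Borel measure $\mu$ on $[0,1]$ into its discrete part $\mu^d$ and its continuous part $\mu^c$, handle each separately, and then combine. The discrete part is already taken care of: Lemma \ref{lddd} produces $h\in V_2^d$ with $\mu_h=\mu^d$. So the heart of the matter is the continuous case, and I would first prove the following reduction: if $g\in V_2$ realizes $\mu_g=\mu^c$, then setting $f=g+h$ I would like to conclude $\mu_f=\mu$. This does not follow from Proposition \ref{basineq}(i) alone (that only gives an inequality with a factor $2$), so here I would invoke the orthogonality remark in Remark \ref{rem1}: since $\mu^c\perp\mu^d$, we have $\mu_g\perp\mu_h$, and the additivity $\mu_{g+h}=\mu_g+\mu_h=\mu^c+\mu^d=\mu$ holds. (If one prefers to avoid relying on the unproved assertion in the remark, one can instead prove it directly using Lemma \ref{la}: $\mu_h=\mu^d\le\mu_{g+h}$ forces, via Lemma \ref{la} applied with $f_1=h$, $f_2=g+h$, $\tau=\mu^d$, that $\mu^d\le\mu_{g}$, which is absurd unless... — actually the clean route is to establish $\mu_{g+h}=\mu_g+\mu_h$ from mutual singularity directly, which I would do as a short lemma.)

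The main work is therefore: given a \emph{continuous} finite positive Borel measure $\mu^c$ on $[0,1]$, construct a continuous $f\in V_2$ with $\mu_f=\mu^c$. The natural idea is to build $f$ as (a limit of) a sum of small "wiggle" functions placed on a dyadic-type net of subintervals, where the amplitude of the wiggle on an interval $I$ is calibrated to $\sqrt{\mu^c(I)}$. Concretely, let $G(x)=\mu^c[0,x]$ be the (continuous, increasing) distribution function of $\mu^c$. On each dyadic interval $I$ of the $n$-th generation I would insert an oscillation of height roughly $\sqrt{\mu^c(I)}=\sqrt{G(b_I)-G(a_I)}$ — for instance a single "zig-zag" $a_I\to m_I\to b_I$ or a sawtooth with a controlled number of teeth — and define $f_n$ to be the superposition of these over generation $n$, arranged so that $(f_n)$ is Cauchy in $\|\cdot\|_{V_2}$. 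The point of continuity of $\mu^c$ is that $\max_I\mu^c(I)\to 0$ as the mesh refines, which is what makes the construction converge and keeps $f$ continuous.

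The key computation is to verify $\mu_f=\mu^c$ for the limit function $f$. For the lower bound $\mu_f\ge\mu^c$: for an open interval $(a,b)$, using partitions supported on the wiggle-vertices of a fine generation $n$ one gets $v_2^2(f_n,\mathcal P)$ close to $\sum_{I\subseteq(a,b)}\mu^c(I)\approx\mu^c(a,b)$, and then I transfer this to $f$ using the $V_2$-convergence $f_n\to f$ together with inequality \eqref{P1}; by Proposition \ref{pm}(d), $\widetilde\mu_f(a,b)=\mu_f(a,b)$, giving $\mu_f(a,b)\ge\mu^c(a,b)$ and hence $\mu_f\ge\mu^c$ by regularity. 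For the upper bound $\mu_f\le\mu^c$: I must show that for every $\delta>0$ there is a mesh threshold below which any partition $\mathcal P$ with $\|\mathcal P\|_{\max}<\delta$ has $v_2^2(f,\mathcal P)$ not much bigger than $\mu^c$ of the ambient interval; this is where the design of the wiggles matters — one wants that a partition cannot "see" more than the total $\mu^c$-mass of the intervals it meets, which forces choosing the number of teeth per interval and the coupling between generations carefully (so that the contributions telescope rather than accumulate). I expect this upper bound — making the oscillations of different generations not reinforce each other in quadratic variation, so that $\widetilde\mu_{f,\delta}$ does not overshoot $\mu^c$ — to be the main obstacle, and the place where one genuinely uses continuity of $\mu^c$ (via $\|\mathcal I\|_{\max}$ small $\Rightarrow$ all $\mu^c$-masses involved are small) rather than just finiteness. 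Once both inequalities are in hand, $\mu_f=\mu^c$, and combining with the discrete part via the additivity-for-singular-measures lemma completes the proof; note that the resulting $f$ is continuous precisely because $\mu^c$ is continuous, which also yields the "moreover" refinements quoted as Proposition 2 in the introduction.
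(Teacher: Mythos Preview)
Your overall strategy matches the paper's: split $\mu=\mu^c+\mu^d$, realize $\mu^d$ via Lemma \ref{lddd}, realize $\mu^c$ by a continuous $f$ built from dyadic ``wiggles'' of amplitude $\sqrt{\mu^c(I)}$, and combine using singularity of $\mu^c$ and $\mu^d$. The combination step and the identification of the upper bound as the crux are both on target.

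There is, however, a genuine gap in the mode of convergence. You plan to arrange the building blocks so that $(f_n)$ is Cauchy in $\|\cdot\|_{V_2}$, and later to ``transfer to $f$ using the $V_2$-convergence $f_n\to f$ together with inequality \eqref{P1}''. This cannot work: every finite superposition of your wiggles is a continuous function in $V_2^0$, and $V_2^0$ is closed in $V_2$; hence any $\|\cdot\|_{V_2}$-limit would again lie in $V_2^0$ and satisfy $\mu_f=0$. Convergence in $V_2$ is precisely what you must \emph{avoid} if you want $\mu_f\ne 0$, so neither your construction step nor your lower-bound transfer step can go through as written.

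The paper's way around this is to make the series $\sum_i H_{n_i}$ converge only in $\|\cdot\|_\infty$ (by forcing $\sum_i\|H_{n_i}\|_\infty<\infty$) and to replace $V_2$-control by the biorthogonality machinery of Section~2.3. A subsequence $(H_{n_i})_i$ is selected (Claim 3, via Lemma \ref{bio}) to be $((2^{-i})_i,(\delta_i)_i)$-biorthogonal; Lemma \ref{lul2} then gives, for any $\mathcal I\in\mathcal A$, an estimate $\textit{v}_2^2(f,\mathcal I)\le\sum_i \textit{v}_2^2(H_{n_i},\mathcal I^{(i)})+O(\varepsilon)$, where the $\mathcal I^{(i)}$ are the disjoint pieces of $\mathcal I$ sorted by interval length. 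Since each $H_{n_i}$ individually satisfies $\textit{v}_2^2(H_{n_i},\mathcal J)\le\mu^c(\cup\mathcal J)$, the right side telescopes to $\mu^c(\cup\mathcal I)+O(\varepsilon)$ --- this is exactly the ``non-reinforcement'' you flagged, and biorthogonality is the missing mechanism that makes it rigorous. The lower bound likewise uses biorthogonality (via Lemma \ref{lprep}(iv)) to show that on the partition $\mathcal P_{n_j}$ adapted to generation $j$, the full sum $f$ behaves like $H_{n_j}$ up to error $2^{-j}$, yielding $\textit{v}_2^2(f,\mathcal P_{n_j}\cap[0,t])\ge\mu^c[0,t]-2^{-j}$ without any appeal to $V_2$-convergence.
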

\begin{proof}
Since $\mu=\mu^c+\mu^d$ where $\mu^c$ is the continuous and
$\mu^d$ is the discrete part of $\mu$,  by Lemma \ref{lddd}, it
suffices to find $f\in V_2\cap C[0,1]$ such that $\mu^c=\mu_f$ (it
is then easy to see that $\mu_{f+h}=\mu$, where $h\in V_2^d$
satisfying that $\mu_h=\mu^d$). Hence we suppose for the sequel
that $\mu$ is continuous.

 For an interval $I=[a,b]$ in $[0,1]$
let $F_I:I\to \rr$, defined by $F_I(x)=\mu[a,x]$, for all $x\in
I$. Then $F_I$ is continuous , $F_I(a)=0$ and $F_I(b)=\mu(I)$.
Hence we may choose $\xi_I\in(a,b)$ such that
$F_I(\xi_I)=\mu[a,\xi_I]=\mu(I)/2$. Consider now the function
$G_I:I\to \rr$ defined by $G_I(x)=F_I(x)$, if $a \leq x \leq
\xi_I$ and $G_I(x)=\mu_I(I)-F_I(x)$, if $\xi_I\leq x\leq b$.
Clearly $\|G_I\|_\infty=\mu(I)/2$.

\bigskip

 \noindent\textit{Claim} 1. For every interval $I$ of $[0,1],$ let
$H_I=\sqrt{G_I}$.   \begin{enumerate} \item[(i)] For every $x<y\in
I$,
$|H_I(y)-H_I(x)|^2\leq
\mu (x,y]$. \item[(ii)]  For all intervals  $I_1, I_2$ in $[0,1]$
such that $\max I_1\leq \min I_2$ and  for all $x_1\in I_1$,
$x_2\in I_2$,
$|H_{I_2}(x_2)-H_{I_1}(x_1)|^2\leq \mu(x_1,x_2]$.
\end{enumerate}
\begin{proof} (i) Notice that for $\alpha,\beta>0$,
$|\alpha-\beta|^2\leq |\alpha^2-\beta^2|$. Hence
$|H_I(y)-H_I(x)|^2\leq |G_I(y)-G_I(x)|$ and so it suffices to show
that $|G_I(y)-G_I(x)|\leq \mu (x,y]$. By the definition of $G_I$,
we immediately get  that for $x<y\leq \xi_I$ or for $\xi_I\leq
x<y$, $|G_I(y)-G_I(x)|= \mu (x,y]$. In the case $x< \xi_I<y$, we
may assume that $G_I(x)<G_I(y)$ (the other case is similarly
treated). Then there is $x<z<\xi_I$ with $G_I(z)=G_I(y)$ and so
$|G_I(y)-G_I(x)|=|G_I(z)-G_I(x)|=\mu (x,z]<\mu (x,z].$\\
(ii) As above it suffices to show that
$|G_{I_2}(x_2)-G_{I_1}(x_1)|\leq \mu(x_1,x_2]$. Let $b_{1}$ be the
right  end of $I_1$ and   $a_{2}$ be the left end of $I_2$. Then
$G_{I_1}(b_{1})=G_{I_1}(a_{2})=0$ and by part (i)
$|G_{I_2}(x_2)-G_{I_1}(x_1)|=|G_{I_2}(x_2)-G_{I_1}(b_{I_1})+G_{I_1}(a_{2})
-G_{I_1}(x_1)|\leq |G_{I_2}(x_2)-G_{I_1}(b_{1})|+|G_{I_1}(a_{2})
-G_{I_1}(x_1)|\leq \mu(x,b_{1}]+\mu(a_{2},y]\leq
\mu(x,y]$.\end{proof}
 \noindent\textit{Claim} 2.  Let $H_n=\sum_{i=1}^{2^{n+1}} H_{I_i^n}$, where
 $I_i^n=[\frac{i-1}{2^n}, \frac{i+1}{2^n}]$,  $n\geq 0$,  $1\leq i\leq 2^n$.
\begin{enumerate}
\item [(i)]  $H_n\in V_2^0$ and
$\|H_n\|_{\infty}=\sqrt{2^{-(n+1)}\mu[0,1]}$.
 \item [(ii)]  For all
$\mathcal{I}\in\mathcal{A}$, $\textit{v}_2^2(H_n,\mathcal{I})\leq
\mu(\bigcup\mathcal{I})$. Therefore $\|H_n\|_{V_2^0}\leq
\sqrt{\mu[0,1]}.$ \item [(iii)] Let
$\mathcal{P}_n=\{i/2^n\}_{i=0}^{2^n}\cup\{\xi_{I_i^n}\}_{i=0}^{2^n}$.
 Then   $\textit{v}_2^2(H_n,\mathcal{P}_n\cap[0,t])=\mu[0,t]$, for all $t\in \mathcal{P}_n$.
 \end{enumerate}
\begin{proof} Straightforward by Claim 1.
\end{proof}
 \noindent\textit{Claim} 3.  There exist a subsequence $(H_{n_i})_i$ of $(H_n)_n$
  and a  strictly decreasing null sequence  $(\delta_i)_{i=0}^\infty$, with $\delta_0=0$,
satisfying the following.
\begin{enumerate}
\item [(i)] $\sum_i\|H_{n_i}\|_\infty<\infty$.
\item [(iii)]  $(H_{n_i})_i$ is
$((2^{-i})_{i=1}^\infty,(\delta_i)_{i=0}^\infty)$-biorthogonal.
\item[(iii)]  For every  $i\in\nn$,
$\delta_i<\|\mathcal{P}_{n_i}\|_{\min}\leq
\|\mathcal{P}_{n_i}\|_{\max}\leq\delta_{i-1}$. \end{enumerate}
\begin{proof} We inductively define a strictly increasing sequence
$n_1<n_2<...$ in $\nn$  and a strictly
 decreasing sequence $0=\delta_0>\delta_1>\delta_2>...$, such that for every $ k\geq 1$, the following hold.
\begin{enumerate}
\item [(1)] For all $1\leq i\leq k$, $\|H_{n_i}\|_\infty<2^{-i}$.
\item[(2)] The finite sequence $(H_{n_i})_{i=1}^k$ is
$((2^{-i})_{i=1}^k,(\delta_i)_{i=0}^{k-1})$-biorthogonal.
\item[(3)] For all $1\leq i\leq k$,
$\delta_i<\|\mathcal{P}_{n_i}\|_{\min}\leq
\|\mathcal{P}_{n_i}\|_{\max}\leq\delta_{i-1}$. \item[(4)]  For
every $\mathcal{I}\in\mathcal{A}$ with
$\|\mathcal{I}\|_{\max}\leq\delta_k$,
$\sum_{i=1}^{k}\textit{v}_2(H_i,\mathcal{I})<2^{-(k+1)}$.
\end{enumerate}

The general inductive step of the construction goes as follows.
 Suppose that for some $k\geq 1$, we have chosen $(n_i)_{i\leq k}$
and $(\delta_i)_{i\leq k}$ satisfying  the above.
Applying   Lemma \ref{bio}
for  $\ee_{k+1}= 2^{-(k+1)}$, we have that there exists  $\ee>0$ such that for every $H\in V_2^0$ with
  $\|H\|_{V_2^0}<\ee$, the sequence $(H_{n_1},...,H_{n_k}, H)$ is
  $((2^{-i})_{i=1}^{k+1},(\delta_i)_{i=0}^{k})$-biorthogonal.
 By Claim 2, we have that  $\lim \|H_n\|_\infty=0$ and $\lim\|\mathcal{P}_n\|_{\max} =0$.
  Hence
 we may choose $n_{k+1}>n_k$ such that $\|H_{n_{k+1}}\|_\infty<\min \{2^{-{k+1}}, \ee\}$
 and $\|\mathcal{P}_{n_{k+1}}\|_{\max}\leq\delta_k$.
Finally  we choose
$0<\delta_{k+1}<\|\mathcal{P}_{n_{k+1}}\|_{\min}$ such that for
every $\mathcal{I}\in\mathcal{A}$ with
$\|\mathcal{I}\|_{\max}\leq\delta_{k+1}$,
$\sum_{i=1}^{k+1}\textit{v}_2(H_i,\mathcal{I})<2^{-(k+2)}$ and the
proof of the inductive step of the construction is complete.
\end{proof}
\noindent\textit{Claim} 4. Let  $f=\sum_i H_{n_i}$.
 Then $f\in V_2\cap C[0,1]$ and $\mu_f=\mu$.
 \begin{proof} Since $\sum_i\|H_{n_i}\|_\infty<\infty$ and $H_{n_i}$ are
continuous we have that $f\in C[0,1]$.  By (ii) of Claim 2,
$(H_n)_n$ is a bounded (by $M=\sqrt{\mu[0,1]}$) sequence in $V_2^0$.
Moreover by Claim 3, $(H_{n_i})_i$ is biorthogonal and so by  Lemma
\ref{lul2} and (ii) of Claim 2, we have that for every
$\mathcal{I}\in \mathcal{A}$
\[\begin{split} \textit{v}_2^2(f,\mathcal{I})\leq \sum_i\mu(\cup \mathcal{I}^{(i)})+
2\sqrt{\mu[0,1]}+1\leq (\sqrt{\mu[0,1]}+1)^2\end{split}\]
 and therefore $f\in V_2$. To prove that  $\mu_f=\mu$, let $D=\bigcup_{i}D_{n_i}$ where for all $i\in\nn$,
$D_{n_i}=\{m2^{-n_i}:\;0\leq m\leq 2^{n_i}\}$. Since $D$ is dense in $[0,1]$, it suffices to
show that $\mu_f[0,t]=\mu[0,t],$ for all
$t\in D$.

Fix $i_0$ and $0\leq m_0\leq 2^{n_{i_0}}$ and let
$t=m_0/2^{n_{i_0}}$. By the definition of $(\mathcal{P}_n)_n$, we
have that  for all $j\geq i_0$, $t\in \mathcal{P}_{n_i}$,
 and so   by (iii) of Claim 2,
 \[\mu[0,t]=\textit{v}_2^2(H_{n_j},\mathcal{P}_{n_j}\cap[0,t])\]
Hence by (iii) of Claim 3, for all  $j\geq i_0$,
\[\begin{split}\textit{v}_2(f,\mathcal{P}_{n_j}\cap [0,t])&
=\textit{v}_2(H_{n_j}+\sum_{i\neq j} H_{n_i},\mathcal{P}_{n_j}\cap
[0,t])\geq \mu[0,t] -2^{-j}\end{split}\] hence since $\lim_j
\|\mathcal{P}_{n_j}\|_{\max}=0$, $\mu_f[0,t]\geq \lim_j
\textit{v}_2^2(f,\mathcal{P}_{n_j}\cap[0,t])\geq\mu[0,t]$.

It remains to show that $\mu_f[0,t]\leq \mu[0,t]$. Since $\lim
\delta_k=0$, we have that
\begin{equation}\label{m2}\mu_f[0,t]=\limsup_k\{\textit{v}_2^2(f,\mathcal{P}):\;\mathcal{P}\subseteq
[0,t], \|\mathcal{P}\|_{\max}<\delta_k\}\end{equation} Fix $k\geq 1$
and $\mathcal{P}\subseteq [0,t]$  such that
$\|\mathcal{P}\|_{\max}\leq \delta_{k-1}$. Let
$\mathcal{I}=\mathcal{I}_\mathcal{P}$  be the corresponding family
of intervals with endpoints successive points of $\mathcal{P}$. Then
$\mathcal{I}=\bigcup_{j\geq k}\mathcal{I}^{(j)}$, where
$\mathcal{I}^{(j)}=\{I\in\mathcal{I}:\delta_j<|I|\leq
\delta_{j-1}\}$, and
\begin{equation}\label{m1}\begin{split}\textit{v}_2^2(f,\mathcal{P})=\textit{v}_2^2(f,\mathcal{I})=
\textit{v}_2^2(f,\bigcup_{j\geq k}\mathcal{I}^{(j)})=\sum_{j\geq
k}\textit{v}_2^2(f,\mathcal{I}^{(j)})\end{split}\end{equation}
Moreover by (ii) of Claim 2,
\[\begin{split}\textit{v}_2(f,\mathcal{I}^{(j)})\leq\textit{v}_2(H_{n_j}+\sum_{i\neq
j}H_{n_i}, \mathcal{I}^{(j)})\leq \textit{v}_2(H_{n_j},
\mathcal{I}^{(j)})+2^{-j}\leq \mu(\cup \mathcal{I}^{(j)})+
2^{-j}\end{split}\] Hence using (\ref{m1}), we obtain that
\[\textit{v}_2^2(f,\mathcal{P})\leq\sum_{j\geq k}\mu(\cup \mathcal{I}^{(j)})+
\sum_{j\geq k}2^{-j}=\mu(\cup \mathcal{I})+2^{k-1}=\mu
[0,t]+2^{k-1}\]and therefore by (\ref{m2}),  $\mu_f[0,t]\leq
\mu[0,t]$.
\end{proof}
By Claim 4 the proof of the theorem is complete.\end{proof}

\begin{rem}\label{Rad} Let us  present here  a concrete example
which illustrates the
 method of the proof of Theorem \ref{tm}. Let $\lambda$ be the Lebesgue measure
 on $[0,1]$ and let $(R_n)_n$ defined by
 \begin{equation}\label{Radem} R_n(t)=2^{n/2}\int_0^tr_n(x) dx,\end{equation} where $(r_n)_n$ is the sequence of
 Rademacher functions. As in Claims 1 and 2 of Theorem \ref{tm}, it can be shown that
\begin{enumerate}\item[(i)] $\textit{v}_2^2(R_n,\mathcal{I})\leq
\lambda(\cup \mathcal{I})$. \item[(ii)] For every $m\geq n$ and
$1\leq i\leq 2^n$, $\textit{v}_2^2(R_n,
\mathcal{P}_m\cap[0,\frac{i}{2^n}])=\lambda[0,\frac{i}{2^n}]$.
\end{enumerate}
where here  $\mathcal{P}_n=\{\frac{i}{2^n}:0\leq i\leq 2^n\}$.
Then as in Claim 3 we may show that there is a subsequence
$(R_{n_i})_i$  of $(R_n)_n$ such that the  sum $f=\sum_iR_{n_i}$
satisfies that $\mu_f=\lambda$. We note that as it has been stated
in \cite{LS}, the above defined sequence $(R_n)_n$ contains
subsequences equivalent to $c_0$ basis. In the sequel (Corollary
\ref{corrad}) we shall provide a proof of this statement.\end{rem}

\subsection{On the points of non differentiability  of  functions in $V_2$.}
\begin{lem}\label{ldd}
Let $f\in V_2$ and let  $(\mathcal{P}_n)_n$ be a sequence of
finite  subsets of  $[0,1]$ such that
$\lim\|\mathcal{P}_n\|_{\max}=0$ and
$\lim\textit{v}_2^2(f,\mathcal{P}_n)=\mu_f[0,1]$. Then for every
sequence $(\mathcal{I}_n)_n$  in $\mathcal{A}$ such that
$\mathcal{I}_n\subseteq \mathcal{I}_{\mathcal{P}_n}$ and
$(\textit{v}_2^2(f,\mathcal{I}_n))_n$ converges, we have that
$(\mu_f(\cup\mathcal{I}_n))_n$ also converges and $\lim
\textit{v}_2^2(f,\mathcal{I}_n)=\lim \mu_f(\cup\mathcal{I}_n)$.
\end{lem}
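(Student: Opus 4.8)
The plan is to split each $\mathcal{I}_{\mathcal{P}_n}$ into the part $\mathcal{I}_n$ and its complement $\mathcal{J}_n=\mathcal{I}_{\mathcal{P}_n}\setminus\mathcal{I}_n$, and to exploit the additivity $\textit{v}_2^2(f,\mathcal{I}_{\mathcal{P}_n})=\textit{v}_2^2(f,\mathcal{I}_n)+\textit{v}_2^2(f,\mathcal{J}_n)$ together with the corresponding additivity of $\mu_f$ on disjoint open pieces. First I would observe that since $\|\mathcal{P}_n\|_{\max}\to 0$, for every $\delta>0$ we have $\|\mathcal{I}_{\mathcal{P}_n}\|_{\max}<\delta$ for $n$ large, so $\textit{v}_2^2(f,\mathcal{I}_n)\le\widetilde\mu_{f,\delta}(\cup\mathcal{I}_n)$ and, using Proposition \ref{pm}(d) (passing to slightly shrunken open intervals, or directly since the $\mathcal{I}$'s can be taken with disjoint interiors), one gets the ``easy'' inequality $\limsup_n \textit{v}_2^2(f,\mathcal{I}_n)\le\liminf_n\mu_f(\cup\mathcal{I}_n)$ — more precisely $\limsup_n \textit{v}_2^2(f,\mathcal{I}_n)\le\liminf_n\mu_f(\overline{\cup\mathcal{I}_n})$, and similarly for $\mathcal{J}_n$.

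The heart of the argument is the reverse inequality, and here the hypothesis $\lim_n\textit{v}_2^2(f,\mathcal{P}_n)=\mu_f[0,1]=\|\mu_f\|$ is essential: it forces the family $(\mathcal{P}_n)_n$ to be ``asymptotically optimal'', so no mass can be lost. Concretely, add the two $\limsup$ inequalities for $\mathcal{I}_n$ and $\mathcal{J}_n$:
\[
\limsup_n\textit{v}_2^2(f,\mathcal{I}_n)+\limsup_n\textit{v}_2^2(f,\mathcal{J}_n)\le\liminf_n\mu_f(\overline{\cup\mathcal{I}_n})+\liminf_n\mu_f(\overline{\cup\mathcal{J}_n}).
\]
But $\textit{v}_2^2(f,\mathcal{I}_n)+\textit{v}_2^2(f,\mathcal{J}_n)=\textit{v}_2^2(f,\mathcal{P}_n)\to\|\mu_f\|$, while $\mu_f(\overline{\cup\mathcal{I}_n})+\mu_f(\overline{\cup\mathcal{J}_n})\le\|\mu_f\|+\mu_f(\text{overlap of closures})$; since the interiors are disjoint and the endpoints form a finite set, one should first arrange (by a harmless perturbation, replacing endpoints by nearby continuity points of $f$, using density of $[0,1]\setminus D_f$ and Lemma \ref{simple2}) that the closures are essentially disjoint up to $\mu_f$-null sets, so the overlap term vanishes. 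Feeding $\|\mu_f\|\le\liminf_n\mu_f(\overline{\cup\mathcal{I}_n})+\liminf_n\mu_f(\overline{\cup\mathcal{J}_n})$ back, and combining with the assumed convergence of $\textit{v}_2^2(f,\mathcal{I}_n)$, we conclude $\liminf_n\mu_f(\cup\mathcal{I}_n)\le\lim_n\textit{v}_2^2(f,\mathcal{I}_n)$, so in fact $\mu_f(\cup\mathcal{I}_n)$ converges and the two limits agree. A symmetric squeeze handles $\mathcal{J}_n$, which is needed to run the addition argument cleanly.

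The step I expect to be the main obstacle is the bookkeeping with closures versus interiors — i.e. making rigorous that the discrepancy between $\mu_f(\cup\mathcal{I}_n)$ (open/half-open unions) and $\widetilde\mu_{f,\delta}$ evaluated on those sets, and between $\overline{\cup\mathcal{I}_n}$ and $\overline{\cup\mathcal{J}_n}$, contributes nothing in the limit. The clean way is: fix $\ee>0$, use Lemma \ref{simple2} to shave off, at each endpoint $t$ appearing in $\mathcal{P}_n$, an interval of $\widetilde\mu_f$-measure $<\ee 2^{-k}$, thereby replacing $\mathcal{I}_n,\mathcal{J}_n$ by families of closed intervals with pairwise disjoint closures whose $\textit{v}_2^2$ and $\mu_f$ values change by at most (a constant times) $\ee$ plus cross terms controlled by Cauchy–Schwarz as in the proof of Lemma \ref{la}; then apply Proposition \ref{pm}(c) on each such closed interval, add up, let $n\to\infty$, and finally let $\ee\to0$. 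Once that reduction is in place, the optimality hypothesis does all the remaining work and the ``no mass lost'' conclusion is immediate.
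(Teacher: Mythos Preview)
Your split $\mathcal{I}_{\mathcal{P}_n}=\mathcal{I}_n\cup\mathcal{J}_n$ and the idea of playing the two halves off against $\textit{v}_2^2(f,\mathcal{P}_n)\to\mu_f[0,1]$ is exactly the paper's starting point, but the perturbation you propose for the closure/interior bookkeeping does not go through. The obstruction is endpoints $t\in\mathcal{P}_n\cap D_f$ (nothing in the hypotheses excludes these). Lemma~\ref{simple2} lets you shave off a one-sided neighborhood of small $\widetilde{\mu}_f$-measure, so the $\mu_f$-values indeed change by at most $\ee$; but moving the right endpoint of $(a,t)\in\mathcal{I}_n$ from $t$ to some $t'\in(t-\eta,t)$ changes $|f(t)-f(a)|^2$ to $|f(t')-f(a)|^2$, and the error is governed by $|f(t)-f(t')|\to|f(t)-f(t^-)|$, which is bounded \emph{away} from $0$. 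Your Cauchy--Schwarz cross-term estimate only produces a factor $\big(\sum_i|f(t_i)-f(t_i')|^2\big)^{1/2}$, which is of order $\|f\|_{V_2}$, not $\ee$. Concretely: for $f=\chi_{\{1/2\}}$, $\mathcal{P}_n=\{k/n:0\le k\le n\}$ with $n$ even, and $\mathcal{I}_n=\{(\tfrac12-\tfrac1n,\tfrac12)\}$, one has $\textit{v}_2^2(f,\mathcal{I}_n)=1$ while any inward shave of the right endpoint drops it to $0$; so your ``$\textit{v}_2^2$ changes by at most a constant times $\ee$'' fails outright.

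The paper sidesteps this entirely with a different and much cleaner device. Since the intervals of $\mathcal{I}_{\mathcal{P}_n}$ are open and $\mu_f=\widetilde{\mu}_f$ on open intervals (Proposition~\ref{pm}(d)), one can pick \emph{refinements} $\mathcal{I}_n'\preceq\mathcal{I}_n$ and $\mathcal{J}_n'\preceq\mathcal{J}_n$ (so no endpoint of $\mathcal{P}_n$ is ever moved) with $\textit{v}_2^2(f,\mathcal{I}_n')\approx\mu_f(\cup\mathcal{I}_n)$ and $\textit{v}_2^2(f,\mathcal{J}_n')\approx\mu_f(\cup\mathcal{J}_n)$. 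The mixed families $\mathcal{I}_n\cup\mathcal{J}_n'$ and $\mathcal{I}_n'\cup\mathcal{J}_n$ are disjoint with mesh $\to0$, so the only inequality ever used is the trivial $\limsup\textit{v}_2^2\le\mu_f[0,1]$; applied to each mixed family this yields $\alpha+(\mu_f[0,1]-\beta)\le\mu_f[0,1]$ and $\beta+(\mu_f[0,1]-\alpha)\le\mu_f[0,1]$ respectively (where $\beta$ is a subsequential limit of $\mu_f(\cup\mathcal{I}_n)$), hence $\alpha=\beta$. No closures enter and no endpoints are perturbed.
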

\begin{proof}
 Let $\alpha=\lim
\textit{v}_2^2(f,\mathcal{I}_n)$ and assume that
$(\mu_f(\cup\mathcal{I}_n))_n$ does not converge to $\alpha$. Then
by passing to a subsequence, we may suppose that $\lim
\mu_f(\cup\mathcal{I}_n)= \beta\neq \alpha.$ Let
$\mathcal{J}_n=\mathcal{I}_{\mathcal{P}_n}\setminus\mathcal{I}_n$.
Then \[\lim
\textit{v}_2^2(f,\mathcal{J}_n)=\mu_f[0,1]-\alpha\;\text{ and}\;
\lim \mu_f(\cup\mathcal{J}_n)=\mu_f[0,1]-\beta\] Since
$\mathcal{I}_n$ and $\mathcal{J}_n$ consist of open intervals of
$[0,1]$, we can choose $\mathcal{I}_n' \preceq\mathcal{I}_n$ and
$\mathcal{J}_n '\preceq\mathcal{J}_n$ such that
$|\mu_f(\cup\mathcal{I}_n)-\textit{v}_2^2(f,\mathcal{I}_n')|<1/n$
and
$|\mu_f(\cup\mathcal{J}_n)-\textit{v}_2^2(f,\mathcal{J}_n')|<1/n$.
Therefore we get that
\[\lim\textit{v}_2^2(f,\mathcal{I}_n')= \beta\;\;
\text{and}\;\;\lim \textit{v}_2^2(f,\mathcal{J}_n')=
\mu_f[0,1]-\beta\] Since  $\mathcal{I}_n, \mathcal{J}_n'$  are
disjoint and $\lim \|\mathcal{I}_n\|_{\max}=\lim
\|\mathcal{J}_n'\|_{\max}=0$ we obtain that
\[\mu_f[0,1]\geq \lim \textit{v}_2^2(f,\mathcal{I}_n\cup\mathcal{J}_n')
 =\alpha+(\mu_f[0,1]-\beta),\] which implies that $\beta\geq \alpha$. Similarly,
\[\mu_f[0,1]\geq\lim \textit{v}_2^2(f,\mathcal{I}_n'\cup\mathcal{J}_n)
=\beta+(\mu_f[0,1]-\alpha),\] which gives that $\alpha\geq \beta$.
Hence $\alpha=\beta$ which is a contradiction.
\end{proof}
\begin{thm}\label{nondif} Let $f\in V_2$. Then the set
of all points $x\in [0,1]$ such that $f$ is differentiable at $x$
has $\mu_f$-measure zero.
\end{thm}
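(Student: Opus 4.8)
The plan is to show that if $E=\{x\in[0,1]: f'(x)\text{ exists}\}$ had $\mu_f(E)>0$, then one could cover $E$ by finitely many intervals on which $f$ oscillates so little (in the quadratic-variation sense) that they cannot carry the measure $\mu_f$ they are forced to carry. Concretely, I would first reduce to a set where the derivative is controlled: for $L,\delta_0>0$ let $E_{L,\delta_0}=\{x\in E:|f(y)-f(x)|\le L|y-x|\text{ whenever }|y-x|<\delta_0\}$. Since $E=\bigcup_{L,k}E_{L,1/k}$ is a countable union, if $\mu_f(E)>0$ then $\mu_f(E_{L,\delta_0})>0$ for some $L$ and some $\delta_0$, and by inner regularity of $\mu_f$ (it is a finite Borel measure on a Polish space) we may further replace $E_{L,\delta_0}$ by a compact subset $K\subseteq E_{L,\delta_0}$ with $\mu_f(K)=:\eta>0$. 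The function $f$ is automatically continuous at every point of $K$, so by Proposition~\ref{pm}(b),(d) the measure $\mu_f$ restricted to neighborhoods of $K$ behaves like $\widetilde\mu_f$.

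Next I would exploit the first inequality of the chain, namely $\textit{v}_2^2(f,\mathcal{P})\le\mu_f(\bigcup\mathcal{I}_\mathcal{P})$-type bounds together with the defining property of $\widetilde\mu_{f,\delta}$. Fix $\ee>0$. Choose an open set $U\supseteq K$ with $\mu_f(U)<\eta+\ee$; then choose $\delta<\min\{\delta_0,\text{dist}(K,U^c)\}$ small enough that $\widetilde\mu_{f,\delta}(U)<\mu_f(U)+\ee$. On the other hand, $\mu_f(K)\le\mu_f(U)$, and since $\mu_f$ restricted to a neighborhood of $K$ is realized by partitions (using Lemma~\ref{ldd} applied to near-optimal partitions $\mathcal{P}_n$ of $[0,1]$, intersected with $U$), there is a finite $\mathcal{P}\subseteq U$ with $\|\mathcal{P}\|_{\max}<\delta$ and $\textit{v}_2^2(f,\mathcal{P}\cap K\text{-intervals})$ close to $\mu_f(K)=\eta$. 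More precisely I want a family $\mathcal{I}$ of intervals of length $<\delta$, each meeting $K$, with $\mu_f(\bigcup\mathcal{I})\ge\eta-\ee$ while $\bigcup\mathcal{I}\subseteq U$. The point of "each meeting $K$" is that then both endpoints $a_i,b_i$ of each such interval lie within $\delta<\delta_0$ of a common point $x_i\in K$, so $|f(b_i)-f(a_i)|\le|f(b_i)-f(x_i)|+|f(x_i)-f(a_i)|\le L(|b_i-x_i|+|a_i-x_i|)\le 2L|I_i|$, hence $(f(b_i)-f(a_i))^2\le 4L^2|I_i|^2\le 4L^2\delta|I_i|$. Summing over the disjoint intervals gives $\textit{v}_2^2(f,\mathcal{I})\le 4L^2\delta\sum_i|I_i|\le 4L^2\delta$.

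Comparing the two estimates is the crux: on one hand $\textit{v}_2^2(f,\mathcal{I})\le 4L^2\delta$, which we can make arbitrarily small by shrinking $\delta$; on the other hand $\mu_f(\bigcup\mathcal{I})\ge\eta-\ee$, a fixed positive quantity, and by Lemma~\ref{ldd} (or directly, by choosing $\mathcal{I}$ to come from a near-optimal partition) we have $\textit{v}_2^2(f,\mathcal{I})$ close to $\mu_f(\bigcup\mathcal{I})\ge\eta-\ee$. Taking $\delta$ small enough that $4L^2\delta<\eta/2$ and $\ee<\eta/4$ yields $\eta/2>\textit{v}_2^2(f,\mathcal{I})\ge\eta-\ee-\ee'>\eta/2$ for the slack terms $\ee'$ controlling the gap between $\textit{v}_2^2$ and $\mu_f$ of the union — a contradiction. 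Hence $\mu_f(E)=0$.

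The main obstacle is the middle step: arranging a finite family $\mathcal{I}$ of intervals that simultaneously (a) have diameter $<\delta$, (b) each meet $K$, so the Lipschitz bound applies, and (c) carry almost all of $\mu_f(K)$ in the sense that $\textit{v}_2^2(f,\mathcal{I})\approx\mu_f(\bigcup\mathcal{I})\gtrsim\eta$. Producing (c) is exactly what Lemma~\ref{ldd} is for — it guarantees that for near-optimal partitions $\mathcal{P}_n$ of $[0,1]$, the subfamilies covering $U$ have $\textit{v}_2^2(f,\cdot)$ converging to $\mu_f$ of their union — but one must be careful that after discarding the intervals of $\mathcal{I}_{\mathcal{P}_n}$ that do \emph{not} meet $K$ one still retains measure $\ge\eta-\ee$; this follows because the discarded intervals lie in the open set $U\setminus K$ whose $\mu_f$-measure is $<\ee$, provided $U$ was chosen with $\mu_f(U\setminus K)<\ee$ (possible by regularity) and the partition is fine enough that intervals not meeting $K$ lie in $U\setminus K$ up to a set of small measure. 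Handling this bookkeeping cleanly, and invoking Lemma~\ref{ldd} to pass from $\textit{v}_2^2$ of the retained family to $\mu_f$ of its union, is where the real work lies; everything else is the elementary Lipschitz estimate $(f(b_i)-f(a_i))^2\le 4L^2\delta|I_i|$ and summing.
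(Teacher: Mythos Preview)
Your proposal is correct and follows essentially the same approach as the paper: reduce to a set on which difference quotients are uniformly bounded, cover it by the intervals of a near-optimal partition that meet the set, use the Lipschitz bound to show $\textit{v}_2^2$ on those intervals is $\le C^2\delta\to 0$, and invoke Lemma~\ref{ldd} to deduce that their $\mu_f$-measure tends to zero. The paper's version is a little more streamlined---it argues directly with the sets $A_C^k=\{x:|f(z)-f(y)|<C|z-y|\text{ for all }y<x<z\text{ with }|z-y|<1/k\}$ and dispenses with the contradiction framing, the inner-regular compact $K$, and the open neighborhood $U$---but the mathematical content is the same.
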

\begin{proof} Let  $\mathcal{P}_n=\{0=t^n_0<...<t_{k_n}^n=1\}\subseteq
[0,1]$  such that $\lim \|\mathcal{P}_n\|_{\max}=0$ and $\lim
\textit{v}_2^2(f,\mathcal{P}_n)=\mu_f([0,1])$. It suffices to show
that for every $C>0$, $\mu_f(A_C)=0$, where \[A_C=\{x\in
[0,1]:\;\exists f'(x)\;\text{and} \;|f'(x)|< C\}\] Fix  $C>0$ and
for every $k\in\nn$ let $A_C^k$ to be the set of all
$x\in(0,1)\setminus \bigcup_n\mathcal{P}_n$ such that for every
$y,z\in (x-1/k,x+1/k)$ with $0\leq y<x<z\leq 1$,
$|\frac{f(z)-f(y)}{z-y}|< C$.

Notice that \[A_C\subseteq \Big(\bigcup_n \mathcal{P}_n\setminus
D_f\Big) \cup \bigcup_{k=1}^\infty A_C^k\] By Proposition
\ref{discr.}, we get that  $\mu_f(\cup_n \mathcal{P}_n\setminus
D_f)=0$ and therefore it remains to show that for every $k\in\nn$,
$\mu_f(A_C^k)=0$. To this end, fix  $k\in\nn$.  Then  for every
$x\in A^k_C$ and  $n\in\nn$ there exists $0\leq i\leq k_n-1$ such
that $x\in(t_i^n,t_{i+1}^n)$. Since
$\lim\|\mathcal{P}_n\|_{\max}=0$ there is $n_0$ such that for all
$n\geq n_0$, $t_{i+1}^n-t_i^n<1/k$. For $n\geq n_0$ let $F_n$ to
be the set of all $0\leq i\leq k_n-1$ such that $A_C^k\cap
(t_i^n,t_{i+1}^n)\neq \emptyset$ and let
$\mathcal{I}_n=((t_i^n,t_{i+1}^n))_{i\in F_n}$. Then
$A_C^k\subseteq \bigcup\mathcal{I}_n$ and therefore
\[\textit{v}_2^2(f,\mathcal{I}_n)=\sum_{i\in
F_n}|f(t_{i+1}^n)-f(t_i^n)|^2\leq C^2\sum_{i\in F_n}|t_{i+1}^n-
t_i^n|^2\leq C^2 \max_{i\in F_n}|t_{i+1}^n- t_i^n|\] Hence $\lim
\textit{v}_2^2(f,\mathcal{I}_n)=0$. By Lemma \ref{ldd}, $\lim\mu_f
(\bigcup \mathcal{I}_n)=0$ and so $\mu_f(A_C^k)=0$.
\end{proof}

\begin{cor} Let $f\in V_2 \cap C[0,1]$. If the set
of all points $x\in [0,1]$ such that $f$ is not  differentiable at
$x$ is countable then $f\in V_2^0$.  Moreover if $f\in
(V_2\setminus V_2^0)\cap C[0,1]$ then the set of all non
differentiability points of $f$ contains a perfect set.

\end{cor}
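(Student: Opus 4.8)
The plan is to deduce both statements directly from Theorem \ref{nondif}, Proposition \ref{pm}(a), and the basic equivalence $\mu_f=0\iff f\in V_2^0$ recorded in Subsection~3.1. Write $E$ for the set of points of $[0,1]$ at which $f$ is differentiable and $N=[0,1]\setminus E$ for the set of points of non-differentiability; endpoint conventions will be irrelevant since one may work inside $(0,1)$. By Theorem \ref{nondif} the set $E$ has $\mu_f$-measure zero --- in fact its proof covers $E$ by sets of arbitrarily small $\mu_f$-measure, so $\mu_f^*(E)=0$ --- and, since $f$ is continuous, Proposition \ref{pm}(a) tells us $\mu_f$ is continuous, hence non-atomic and so vanishing on every countable set.

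For the first assertion I would argue as follows. Assume $N$ is countable. Then $\mu_f^*(N)=0$, and since $[0,1]=E\cup N$, subadditivity of outer measure gives $\|\mu_f\|=\mu_f[0,1]\le\mu_f^*(E)+\mu_f^*(N)=0$. Hence $\mu_f=0$ and therefore $f\in V_2^0$.

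For the second assertion, I would take $f\in(V_2\setminus V_2^0)\cap C[0,1]$, so that $\mu_f\neq 0$ and $\mu_f$ is non-atomic. Since $\mu_f^*(E)=0$ and $\mu_f$ is a finite regular Borel measure on the compact metric space $[0,1]$, outer regularity provides a Borel (indeed $G_\delta$) set $B\supseteq E$ with $\mu_f(B)=0$. Then $P_0:=[0,1]\setminus B$ is a Borel subset of $N$ with $\mu_f(P_0)=\|\mu_f\|>0$; being of positive measure for a non-atomic measure, $P_0$ is uncountable, and by the classical perfect set theorem it contains a nonempty perfect set, which therefore lies in $N$. (Should one prefer to avoid citing the perfect set theorem, the Cantor set can be built inside $P_0$ by hand: using inner regularity and non-atomicity of $\mu_f|_{P_0}$, repeatedly split a compact set of positive measure into two compact subsets of positive measure lying in disjoint intervals of vanishing diameter.)

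Since the corollary is so close to being an immediate consequence of Theorem \ref{nondif}, I do not expect a real obstacle; the only step that deserves a careful word is the passage from ``$E$ has $\mu_f$-outer-measure zero'' to ``$E$ sits inside a Borel $\mu_f$-null set'', which is precisely the outer regularity of the finite Borel measure $\mu_f$ and is what lets us replace the possibly irregular set $N$ by a genuine Borel set.
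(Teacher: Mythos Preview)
Your proof is correct and follows essentially the same line as the paper's, which also invokes Theorem~\ref{nondif}, the non-atomicity of $\mu_f$ from Proposition~\ref{pm}(a), and the fact that a Borel set of positive non-atomic measure contains a perfect set. Your caution about outer measure is unnecessary, since the set of differentiability points of any real-valued function is Borel (indeed $\Pi^0_3$), so $\mu_f(E)=0$ holds directly and $N$ is already Borel; the paper simply writes $\mu_f(N)=\mu_f[0,1]$ without further comment.
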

\begin{proof} Let $B$ be the set of all $x\in [0,1]$
such that $f$ is not differentiable at $x$.  By Theorem
\ref{nondif}, we have that $\mu_f(B)=\mu_f[0,1]$.  Also since
$f\in C[0,1]$, $\mu_f$ is continuous. Therefore if $B$ is
countable then $\mu_f[0,1]=0$ and so  $f\in V_2^0$. In the case
$f\in (V_2\setminus V_2^0)\cap C[0,1]$, $\mu_f(B)>0$ and so $B$
contains a perfect set.
\end{proof}

\section{Geometric properties of the measure.} In this section we
mainly concern to connect the norm of the measure $\mu_f$ with the
distance of $f$ from $V_2^0$. This requires first some results
from \cite{AK}, included in the first subsection,  related to the
oscillation function $\widetilde{osc} f$
 defined by A. Kechris and A. Louveau \cite{KL} and
 further studied by H. P. Rosenthal in \cite{R}. The second
 subsection contains the statement and the proof of the basic
 inequality and
in the third subsection  we use  these geometric properties of the
measure to obtain optimal  approximations for the functions of
$V_2\setminus V_2^0$.

\subsection{The oscillation function.} Recall that  for a
function  $f:K\rightarrow\rr$ where $K$ is a compact metric space,
$\widetilde{osc}_K f$,  is defined as follows.   For every $t\in
V\subseteq K$  let $s(V,t)=\sup\{|f(x)-f(t)|:x\in V\}$. Then for
each $t\in K$, \[\widetilde{osc}_K f(t)=\inf\{s(V,t):V \text{open
neighborhood of t}\}\]

It can be easily shown that for every sequence  $(f_n)_{n}$
  of continuous real valued functions on $K$  pointwise converging to a function
  $f$ and every $\varepsilon>0$ there exists $n_0\in \nn$ such that
  for every $n\geq n_0$,
  $\|\widetilde{osc}_Kf\|_\infty-\varepsilon<\|f_n-f\|_\infty$.

The next lemma is included in the more general Lemma 1.2 in
\cite{AK} and shows that passing to convex blocks the above
inequality can be reversed.
\begin{lem} \label{approx} Let $(f_n)_n$ be a uniformly
bounded sequence of continuous real valued functions
 on a compact metric space $K$ pointwise converging to a function
 $f$. Then for every null sequence of positive reals $(\delta_n)_n$ there exist a convex block sequence $(g_n)_n$ of
 $(f_n)_n$ such that
 $\|g_n-f\|_\infty< \|\widetilde{osc}_K f\|_\infty+\delta_n$.
\end{lem}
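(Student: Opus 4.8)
Write $\omega:=\|\widetilde{osc}_K f\|_\infty$; since $f$ is a pointwise limit of continuous functions on a compact metric space it is bounded, so $\omega<\infty$. The lemma is a diagonalization over the following key claim, which is the real content: $(\star)$ \emph{for every $\ee>0$ and every $N\in\nn$ there is a finite convex combination $g=\sum_{i}\lambda_i f_{m_i}$ with $N\le m_1<\cdots<m_k$ and $\|g-f\|_\infty<\omega+\ee$.} Granting $(\star)$, one builds the required convex block sequence by applying it successively with $(\ee,N)=(\delta_1,1)$, then with $(\delta_2,N_1+1)$ where $N_1$ exceeds the support of the first block, and so on; since the resulting blocks are consecutive this produces a convex block sequence $(g_n)_n$ of $(f_n)_n$ with $\|g_n-f\|_\infty<\omega+\delta_n$.

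Two ingredients feed the proof of $(\star)$. First, from $\widetilde{osc}_K f(t)\le\omega$ for every $t$ one gets, for each $\ee>0$, a neighbourhood $V_t$ of $t$ with $\sup_{x\in V_t}|f(x)-f(t)|<\omega+\ee$; by compactness finitely many $V_{t_1},\dots,V_{t_m}$ cover $K$, and a partition of unity subordinate to them gives a continuous $h$ with $\|h-f\|_\infty\le\omega+\ee$, hence $\mathrm{dist}(f,C(K))\le\omega$ for the sup-norm. Second, $f_n\to f$ pointwise, so along any prescribed finite set of points the $f_n$ are eventually uniformly close to $f$, and each $f_n$ is uniformly continuous. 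I would then deduce $(\star)$ by a separation argument. Fix $\ee$ and $N$, put $A:=\mathrm{conv}\{f_n:n\ge N\}\subseteq C(K)$ and $B:=\{h\in C(K):\|h-f\|_\infty<\omega+\ee\}$; $B$ is open and, by the first ingredient, nonempty. If $A\cap B=\varnothing$, then by Hahn--Banach in $C(K)$ there are $\mu\in M(K)$ with $\|\mu\|=1$ and $\alpha\in\rr$ such that $\int f_n\,d\mu\le\alpha$ for all $n\ge N$ while $\int h\,d\mu\ge\alpha$ for all $h\in B$. Letting $n\to\infty$ and using bounded convergence, $\int f\,d\mu\le\alpha$; so it suffices to produce one $h\in B$ with $\int h\,d\mu<\int f\,d\mu$, a contradiction, whence $A\cap B\neq\varnothing$, which is exactly $(\star)$.

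The step I expect to be the main obstacle is producing that $h$: one must show that for every norm-one $\mu\in M(K)$ there is a continuous $h$ with $\|h-f\|_\infty<\omega+\ee$ and $\int h\,d\mu<\int f\,d\mu$, and this is where $\widetilde{osc}_K f\le\omega$ is used in an essential, one-sided way. Splitting $\mu=\mu^+-\mu^-$, one seeks $h$ pushed slightly below $f$ on $\mathrm{supp}\,\mu^+$ and slightly above $f$ on $\mathrm{supp}\,\mu^-$; the oscillation bound forces the values of $f$ near any discontinuity to lie within $\omega$ of the value at the point, so the smallest continuous function lying above $f-\omega-\ee$ necessarily sits at least $\ee$ below $f$ everywhere (and dually for the largest continuous function below $f+\omega+\ee$), which provides the required slack. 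Making this global and reconciling the pushes on $\mathrm{supp}\,\mu^+$ and $\mathrm{supp}\,\mu^-$ with the sup-norm constraint—using regularity of $\mu$ (and Lusin's theorem) to control the parts of $\mathrm{supp}\,\mu$ where $f$ is a.e.\ continuous, and the closed separation of the Hahn sets elsewhere—is the technical heart, and is the form in which the statement is proved in \cite{AK}. (An equivalent, purely combinatorial route to $(\star)$ builds the indices $m_i$ together with a refining sequence of finite covers so that on each piece $U$ of the final cover every $f_{m_i}$ varies by $<\ee/k$ and $|f_{m_i}(t_U)-f(t_U)|<\ee/k$; then $|f_{m_i}(x)-f(x)|\le|f_{m_i}(x)-f_{m_i}(t_U)|+|f_{m_i}(t_U)-f(t_U)|+|f(t_U)-f(x)|<\ee/k+\ee/k+\omega+\ee$, and averaging over $i$ gives $\|g-f\|_\infty<\omega+2\ee$; there the obstacle is the bookkeeping that keeps the centres controlled for the already chosen indices while still refining the cover for the new ones.)
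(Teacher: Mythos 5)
First, note that the paper itself offers no proof to compare against: Lemma~\ref{approx} is quoted as a special case of Lemma~1.2 of \cite{AK}, so your argument has to stand on its own. Your main skeleton is sound: the diagonalization correctly reduces the lemma to $(\star)$; the partition-of-unity argument does give $\operatorname{dist}_{\|\cdot\|_\infty}(f,C(K))\le\omega$, so $B$ is a nonempty open convex subset of $C(K)$; and geometric Hahn--Banach plus dominated convergence correctly reduce $(\star)$ to the sub-claim that for every nonzero $\mu\in C(K)^*$ there is $h\in B$ with $\int h\,d\mu<\int f\,d\mu$. That sub-claim is true, but it is exactly where the hypothesis $\widetilde{osc}_Kf\le\omega$ does its work, and your proposal only gestures at it; as written this is a genuine (though fillable) gap. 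To close it: let $f^*$, $f_*$ be the upper and lower semicontinuous envelopes of $f$, so that $f^*-f\le\widetilde{osc}_Kf\le\omega$ and $f-f_*\le\omega$, and observe that a continuous $h$ satisfies $\|h-f\|_\infty\le r$ if and only if $f^*-r\le h\le f_*+r$. With $r=\omega+\ee/2$ this band has width at least $\ee$ and its floor lies at least $\ee/2$ below $f$ (its ceiling at least $\ee/2$ above $f$). Take a Hahn decomposition $K=K^+\cup K^-$ of $\mu$, use inner regularity and Lusin's theorem to produce disjoint compacta $L^{\pm}\subseteq K^{\pm}$ carrying all but $\delta$ of $\mu^{\pm}$ on which $f$ is continuous, set $h_0=f\mp\ee/4$ on $L^{\pm}$ (these values lie in the band), and extend $h_0$ to a continuous $h$ with $f^*-r\le h\le f_*+r$ by applying the Hahn/Kat\v{e}tov--Tong insertion theorem to the envelopes modified to equal $h_0$ on $L^+\cup L^-$ (one checks the modified upper one is still u.s.c.\ and the lower one still l.s.c.). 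Then $\int(h-f)\,d\mu\le-\tfrac{\ee}{4}\bigl(\|\mu\|-4\delta\bigr)+4r\delta<0$ for $\delta$ small. The insertion/extension step is the missing idea; without it the "pushing below $f$ on $\operatorname{supp}\mu^+$" picture does not become a continuous function obeying the global sup-norm constraint.

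Your parenthetical "purely combinatorial" alternative, by contrast, cannot be repaired as described: the displayed estimate bounds each $|f_{m_i}(x)-f(x)|$ individually by $\omega+\ee+2\ee/k$ before any averaging, so if the bookkeeping could be carried out it would prove that some \emph{subsequence} of $(f_n)_n$ converges uniformly to within $\omega$ of $f$. This is false already for a moving bump: tent functions $f_n$ supported on $[0,1/n]$ with peak $1$ converge pointwise to $f=0$ (so $\omega=0$) while $\|f_n\|_\infty=1$ for all $n$, yet convex blocks with disjoint supports do converge uniformly to $0$. Convexity must enter in an essential way; it does in your separation route (through the convex hull $A$), but nowhere in the combinatorial sketch, which is why the "bookkeeping obstacle" you mention there is not a technicality but an impossibility.
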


\begin{cor} \label{cap} Let $E$ be a separable Banach space, $X$ be a subspace of $E$
 and  $K$ be a weak$^{*}$-compact subset  of  $B_{E^*}$ which is
 $1$-norming for $E^{**}$ (that is for all $x^{**}\in E^{**}$,
$\|x^{**}\|=\sup_{x^*\in K}|x^{**}(x^*)|$). Then for all
$x^{**}\in X^{**}$ which are  weak$^*$-limits of sequences in $X$,
we have that
\[dist(x^{**}, X)\leq\|\widetilde{osc}_{K} x^{**}\|_\infty\] In particular this holds
for all $x^{**}\in X^{**}$ if $\ell_1$ is not embedded into $X$.
\end{cor}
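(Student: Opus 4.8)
The plan is to transfer the problem onto the compact metrizable space $(K,w^*)$ and then invoke Lemma \ref{approx}. Fix $x^{**}\in X^{**}$ and identify $X^{**}$ isometrically with a subspace of $E^{**}$ via the adjoint of the restriction map $E^*\to X^*$, so that $x^{**}$ acts on $E^*$ and in particular restricts to a bounded function on $K\subseteq B_{E^*}$. Let $(x_n)_n$ be a sequence in $X$ with $x_n\to x^{**}$ in the weak$^*$ topology of $E^{**}$; such a sequence is bounded by the uniform boundedness principle. Since $E$ is separable, $(B_{E^*},w^*)$ is metrizable, hence so is the $w^*$-compact set $K$. Regarding each $x_n$ as the $w^*$-continuous function $x^*\mapsto x^*(x_n)$ on $K$, the sequence $(x_n|_K)_n$ is uniformly bounded and converges pointwise on $K$ to $x^{**}|_K$, because $x^*(x_n)\to x^{**}(x^*)$ for every $x^*\in E^*$, in particular for every $x^*\in K$.

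Next I would apply Lemma \ref{approx} to $(x_n|_K)_n$ on the compact metric space $K$, together with an arbitrary null sequence $(\dd_n)_n$ of positive reals. This produces a convex block sequence whose $n$-th term is the restriction to $K$ of a convex combination $g_n=\sum_i\lambda_i^n x_i\in X$, and which satisfies $\big\|g_n|_K-x^{**}|_K\big\|_\infty<\|\widetilde{osc}_K x^{**}\|_\infty+\dd_n$ for every $n$. Since $K$ is $1$-norming for $E^{**}$ and $g_n\in X\subseteq E\subseteq E^{**}$, we obtain for each $n$
\begin{align*}
dist(x^{**},X)&\leq\|g_n-x^{**}\|_{E^{**}}=\sup_{x^*\in K}|x^*(g_n)-x^{**}(x^*)|\\
&=\big\|g_n|_K-x^{**}|_K\big\|_\infty<\|\widetilde{osc}_K x^{**}\|_\infty+\dd_n,
\end{align*}
and letting $n\to\infty$ yields $dist(x^{**},X)\leq\|\widetilde{osc}_K x^{**}\|_\infty$, as desired.

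For the last assertion, $X$ is separable as a subspace of the separable space $E$, so if $\ell_1$ does not embed into $X$ then the Odell--Rosenthal theorem (cf. \cite{OR}) guarantees that every element of $X^{**}$ is the weak$^*$-limit of a sequence from $X$, and hence the inequality holds for all $x^{**}\in X^{**}$. I do not expect a genuine analytic obstacle here: the substance of the argument is entirely contained in Lemma \ref{approx}, and the only points that require care are the bookkeeping of the canonical embedding $X^{**}\hookrightarrow E^{**}$ (so that $x^{**}$ is a legitimate function on $K$ and the convex blocks stay inside $X$) and the observation that the $1$-norming hypothesis is precisely what converts the sup-norm estimate over $K$ into an estimate for $\|\cdot\|_{E^{**}}$.
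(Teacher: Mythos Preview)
Your proposal is correct and follows essentially the same route as the paper: restrict everything to the compact metrizable set $K$, apply Lemma \ref{approx} to obtain convex blocks $g_n\in X$ close to $x^{**}$ in $\sup$-norm over $K$, and use the $1$-norming hypothesis to convert this into a norm estimate, finishing with Odell--Rosenthal for the final clause. The paper's proof is slightly terser (it does not spell out the metrizability of $K$ or the embedding $X^{**}\hookrightarrow E^{**}$), but the argument is the same.
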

\begin{proof} Let $K$ be a weak$^*$ subset of  $B_{E^*}$ which is
 $1$-norming for $X^{**}$. Let $x^{**}\in X^{**}$ be a the  weak$^*$- limit of a sequence $(x_n)_n$ in $X$.
Denoting again by $x_n$ and $x^{**}$ the restrictions of $x_n$ and
$x^{**}$ on $K$, we have that
 $(x_n)_n$ is
a uniformly bounded  sequence of continuous functions on the
compact metric space $K$,
 pointwise convergent to $x^{**}$. Therefore, by
  Lemma \ref{approx}
 there is a convex block sequence $(y_n)_n$ of $(x_n)_n$ such that for all $n\in\nn$,
 \[\sup_{x^{*}\in K}|y_n(x^*)-x^{**}(x^{*})|\leq \|\widetilde{osc}_K x^{**}\|_{\infty}+1/n\]
 Since $K$ is $1$-norming for $E^{**}$, we have that the left side of the above inequality is the norm of $y_n-x^{**}$.
 Hence
\[dist(x^{**}, X)\leq \|y_n-x^{**}\|\leq\|\widetilde{osc}_{K} x^{**}\|_\infty+1/n,\]
for all $n\in \nn$ and the result follows. Finally if $\ell_1$
does not embed into $X$ then by Odell-Rosenthal's theorem
\cite{OR}, all $x^{**}\in X^{**}$ are $w^*$-limits of sequences of
$X$.
\end{proof}

\begin{rem}\label{Rem3} Notice  that for every $x^{**}\in X^{**}$ and every $x\in
X$, $\|\widetilde{osc}_\mathcal{K}
x^{**}\|_\infty=\|\widetilde{osc}_\mathcal{K}
(x^{**}-x)\|_\infty\leq 2\|x^{**}-x\|$ and so
$\|\widetilde{osc}_\mathcal{K} x^{**}\|_\infty\leq 2
dist(x^{**},X)$. Hence by Corollary \ref{cap}, we have that for
every subspace $X$ of $E$ not containing $\ell_1$,
$\|\widetilde{osc}_\mathcal{K} x^{**}\|_\infty$ is an equivalent
norm on the quotient space $X^{**}/X$.\end{rem}

\subsection{Connection of the measure $\mu_f$ with the distance
of $f$ from $V_2^0$.} Returning to $V_2$, we recall that  the set
$\mathcal{K}\subseteq (V_2^0)^*$  of all $x^*$
 of the form $x^*=\sum_i\alpha_i(\delta_{s_i}-\delta_{t_i})$ where $\big((s_i,t_i)\big)_i$
  is a sequence of pairwise disjoint intervals of $[0,1]$ and $\sum_i\alpha_i^2\leq 1$,
  is a $w^*$-compact subset of $ B_{(V_2^0)^*}$ which is  1-norming for $V_2$
  (\cite{AMP}).

\begin{thm} \label{osc-meas} Let $X$ be a subspace of $V_2^0$. Then for
 every  $f\in X^{**}$, \[\sqrt{\|\mu_f\|}\leq dist (f, V_2^0)\leq dist(f,X)\leq
\|\widetilde{osc}_\mathcal{K}
f\|_{\infty}\leq\sqrt{\|\mu_f\|}+2\sqrt{\|\mu_f^d\|}\]\end{thm}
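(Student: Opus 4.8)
The middle inequality $dist(f,V_2^0)\leq dist(f,X)$ is trivial since $X\subseteq V_2^0$. The inequality $dist(f,X)\leq\|\widetilde{osc}_{\mathcal K}f\|_\infty$ is exactly Corollary \ref{cap}: here $E=V_2^0$, the set $\mathcal K$ is $w^*$-compact in $B_{(V_2^0)^*}$ and $1$-norming for $V_2=(V_2^0)^{**}$ by \cite{AMP}, and since $V_2^0$ does not contain $\ell_1$ every $f\in X^{**}$ is a $w^*$-limit of a sequence in $X$, so the corollary applies. So the two genuine tasks are the leftmost and the rightmost inequalities.

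\textbf{Lower bound $\sqrt{\|\mu_f\|}\leq dist(f,V_2^0)$.} First I would observe that by Proposition \ref{basineq}(iii), $\mu_{f+g}=\mu_f$ for every $g\in V_2^0$, so it suffices to show $\sqrt{\|\mu_f\|}\leq\|f\|_{V_2}$ for all $f\in V_2$ (then apply this to $f+g$ and take the infimum over $g\in V_2^0$). But $\|\mu_f\|=\mu_f[0,1]=\widetilde\mu_f[0,1]=\inf_{\delta>0}\sup\{v_2^2(f,\mathcal P):\|\mathcal P\|_{\max}<\delta\}\leq\sup_{\mathcal P}v_2^2(f,\mathcal P)=\|f\|_{V_2}^2$, which gives the claim immediately. (This is essentially Wiener's seminorm remark from the introduction.)

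\textbf{Upper bound $\|\widetilde{osc}_{\mathcal K}f\|_\infty\leq\sqrt{\|\mu_f\|}+2\sqrt{\|\mu_f^d\|}$.} This is the main obstacle. The idea is to decompose $f$ via Theorem \ref{tm}: write $\mu_f=\mu_f^c+\mu_f^d$ and, using Lemma \ref{lddd}, pick $h\in V_2^d$ with $\mu_h=\mu_f^d$; then $\mu_{f-h}=\mu_f^c$ is continuous, so $f-h\in C[0,1]$ (by Proposition \ref{pm}(a)) — hence $f-h$ is a \emph{continuous} element of $V_2$, while $h$ carries precisely the discontinuity data. For the continuous part, one should show $\|\widetilde{osc}_{\mathcal K}(f-h)\|_\infty\leq dist(f-h,V_2^0)=\sqrt{\|\mu_{f-h}\|}=\sqrt{\|\mu_f^c\|}\leq\sqrt{\|\mu_f\|}$; getting $dist(g,V_2^0)=\sqrt{\|\mu_g\|}$ for continuous $g$ requires constructing, for each $\varepsilon$, a $g_0\in V_2^0$ with $\|g-g_0\|_{V_2}^2\leq\|\mu_g\|+\varepsilon$ — one would approximate $g$ by truncating/mollifying so that the residual quadratic variation localized at small scales is captured by $\mu_g$, much as in the proof of Theorem \ref{tm}. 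For the discrete part $h$, a direct estimate on $\widetilde{osc}_{\mathcal K}h$ using that $h$ is a sum of jumps $\sqrt{\lambda_n}\chi_{t_n}$ with $\sum\lambda_n=\|\mu_f^d\|$ should yield $\|\widetilde{osc}_{\mathcal K}h\|_\infty\leq 2\sqrt{\|\mu_f^d\|}$ — the factor $2$ coming from the two-sided jump contribution and the $1$-norming functionals in $\mathcal K$ picking up differences $\delta_{s}-\delta_{t}$ straddling a jump point. Finally combine via the triangle-type bound $\|\widetilde{osc}_{\mathcal K}f\|_\infty\leq\|\widetilde{osc}_{\mathcal K}(f-h)\|_\infty+\|\widetilde{osc}_{\mathcal K}h\|_\infty$ (subadditivity of $\widetilde{osc}$, noting $h\in V_2\subseteq V_2^0{}^{**}$ so $\widetilde{osc}_{\mathcal K}h$ makes sense and $\widetilde{osc}$ ignores any $V_2^0$-part).

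\textbf{Expected difficulty.} The rightmost inequality is where the real work lies: one must carefully handle the discrete part $h$ and the precise role of the norming set $\mathcal K$ (elements $\sum_i\alpha_i(\delta_{s_i}-\delta_{t_i})$ with $\sum\alpha_i^2\leq1$), and one must prove the exact identity $dist(g,V_2^0)=\sqrt{\|\mu_g\|}$ for continuous $g\in V_2$, for which the construction in Theorem \ref{tm} (biorthogonal sequences summing to a function with prescribed measure) provides both the template and, likely, the explicit approximant $g_0$. Bookkeeping the constant $2$ in front of $\sqrt{\|\mu_f^d\|}$ — making sure the jump contributions to $\widetilde{osc}$ are not double-counted against $\mathcal K$ — is the delicate point.
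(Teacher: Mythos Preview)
Your treatment of the first three inequalities is correct and matches the paper. The rightmost inequality, however, has two genuine gaps.

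\textbf{The decomposition fails.} The function $h=\sum_n\sqrt{\lambda_n}\chi_{t_n}$ from Lemma~\ref{lddd} satisfies $\mu_h=\mu_f^d$, but it does \emph{not} make $f-h$ continuous. At a point $t_0$ where $f$ has, say, a one-sided jump $f(t_0^+)-f(t_0)=b\neq0$ and $f(t_0^-)=f(t_0)$, subtracting $\sqrt{\lambda_0}\chi_{t_0}$ changes the value only at the single point $t_0$ and cannot repair the right jump. Since $\mu_{f-h}$ need not equal $\mu_f^c$ (the map $f\mapsto\mu_f$ is not additive, cf.\ Remark~\ref{rem1}), Proposition~\ref{pm}(a) does not apply and your chain breaks immediately.

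\textbf{The continuous case is circular, and the constants do not close.} You write $\|\widetilde{osc}_{\mathcal K}(f-h)\|_\infty\leq dist(f-h,V_2^0)$, but Corollary~\ref{cap} gives the \emph{opposite} inequality; the only general bound the other way is $\|\widetilde{osc}_{\mathcal K}\,\cdot\,\|_\infty\leq 2\,dist(\,\cdot\,,V_2^0)$ (Remark~\ref{Rem3}), which would yield $2\sqrt{\|\mu_f^c\|}$ rather than $\sqrt{\|\mu_f\|}$. Establishing $\|\widetilde{osc}_{\mathcal K}g\|_\infty=\sqrt{\|\mu_g\|}$ for continuous $g$ is precisely Corollary~\ref{corcon}, which the paper deduces \emph{from} this theorem, so invoking it here is circular. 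Constructing an approximant $g_0\in V_2^0$ only bounds $dist(g,V_2^0)$, not $\|\widetilde{osc}_{\mathcal K}g\|_\infty$.

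The paper avoids any decomposition of $f$ and works directly on $\mathcal K$. Given $x^*\in\mathcal K$, one takes a sequence $x_n^*\to x^*$ realizing $\widetilde{osc}_{\mathcal K}f(x^*)$, writes $x_n^*=\sum_i\alpha_i^n(\delta_{s_i^n}-\delta_{t_i^n})$ with the intervals ordered by decreasing length, and passes to subsequences so that $\alpha_i^n\to\alpha_i$, $s_i^n\to s_i$, $t_i^n\to t_i$ monotonically. One then splits at an index $i_1$ chosen so that all intervals with $i>i_1$ have length $<\delta$ with $\widetilde\mu_{f,\delta}[0,1]<\|\mu_f\|+\ee^2$ and $\sum_{i>i_1}|\alpha_i|^2<\ee^2/\|f\|_{V_2}^2$. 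The tail contribution is then bounded by Cauchy--Schwarz and the definition of $\widetilde\mu_{f,\delta}$, giving $\sqrt{\|\mu_f\|}+2\ee$. The head contribution reduces to $\sum_{i\leq i_1}\alpha_i\big((f(s_i^{\pm})-f(s_i))-(f(t_i^{\pm})-f(t_i))\big)$, and Cauchy--Schwarz together with Proposition~\ref{discr.} ($|f(s^{\pm})-f(s)|^2\leq\tau_f(s)=\mu_f(\{s\})$) bounds this by $2\sqrt{\|\mu_f^d\|}$. The key idea you are missing is that the $\sqrt{\|\mu_f\|}$ term comes from the \emph{small-scale} quadratic variation of $f$ (via $\widetilde\mu_{f,\delta}$) acting on the tail of $x_n^*$, while the $2\sqrt{\|\mu_f^d\|}$ term comes from the finitely many endpoint limits in the head, where only the jumps of $f$ contribute.
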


\begin{proof} By Proposition \ref{basineq}, we have that for every $g\in V_2^0$, $\mu_{f+g}=\mu_f$. Hence
for every $g\in V_2^0$,
$\|\mu_f\|=\|\mu_{f+g}\|\leq\|f+g\|^2_{V_2}$ which gives that
$\sqrt{\|\mu_f\|}\leq dist(f,V_2^0)\leq dist(f, X)$. Since $K$ is
a weak$^*$-compact subset of $(V_2^0)^*$ which is $1$-norming for
$V_2$ and $\ell_1$ is not embedded into $V_2^0$, by Corollary
\ref{cap}, we have that $dist(f, X)\leq
\|\widetilde{osc}_\mathcal{K} f\|_{\infty}$ and so it remains to
prove that $\|\widetilde{osc}_\mathcal{K}
f\|_{\infty}\leq\sqrt{\|\mu_f\|}+2\sqrt{\|\mu_f^d\|}$. To show
this, let $x^{*}\in\mathcal{K}$ and let $\{x_n^{*}\}_{n}$ be a
sequence in $\mathcal{K}$, $w^{*}$-converging to $x^{*}$ such that
$\widetilde{osc}_{\mathcal{K}}f(x^{*})=\lim_{n}|f(x_n^{*})-f(x^{*})|$
(clearly there exists such a sequence). Notice that for every
$y^*=\sum_i\alpha_i(\delta_{s_i}-\delta_{t_i})\in\mathcal{K}$ and
every $f\in V_2$, $f(y^*)=\sum_i\alpha_i(f(s_i)-f(t_i))$ where the
series $ \sum_i\alpha_i(f(s_i)-f(t_i))$ is absolutely convergent.
So we may  reorder  each $x_n^*$ and in this way  we may assume
that
$x_n^{*}=\sum_{i=1}^{\infty}\alpha_i^n(\delta_{s_i^n}-\delta_{t_i^n})$
where   for every $i$, $t_i^n-s_i^n\geq t_{i+1}^n-s_{i+1}^n$.
Moreover by passing to a subsequence we may also suppose  that for
each $i\in\nn,$ the sequences $\{s_i^n\}_{n}$, $\{t_i^n\}_{n}$ are
monotone and that
 $\alpha_i^n\rightarrow\alpha_i,\, s_i^n\rightarrow s_i,$ and
$ t_i^n\rightarrow t_i$.

Therefore
$x^{*}=\sum_{i=1}^{\infty}\alpha_i(\delta_{s_i}-\delta_{t_i})$
with $t_i-s_i\geq t_{i+1}-s_{i+1}$ and so  since $((s_i,t_i))_i$
consists of pairwise disjoint open intervals in $[0,1]$ of
decreasing length, $t_i-s_i\leq 1/i$. Also  by the monotonicity of
$(s_i^n)_n$, $(t_i^n)_n$,
 there are $\ee_{t_i},\ee_{s_i}\in\{0,+,-\}$,
 such that $\lim_n f(t_i^n)=f(t_i^{\ee_{t_i}})$ and $\lim_n f(s_i^n)=f(s_i^{\ee_{s_i}})$.

 Let $\ee>0$. We choose $i_1\in\nn$ such that $\mu_f[0,1]\leq \widetilde{\mu}_{f,1/i_1}[0,1]
<\mu_f[0,1]+\ee^2$ and $\sum_{i=i_1+1}^{\infty}|\alpha_i|^2<
\ee^2/\|f\|_{V_2}^2$. We set
\[x_1^{*}=\sum_{i=1}^{i_1}\alpha_i(\delta_{s_i}-\delta_{t_i})\;\;\;\;\;\;\text{and}\;\;\;\;
x_2^{*}=\sum_{i=i_1+1}^{\infty}\alpha_i(\delta_{s_i}-\delta_{t_i})\]
and  for each $n\in\nn$, let
\[x_{n,1}^{*}=\sum_{i=1}^{i_1}\alpha_i^n(\delta_{s_i^n}-\delta_{t_i^n})\;\;\;\;\;\;\text{and}\;\;\;\;\;\;\;
x_{n,2}^{*}=\sum_{i=i_1+1}^{\infty}\alpha_i^n(\delta_{s_i^n}-\delta_{t_i^n})\]
Then by  Cauchy-Schwartz inequality we get that
$|f(x_{2}^{*})|^2\leq\ee^2$ and
 $|f(x_{n,2}^{*})|^2\leq\mu_f[0,1]+\ee^2$, for all $n\in\nn$.
Therefore
\[\begin{split}&\widetilde{osc}_{\mathcal{K}}f(x^{*})
=\lim_{n}|f(x_n^{*})-f(x^{*})|\leq
|\lim_{n}f(x_{n,1}^{*})-f(x_1^{*})|+\overline{\lim_n}
|f(x_{n,2}^{*})-f(x_2^{*})|\\&
\leq\Big|\sum_{i=1}^{i_1}\alpha_i(f(s_i^{\ee_{s_i}})-f(s_i))\Big|+
\Big|\sum_{i=1}^{i_1}\alpha_i(f(t_i^{\ee_{t_i}})-f(t_i))\Big|
+\overline{\lim_n} |f(x_{n,2}^{*})|+ |f(x_{2}^{*})|\\& \leq
\Big(\sum_{i=1}^{i_1}|f(s_i^{\ee_{s_i}})-f(s_i)|^2\Big)^{1/2}+
\Big(\sum_{i=1}^{i_1}|f(t_i^{\ee_{t_i}})-f(t_i)|^2\Big)^{1/2}
+(\sqrt{\mu_f[0,1]}+\ee)+\ee\\&\leq
\Big(\sum_{i=1}^{i_1}\mu_f(\{s_i\})\Big)^{1/2}+\Big(\sum_{i=1}^{i_1}\mu_f(\{t_i\})\Big)^{1/2}
+\sqrt{\mu_f[0,1]}+2\ee.
\end{split}\]Hence  for every $\ee>0$, $\|\widetilde{osc}_K
f\|_{\infty}\leq\sqrt{\|\mu_f\|}+2\sqrt{\|\mu_f^d\|}+2\ee$ and
the conclusion follows.\end{proof} Since $\mu_f^d=0$  if  $f$ is
continuous, we easily get the following.
\begin{cor}\label{corcon} For every subspace $X$ of $V_2^0$ and every $f\in X^{**}\cap C[0,1]$,
  \[dist(f,V_2^0)=dist(f,X)=\|\widetilde{osc}_\mathcal{K}f\|_\infty=\sqrt{\|\mu_f\|}\]
\end{cor}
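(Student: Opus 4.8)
The plan is to obtain Corollary~\ref{corcon} as an immediate consequence of Theorem~\ref{osc-meas}, using only the fact that continuity of $f$ kills the atomic part of $\mu_f$. First I would recall that Theorem~\ref{osc-meas}, applied to the given subspace $X$ of $V_2^0$ and to $f\in X^{**}$, already provides the chain
\[\sqrt{\|\mu_f\|}\leq dist(f,V_2^0)\leq dist(f,X)\leq \|\widetilde{osc}_\mathcal{K} f\|_{\infty}\leq\sqrt{\|\mu_f\|}+2\sqrt{\|\mu_f^d\|}.\]
Thus all four quantities appearing in the statement are trapped between $\sqrt{\|\mu_f\|}$ and $\sqrt{\|\mu_f\|}+2\sqrt{\|\mu_f^d\|}$, and it only remains to show that the rightmost term equals the leftmost, i.e. that $\mu_f^d=0$.

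The single additional ingredient is that the hypothesis $f\in C[0,1]$ forces $\mu_f^d=0$. This is exactly what Proposition~\ref{pm}(a) gives: $f$ is continuous if and only if $\mu_f$ is continuous (equivalently $D_f=D_{F_f}=\emptyset$), and a continuous measure has no atoms, so its discrete part vanishes. One may equally well invoke Proposition~\ref{discr.}, according to which $\mu_f^d[0,1]=\sum_{x\in D_f}\tau_f(x)$; since $D_f=\emptyset$ this sum is empty and $\mu_f^d=0$. Hence $\sqrt{\|\mu_f^d\|}=0$.

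Substituting $\|\mu_f^d\|=0$ into the displayed chain collapses it to
\[\sqrt{\|\mu_f\|}\leq dist(f,V_2^0)\leq dist(f,X)\leq \|\widetilde{osc}_\mathcal{K} f\|_{\infty}\leq\sqrt{\|\mu_f\|},\]
so every inequality is in fact an equality, which is precisely the assertion of the corollary. I do not expect any real obstacle here: all the substantive work has been carried out in Theorem~\ref{osc-meas} and in the elementary properties of $\mu_f$ from Section~3. If a fully self-contained derivation were wanted, one would simply re-examine the last estimate in the proof of Theorem~\ref{osc-meas}, noting that the atomic contributions $\sum_{i=1}^{i_1}\mu_f(\{s_i\})$ and $\sum_{i=1}^{i_1}\mu_f(\{t_i\})$ there all vanish once $\mu_f$ is diffuse, so that $\|\widetilde{osc}_\mathcal{K}f\|_\infty\leq\sqrt{\|\mu_f\|}$ directly.
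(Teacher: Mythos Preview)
Your proposal is correct and follows exactly the paper's own approach: the paper simply notes that continuity of $f$ gives $\mu_f^d=0$, whence the chain of inequalities in Theorem~\ref{osc-meas} collapses to equalities. Your additional remarks (citing Proposition~\ref{pm}(a) or Proposition~\ref{discr.} for $\mu_f^d=0$, and the observation about the atomic terms in the proof of Theorem~\ref{osc-meas}) are accurate elaborations of what the paper leaves implicit.
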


\begin{rem} Let us note that there exist    non-continuous functions $f\in
V_2$ satisfying  the proper inequalities $\sqrt{\|\mu_f\|}< dist
(f, V_2^0)< \|\widetilde{osc}_{\mathcal{K}}f\|_\infty$. For
example, it can be easily shown that  for $0<s<t<1$ and
$f=\chi_{[s,t]}+2\chi_{(t,1]}$, we have that
 $\|\widetilde{osc}_{\mathcal{K}}f\|_\infty= 2$, $\mu_f$ is the
sum of the Dirac measures on $s$ and $t$ (so
$\sqrt{\mu_f}=\sqrt{\mu_f^d}=\sqrt{2}$) and
$\sqrt{2}<dist(f,V_2^0)< 2$.\end{rem}
\subsection{Optimal  approximation of functions of $V_2\setminus V_2^0$.}
\begin{lem}\label{luse} Let $f\in V_2\setminus V_2^0$ and let $(f_n)_n$ be
a  bounded sequence in $V_2^0$ pointwise convergent to $f$. Then
for every sequence $(\ee_n)_n$ of positive real numbers
there exists a  convex block sequence  $(h_n)_n$ of $(f_n)_n$
satisfying the following properties.
\begin{enumerate}
\item [(i)]  $\|h_n-f\|_\infty\leq \|\widetilde{osc}_{[0,1]}
f\|_{\infty}+\ee_n$. \item [(ii)] For every
$\mathcal{I}\in\mathcal{F}([0,1]\setminus D_f)$,
$\textit{v}_2^2(h_n-f,\mathcal{I})\leq\mu_f(\cup\mathcal{I})+8\|f\|_{V_2}
\sqrt{\|\mu_f^d\|}+\ee_n$.
\end{enumerate}
\end{lem}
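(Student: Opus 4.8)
The plan is to produce the convex block sequence $(h_n)_n$ by a careful induction that controls $h_n-f$ simultaneously at a coarse scale (on the norming set $\mathcal{K}$), at fine scales (through the set functions $\widetilde{\mu}_{h_n-f,\dd}$), and in the sup--norm on $[0,1]$. Throughout I would use the elementary bound $\|\widetilde{osc}_{[0,1]}f\|_\infty\le\sqrt{\|\mu_f^d\|}$: indeed $\widetilde{osc}_{[0,1]}f$ vanishes off $D_f$, and for $t\in D_f$,
\[(\widetilde{osc}_{[0,1]}f(t))^2=\max\{|f(t^+)-f(t)|^2,\ |f(t^-)-f(t)|^2\}\le\tau_f(t)=\mu_f(\{t\})\le\|\mu_f^d\|,\]
by Proposition \ref{discr.}; one also has $\|\mu_f^d\|\le\|\mu_f\|\le\|f\|_{V_2}^2$. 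Item (i) is then immediate from Lemma \ref{approx}, since $(f_n|_{[0,1]})$ is a uniformly bounded sequence of continuous functions on the compact metric space $[0,1]$ converging pointwise to $f$: passing to convex blocks we may assume $\|h_n-f\|_\infty\le\|\widetilde{osc}_{[0,1]}f\|_\infty+\ee_n$, in fact with $\ee_n$ replaced by any prescribed smaller quantity so as to leave room for (ii).

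For (ii) I would fix a scale $\dd>0$ (to depend on $n$) and split a given $\mathcal{I}\in\mathcal{F}([0,1]\setminus D_f)$ into the subfamily $\mathcal{I}_{\ge\dd}$ of its intervals of length $\ge\dd$ and the subfamily $\mathcal{I}_{<\dd}$ of those of length $<\dd$, estimating $\textit{v}_2^2(h_n-f,\mathcal{I})=\textit{v}_2^2(h_n-f,\mathcal{I}_{\ge\dd})+\textit{v}_2^2(h_n-f,\mathcal{I}_{<\dd})$ separately. For the long part, let $\mathcal{K}_\dd$ be the set of those $x^*=\sum_i\alpha_i(\delta_{b_i}-\delta_{a_i})\in\mathcal{K}$ whose pairwise disjoint defining intervals $(a_i,b_i)$ all have length $\ge\dd$; since at most $\dd^{-1}$ such intervals occur, $\mathcal{K}_\dd$ is the image of a compact parameter set under a $w^*$--continuous map, hence a $w^*$--compact metrizable subset of $\mathcal{K}$. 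Letting $y^*\to x^*$ within $\mathcal{K}_\dd$ and using that the number of intervals stays bounded, one checks $\widetilde{osc}_{\mathcal{K}_\dd}f(x^*)\le\sum_i|\alpha_i|(\widetilde{osc}_{[0,1]}f(a_i)+\widetilde{osc}_{[0,1]}f(b_i))$, whence by Cauchy--Schwarz, $(u+v)^2\le2(u^2+v^2)$, the bound $(\widetilde{osc}_{[0,1]}f(t))^2\le\mu_f(\{t\})$ and the distinctness of the endpoints $\{a_i,b_i\}$,
\[\widetilde{osc}_{\mathcal{K}_\dd}f(x^*)\le\Bigl(2\sum_i\bigl(\mu_f(\{a_i\})+\mu_f(\{b_i\})\bigr)\Bigr)^{1/2}\le\sqrt2\sqrt{\|\mu_f^d\|},\]
uniformly in $\dd$. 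Since $(f_n)$ restricts to a uniformly bounded sequence of continuous functions on $\mathcal{K}_\dd$ converging pointwise to $f$, Lemma \ref{approx} yields convex blocks $h_n$ of $(f_n)$ with $\sup_{x^*\in\mathcal{K}_\dd}|h_n(x^*)-f(x^*)|\le\sqrt2\sqrt{\|\mu_f^d\|}+\ee_n'$ for a prescribed small $\ee_n'$; evaluating at the normalised functional $\sum_{[a,b]\in\mathcal{I}_{\ge\dd}}\alpha_{[a,b]}(\delta_b-\delta_a)$ with $\alpha_{[a,b]}=((h_n-f)(b)-(h_n-f)(a))/\textit{v}_2(h_n-f,\mathcal{I}_{\ge\dd})$, which lies in $\mathcal{K}_\dd$, gives $\textit{v}_2(h_n-f,\mathcal{I}_{\ge\dd})=(h_n-f)(x^*)$ and hence $\textit{v}_2^2(h_n-f,\mathcal{I}_{\ge\dd})\le(\sqrt2\sqrt{\|\mu_f^d\|}+\ee_n')^2\le2\|f\|_{V_2}\sqrt{\|\mu_f^d\|}+\ee_n/4$.

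For the short part, whenever $b-a<\dd$ the two--point partition $\{a,b\}$ has mesh $<\dd$, so $((h_n-f)(b)-(h_n-f)(a))^2\le\widetilde{\mu}_{h_n-f,\dd}([a,b])$ and therefore $\textit{v}_2^2(h_n-f,\mathcal{I}_{<\dd})\le\sum_{[a,b]\in\mathcal{I}_{<\dd}}\widetilde{\mu}_{h_n-f,\dd}([a,b])$. Now $\widetilde{\mu}_{h_n-f,\dd}$ is superadditive over a partition of $[0,1]$ into intervals — concatenate near--optimal subpartitions, filling the gaps with mesh $<\dd$, which only adds nonnegative increments — while $\widetilde{\mu}_{h_n-f}$ is additive over intervals with endpoints outside $D_{h_n-f}=D_f$ (Lemma \ref{simple1}), and there $\widetilde{\mu}_{h_n-f}([a,b])=\mu_{h_n-f}([a,b])=\mu_f([a,b])$ by Proposition \ref{pm}(c) and Proposition \ref{basineq}(iii). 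Writing $[0,1]$ as the alternating union of the intervals of $\mathcal{I}_{<\dd}$ and their complementary intervals one gets
\[\sum_{[a,b]\in\mathcal{I}_{<\dd}}\widetilde{\mu}_{h_n-f,\dd}([a,b])\le\mu_f\Bigl(\bigcup\mathcal{I}_{<\dd}\Bigr)+\bigl(\widetilde{\mu}_{h_n-f,\dd}([0,1])-\mu_f([0,1])\bigr),\]
and the last term tends to $0$ as $\dd\downarrow0$ for a fixed $h_n$, by the definition of $\widetilde{\mu}_{h_n-f}=\mu_{h_n-f}=\mu_f$. Adding the two parts and using $\mu_f(\cup\mathcal{I}_{<\dd})\le\mu_f(\cup\mathcal{I})$ and $2\le8$ gives
\[\textit{v}_2^2(h_n-f,\mathcal{I})\le\mu_f(\cup\mathcal{I})+8\|f\|_{V_2}\sqrt{\|\mu_f^d\|}+\Bigl(\tfrac{\ee_n}{4}+\widetilde{\mu}_{h_n-f,\dd}([0,1])-\mu_f([0,1])\Bigr),\]
so (ii) follows once the bracketed quantity is made $\le\ee_n$.

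The point where care is required — and the main obstacle — is the coordination of the scale $\dd=\dd_n$ with the block $h_n$: the long--interval step needs $\dd_n$ chosen \emph{before} $h_n$ (so that $h_n$ can be taken good on $\mathcal{K}_{\dd_n}$), whereas the short--interval step needs $\dd_n$ chosen \emph{small relative to} $h_n$ (so that $\widetilde{\mu}_{h_n-f,\dd_n}([0,1])-\mu_f([0,1])\le\ee_n/4$). I expect this coupling to be the technical heart of the argument. It should be handled by the inductive construction itself: having fixed $h_1,\dots,h_{n-1}$ and the corresponding index $N_{n-1}$, one passes through successive nested convex blocks of $(f_k)_{k\ge N_{n-1}}$, controlling in turn the sup--norm on $[0,1]$, the supremum over $\mathcal{K}_{\dd_n}$, and — exploiting that each block is a \emph{finite} convex combination of the fixed $f_k$'s and that $\textit{v}_2(\cdot-f,\mathcal{P})\to0$ along the sequence for every fixed finite $\mathcal{P}$ — the fine--scale behaviour, readjusting $\dd_n$ downward until all requirements hold for a single block, which is then taken as $h_n$; this is precisely the construction of an ``optimal'' convex block in the sense of \cite{AK}. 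Once that construction is in place, Lemma \ref{luse} follows from the measure--theoretic estimates above.
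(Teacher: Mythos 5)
Your decomposition into a ``long--interval'' part (controlled via the oscillation of $f$ on a truncated norming set $\mathcal{K}_{\dd}$) and a ``short--interval'' part (controlled via $\widetilde{\mu}_{h_n-f,\dd}$ and superadditivity) is a genuinely different route from the paper's, and each half is individually plausible. But the step you yourself flag as ``the technical heart'' --- coordinating the scale $\dd_n$ with the block $h_n$ --- is a genuine gap, not a routine technicality, and your proposed resolution does not close it. The two requirements pull in opposite directions: to control $\textit{v}_2(h_n-f,\mathcal{I}_{\geq\dd_n})$ you must choose $h_n$ \emph{after} $\dd_n$ (so that Lemma \ref{approx} can be applied on $\mathcal{K}_{\dd_n}$), while to make $\widetilde{\mu}_{h_n-f,\dd_n}[0,1]-\mu_f[0,1]$ small you must choose $\dd_n$ \emph{after} $h_n$, and the admissible threshold $\dd(h)$ depends on the modulus of the particular finite convex combination $h$ and may shrink arbitrarily fast as you move along the nested blocks. ``Readjusting $\dd_n$ downward until all requirements hold'' therefore need not terminate: lowering $\dd_n$ forces a later block $h_n$, which may force $\dd_n$ lower still. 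Note also that no single $h\in V_2^0$ can be good on $\mathcal{K}_{\dd}$ for all $\dd>0$ simultaneously (that would give $\|h-f\|_{V_2}\leq 2\sqrt{\|\mu_f^d\|}+\ee'<\sqrt{\|\mu_f\|}\leq dist(f,V_2^0)$ whenever, say, $f$ is continuous and $f\notin V_2^0$), so there is a real tension here that must be argued away, not just iterated past. A secondary, fixable point: the endpoints of the disjoint intervals defining an element of $\mathcal{K}_\dd$ need not be distinct (adjacent intervals may share an endpoint), so your Cauchy--Schwarz step gives $2\sqrt{\|\mu_f^d\|}$ rather than $\sqrt{2}\sqrt{\|\mu_f^d\|}$; this still fits in the $8\|f\|_{V_2}\sqrt{\|\mu_f^d\|}$ budget.

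The paper avoids the scale--splitting, and hence the circularity, entirely. After the first application of Lemma \ref{approx} on $[0,1]$ (your step (i)), it applies Lemma \ref{approx} once more on the \emph{full} set $\mathcal{K}$ and invokes the already--proved Theorem \ref{osc-meas} to get the single global bound $\|h_n-f\|_{V_2}\leq\|\widetilde{osc}_{\mathcal{K}}f\|_\infty+\delta_n\leq\sqrt{\|\mu_f\|}+2\sqrt{\|\mu_f^d\|}+\delta_n$, whence $\|h_n-f\|_{V_2}^2\leq\|\mu_f\|+8\|f\|_{V_2}\sqrt{\|\mu_f^d\|}+\ee_n$. Property (ii) is then obtained by \emph{subtraction} rather than by splitting $\mathcal{I}$: writing $[0,1]\setminus\cup\mathcal{I}$ as a finite union of open intervals $I_i'$, one chooses partitions $\mathcal{P}_k^i\subseteq I_i'$ with mesh tending to $0$ and $\textit{v}_2^2(f,\mathcal{P}_k^i)\to\mu_f(I_i')=\widetilde{\mu}_f(I_i')$ (Proposition \ref{pm}); since $h_n\in V_2^0$, also $\textit{v}_2^2(h_n-f,\mathcal{P}_k^i)\to\mu_f(I_i')$, and the disjointness of $\mathcal{I}$ and the $\mathcal{P}_k^i$'s gives $\textit{v}_2^2(h_n-f,\mathcal{I})+\sum_i\textit{v}_2^2(h_n-f,\mathcal{P}_k^i)\leq\|h_n-f\|_{V_2}^2$. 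Letting $k\to\infty$ and using $\|\mu_f\|=\mu_f(\cup\mathcal{I})+\sum_i\mu_f(I_i')$ cancels the complementary mass and yields (ii). Here the fine partitions are chosen after $h_n$, but they sit on the lower--bound side of the inequality and impose no constraint back on $h_n$, which is exactly what breaks the circle. I would encourage you to either adopt this global argument or supply a concrete termination proof for your iteration; as written, the proposal does not constitute a proof.
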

\begin{proof}  We may assume that $\ee_n<1$. Let also  $\delta_n= (1+6\|f\|_{V_2})^{-1}\ee_n$.
 By our assumptions we have that
$(f_n)_n$ is a uniformly bounded sequence of continuous functions
on $[0,1]$ pointwise convergent to $f$. Hence by Lemma
\ref{approx} (for $K=[0,1]$), there exists a convex block sequence
$(g_n)_n $ of $(f_n)_n$ such that
 \begin{equation}\label{a1}\|g_n-f\|_\infty\leq
 \|\widetilde{osc}_{[0,1]}
f\|_{\infty}+\delta_n,\end{equation}  Now $(g_n)_n$ is a uniformly
bounded sequence of w$^*$- continuous functions on the compact
metric space $\mathcal{K}$ and so by the same lemma and Theorem
\ref{osc-meas}, there exists a convex block sequence $(h_n)_n$ of
$(g_n)_n$ such that
\begin{equation}\label{a2}\|h_n-f\|_{V_2}\leq \|\widetilde{osc}_{\mathcal{K}}
f\|_{\infty}+\dd_n\leq
\sqrt{\|\mu_f\|}+2\sqrt{\|\mu_f^d\|}+\delta_n\end{equation} It is
clear that by (\ref{a1}) we have that
\begin{equation}\label{a3} \|h_n-f\|_\infty\leq
 \|\widetilde{osc}_{[0,1]}
f\|_{\infty}+\delta_n
\end{equation}
Moreover, using that $\|\mu_f^d\|=\sqrt
{\|\mu_f^d\|}\sqrt{\|\mu_f^d\|}\leq \|f\|_{V_2}\sqrt{\|\mu_f^d\|}$
and taking squares in  (\ref{a2}) we easily  obtain that
\begin{equation}
\label{ldom3} \|h_n-f\|_{V_2}^2\leq
\|\mu_f\|+8\|f\|_{V_2}\sqrt{\|\mu_f^d\|}+\ee_n
\end{equation} Let $\mathcal{I}\in\mathcal{F}([0,1]\setminus D_f)$. Then  $[0,1]\setminus \cup\mathcal{I}=\cup_{i=1}^m I_i'$ where each
$I_i'$ is a non trivial open interval in $[0,1]$. By Proposition
\ref{pm}, $\mu_f(I_i')=\widetilde{\mu}_f(I_i')$ and so for each
$1\leq i\leq m$, we can choose a sequence $(\mathcal{P}^i_k)_k$ of finite subsets
of $I_i'$, such that
 $\lim_k\|\mathcal{P}_k^i\|_{\max}=0$ and
$\lim_k\textit{v}_2^2(f,\mathcal{P}_k^i)=\mu_f(I_i')$. Since
$h_n\in V_2^0$,  for all $1\leq i\leq m$,
\begin{equation}\label{a4}\lim_k\textit{v}_2^2(h_n-f,\mathcal{P}_k^i)=\lim_k
\textit{v}_2^2(f,\mathcal{P}_k^i)=\mu_f(I_i')\end{equation} Also
setting $\mathcal{I}'=(I_i')_{i=1}^m$,
$\|\mu_f\|=\mu_f[0,1]=\mu_f(\cup\mathcal{I})+\mu_f(\cup
\mathcal{I}')$. Hence    (\ref{ldom3}) gives that
\[\textit{v}_2^2(h_n-f,\mathcal{I})+\sum_{i=1}^m\textit{v}_2^2(h_n-f,\mathcal{P}_k^i)
\leq\mu_f(\cup\mathcal{I})+\mu_f(\cup\mathcal{I}')+
8\|f\|_{V_2}\sqrt{\|\mu_f^d\|}+\ee_n\] Letting
$k\rightarrow\infty$ and using  (\ref{a4}), part (ii) of the lemma follows.
\end{proof}

\begin{prop}\label{ldom}
Let $X$ be a subspace of $V_2^0$, $f\in X^{**}\setminus X$ and  $(f_n)_n$ be a bounded sequence
in $X$ pointwise
convergent to $f$.  Then for every $0<\delta<dist(f,X)$ and
 for every sequence $(\ee_n)_n$ of positive real numbers there exist
a  convex block sequence $(h_n)_n$ of  $(f_n)_n$ such that for all $n<m$ the following properties are satisfied.
\begin{enumerate}
\item [(i)] $\delta< \|h_m-h_n\|_{V_2}\leq 2M$, where $M=\sup_n
\|f_n\|_{V_2} $. \item[(ii)] $\|h_m-h_n\|_{\infty}\leq
2\|\widetilde{osc}_{[0,1]} f\|_{\infty}+\ee_n\leq
4\|f\|_\infty+\ee_n.$  \item [(iii)] For every
$\mathcal{I}\in\mathcal{A}$,
$\textit{v}_2^2(h_m-h_n,\mathcal{I})\leq 4\mu_f(\cup\mathcal{I})+
32\|f\|_{V_2}\sqrt{\|\mu_f^d\|}+\ee_n$.
\end{enumerate}
Moreover given $k\in\nn$ and an open subset $V$
 of $[0,1]$ with    $\mu_f(V)>\theta>0$  there exist
$\mathcal{J}\in\mathcal{A}$ with $\cup\mathcal{J}\subseteq V$ and $l>k$ such that
\begin{enumerate}
\item[(iv)] $\textit{v}_2^2(h_l-h_k,\mathcal{J})>\theta$.
\end{enumerate}  \end{prop}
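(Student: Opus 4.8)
\emph{Setup.} The plan is to run Lemma \ref{luse} and then thin out the resulting convex block sequence to obtain the lower bound in (i). Note first that $f\in X^{**}\setminus X$ forces $f\in V_2\setminus V_2^0$ (in the canonical embeddings $X^{**}\cap V_2^0=X$), so Lemma \ref{luse} applies. Fix the non-increasing null sequence $\eta_n=\tfrac14\min\{\ee_1,\dots,\ee_n\}$, so $\eta_n\le\ee_n/4$, and let $(g_n)_n$ be a convex block sequence of $(f_n)_n$ as furnished by Lemma \ref{luse} (applied with $(\eta_n)_n$); thus $\|g_n-f\|_\infty\le\|\widetilde{osc}_{[0,1]}f\|_\infty+\eta_n$ and $\textit{v}_2^2(g_n-f,\mathcal I)\le\mu_f(\cup\mathcal I)+8\|f\|_{V_2}\sqrt{\|\mu_f^d\|}+\eta_n$ for every $\mathcal I\in\mathcal F([0,1]\setminus D_f)$. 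Each $g_n$ lies in $X$ with $\|g_n\|_{V_2}\le M$, and $g_n\to f$ pointwise (a convex block sequence of a pointwise convergent sequence converges pointwise to the same limit). Since $g\mapsto\textit{v}_2(g,\mathcal P)$ is continuous for pointwise convergence for each finite $\mathcal P$, the $V_2$-norm is pointwise lower semicontinuous, whence $\liminf_m\|g_m-g_n\|_{V_2}\ge\|f-g_n\|_{V_2}\ge dist(f,X)>\delta$ for every $n$. Therefore one may pick recursively $\sigma(1)<\sigma(2)<\cdots$ with $\|g_{\sigma(m)}-g_{\sigma(n)}\|_{V_2}>\delta$ for all $n<m$, and set $h_j=g_{\sigma(j)}$; this is again a convex block sequence of $(f_n)_n$ converging pointwise to $f$, and it inherits the two displayed estimates with $\eta_{\sigma(j)}\le\eta_j$ in place of $\eta_n$.

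\emph{Properties (i) and (ii).} The upper bound in (i) is $\|h_m-h_n\|_{V_2}\le\|h_m\|_{V_2}+\|h_n\|_{V_2}\le 2M$, and the lower bound holds by construction. For (ii), $\|h_m-h_n\|_\infty\le\|h_m-f\|_\infty+\|h_n-f\|_\infty\le 2\|\widetilde{osc}_{[0,1]}f\|_\infty+\eta_{\sigma(m)}+\eta_{\sigma(n)}\le 2\|\widetilde{osc}_{[0,1]}f\|_\infty+\ee_n$, and $\|\widetilde{osc}_{[0,1]}f\|_\infty\le 2\|f\|_\infty$ gives the last estimate.

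\emph{Property (iii).} I expect this to be the main obstacle: for a general $\mathcal I\in\mathcal A$ the endpoints may lie in $D_f$, so Lemma \ref{luse}(ii) is not directly applicable, and a crude ``shrink the intervals off $D_f$'' argument loses constants since the correction is additive in $\textit{v}_2$ rather than in $\textit{v}_2^2$. The device is to estimate the \emph{one-sided} increments. Writing $a_i\le b_i$ for the endpoints of the members of $\mathcal I$ (we may assume all members non-degenerate) and $\Delta_i^{(j)}=(g_j-f)(b_i^-)-(g_j-f)(a_i^+)$, I would first establish
\[
\sum_i\bigl(\Delta_i^{(j)}\bigr)^2\le\mu_f(\cup\mathcal I)+8\|f\|_{V_2}\sqrt{\|\mu_f^d\|}+\eta_j\qquad(\star)
\]
for every $\mathcal I\in\mathcal A$ and every $j$: for small $\rho>0$ choose $a_i^{(\rho)}\in(a_i,a_i+\rho)\setminus D_f$ and $b_i^{(\rho)}\in(b_i-\rho,b_i)\setminus D_f$ (possible as $D_f$ is countable), so that $([a_i^{(\rho)},b_i^{(\rho)}])_i$ is a pairwise disjoint family in $\mathcal F([0,1]\setminus D_f)$ with union contained in $\cup\mathcal I$; apply Lemma \ref{luse}(ii) to this family and let $\rho\to0$, using the continuity of $g_j$ and the existence of one-sided limits of $f$ so that the left-hand side tends to $\sum_i(\Delta_i^{(j)})^2$. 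Now, because $h_m-h_n\in V_2^0$ is continuous, for each member of $\mathcal I$ we have $(h_m-h_n)(b_i)-(h_m-h_n)(a_i)=(h_m-h_n)(b_i^-)-(h_m-h_n)(a_i^+)=\Delta_i^{(\sigma(m))}-\Delta_i^{(\sigma(n))}$, hence $|(h_m-h_n)(b_i)-(h_m-h_n)(a_i)|^2\le 2(\Delta_i^{(\sigma(m))})^2+2(\Delta_i^{(\sigma(n))})^2$; summing over $i$ and applying $(\star)$ to $g_{\sigma(m)}$ and to $g_{\sigma(n)}$ gives $\textit{v}_2^2(h_m-h_n,\mathcal I)\le 4\mu_f(\cup\mathcal I)+32\|f\|_{V_2}\sqrt{\|\mu_f^d\|}+2\eta_{\sigma(m)}+2\eta_{\sigma(n)}\le 4\mu_f(\cup\mathcal I)+32\|f\|_{V_2}\sqrt{\|\mu_f^d\|}+\ee_n$. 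It is the continuity of $h_m-h_n$ — which lets one pass to one-sided limits for free and split the increment symmetrically into two pieces each governed by $(\star)$ — that keeps the constants exactly $4$ and $32$; verifying $(\star)$ should be the only technical point.

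\emph{Property (iv).} Given $k$ and an open $V$ with $\mu_f(V)>\theta$: since $h_k\in V_2^0$, Proposition \ref{basineq}(iii) gives $\mu_{f-h_k}=\mu_f$, so $\mu_{f-h_k}(V)>\theta$. Writing $V$ as a countable union of pairwise disjoint open intervals $V_j$, pick a finite set $F$ with $\sum_{j\in F}\mu_{f-h_k}(V_j)>\theta$, and for $j\in F$ choose, using Proposition \ref{pm}(d) (so $\mu_{f-h_k}(V_j)=\widetilde{\mu}_{f-h_k}(V_j)$), a partition $\mathcal P_j\subseteq V_j$ with $\textit{v}_2^2(f-h_k,\mathcal P_j)$ close enough to $\mu_{f-h_k}(V_j)$; then $\mathcal J=\bigcup_{j\in F}\mathcal I_{\mathcal P_j}\in\mathcal A$ satisfies $\cup\mathcal J\subseteq V$ and $\textit{v}_2^2(f-h_k,\mathcal J)>\theta$. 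Finally, since $h_l\to f$ pointwise, $\textit{v}_2^2(h_l-h_k,\mathcal J)\to\textit{v}_2^2(f-h_k,\mathcal J)>\theta$ as $l\to\infty$, so the required inequality holds for all sufficiently large $l>k$.
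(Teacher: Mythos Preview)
Your proof is correct and follows the same plan as the paper's: apply Lemma \ref{luse}, thin out the resulting convex block sequence for the lower bound in (i), use the triangle inequality for (ii), exploit continuity of $h_m-h_n$ for (iii), and pointwise convergence for (iv). For (iii) the paper observes more directly that, since $h_m-h_n$ is continuous and $[0,1]\setminus D_f$ is dense, it suffices to verify the estimate for $\mathcal I\in\mathcal F([0,1]\setminus D_f)$, where Lemma \ref{luse}(ii) applies immediately---your one-sided-limit device is exactly this reduction made explicit; for (iv) the paper instead uses that $h_k\in V_2^0$ has small variation on fine partitions and then picks $l$ so that $h_l$ approximates $f$ on the finitely many partition points, but your route via $\mu_{f-h_k}=\mu_f$ and pointwise convergence of $\textit{v}_2^2(h_l-h_k,\mathcal J)$ is equally valid and arguably cleaner.
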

\begin{proof}
Let $(\ee_n')$ be a decreasing sequence of positive real numbers
with $\ee_n'<4\ee_n$. Let $(h_n)_n$ be the convex block sequence
of $(f_n)_n$ resulting from  Lemma \ref{luse}. Since $(h_n)_n$ is
w$^*$- convergent to $f$, for every $n\in\nn$ there are finitely
many $m>n$ such that $\|h_m-h_n\|_{V_2}\leq\delta$ (otherwise,
$\|f-h_n\|_{V_2}\leq\delta<dist(f,X)$ which is impossible).
Therefore by passing to a subsequence we may assume that for all
$n<m$, $\delta< \|h_m- h_n\|_{V_2}$. Also since $(h_n)_n$ is a
convex block sequence of  $(f_n)_n,$  $\|h_n\|_{V_2}\leq M$ and so
$\|h_m- h_n\|_{V_2}\leq 2M$.

To show (ii), notice that by (1) above,
\[\|h_m-h_n\|_\infty\leq \|h_n-f\|_\infty+\|h_m-f\|_\infty\leq 2\|\widetilde{osc}_{[0,1]} f\|_{\infty}+\ee_n\leq 4\|f\|_\infty+\ee_n\]
For property  (iii) observe that  since $h_m-h_n$ is a continuous
function on $[0,1]$, and $[0,1]\setminus D_f$ is dense in $[0,1]$,
it suffices to check it   for
$\mathcal{I}\in\mathcal{F}([0,1]\setminus D_f)$. In this case, by
Lemma \ref{luse} (ii), we have that
\[\begin{split}\textit{v}_2^2(h_m-h_n,\mathcal{I})&\leq 2\textit{v}_2^2(h_{m}-f,\mathcal{I})+
2\textit{v}_2^2(h_n-f,\mathcal{I})\\&\leq
4\mu_f(\cup\mathcal{I})+32\|f\|_{V_2}
\sqrt{\|\mu_f^d\|}+\ee_n\end{split}\] Finally fix $k\in\nn$ and
let $V$ be an open subset of $[0,1]$ with  $\mu_f(V)>\theta>0$.
Choose a family $(I_i)_{i=1}^m$ of disjoint open intervals of
$[0,1]$ such that $I_i\subseteq V$ and $\mu_f(\cup_{i=1}^m
I_i)>\theta$ and let
\[0<\ee<\frac{\mu_f(\cup_{i=1}^m I_i)-\theta}{2(1+M)m}\]
Since $h_k\in V_2^0$, there is   some $\dd>0$   such that
\begin{equation}\label{eq13} \sup\{\textit{v}_2(h_k,\mathcal{Q}): \mathcal{Q}\subseteq
[0,1],\, \|\mathcal{Q}\|_{\max}<\dd\}<\ee.\end{equation} For
every $1\leq i\leq m$, there exists $\mathcal{P}_i\subseteq I_i$ with
$\|\mathcal{P}_i\|_{\max}\leq \dd$ and
\begin{equation}\label{eq14}\mu_f(I_i)-\ee<\textit{v}_2^2(f,\mathcal{P}_i)\end{equation} Moreover since
$(h_n)_n$ converges pointwise to $f$, there is $l>k$ such
that for all $1\leq i\leq m$,
\begin{equation}\label{eq15}|\textit{v}_2^2(f,\mathcal{P}_i)-\textit{v}_2^2(h_l,\mathcal{P}_i)|<\ee\end{equation}
Then for every $1\leq i\leq m$ we have that
\[\begin{split}
\textit{v}_2^2(h_l-h_k,\mathcal{P}_i)&\geq
(\textit{v}_2(h_l,\mathcal{P}_i)-\textit{v}_2(h_k,\mathcal{P}_i))^2\geq
\textit{v}_2^2(h_l,\mathcal{P}_i)-2\textit{v}_2(h_k,\mathcal{P}_i)
\textit{v}_2(h_l,\mathcal{P}_i)\\&\geq
\textit{v}_2^2(f,\mathcal{P}_i)-\ee-2\ee\|h_l\|_{V_2}\geq
\mu_f(I_i)-2(1+M)\ee
\end{split}\]
 Therefore, setting $\mathcal{J}=\cup_{i=1}^k\mathcal{I}_{\mathcal{P}_i}$, we obtain that
\[\textit{v}_2^2(h_l-h_k,\mathcal{J})=\sum_{i=1}^{m}\textit{v}_2^2(h_l-h_k,\mathcal{P}_i)
\geq\sum_{i=1}^{m}\mu_f(I_i)-2(1+M)m\ee>\theta.\]
\end{proof}

\begin{rem}\label{Rem5} Notice that since  $\ell_1$ is not embedded into $V_2^0$,
from \cite{OR} and Goldstine's theorem, there is a sequence
$(f_n)_n$ in $X$ pointwise converging to $f$ with
$\|f_n\|_{V_2}\leq \|f\|_{V_2}$. Hence, in Proposition \ref{ldom}
we can assume that $(f_n)_n$ (and thus also $(h_n)_n$) is a
sequence in $X$ with $\|f_n\|_{V_2}\leq \|f\|_{V_2}=M$.\end{rem}

\section{On the embedding of $c_0$ into  subspaces of $V_2^0$.}
In this section we show that every subspace $X$ of $V_2^0$ with $X^*$ separable, $X^{**}$ non separable
and $\mathcal{M}_{X^{**}}=\{\mu_f:f\in X^{**}\}$ separable, contains an isomorphic copy of $c_0$.
 This is the first step towards the proof of the main theorem integrated  in the next section.
\subsection{Sequences of $V_2^0$  dominated by measures.}

\begin{defn} Let $\mu$ be a positive finite Borel measure on $[0, 1]$ and $C,\ee$ be positive constants.
We will say that  a function $G$  of $ V_2^0$   is
$(C,\ee)-$dominated by $\mu$ if  for every
$\mathcal{I}\in\mathcal{A}$,
\[\textit{v}_2^2(G, \mathcal{I})\leq C\mu(\cup\mathcal{I})+\ee\]
More generally for a sequence $(G_n)_n$ in $V_2^0$ and  a sequence
$(\ee_n)_n$ of positive real numbers, we say that $(G_n)_n$ is
$(C,(\ee_n)_n)$-dominated by $\mu$ if  for every $n\in\nn$ and
every $\mathcal{I}\in\mathcal{A}$, $\textit{v}_2^2(G_n,
\mathcal{I})\leq C\mu(\cup\mathcal{I})+\ee_n$.\end{defn}

\begin{rem}\label{Rem6} Suppose that the sequence $(G_n)_n$ is
$(C,(\ee_n)_n)$-dominated by $\mu$ and $\sum_n\ee_n=\ee<\infty$.
Then by the countable additivity and the monotonicity of $\mu$, it
is easy to see that
 for every  disjoint sequence $(\mathcal{I}_n)_n$ in $\mathcal{A}$, we have
  \begin{equation}\label{add}\sum_n\textit{v}_2^2(G_n,\mathcal{I}_n)\leq C\|\mu\|+\ee\end{equation}
\end{rem}

\begin{prop}\label{pc0}
Let $(G_n)_{n\in\nn}$ be a sequence of functions of $V_2^0$ which
is $(C,(\ee_n)_n)$- dominated by  a positive measure $\mu\in
\mathcal{M}[0,1]$, for some  null sequence $(\ee_n)_n$ of positive
real numbers. Assume also that $(G_n)_n$ is  a seminormalized
sequence in $V_2^0$ and $\lim_n\|G_n\|_{\infty}= 0$.  Then there
is a subsequence of $(G_n)_{n\in\nn}$ equivalent to the usual
basis of $c_0$.
\end{prop}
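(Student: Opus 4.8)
The plan is to pass to a subsequence that is simultaneously biorthogonal in the sense of Definition \ref{B}, with small constant, and along which the domination errors are summable, and then to extract the two-sided $c_0$-estimate from the biorthogonal partition. First I would use $\ee_n\to 0$ to pass to a subsequence with $\sum_n\ee_n<\infty$, and then, since $\lim_n\|G_n\|_\infty=0$, apply Proposition \ref{pb} to a further subsequence --- which I relabel $(G_n)_n$ --- making it $(\eta_n)_n$-biorthogonal with $\eta:=\sum_n\eta_n$ as small as we wish; using that $(G_n)_n$ is seminormalized, say $\theta\le\|G_n\|_{V_2}\le M$, I would take $\eta<\min\{1,\theta/2\}$, so that $M>\theta>2\eta>0$. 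Domination, seminormalization and $\|\cdot\|_\infty\to 0$ all pass to subsequences, and I write $\ee:=\sum_n\ee_n<\infty$.

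For the upper estimate, fix a finite set $F$, scalars $(\lambda_i)_{i\in F}$ and $\mathcal{I}\in\mathcal{A}$, and let $\mathcal{I}=\bigcup_i\mathcal{I}^{(i)}$ be the partition supplied by biorthogonality. By Lemma \ref{lul2}(i), $\textit{v}_2^2(\sum_{i\in F}\lambda_i G_i,\mathcal{I})$ is at most $\sum_{i\in F}|\lambda_i|^2\textit{v}_2^2(G_i,\mathcal{I}^{(i)})$ plus an error of the form $\max_{i\in F}|\lambda_i|^2(2M+\eta)\eta$. The crucial observation is that $(\mathcal{I}^{(i)})_{i\in F}$ is a disjoint family in $\mathcal{A}$, so by Remark \ref{Rem6} (inequality (\ref{add})) together with the $(C,(\ee_i)_i)$-domination of $(G_i)_i$ by $\mu$, one has $\sum_{i\in F}\textit{v}_2^2(G_i,\mathcal{I}^{(i)})\le C\|\mu\|+\ee$, a bound that does not depend on $F$ or on $\mathcal{I}$. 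Hence $\|\sum_{i\in F}\lambda_i G_i\|_{V_2}\le K\max_{i\in F}|\lambda_i|$ with $K:=(C\|\mu\|+\ee+(2M+\eta)\eta)^{1/2}$. This uniformity in the number of summands is exactly what produces $c_0$ (rather than, e.g., $\ell_2$) behaviour, and I expect it to be the heart of the argument; the delicate part is precisely ensuring, via the preliminary passage to a subsequence, that both the total $\mu$-mass and the accumulated errors are controlled independently of $F$.

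For the lower estimate I would follow the proof of Proposition \ref{punc}. Let $i_0\in F$ realize $|\lambda_{i_0}|=\max_{i\in F}|\lambda_i|$ and pick $\mathcal{I}_0\in\mathcal{A}$ with $\textit{v}_2(G_{i_0},\mathcal{I}_0)>\theta$; then Lemma \ref{lprep}(i) gives $\textit{v}_2(G_{i_0},\mathcal{I}_0^{(i_0)})>\theta-\eta$, and since $\theta-\eta>\eta$ Lemma \ref{lprep}(iv) yields $\textit{v}_2(\sum_{i\in F}\lambda_i G_i,\mathcal{I}_0^{(i_0)})>|\lambda_{i_0}|(\theta-2\eta)$, hence $\|\sum_{i\in F}\lambda_i G_i\|_{V_2}\ge(\theta-2\eta)\max_{i\in F}|\lambda_i|$. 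Combining the two bounds,
\[(\theta-2\eta)\max_{i\in F}|\lambda_i|\;\le\;\Big\|\sum_{i\in F}\lambda_i G_i\Big\|_{V_2}\;\le\;K\max_{i\in F}|\lambda_i|\]
for every finitely supported choice of scalars, with $\theta-2\eta>0$; passing to the closed span, this is exactly the assertion that the subsequence $(G_i)_i$ is equivalent to the unit vector basis of $c_0$.
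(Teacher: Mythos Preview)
Your argument is correct and follows essentially the same route as the paper: pass to a subsequence with summable domination errors, invoke Proposition~\ref{pb} to obtain biorthogonality, and then combine Lemma~\ref{lul2}(i) with the disjointness inequality~(\ref{add}) to get the uniform upper $c_0$-estimate $K=(C\|\mu\|+\ee+(2M+\eta)\eta)^{1/2}$. The only difference is in the lower estimate: the paper observes that $(G_n)_n$, being seminormalized and pointwise (hence weakly) null, has a basic subsequence, which immediately gives the lower $c_0$-bound; you instead extract the lower bound directly from biorthogonality by repeating the computation inside the proof of Proposition~\ref{punc}. Both are valid --- your version has the mild advantage of not appealing to the Bessaga--Pe\l czy\'nski selection principle, at the cost of writing out an argument the paper already packages elsewhere.
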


\begin{proof} Let $0<\dd<\|G_n\|_{V_2}\leq M$. Since   $(\ee_n)_n$ is a null sequence
by passing to a  subsequence  we may assume that $(\ee_n)_n$ is
decreasing and  $\sum_{n}\ee_n=\ee<\infty$. Since $(G_n)_{n}$ is
seminormalized and pointwise convergent to zero, it is weakly null
and so we may also suppose, by passing again to a subsequence,
that it is a basic sequence. Also since
$\lim_n\|G_n\|_{\infty}=0$, by Proposition \ref{pb} and passing to
a further subsequence, we may suppose that $(G_n)_{n}$ is
$(\ee_n)_{n}-$ biorthogonal.

 As $(G_n)_{n}$ is a  basic sequence,
trivially $(G_n)_n$ has a lower $c_0$-estimate. To show that
$(G_n)_{n}$ is dominated by the  $c_0$- basis, let
$(\lambda_k)_{k=1}^n$ be a sequence of scalars and let
$|\lambda_{k_0}|=\max_{1\leq k\leq n}|\lambda_k|$ . Then  by Lemma
\ref{lul2}, we have that
\begin{equation}\label{ff1}\textit{v}_2^2\Big(\sum_{k=1}^n\lambda_k
G_k,\mathcal{I}\Big)\leq
|\lambda_{k_0}|^2\Big(\sum_{k=1}^n\textit{v}_2^2(G_k,\mathcal{I}^{(k)})
+\ee(2M+\ee)\Big),\end{equation}
 for every $\mathcal{I}\in\mathcal{A}$, which by (\ref{add}) gives that
\begin{equation}\label{ff2}\textit{v}_2^2\Big(\sum_{k=1}^n\lambda_k
G_k,\mathcal{I}\Big)\leq
\Big(C\|\mu\|+\ee(2M+1+\ee)\Big)|\lambda_{k_0}|^2\end{equation}
Therefore setting $K=\sqrt{C\|\mu\|+\ee(2M+1+\ee)}$, we conclude
that
\[\Big\|\sum
_{k=1}^n\lambda_k G_k\Big\|_{V_2}\leq K\max_{1\leq k\leq
n}|\lambda_k|\] and the proof of the proposition is complete.
\end{proof}
Remark \ref{Rad} and the above proposition yield the following.
\begin{cor}\label{corrad} The sequence $(R_n)_n$ defined in
(\ref{Radem}) contains a subsequence equivalent to $c_0$ basis.
\end{cor}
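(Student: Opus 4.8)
The plan is to recognise that the Corollary is an immediate application of Proposition \ref{pc0} to the sequence $(R_n)_n$, with $\mu=\lambda$ the Lebesgue measure on $[0,1]$; thus the only real work is to check the four hypotheses of that proposition.

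First I would observe that each $R_n$ is continuous, piecewise linear and satisfies $R_n(0)=0$, so $R_n\in V_2^0$. Then property (i) of Remark \ref{Rad}, namely $\textit{v}_2^2(R_n,\mathcal{I})\leq\lambda(\cup\mathcal{I})$ for all $\mathcal{I}\in\mathcal{A}$, shows at once that, upon fixing any null sequence of positive reals $(\ee_n)_n$ (say $\ee_n=2^{-n}$), the sequence $(R_n)_n$ is $(1,(\ee_n)_n)$-dominated by $\lambda\in\mathcal{M}^+[0,1]$, since $\textit{v}_2^2(R_n,\mathcal{I})\leq\lambda(\cup\mathcal{I})\leq\lambda(\cup\mathcal{I})+\ee_n$. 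The same property gives $\|R_n\|_{V_2}\leq 1$ (taking the supremum over $\mathcal{I}\in\mathcal{A}$ and using $\lambda(\cup\mathcal{I})\leq 1$), while property (ii) of Remark \ref{Rad} with $m=n$ and $i=2^n$ yields $\textit{v}_2^2(R_n,\mathcal{P}_n)=\lambda[0,1]=1$, whence $\|R_n\|_{V_2}\geq 1$. Consequently $\|R_n\|_{V_2}=1$ for every $n$, so $(R_n)_n$ is (semi)normalised. Finally, for $\lim_n\|R_n\|_\infty=0$ I would argue directly from the definition (\ref{Radem}): the primitive $t\mapsto\int_0^t r_n(x)\,dx$ is the usual tent function, which vanishes at consecutive dyadic points and attains a maximal value of order $2^{-n}$, so $\|R_n\|_\infty=2^{n/2}\cdot O(2^{-n})=O(2^{-n/2})\to 0$.

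Having verified all the hypotheses, Proposition \ref{pc0} (applied with $C=1$, $\mu=\lambda$ and $\ee_n=2^{-n}$) produces a subsequence of $(R_n)_n$ equivalent to the usual basis of $c_0$, which is precisely the claim; note that the passage to a weakly null, basic subsequence is carried out internally in the proof of Proposition \ref{pc0}. There is no genuine difficulty here: the only spots that need a moment's attention are the $V_2$-normalisation of $(R_n)_n$ and the value of $\|R_n\|_\infty$, and both follow from the elementary estimates (i), (ii) recorded in Remark \ref{Rad}.
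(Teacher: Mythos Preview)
Your proposal is correct and follows exactly the route the paper intends: the authors simply state that the corollary follows from Remark~\ref{Rad} together with Proposition~\ref{pc0}, and you have spelled out precisely how the hypotheses of that proposition (domination by $\lambda$ with $C=1$, normalisation via (i) and (ii), and $\|R_n\|_\infty\to 0$) are verified.
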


We also state the following generalization of Proposition
\ref{pc0} for later use.

\begin{prop}\label{dominate} Let $(G_n)_{n\in\nn}$ be a sequence of $V_2^0$, $(\mu_n)_n$ be a sequence in
$\mathcal{M}^+[0,1]$ and $(\ee_n)_n$ be a null sequence of
positive real numbers with the following properties.
\begin{enumerate}
\item $(G_n)_n$ is a seminormalized sequence in $V_2^0$. \item
$\lim_n\|G_n\|_\infty=0$. \item There is a constant $C>0$ such
that  $G_n$ is $(C,\ee_n)-$dominated by $\mu_n$, for all
$n\in\nn$. \item There is a measure $\mu\in\mathcal{M}^+[0,1]$
such that $(\mu_n)_n$ is norm convergent to $\mu$.
\end{enumerate}  Then there is a subsequence of
$(G_n)_{n\in\nn}$ equivalent to the usual basis of $c_0$.
\end{prop}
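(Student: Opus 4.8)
The plan is to reduce Proposition \ref{dominate} directly to Proposition \ref{pc0} by absorbing the discrepancy between $\mu_n$ and its norm limit $\mu$ into the error terms. First I would note that for every $\mathcal{I}\in\mathcal{A}$ and every $n\in\nn$, since $\mu_n$ and $\mu$ are positive measures and $\mu_n-\mu$ is a finite signed measure,
\[\mu_n(\cup\mathcal{I})=\mu(\cup\mathcal{I})+(\mu_n-\mu)(\cup\mathcal{I})\leq\mu(\cup\mathcal{I})+\|\mu_n-\mu\|,\]
where the last inequality is immediate from the definition $\|\nu\|=\sup\{|\nu(B)|:B\text{ Borel}\}$ of the norm on $\mathcal{M}[0,1]$. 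Combining this with hypothesis (3), i.e. $\textit{v}_2^2(G_n,\mathcal{I})\leq C\mu_n(\cup\mathcal{I})+\ee_n$, yields
\[\textit{v}_2^2(G_n,\mathcal{I})\leq C\mu(\cup\mathcal{I})+\ee_n',\qquad\text{where }\ee_n'=\ee_n+C\|\mu_n-\mu\|.\]

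Next I would observe that $(\ee_n')_n$ is again a null sequence of positive real numbers: $\ee_n'\geq\ee_n>0$, while $\ee_n\to 0$ by assumption and $\|\mu_n-\mu\|\to 0$ by hypothesis (4). Hence $(G_n)_n$ is a seminormalized sequence of $V_2^0$ with $\lim_n\|G_n\|_\infty=0$ which is $(C,(\ee_n')_n)$-dominated by the \emph{single} positive measure $\mu\in\mathcal{M}^+[0,1]$, so all the hypotheses of Proposition \ref{pc0} are fulfilled. Applying Proposition \ref{pc0} produces a subsequence of $(G_n)_n$ equivalent to the usual basis of $c_0$, which is the desired conclusion.

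There is essentially no obstacle in this argument; the construction of the $c_0$-subsequence, including the passage to a summable subsequence of the errors and the use of biorthogonality via Lemma \ref{lul2}, is entirely carried out inside Proposition \ref{pc0}. The only point requiring a moment's care is the uniform-over-Borel-sets control of $(\mu_n-\mu)(B)$ by $\|\mu_n-\mu\|$, which is built into the choice of norm on $\mathcal{M}[0,1]$, together with the elementary remark that adding the vanishing quantities $C\|\mu_n-\mu\|$ preserves the null-sequence property of the error terms.
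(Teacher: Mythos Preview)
Your proof is correct and uses the same key observation as the paper, namely that $\mu_n(\cup\mathcal{I})\leq\mu(\cup\mathcal{I})+\|\mu_n-\mu\|$; the paper, rather than reducing directly to Proposition~\ref{pc0}, unfolds that proof and inserts this bound after the biorthogonal partition (first passing to a subsequence with $\|\mu_n-\mu\|<2^{-n}$ so that the extra errors sum). Your packaging as an immediate reduction to Proposition~\ref{pc0} via the modified null sequence $\ee_n'=\ee_n+C\|\mu_n-\mu\|$ is slightly cleaner and avoids repeating any of the earlier argument.
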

\begin{proof} By passing to a subsequence of $(G_n)_n$ we may suppose that
for all $n\in\nn$,
\begin{equation}\label{ff3}\|\mu_n-\mu\|<2^{-n}\end{equation}
 Let $0<\dd<\|G_n\|_{V_2}\leq M$. As in the proof
of Proposition \ref{pc0}, by passing to a further subsequence  we
may also assume that $\sum_{n}\ee_n=\ee<\infty$ and that $(G_n)_n$
is again a basic sequence which in addition is $(\ee_n)_{n}-$
biorthogonal. To show that $(G_n)_n$ is dominated by the
$c_0$-basis, equations (\ref{ff1}) and (\ref{ff3}) give that for
every $\mathcal{I}\in\mathcal{A}$,
\[\begin{split}\textit{v}_2^2\Big(\sum_{k=1}^n\lambda_k
G_k,\mathcal{I}\Big)&\leq
|\lambda_{k_0}|^2\Big(C\sum_{k=1}^n\mu_k(\cup\mathcal{I}^{(k)})
+\ee(2M+1+\ee)\Big)\\&\leq |\lambda_{k_0}|^2(C\|\mu\|
+C+\ee(2M+1+\ee)),\end{split}\]
 which setting  $K=\sqrt{C\|\mu\|+C +\ee(2M+1+\ee)}$, yields that
\[\Big\|\sum _{k=1}^n\lambda_k G_k\Big\|_{V_2}\leq K
\max\{|\lambda_k|:1\leq k\leq n\}\]
\end{proof}

\begin{prop} \label{c011} Let $X$ be a subspace of $V_2^0$ and let $(f_n)_n$ be
a bounded sequence in $X$ pointwise convergent to a function $f\in
(X^{**}\setminus X)\cap C[0,1]$.   Then for every $0<\delta<dist(f,X)$ and
 for every sequence $(\ee_n)_n$ of positive real numbers there exist
a  convex block sequence $(h_n)_n$ of  $(f_n)_n$ such that for all $n<m$ the following  are satisfied.
\begin{enumerate}
\item [(i)] $\delta< \|h_m-h_n\|_{V_2}\leq 2M$, where $M=\sup_n
\|f_n\|_{V_2} $. \item[(ii)] $\|h_m-h_n\|_{\infty}\leq \ee_n$.
\item [(iii)] The function $h_m-h_n$ is $(4,\ee_n)$-dominated by
the measure $\mu_f$.
\end{enumerate}\end{prop}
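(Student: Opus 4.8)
The plan is to obtain Proposition \ref{c011} as a direct specialization of Proposition \ref{ldom} to the case of a \emph{continuous} limit function, exploiting that for $f\in C[0,1]$ both of the error terms $\widetilde{osc}_{[0,1]}f$ and $\mu_f^d$ that appear in \ref{ldom} vanish. First I would note that, since $f\in(X^{**}\setminus X)\cap C[0,1]$ and $X$ is a closed subspace of $V_2^0$, we have $dist(f,X)>0$, so the hypothesis $0<\delta<dist(f,X)$ can indeed be met. I would then apply Proposition \ref{ldom} to $X$, the function $f$, the bounded sequence $(f_n)_n$ (which is pointwise convergent to $f$), the chosen $\delta$, and the prescribed sequence $(\ee_n)_n$ of positive reals, obtaining a convex block sequence $(h_n)_n$ of $(f_n)_n$ for which properties (i)--(iii) of \ref{ldom} hold for all $n<m$. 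Property (i) of the present statement is then literally property (i) of \ref{ldom}.

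It remains to simplify the bounds in (ii) and (iii) using the continuity of $f$. For (ii): since $f$ is continuous on $[0,1]$, the oscillation function $\widetilde{osc}_{[0,1]}f$ is identically zero, so $\|\widetilde{osc}_{[0,1]}f\|_\infty=0$, and property (ii) of Proposition \ref{ldom}, namely $\|h_m-h_n\|_\infty\leq 2\|\widetilde{osc}_{[0,1]}f\|_\infty+\ee_n$, collapses to $\|h_m-h_n\|_\infty\leq\ee_n$. For (iii): by Proposition \ref{pm}(a) the measure $\mu_f$ is continuous (diffuse) precisely because $f$ is continuous, hence its discrete part $\mu_f^d$ is the zero measure and $\sqrt{\|\mu_f^d\|}=0$; then property (iii) of Proposition \ref{ldom}, namely $\textit{v}_2^2(h_m-h_n,\mathcal{I})\leq 4\mu_f(\cup\mathcal{I})+32\|f\|_{V_2}\sqrt{\|\mu_f^d\|}+\ee_n$, collapses to $\textit{v}_2^2(h_m-h_n,\mathcal{I})\leq 4\mu_f(\cup\mathcal{I})+\ee_n$ for every $\mathcal{I}\in\mathcal{A}$, which is exactly the assertion that $h_m-h_n$ is $(4,\ee_n)$-dominated by $\mu_f$.

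I do not expect any genuine obstacle here: all of the substantive work — the construction of the convex block sequence via Lemma \ref{approx} together with the basic inequality of Theorem \ref{osc-meas}, and the passage to a subsequence securing the lower bound $\delta<\|h_m-h_n\|_{V_2}$ — was already carried out in Proposition \ref{ldom}. The only point requiring care is the (routine) translation of the continuity hypothesis into the vanishing of $\widetilde{osc}_{[0,1]}f$ and of $\mu_f^d$. Should a self-contained argument be preferred, one could instead rerun the proof of Proposition \ref{ldom}, and hence of Lemma \ref{luse}, verbatim: in the continuous case the estimate (\ref{a2}) there already reads $\|h_n-f\|_{V_2}\leq\sqrt{\|\mu_f\|}+\delta_n$ and (\ref{a3}) reads $\|h_n-f\|_\infty\leq\delta_n$, and the rest of the argument proceeds identically; but invoking \ref{ldom} directly is cleaner.
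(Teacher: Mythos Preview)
Your proposal is correct and follows exactly the paper's own argument: the paper's proof is just the two-line observation that continuity of $f$ gives $\widetilde{osc}_{[0,1]}f=0$ and $\mu_f^d=0$, whence the result follows from Proposition \ref{ldom}. Your additional remarks (that $dist(f,X)>0$, the reference to Proposition \ref{pm}(a), and the alternative of rerunning Lemma \ref{luse}) are all accurate but more than the paper itself provides.
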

\begin{proof} Since $f$ is
continuous, we have that
 $\widetilde{osc}_{[0,1]}f=0$ and
$\mu_f^d=0$. The result now follows by Proposition \ref{ldom}.
\end{proof}

\begin{cor} \label{c0emb}Let $X$ be a subspace of $V_2^0$ and  $f\in
(X^{**}\setminus X)\cap C[0,1]$. Then $c_0$ is embedded into $X$.
\end{cor}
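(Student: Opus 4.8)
The plan is to combine Proposition \ref{c011} with Proposition \ref{pc0}, the continuity of $f$ being precisely what makes this a short deduction. First I would fix a real number $\delta$ with $0<\delta<dist(f,X)$ and a summable sequence $(\ee_n)_n$ of positive reals (so in particular $\ee_n\to 0$). Since $\ell_1$ does not embed into $V_2^0$, Remark \ref{Rem5} provides a bounded sequence $(f_n)_n$ in $X$ with $\|f_n\|_{V_2}\le\|f\|_{V_2}$ which converges pointwise to $f$. Applying Proposition \ref{c011} to this sequence, to $\delta$, and to $(\ee_n)_n$, I obtain a convex block sequence $(h_n)_n$ of $(f_n)_n$ — hence a sequence lying in $X$ — such that for all $n<m$ one has $\delta<\|h_m-h_n\|_{V_2}\le 2M$ (where $M=\sup_n\|f_n\|_{V_2}$), $\|h_m-h_n\|_{\infty}\le\ee_n$, and $h_m-h_n$ is $(4,\ee_n)$-dominated by the measure $\mu_f$.

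Next I would set $G_n=h_{n+1}-h_n\in X$ for $n\in\nn$. The three properties above say exactly that $(G_n)_n$ is seminormalized in $V_2^0$, since $\delta<\|G_n\|_{V_2}\le 2M$; that $\|G_n\|_{\infty}\le\ee_n\to 0$; and that $(G_n)_n$ is $(4,(\ee_n)_n)$-dominated by the fixed positive measure $\mu_f\in\mathcal{M}^+[0,1]$, with $(\ee_n)_n$ a null sequence. These are precisely the hypotheses of Proposition \ref{pc0}, so some subsequence of $(G_n)_n$ is equivalent to the usual basis of $c_0$. Since every $G_n$ belongs to $X$, this exhibits an isomorphic copy of $c_0$ inside $X$, which is the assertion of the corollary.

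I do not expect a genuine obstacle here. The only role played by the continuity of $f$ is that $\widetilde{osc}_{[0,1]}f=0$ and $\mu_f^d=0$, which is exactly what collapses the error terms $2\|\widetilde{osc}_{[0,1]}f\|_\infty$ and $32\|f\|_{V_2}\sqrt{\|\mu_f^d\|}$ appearing in Proposition \ref{ldom}, so that Proposition \ref{c011} delivers clean domination by the single measure $\mu_f$ — which is the form required by Proposition \ref{pc0}. The points to be careful about are minor: to choose $(\ee_n)_n$ null (summable is convenient) \emph{before} invoking Proposition \ref{c011}, and to observe that a convex block sequence of elements of $X$, together with its difference sequence $(h_{n+1}-h_n)_n$, remains in $X$.
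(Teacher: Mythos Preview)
Your proof is correct and follows essentially the same route as the paper: invoke Remark \ref{Rem5} for an approximating sequence, apply Proposition \ref{c011} to get the convex block sequence $(h_n)_n$, form differences, and feed them into Proposition \ref{pc0}. The only cosmetic difference is that the paper sets $G_n=h_{2n}-h_{2n-1}$ rather than your $G_n=h_{n+1}-h_n$, but either choice satisfies the hypotheses of Proposition \ref{pc0} equally well.
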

\begin{proof} As we have already mentioned  (see Remark \ref{Rem5}), there
is a sequence $(f_n)_n$ in $X$ pointwise convergent to $f$ with
$\|f_n\|_{V_2}\leq \|f\|_{V_2}$. Let $(\ee_n)_n$ be a null
sequence of positive real numbers and let $(h_n)_n$ be a convex
block sequence of $(f_n)_n$ satisfying the properties of
Proposition \ref{c011}. For each $n\in\nn$, let
$G_n=h_{2n}-h_{2n-1}$. By (i)-(iii) of Proposition \ref{c011}, we
have that $(G_n)_n$ is a seminormalized sequence of functions in
$X$, $\lim_n\|G_n\|_\infty=0$ and $(G_n)_n$ is
$(4,(\ee_{2n-1})_n)$-dominated by the measure $\mu_f$. By
Proposition \ref{pc0} the result follows.
\end{proof}

\begin{rem}\label{rem6} Exploiting more carefully Lemma \ref{c011}, we may pass to
an appropriate subsequence of $(h_n)_n$ which is equivalent to the
summing basis. This gives an alternative proof of the known result
 that every $f\in V_2\cap C[0,1]$ is a difference
of bounded semicontinuous functions on the compact metric space
$(B_{(V_2^0)^*},w^*)$. Moreover the converse of Corollary
\ref{c0emb} also holds, that is  $(X^{**}\setminus X)\cap
C[0,1]\neq \emptyset$ if and only if $c_0$ is embedded into $X$
(cf. \cite{AMP}).\end{rem}

\subsection{The embedding of $c_0$ into $X$ when
$\mathcal{M}_{X^{**}}$ is separable.}
\begin{lem}\label{ld} Let $X$ be a subspace of $V_2^0$ and $\mathcal{F}$ be an uncountable
subset of $X^{**}$. If $D_\mathcal{F}=\cup_{f\in\mathcal{F}} D_f$
is countable then for every $\ee>0$ there is an uncountable subset
$\mathcal{F}'\subseteq \mathcal{F}$ such that for every
$f_1,f_2\in \mathcal{F}'$, $\|\mu_{f_1-f_2}^d\|<\ee.$
\end{lem}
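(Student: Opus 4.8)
The plan is to reduce the assertion to an elementary clustering argument in a separable Hilbert space, by encoding the ``jump data'' of the functions in $\mathcal{F}$. First, since $X^{**}=\bigcup_{n}nB_{X^{**}}$, after passing to an uncountable subset we may assume that there is $M>0$ with $\|f\|_{V_2}\le M$ for every $f\in\mathcal{F}$. Put $S=D_\mathcal{F}$, which is countable by hypothesis. For $f\in\mathcal{F}$ and $x\in S$ set $a_f(x)=f(x^-)-f(x)$ and $b_f(x)=f(x^+)-f(x)$ (using the conventions $f(0^-)=f(0)$, $f(1^+)=f(1)$); these one-sided limits exist at every point of $D_f$, and $a_f(x)=b_f(x)=0$ whenever $x\in S\setminus D_f$. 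Define $j_f\colon S\to\rr^2$ by $j_f(x)=(a_f(x),b_f(x))$. By the inequality $\sum_{t\in D_f}|f(t)-f(t^-)|^2+|f(t)-f(t^+)|^2\le\|f\|_{V_2}^2$ recorded in Section~2, $j_f$ lies in the Hilbert space $H=\ell_2(S;\rr^2)$ of square-summable $\rr^2$-valued functions on $S$, and $\|j_f\|_H\le M$.

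The core estimate is that $\|\mu_{f_1-f_2}^d\|$ is controlled by the squared $H$-distance of the associated jump vectors. Writing $g=f_1-f_2$ and noting that $g(x^+)-g(x)=b_{f_1}(x)-b_{f_2}(x)$ and $g(x^-)-g(x)=a_{f_1}(x)-a_{f_2}(x)$, the definition of $\tau_g(x)$ together with $|u-v|^2\le 2|u|^2+2|v|^2$ yields $\tau_g(x)\le 2\|j_{f_1}(x)-j_{f_2}(x)\|_{\rr^2}^2$ for every $x$. Since $D_g\subseteq S$, summing over $x\in S$ and applying Proposition~\ref{discr.} (which gives $\|\mu_g^d\|=\mu_g^d[0,1]=\sum_{x\in D_g}\tau_g(x)$), we obtain
\[\|\mu_{f_1-f_2}^d\|\ \le\ 2\,\|j_{f_1}-j_{f_2}\|_H^2\qquad\text{for all }f_1,f_2\in\mathcal{F}.\]

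Finally, since $S$ is countable, $H$ is norm separable, so the ball $MB_H$ can be covered by countably many norm balls of radius $\rho=\tfrac13\sqrt{\ee/2}$. As $\mathcal{F}$ is uncountable, one of these balls contains $j_f$ for uncountably many $f\in\mathcal{F}$; let $\mathcal{F}'\subseteq\mathcal{F}$ be the corresponding uncountable set of functions. For $f_1,f_2\in\mathcal{F}'$ we have $\|j_{f_1}-j_{f_2}\|_H\le 2\rho<\sqrt{\ee/2}$, whence $\|\mu_{f_1-f_2}^d\|<\ee$ by the displayed inequality, as required. The only step needing care is the quadratic estimate for $\tau_{f_1-f_2}$, together with the bookkeeping that what is being bounded is genuinely the \emph{discrete} part of $\mu_{f_1-f_2}$ (via Proposition~\ref{discr.}) rather than the whole measure; the rest is just the observation that a bounded subset of a separable Hilbert space decomposes into countably many pieces of arbitrarily small diameter.
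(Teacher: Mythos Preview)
Your proof is correct and, in spirit, close to the paper's argument, but the packaging is genuinely cleaner. The paper enumerates $D_{\mathcal{F}}=\{t_n\}_n$ and argues in two stages: first pigeonhole to find a common $n_0$ with $\sum_{n>n_0}\tau_f(t_n)<\ee/8$ for all $f$ in an uncountable subset (and handle the tail via the subadditivity estimate $\tau_{f_1-f_2}(t)\le 2\tau_{f_1}(t)+2\tau_{f_2}(t)$), and then pigeonhole again on the finitely many values $f(t_n^j)$, $n\le n_0$, $j\in\{-,0,+\}$, to make $\tau_{f_1-f_2}(t_n)$ small for the head. You bypass this head/tail split by encoding all the jump data at once as a vector $j_f=(f(x^-)-f(x),\,f(x^+)-f(x))_{x\in S}$ in the separable Hilbert space $\ell_2(S;\rr^2)$, proving the clean inequality $\|\mu_{f_1-f_2}^d\|\le 2\|j_{f_1}-j_{f_2}\|_H^2$, and then clustering by a single pigeonhole on small $H$-balls. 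What your approach buys is a one-line reduction to separability of $H$; what the paper's approach buys is that it never needs the preliminary passage to a bounded subset of $X^{**}$ (its two pigeonhole steps work for arbitrary countable families of real values). Both verifications of the key pointwise bound $\tau_g(x)\le 2(|a_g(x)|^2+|b_g(x)|^2)$ and the identification of $\|\mu_g^d\|$ via Proposition~\ref{discr.} are the same.
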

\begin{proof} Let
$D_{\mathcal{F}}=\{t_n\}_n$.
 By Proposition  \ref{discr.}, we have that $\|\mu_f^d\|=\sum_{n}\tau_f(t_n)\leq
\|f\|_{V_2}^2$,  for every $f\in \mathcal{F}$. By the definition
of $\tau_f(t)$,  we easily get the following inequalities.
\begin{enumerate}
\item[(a)] For every $f\in V_2$, $\tau_f(t)\leq
4(|f(t^+)|^2+|f(t^+)|^2+|f(t)|^2)$. \item[(b)] For every $f_1,
f_2$ in $V_2$, $\tau_{f_1-f_2}(t)\leq
2\tau_{f_1}(t)+2\tau_{f_2}(t)$.
\end{enumerate}
By passing to an uncountable subset $\mathcal{F}'$ of
$\mathcal{F}$, we may suppose that the following hold.
\begin{enumerate}
\item [(i)] There is $n_0\in\nn$ such that
$\sum_{n>n_0}\tau_f(t_n)< \ee/8$, for all $f\in\mathcal{F}'$.
\item [(ii)] For all $1\leq n\leq n_0$, $j\in\{-,0,+\}$ and
$f_1,f_2\in \mathcal{F}'$, $|(f_1-f_2)(t_n^j)|<\sqrt{\frac{\ee}{24
n_0}}$.
\end{enumerate}
Then by (ii) and (a), we get that for every $f_1,
f_2\in\mathcal{F}'$,
\begin{equation}\label{low1}\sum_{n=1}^{n_0}\tau_{f_1-f_2}(t_n)\leq
4\sum_{j\in\{-,0,+\}}\sum_{n=1}^{n_0}|(f_1-f_2)(t_n^j)|^2<
\ee/2\end{equation} Moreover by (i) and (b),
\begin{equation}\label{low2}\sum_{n>n_0}\tau_{f_1-f_2}(t_n)\leq
2\sum_{n>n_0}\tau_{f_1}(t_n)+2\sum_{n>n_0}\tau_{f_2}(t_n)<
\ee/2\end{equation} Hence, by (\ref{low1}) and (\ref{low2}),
$\|\mu_{f_1-f_2}^d\|=\sum_{n=1}^{n_0}\tau_{f_1-f_2}(t_n)+\sum_{n>n_0}\tau_{f_1-f_2}(t_n)<
\ee$. \end{proof}

\begin{prop}\label{submain}
Let $X$ be a subspace of $V_2^0$ such that $X^*$ is separable,
$X^{**}$ non-separable. If  $\mathcal{M}_{X^{**}}$ is a separable
subset of $\mathcal{M}[0,1]$ then   $c_0$ is embedded into $X$.
\end{prop}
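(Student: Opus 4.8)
The plan is to manufacture a seminormalized sequence in $X$ that meets the hypotheses of Proposition \ref{dominate}; the copy of $c_0$ is then automatic.

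\emph{Step 1: thinning a separated family of $X^{**}$.} Since $X\subseteq V_2^0$ is separable and $X^{**}$ is non-separable, the quotient $X^{**}/X$ is non-separable, so there are $\dd>0$ and an uncountable family $(F_\xi)_{\xi<\omega_1}$ with $\|F_\xi\|_{V_2}\leq 2$ and $dist(F_\xi-F_\eta,X)\geq\dd$ for all $\xi\neq\eta$. I would then thin this family using three separability facts simultaneously: as $X^*$ is separable, $D_{X^{**}}$ is countable (Proposition \ref{pcd}) and $(X^{**},\|\cdot\|_\infty)$ is separable (Corollary \ref{corps}), while $\mathcal{M}_{X^{**}}$ is separable by hypothesis. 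Build a decreasing chain of uncountable sets $\mathcal{F}_0\supseteq\mathcal{F}_1\supseteq\cdots$: from $\mathcal{F}_{k-1}$ pass to an uncountable subset on which $\|F_\xi-F_\eta\|_\infty<2^{-k}$ (a $2^{-k-1}$-ball of the separable space $(X^{**},\|\cdot\|_\infty)$ catching uncountably many $F_\xi$), then to one on which $\|\mu_{F_\xi}-\mu_{F_\eta}\|<2^{-k}$ (the same argument inside $\mathcal{M}_{X^{**}}$), then apply Lemma \ref{ld} to obtain $\mathcal{F}_k$ with $\|\mu_{F_\xi-F_\eta}^d\|<2^{-k}$ for all $\xi,\eta\in\mathcal{F}_k$. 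Picking distinct $\xi_n\in\mathcal{F}_n$ and setting $\widehat F_n=F_{\xi_n}$, one gets, for all $m\geq n$, that $dist(\widehat F_m-\widehat F_n,X)\geq\dd$, $\|\widehat F_m-\widehat F_n\|_\infty<2^{-n}$, $\|\mu_{\widehat F_m-\widehat F_n}^d\|<2^{-n}$ and $\|\mu_{\widehat F_m}-\mu_{\widehat F_n}\|<2^{-n}$; in particular $(\mu_{\widehat F_n})_n$ is $\|\cdot\|$-Cauchy, converging to some $\nu\in\mathcal{M}^+[0,1]$.

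\emph{Step 2: differences, and their dominating measures.} Set $G_n=\widehat F_{2n}-\widehat F_{2n-1}$. Since $dist(G_n,X)\geq\dd>0$ we have $G_n\in X^{**}\setminus X$, and $\|G_n\|_{V_2}\leq 4$, $\|G_n\|_\infty\to 0$, $\|\mu_{G_n}^d\|\to 0$. The key point — where the non-additivity of $f\mapsto\mu_f$ must be sidestepped — is that I do \emph{not} try to make $(\mu_{G_n})_n$ converge; instead, by Proposition \ref{basineq}(i),(ii), $\mu_{G_n}\leq 2\mu_{\widehat F_{2n}}+2\mu_{\widehat F_{2n-1}}=:\lambda_n$, and $\lambda_n\to 4\nu$ in $\|\cdot\|$. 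Now apply Proposition \ref{ldom} to each $G_n$ (taking $\delta=\dd/2$, a sequence of positive errors with first term $1/n$, and by Remark \ref{Rem5} an approximating sequence in $X$ of norm $\leq\|G_n\|_{V_2}$) and keep only the first difference $H_n:=h^n_2-h^n_1\in X$. Conditions (i)--(iii) of Proposition \ref{ldom} then give $\dd/2<\|H_n\|_{V_2}\leq 8$, $\|H_n\|_\infty\leq 4\|G_n\|_\infty+1/n\to 0$, and $H_n$ is $(4,\eta_n)$-dominated by $\mu_{G_n}$, hence by the larger measure $\lambda_n$, where $\eta_n=32\|G_n\|_{V_2}\sqrt{\|\mu_{G_n}^d\|}+1/n\to 0$.

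\emph{Step 3: conclusion.} The sequence $(H_n)_n\subseteq X$, the measures $(\lambda_n)_n$, the null sequence $(\eta_n)_n$, the constant $C=4$, and the measure $4\nu\in\mathcal{M}^+[0,1]$ satisfy all four hypotheses of Proposition \ref{dominate}: $(H_n)$ is seminormalized in $V_2^0$, $\|H_n\|_\infty\to 0$, $H_n$ is $(4,\eta_n)$-dominated by $\lambda_n$, and $\lambda_n\to 4\nu$ in norm. Therefore a subsequence of $(H_n)_n$ is equivalent to the usual basis of $c_0$, and since $(H_n)_n$ lies in $X$ we conclude that $c_0$ embeds into $X$.

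I expect the main obstacle to be Step 2. The natural impulse is to imitate the continuous case (Corollary \ref{c0emb}): feed a single $f\in X^{**}\setminus X$ to Proposition \ref{ldom} and dominate by $\mu_f$. But non-separability of $X^{**}$ only supplies elements that are mutually far \emph{modulo $X$}, so one is forced to work with differences $G_n=\widehat F_{2n}-\widehat F_{2n-1}$; these do keep $\|G_n\|_\infty$ and $\|\mu_{G_n}^d\|$ small and preserve $dist(G_n,X)\geq\dd$ (the latter being precisely why one must separate in $X^{**}/X$ and not merely in $X^{**}$), yet their measures $\mu_{G_n}$ need not norm-converge — separability of $\mathcal{M}_{X^{**}}$ provides no compactness, and $f\mapsto\mu_f$ is not linear. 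The device that closes the gap is to replace $\mu_{G_n}$ by the majorant $\lambda_n=2\mu_{\widehat F_{2n}}+2\mu_{\widehat F_{2n-1}}$, whose convergence \emph{is} forced by the simultaneous thinning of Step 1; the remaining care is to make that thinning control $\|\cdot\|_\infty$, $\mu^d$ and $\mu$ at once, and to check that every subsequent passage to a subsequence leaves $dist(G_n,X)\geq\dd$ intact, so that $(H_n)$ really is seminormalized.
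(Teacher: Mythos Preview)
Your proof is correct and follows essentially the same route as the paper. The only cosmetic difference is bookkeeping: the paper picks \emph{two} elements $f_1^n\neq f_2^n$ from each level $\mathcal{F}_n$ and sets $F_n=f_1^n-f_2^n$, whereas you pick one element per level and form $G_n=\widehat F_{2n}-\widehat F_{2n-1}$; both then dominate by $2\mu_{f_1}+2\mu_{f_2}$, use its norm-convergence, and finish with Proposition~\ref{dominate}.
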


\begin{proof} Let  $\mathcal{F}$ be an uncountable subset of the unit sphere
$S_{X^{**}}$ of $X^{**}$ such that  for all $f_1\neq f_2$ in
$\mathcal{F}$, $\|f_1-f_2\|_{V_2}>3\dd>0$. Since $X$ is separable,
it is easy to see that by  passing  to  a further uncountable
subset, we  may assume that for all  $f_1\neq f_2$ in
$\mathcal{F}$, $dist(f_1-f_2, X)>\dd$. Moreover  since $X^*$ is
separable, by Proposition \ref{P1} the set $D_\mathcal{F}$ is
countable.

Let $(\ee_n)_{n}$ be a sequence of positive real numbers with
$\ee=\sum_{n}\ee_n<\infty$. Using Lemma \ref{ld}, Proposition
\ref{ps} and our assumption that $\mathcal{M}_{X^{**}}$ is a
separable, we easily construct a decreasing sequence
$(\mathcal{F}_n)_{n}$ of uncountable subsets of $\mathcal{F}$ such
that  \begin{equation}\label{R1}\|\mu_{f_1-f_2}^d\|<\ee_n^2,\;\;
\|f_1-f_2\|_\infty<\ee_n\;\;\text{ and}\;\;
\|\mu_{f_1}-\mu_{f_2}\|<\ee_n,\end{equation} for all $n\in\nn$ and
$f_1,f_2\in\mathcal{F}_n$.

Given the above construction, we pick for each $n$, $f_1^n\neq
f_2^n$ in $\mathcal{F}_n$.  Notice that since $(\mathcal{F}_n)_n$
is decreasing,  by (\ref{R1}) we have that
$\|\mu_{f_1^n}-\mu_{f_1^{n+1}}\|<\ee_n$ and so, as
$\sum_n\ee_n<\infty$, the sequence $(\mu_{f_1^n})_n$ is norm
converging to a $\mu_1\in\mathcal{M}[0,1]$. Similarly
$(\mu_{f_2^n})_n$ converges to a $\mu_2\in\mathcal{M}[0,1]$.  Let
$F_n=f_1^n-f_2^n$. Applying for each $n\in\nn$, Proposition
\ref{ldom}  with $F_n$ in place of $f$, we obtain $G_n\in X$
satisfying the following.
\begin{enumerate}
\item [(i)] $\delta<\|G_n\|_{V_2}\leq 2\|F_n\|_{V_2}\leq 4$.
\item[(ii)] $\|G_n\|_{\infty}\leq
4\|F_n\|_{\infty}+\ee_n$.\item[(iii)] For every
$\mathcal{I}\in\mathcal{A}$, $\textit{v}_2^2(G_n,\mathcal{I})\leq
4\mu_{F_n}(\cup\mathcal{I})+32
\|F_n\|_{V_2}\sqrt{\|\mu_{F_n}^d\|}+\ee_n$.
\end{enumerate}
By (\ref{R1}), $\|F_n\|_\infty<\ee_n$ and therefore   (ii) gives
\begin{enumerate}
\item[(iv)] $\lim_n\|G_n\|_\infty=0.$
\end{enumerate}
 Moreover   $\|\mu_{F_n}^d\|<\ee_n^2$,
$\|F_n\|_{V_2}\leq 2$ and setting
$\mu_n=2(\mu_{f_1^n}+\mu_{f_2^n})$, by Proposition \ref{basineq}, $\mu_{F_n}\leq\mu_n$.
Replacing in  (iii) we get that for each $n\in\nn$,
\begin{enumerate}
\item[(v)] For all $\mathcal{I}\in\mathcal{A}$,
$\textit{v}_2^2(G_n,\mathcal{I})\leq
4\mu_n(\cup\mathcal{I})+65\ee_n$,that is  $G_n$ is
$65\ee_n$-dominated by $\mu_n$. \end{enumerate} By (i), (iv), (v)
and since  $(\mu_n)_n$ is norm convergent to $\mu_1+\mu_2$, the
assumptions of Proposition \ref{dominate} are fulfilled and so
there is a subsequence of $(G_n)_n$ equivalent to the $c_0$
basis.\end{proof}

\section{On the embedding of $S^2$ into  subspaces of $V_2^0$.}
This final section includes  the main results of the paper. We
divide this section  into three subsections. In the first
subsection we define the $S^2$- \textit{systems} and we show that
their existence  in a subspace $X$ of $V_2^0$ lead to the
embedding of $S^2$ into $X$. A key ingredient is Lemma \ref{lus2}
which is of independent interest. In the next subsection we define
the $S^2$-generating systems which consist the frame for building
$S^2$-systems. Finally in the third subsection we show that every
subspace $X$ of $V_2^0$ with $\mathcal{M}_{X^{**}}$ non separable
contains an $S^2$ generating system and thus by the preceding
results the space $S^2$ is embedded into $X$. We also show that
$S^2$ is contained into $TF$.

\subsection{$S^2$-Systems.}
In this subsection we will define certain structures
closely related with the embedding of the space $S^2$ into $V_2^0$. We start with the definition of a \textit {system}.
 \begin{defn}\label{system}Let $ M$, $\Lambda$, $\theta$ be positive constants  and $(\ee_n)_{n=0}^\infty$ be  a
sequence of positive real numbers.
An $(\ee_n)_n-$system with constants $(M, \Lambda, \theta)$ is  a family of
the form
\[((G_s,\nu_s,\mathcal{I}_s)_{s\in 2^{<\nn}},(\mathcal{Q}_n)_{n\in\nn}),\]
 where  $(G_s)_{s\in 2^{<\nn}}$ is a family of functions of $V_2^0$,
   $(\nu_s)_{s\in 2^{<\nn}}$ is a  family of positive
Borel measures on $[0,1]$,
 $(\mathcal{I}_s)_{s\in 2^{<\nn}}$ is a family in $\mathcal{A}$, and
  $(\mathcal{Q}_n)_{n\in\nn}$ is an increasing sequence of finite subsets of
  $[0,1]$, satisfying the following properties.
  \begin{enumerate}
  \item [(1)] For every  $s\in 2^{<\nn}$,
   $ \|G_s\|_{V_2}\leq M$ and $\|\nu_s\|\leq \Lambda$.
\item  [(2)] For every $n\geq 0$ and $s\in 2^n$, $\|G_{s}\|_{\infty}\leq \ee_n$.
 \item [(3)] For every $n\geq
0$, the set $\mathcal{Q}_n$ $\ee_n-$ determines the quadratic
variation of $<\{G_s:s\in 2^{n}\}>$. \item [(4)] For every $n\geq
0$, $s\in 2^n$ and every
$\mathcal{I}\in\mathcal{F}(\mathcal{Q}_n)$,
$\textit{v}_2^2(G_s,\mathcal{I})\leq\nu_s(\cup\mathcal{I})+\ee_n$.
 \item[(5)] For every $s\perp t$, $(\mathcal{I}_s,\mathcal{I}_t)$ is a
disjoint pair. \item [(6)] For every $s\in 2^{<\nn}$,
$\textit{v}_2^2(G_s,\mathcal{I}_s)>\theta$.
\end{enumerate}\end{defn}
\begin{rem}\label{remark9} Notice that by property (2), we have that $\lim\|G_{\sigma|n}\|_{\infty}=0$
and therefore by Proposition \ref{treebio}, for every family $(\ee_s)_s$ of positive scalars there is a dyadic subtree $(t_s)_s$ such that
$(G_{t_s})_s$ is $(\ee_s)_s$-biorthogonal.
Moreover by (3) and (4) we have that for every $s\in 2^n$, the function $G_s$ is   $(1,2\ee_n)-$dominated by $\nu_s$.
\end{rem}

\begin{defn}\label{S2} An $(\ee_n)_n-$ $S^2$ system with constants $(M,\Lambda, \theta)$ is an $(\ee_n)_n-$
system $((G_s,\nu_s,\mathcal{I}_s)_{s\in
2^{<\nn}},(\mathcal{Q}_n)_{n}),$ with the same constants  satisfying in addition the following
property. For every  $0\leq n\leq m$, $s\in 2^n$, $t\in 2^m$ with $s\sqsubseteq t$ and
 $\mathcal{I}\in\mathcal{F}(\mathcal{Q}_n)$,
\begin{equation}\label{s2system}|\nu_t(\cup\mathcal{I})-\nu_s(\cup\mathcal{I})|<\ee_n\end{equation}
\end{defn}
\begin{rem} \label{S2rem} Suppose that $(\ee_n)_n$ is a null sequence. Then, as $\mathcal{Q}_n$ is increasing, by (\ref{s2system}) we get that
for every $\sigma\in 2^\nn$ and every $\mathcal{I}\in \mathcal{F}(\cup_n\mathcal{Q}_n)$,
the sequence $(\nu_{\sigma|n}(\cup\mathcal{I}))_n$ is  Cauchy.
\end{rem}
\begin{lem}\label{pc0syst}
Let $((G_s,\nu_s,\mathcal{I}_s)_{s\in
2^{<\nn}},(\mathcal{Q}_n)_{n\in\nn}),$ be an $(\ee_n)_n- S^2$
system with constants $(M,\Lambda, \theta)$. Assume also that $(\ee_n)_n$ is a null sequence.
Then there exist a
family of positive Borel measures $(\nu_\sigma)_{\sigma\in 2^\nn}$
on $[0,1]$ such that   $\sup_\sigma \|\nu_\sigma\|\leq \Lambda$,
and  for all $\sigma\in 2^\nn$, $(G_{\sigma|n})_n$ is $(1,(3\ee_n)_n)-$ dominated by
$\nu_\sigma$.\end{lem}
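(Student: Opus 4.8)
The plan is to construct $\nu_\sigma$ separately for each $\sigma\in 2^\nn$, taking it to be a weak$^*$-cluster point of the sequence $(\nu_{\sigma|n})_n$ and then passing to the limit in the domination data that the $S^2$-system carries. The key preliminary estimate I would isolate is that for all $m\le p$ and every $\mathcal{I}\in\mathcal{A}$,
\begin{equation*}
\textit{v}_2^2(G_{\sigma|m},\mathcal{I})\le\nu_{\sigma|p}(\cup\mathcal{I})+3\ee_m.\tag{$\star$}
\end{equation*}
To get $(\star)$, fix $m$ and $\mathcal{I}$. Since, by (3) of Definition \ref{system}, $\mathcal{Q}_m$ $\ee_m$-determines the quadratic variation of $<\{G_s:s\in 2^m\}>$, applying the defining inequality to the scalar family equal to $1$ at $\sigma|m$ and $0$ elsewhere yields $\widetilde{\mathcal{I}}\preceq\mathcal{I}$ with $\widetilde{\mathcal{I}}\in\mathcal{F}(\mathcal{Q}_m)$ and $\textit{v}_2^2(G_{\sigma|m},\mathcal{I})\le\textit{v}_2^2(G_{\sigma|m},\widetilde{\mathcal{I}})+\ee_m$. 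By (4) of Definition \ref{system}, $\textit{v}_2^2(G_{\sigma|m},\widetilde{\mathcal{I}})\le\nu_{\sigma|m}(\cup\widetilde{\mathcal{I}})+\ee_m$, and since $\sigma|m\sqsubseteq\sigma|p$ and $\widetilde{\mathcal{I}}\in\mathcal{F}(\mathcal{Q}_m)$, inequality (\ref{s2system}) gives $\nu_{\sigma|m}(\cup\widetilde{\mathcal{I}})\le\nu_{\sigma|p}(\cup\widetilde{\mathcal{I}})+\ee_m$. As $\cup\widetilde{\mathcal{I}}\subseteq\cup\mathcal{I}$ and $\nu_{\sigma|p}\ge 0$, chaining these three inequalities produces $(\star)$.

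Next I would choose $\nu_\sigma$. By (1) of Definition \ref{system} the sequence $(\nu_{\sigma|n})_n$ lies in the weak$^*$-compact, metrizable set $\{\mu\in\mathcal{M}^+[0,1]:\|\mu\|\le\Lambda\}$, so it has a subsequence $(\nu_{\sigma|n_k})_k$ converging weak$^*$ to some $\nu_\sigma$ in that set; in particular $\nu_\sigma\ge 0$, and testing against the constant $1$ gives $\|\nu_\sigma\|=\nu_\sigma[0,1]=\lim_k\nu_{\sigma|n_k}[0,1]\le\Lambda$. Carrying this out for every $\sigma$ produces the family $(\nu_\sigma)_{\sigma\in 2^\nn}$ with $\sup_\sigma\|\nu_\sigma\|\le\Lambda$.

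It then remains to check that $G_{\sigma|m}$ is $(1,3\ee_m)$-dominated by $\nu_\sigma$ for each $m$, i.e.\ $\textit{v}_2^2(G_{\sigma|m},\mathcal{I})\le\nu_\sigma(\cup\mathcal{I})+3\ee_m$ for all $\mathcal{I}\in\mathcal{A}$. I expect this to be the main obstacle: weak$^*$-convergence of measures controls $\mu(B)$ only one-sidedly when $B$ is not closed, whereas $\cup\mathcal{I}$ is in general neither open nor closed. The way around it is an inner approximation using continuity. Because $G_{\sigma|m}\in V_2^0$ we have $\mu_{G_{\sigma|m}}=0$, hence by Proposition \ref{pm} $G_{\sigma|m}$ is continuous on $[0,1]$, so replacing each non-degenerate interval of $\mathcal{I}$ by closed subintervals strictly inside its interior that increase to it, one obtains $\mathcal{I}^{(i)}\in\mathcal{F}$ with $\cup\mathcal{I}^{(i)}\subseteq\cup\mathcal{I}$ closed and $\textit{v}_2^2(G_{\sigma|m},\mathcal{I}^{(i)})\to\textit{v}_2^2(G_{\sigma|m},\mathcal{I})$. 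Applying $(\star)$ with $p=n_k$ for $k$ large (so $n_k\ge m$), taking $\limsup_k$, and using the standard inequality $\limsup_k\nu_{\sigma|n_k}(F)\le\nu_\sigma(F)$ for closed $F$ (an immediate consequence of weak$^*$-convergence) gives $\textit{v}_2^2(G_{\sigma|m},\mathcal{I}^{(i)})\le\nu_\sigma(\cup\mathcal{I}^{(i)})+3\ee_m\le\nu_\sigma(\cup\mathcal{I})+3\ee_m$; letting $i\to\infty$ completes the argument. Both remaining ingredients — the elementary chaining in $(\star)$ and the limiting passages — I expect to be routine.
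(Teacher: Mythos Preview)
Your proof is correct and follows essentially the same route as the paper's: take a weak$^*$-cluster point $\nu_\sigma$ of $(\nu_{\sigma|n})_n$, chain conditions (3), (4), and (\ref{s2system}) to bound $\textit{v}_2^2(G_{\sigma|m},\mathcal{I})$ by $\nu_{\sigma|p}(\cdot)+3\ee_m$, and pass to the limit via the Portmanteau inequality on closed sets. The paper avoids your inner-approximation step by applying Portmanteau directly to $\cup\widetilde{\mathcal{I}}$, which is already closed since $\widetilde{\mathcal{I}}\in\mathcal{F}(\mathcal{Q}_m)$, rather than first passing to $\cup\mathcal{I}$ in $(\star)$ and then re-closing.
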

\begin{proof}
Let $\sigma\in 2^\nn$. Since $(\nu_{\sigma|n})_n$ is a bounded
sequence in $\mathcal{M}[0,1]$, there exist  a subsequence
$(\nu_{\sigma|n})_{n\in L}$ and a positive Borel measure
$\nu_\sigma$ on $[0,1]$, such that $(\nu_{\sigma|n})_{n\in L}$  is
$w^*-$ convergent to $\nu_\sigma$. Fix for the following $k\geq 0$
and a finite family of intervals $\mathcal{I}\in\mathcal{A}$. By
condition (3) of Definition \ref{system}, there exists
$\widetilde{\mathcal{I}}\in\mathcal{F}(\mathcal{Q}_k)$ with
$\widetilde{\mathcal{I}}\preceq \mathcal{I}$ such that
$|\textit{v}_2^2(G_{\sigma|k},\mathcal{I})-\textit{v}_2^2(G_{\sigma|k},\widetilde{\mathcal{I}})|<\ee_k$.
 Moreover  by condition (4) of Definition \ref{system} and (\ref{s2system}),
we get that for every $m>k$,
\begin{equation}\label{w1}\textit{v}_2^2(G_{\sigma|k},\mathcal{I})\leq\textit{v}_2^2(G_{\sigma|k},\widetilde{\mathcal{I}})+\ee_k\leq
\nu_{\sigma|k}(\cup\widetilde{\mathcal{I}})+2\ee_k\leq\nu_{\sigma|m}(\cup\widetilde{\mathcal{I}})+3\ee_k\end{equation}
 By Remark \ref{S2rem}, we have that $\lim_m\nu_{\sigma|m}
(\cup\widetilde{\mathcal{I}})$ exists and so (\ref{w1}) implies that,
\begin{equation}\label{m11}\textit{v}_2^2(G_{\sigma|k},\mathcal{I})\leq\lim_m\nu_{\sigma|m}
(\cup\widetilde{\mathcal{I}})+3\ee_k= \lim_{n\in
L}\nu_{\sigma|n}(\cup\widetilde{\mathcal{I}})+3\ee_k\end{equation}
As the set  $\cup\widetilde{\mathcal{I}}$ is a closed subset of
$[0,1]$ and w$^*-\lim_{n\in L}\nu_{\sigma|n}=\nu_\sigma$, by
Portmateau's theorem (see \cite{Ke}) and the monotonicity of
$\nu_\sigma$, we have that
\begin{equation}\label{w2}\lim_{n\in
L}\nu_{\sigma|n}(\cup\widetilde{\mathcal{I}})\leq\nu_\sigma(\cup\widetilde{\mathcal{I}})\leq\nu_\sigma(\cup\mathcal{I})\end{equation}
By (\ref{m11}) and (\ref{w2}) we conlude that for every $k\geq 0$ and every
$\mathcal{I}\in\mathcal{A}$,
\begin{equation}\label{m22}\textit{v}_2^2(G_{\sigma|k},\mathcal{I})\leq\nu_\sigma(\cup\mathcal{I})+3\ee_k,\end{equation}
that is the sequence $(G_{\sigma|n})_n$ is
$(1,(3\ee_n)_n)$-dominated by the measure $\nu_\sigma$. Finally
since $\nu_\sigma$ is in the w$^*$-closure of $\{\nu_s\}_{s\in
2{<\nn}}$,  $\|\nu_\sigma\|\leq \sup_n\|\nu_{\sigma|n}\|\leq
\Lambda$.
\end{proof}

\begin{rem}\label{chain} Notice that if $\sum_n\ee_n=\epsilon<\infty$, the above lemma yields that
 for  every $\sigma\in 2^\nn$ and every disjoint family $(\mathcal{I}_n)_{n}$ in
$\mathcal{A}$, we have that
\begin{equation}\label{diseq}\sum_n\textit{v}_2^2(G_{\sigma|n},\mathcal{I}_n)\leq \Lambda+3\epsilon\end{equation}

The next lemma concerns an inequality for tree families of
positive numbers which is critical for the embedding of $S^2$ into
subspaces of $V_2^0$ and could be useful elsewhere.
\end{rem}
\begin{lem}\label{lus2}
Let $(\alpha_s)_{s\in 2^{<\nn}}$ , $(\lambda_s)_{s\in 2^{<\nn}}$
be two families of non negative real numbers and let $n\geq 0$.
Then there exists a maximal  antichain $A$ of $ 2^{\leqslant n}$
and a family of branches $(b_t)_{t\in A}$ of $2^{\leqslant n}$
such that
 $\sum_{s\in 2^{\leqslant n}}\lambda_s\alpha_s\leq \sum_{t\in
A}(\sum_{s\in b_t}\alpha_s)\lambda_t$ and $t\in b_t$, for all
$t\in A$. Therefore  if $\sum_{n=1}^{\infty}\alpha_{\sigma|n}\leq
C$,  for all $\sigma\in 2^{\nn}$, then for each  $n\geq 0$ there
 is an antichain $A$ of $2^{\leqslant n}$ such that
$\sum_{s\in 2^{\leqslant n}}\lambda_s\alpha_s\leq C\sum_{s\in A}\lambda_s$.
\end{lem}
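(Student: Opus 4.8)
\emph{Proof plan.} The plan is to prove the first assertion by downward induction on $|s|$, establishing for every $s\in 2^{\leqslant n}$ the following refinement: writing $T_s=\{u\in 2^{\leqslant n}:s\sqsubseteq u\}$ for the subtree of $2^{\leqslant n}$ below $s$, there are a maximal antichain $A_s$ of $T_s$ and, for each $t\in A_s$, a maximal chain $b_t$ of $T_s$ with $t\in b_t$, such that $\sum_{u\in T_s}\lambda_u\alpha_u\leq\sum_{t\in A_s}\big(\sum_{u\in b_t}\alpha_u\big)\lambda_t$. Taking $s=\emptyset$ then gives the statement of the lemma, since $T_\emptyset=2^{\leqslant n}$ and a maximal chain of $2^{\leqslant n}$ is exactly a branch. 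When $|s|=n$ the tree $T_s$ is the singleton $\{s\}$ and one takes $A_s=b_s=\{s\}$, with equality holding trivially.

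For the inductive step suppose $|s|<n$ and apply the inductive hypothesis to the two children $s^\con 0$ and $s^\con 1$, obtaining $A_{s^\con i}$ and branches $(b_t)_{t\in A_{s^\con i}}$ for $i=0,1$. Set $\Lambda_i=\sum_{t\in A_{s^\con i}}\lambda_t$, and let $\beta_i$ be the maximum of $\sum_{u\in c}\alpha_u$ over all maximal chains $c$ of $T_{s^\con i}$, so that $\sum_{u\in b_t}\alpha_u\leq\beta_i$ for every $t\in A_{s^\con i}$. I would then split into two cases. If $\lambda_s\leq\Lambda_0+\Lambda_1$, put $A_s=A_{s^\con 0}\cup A_{s^\con 1}$ and replace each $b_t$ by $\{s\}\cup b_t$; one checks directly that $A_s$ is a maximal antichain of $T_s$ and that each new $b_t$ is a maximal chain of $T_s$ through $t$, and since $\sum_{u\in T_s}\lambda_u\alpha_u=\lambda_s\alpha_s+\sum_i\sum_{u\in T_{s^\con i}}\lambda_u\alpha_u$ while the candidate right-hand side increases by exactly $\alpha_s\sum_{t\in A_s}\lambda_t=\alpha_s(\Lambda_0+\Lambda_1)\geq\alpha_s\lambda_s$, the inequality is inherited. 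If instead $\lambda_s>\Lambda_0+\Lambda_1$, I would collapse the whole subtree onto its root: relabel so that $\beta_0\geq\beta_1$, choose a maximal chain $c_0$ of $T_{s^\con 0}$ with $\sum_{u\in c_0}\alpha_u=\beta_0$, and set $A_s=\{s\}$ and $b_s=\{s\}\cup c_0$. Using the two inductive inequalities together with $\sum_{u\in b_t}\alpha_u\leq\beta_i$ one obtains $\sum_i\sum_{u\in T_{s^\con i}}\lambda_u\alpha_u\leq\beta_0\Lambda_0+\beta_1\Lambda_1\leq\beta_0(\Lambda_0+\Lambda_1)\leq\beta_0\lambda_s$, hence $\sum_{u\in T_s}\lambda_u\alpha_u\leq\lambda_s\alpha_s+\beta_0\lambda_s=\big(\sum_{u\in b_s}\alpha_u\big)\lambda_s=\sum_{t\in A_s}\big(\sum_{u\in b_t}\alpha_u\big)\lambda_t$, which is what is needed (here $\{s\}$ is a maximal antichain of $T_s$ since $s$ is comparable to every element of $T_s$).

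The concluding (``therefore'') assertion is now immediate. With $A$ and $(b_t)_{t\in A}$ as produced above, each $b_t$ is a branch of $2^{\leqslant n}$, that is, $b_t=\{\sigma|k:0\leq k\leq n\}$ for some $\sigma\in 2^{\nn}$; hence $\sum_{u\in b_t}\alpha_u\leq C$ by hypothesis, and therefore $\sum_{s\in 2^{\leqslant n}}\lambda_s\alpha_s\leq\sum_{t\in A}\big(\sum_{u\in b_t}\alpha_u\big)\lambda_t\leq C\sum_{t\in A}\lambda_t$.

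The point I expect to require care is the dichotomy governing the inductive step. Neither rule alone works: ``always amalgamate the children's antichains'' fails when $\lambda_s$ strictly dominates all the $\lambda_u$ in $T_s$, while ``always collapse onto the root'' fails when the $\alpha$-mass of $T_s$ is distributed over many pairwise incomparable leaves and cannot be fitted onto a single chain. The resolution is that the comparison of $\lambda_s$ with $\Lambda_0+\Lambda_1$ is exactly the right branching condition, and that the auxiliary quantity $\beta_i$ is precisely what controls the children's contribution in the collapse case; granting those two observations, the rest of the argument is routine bookkeeping with antichains, maximal chains, and non-negative sums.
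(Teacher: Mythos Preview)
Your proof is correct and follows essentially the same strategy as the paper's. The paper argues by induction on the height $n$ of the tree (splitting $2^{\leqslant n+1}$ at the root into the two subtrees $\mathcal{D}_0,\mathcal{D}_1$), while you phrase it as downward induction on $|s|$ inside a fixed $2^{\leqslant n}$; the dichotomy $\lambda_s\leq\Lambda_0+\Lambda_1$ versus $\lambda_s>\Lambda_0+\Lambda_1$ and the two resulting constructions (amalgamate the children's antichains, or collapse onto the root along a single longest-$\alpha$ chain) are identical in both arguments, the only cosmetic difference being that in the collapse case the paper selects the maximizing branch among those produced by the inductive hypothesis rather than among all maximal chains.
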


\begin{proof} We shall use induction on $n\geq 0$. The lemma  trivially holds  for $n=0$.
Assuming that  it is true for some $n$, we show the $n+1$ case.
For each $j\in\{0,1\}$, let $\mathcal{D}_j=\{t\in
2^{n+1}:\;t(1)=j\}$. Then  $\mathcal{D}_j$ is order isomorphic to
$2^{\leqslant n}$ and so  by our inductive assumption there is an
antichain $A_j\subseteq\mathcal{D}_j$, and a family of branches
$\{b^j_t:t\in A_j\}\subseteq\mathcal{D}_j$ with $t\in b_t^j$, for
each $t\in A_j$ and $\sum_{s\in\mathcal{D}_j}\lambda_s\alpha_s\leq
\sum_{t\in A_j}(\sum_{s\in b^j_t}\alpha_s)\lambda_t$. Hence we
easily get that
\begin{equation} \label{tr1}
\sum_{s\in 2^{\leqslant n+1}}\lambda_s\alpha_s=\lambda_{\emptyset}\alpha_{\emptyset}+
\sum_{t\in \mathcal{D}_0\cup \mathcal{D}_1}\lambda_t\alpha_t\leq\lambda_{\emptyset}\alpha_{\emptyset}+
\sum_{t\in A_0}(\sum_{s\in b^0_t}\alpha_s)\lambda_t+\sum_{t\in A_1}(\sum_{s\in b^1_t}\alpha_s)\lambda_t\end{equation}
We distinguish two cases.\\
\textbf{Case 1}: $\lambda_{\emptyset}\leq\sum_{t\in
A_0}\lambda_t+\sum_{t\in A_1}\lambda_t.$ Then let $A=A_0\cup A_1$
and $b_t=b_t^j\cup\{\emptyset\}$, for each $t\in A_j$. Obviously
$A$ is a maximal antichain in $2^{\leqslant n+1}$ and  $\{b_t:
\;t\in A\}$ is a family of branches with $t\in b_t$ for each $t\in
A$. Moreover as  $\lambda_{\emptyset}\leq\sum_{t\in A}\lambda_t$,
by (\ref{tr1}) we obtain that
 \[\sum_{s\in 2^{\leqslant n+1}}\lambda_s\alpha_s\leq\sum_{t\in A}\Big(\sum_{s\in b_t}\alpha_s\Big)\lambda_t.\]
\textbf{Case 2}:
$\sum_{t\in A_0}\lambda_t+\sum_{t\in A_1}\lambda_t<\lambda_{\emptyset}.$ Then we set
 $A=\{\emptyset\}$ and  $b_\emptyset=\{\emptyset\}\cup b_{t_0}^{j_0} $ where
$\sum_{s\in b_{t_0}^{j_0}}\alpha_s=\max\bigcup_{j=0}^1\{\sum_{s\in
b_{t}^{j}}\alpha_s:t\in  A_j\}$. By  (\ref{tr1}), we get that
\[\sum_{s\in 2^{\leqslant n+1}}\lambda_s\alpha_s\leq
\lambda_{\emptyset}\Big(\alpha_{\emptyset}
+\sum_{s\in b^{j_0}_{t_0}}\alpha_s\Big)\leq (\sum_{s\in b_\emptyset}\alpha_s)\lambda_\emptyset\]\end{proof}

\begin{prop}\label{corgen1}  Let $((G_s,\nu_s,\mathcal{I}_s)_{s\in 2^{<\nn}},(\mathcal{Q}_n)_{n}),$ be  an
 $(\ee_n)_n-S^2$   system with constants $(M,\Lambda, \theta)$. Suppose that $(\ee_n)_n$ is a
 summable sequence of positive real numbers.
 Then there is a dyadic
 subtree $(t_s)_{s\in 2^n}$ of $2^{<\nn}$ such that $(G_{t_s})_{s\in 2^{<\nn}}$ is equivalent to the $S^2$-basis.
\end{prop}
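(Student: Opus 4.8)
The plan is to produce the dyadic subtree in two stages. First I would apply Proposition~\ref{treebio} (invoking Remark~\ref{remark9}, which guarantees $\lim_n\|G_{\sigma|n}\|_\infty=0$ along every branch) to pass to a dyadic subtree on which the family $(G_{t_s})_s$ is $(\ee_s)_s$-biorthogonal for a suitably chosen summable family of scalars $(\ee_s)_{s\in 2^{<\nn}}$, say with $\sum_s\ee_s<\infty$ small compared to $\theta$; this is exactly the regime covered by Lemmas~\ref{lprep}, \ref{lul2} and Proposition~\ref{punc}. Relabelling, I may assume the original system is already biorthogonal. Having done this, the upper and lower $S^2$-estimates for $\sum_s\lambda_s G_s$ must be established separately.

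\textbf{Lower estimate.} For the lower $S^2$-estimate I would use properties (5) and (6) of the system together with biorthogonality. Given scalars $(\lambda_s)_{s\in F}$ with $F\subseteq 2^{<\nn}$ finite, for each $s\in F$ pick the witnessing family $\mathcal{I}_s\in\mathcal{A}$ from property (6) with $\textit{v}_2^2(G_s,\mathcal{I}_s)>\theta$; by passing to $\mathcal{I}_s^{(s)}$ as in (\ref{punc2}) and using (5) to ensure that the families attached to distinct $s$ overlap only through the biorthogonal ``error'' part, one assembles a single $\mathcal{I}\in\mathcal{A}$ (a disjoint union of pieces of the $\mathcal{I}_s$) along which, by (iv) of Lemma~\ref{lprep} and Lemma~\ref{lul2}(ii),
\[
\textit{v}_2^2\Big(\sum_{s\in F}\lambda_s G_s,\mathcal{I}\Big)\geq \sum_{s\in F}|\lambda_s|^2\,\textit{v}_2^2(G_s,\mathcal{I}_s^{(s)})-\max_s|\lambda_s|^2\cdot 2M\ee\geq (\theta-\ee)^2\sum_{s\in F}|\lambda_s|^2 - 2M\ee\max_s|\lambda_s|^2.
\]
Since the $\mathcal{I}_s$ are indexed by a \emph{finite} set while the $S^2$ norm takes the supremum over \emph{all} antichains, the subtlety here is that $F$ need not be an antichain of $2^{<\nn}$; but the disjointness property (5) says precisely that the interval families $(\mathcal{I}_s)_{s\in F}$ are pairwise disjoint regardless, so the estimate goes through for all finite $F$, giving $\|\sum_{s\in F}\lambda_sG_s\|_{V_2}\gtrsim\theta\big(\sum_{s\in F}|\lambda_s|^2\big)^{1/2}$, hence in particular a lower $S^2$-estimate (the $\ell_2$ norm over $F$ dominates the $S^2$ norm over $F$).

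\textbf{Upper estimate — the main obstacle.} This is where Lemmas~\ref{pc0syst} and \ref{lus2} enter, and it is the heart of the argument. Fix a finite $F\subseteq 2^{\leqslant n}$ and $\mathcal{I}\in\mathcal{A}$. By Lemma~\ref{lul2}(i) and biorthogonality, $\textit{v}_2^2(\sum_{s\in F}\lambda_sG_s,\mathcal{I})\leq\sum_{s\in F}|\lambda_s|^2\textit{v}_2^2(G_s,\mathcal{I}^{(s)})+\max_s|\lambda_s|^2(2M+\ee)\ee$, where $\mathcal{I}=\bigcup_{s}\mathcal{I}^{(s)}$ is the biorthogonal partition; crucially the $\mathcal{I}^{(s)}$ are pairwise disjoint. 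By Lemma~\ref{pc0syst}, for each branch $\sigma$ of $2^{<\nn}$ the sequence $(G_{\sigma|k})_k$ is $(1,(3\ee_k)_k)$-dominated by a measure $\nu_\sigma$ with $\|\nu_\sigma\|\leq\Lambda$, and so by Remark~\ref{chain}, for each fixed $\sigma$, $\sum_k\textit{v}_2^2(G_{\sigma|k},\mathcal{I}^{(\sigma|k)})\leq\Lambda+3\epsilon=:C$ since the $\mathcal{I}^{(\cdot)}$ are disjoint. Setting $\alpha_s=\textit{v}_2^2(G_s,\mathcal{I}^{(s)})$ we have $\sum_{k}\alpha_{\sigma|k}\leq C$ for every $\sigma$, and thus by Lemma~\ref{lus2} there is an antichain $A\subseteq 2^{\leqslant n}$ with $\sum_{s\in 2^{\leqslant n}}|\lambda_s|^2\alpha_s\leq C\sum_{s\in A}|\lambda_s|^2\leq C\|\,(\lambda_s)_s\|_{S^2}^2$. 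Combining, $\|\sum_{s\in F}\lambda_sG_s\|_{V_2}^2\leq C\|(\lambda_s)_s\|_{S^2}^2+(\text{error term})\leq K\|(\lambda_s)_s\|_{S^2}^2$ once the biorthogonal errors are summed and absorbed (this is where summability of $(\ee_s)_s$ is used, exactly as in Propositions~\ref{pc0} and \ref{dominate}). The hard part is precisely the bookkeeping that the same interval partition $\mathcal{I}=\bigcup_s\mathcal{I}^{(s)}$ simultaneously serves property (4)/domination (to bound each $\alpha_s$ branch-wise and feed Lemma~\ref{lus2}) and the biorthogonality error control; one must check that replacing $\mathcal{I}$ by $\widetilde{\mathcal{I}}\in\mathcal{F}(\mathcal{Q}_n)$ via property (3) (needed so that (4) applies with the measures $\nu_s$) costs only $O(\ee_n\sum_s|\lambda_s|^2)$, which is legitimate because property (3) is phrased uniformly in the scalars. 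Together the two estimates show $(G_{t_s})_{s\in 2^{<\nn}}$ is equivalent to the $S^2$-basis with constants depending only on $M,\Lambda,\theta$ and $\sum_s\ee_s$.
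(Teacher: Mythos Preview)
Your upper estimate is essentially the paper's argument: Lemma~\ref{lul2}(i) reduces to bounding $\sum_{s}|\lambda_s|^2\textit{v}_2^2(G_s,\mathcal{I}^{(s)})$, Remark~\ref{chain} (via Lemma~\ref{pc0syst}) bounds $\sum_k\alpha_{\sigma|k}$ along every branch by $\Lambda+3\epsilon$, and Lemma~\ref{lus2} converts this into an antichain bound. The digression about passing to $\widetilde{\mathcal{I}}\in\mathcal{F}(\mathcal{Q}_n)$ via property~(3) is unnecessary here, since Remark~\ref{chain} already works for arbitrary disjoint families in $\mathcal{A}$, but it does no harm.

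The lower estimate, however, has a real gap. Property~(5) in Definition~\ref{system} asserts that $(\mathcal{I}_s,\mathcal{I}_t)$ is a disjoint pair only when $s\perp t$, i.e.\ when $s$ and $t$ are \emph{incomparable}. It says nothing about $\mathcal{I}_s$ versus $\mathcal{I}_t$ when $s\sqsubseteq t$. Your assertion that ``the interval families $(\mathcal{I}_s)_{s\in F}$ are pairwise disjoint regardless'' is therefore unfounded, and the $\ell_2$ lower bound $\|\sum_{s\in F}\lambda_sG_s\|_{V_2}\gtrsim\theta(\sum_{s\in F}|\lambda_s|^2)^{1/2}$ you derive from it is in fact false: take $F$ to be a chain of length $n$ with all $\lambda_s=1$; your own upper $S^2$-estimate gives $\|\sum_sG_s\|_{V_2}\leq C$, while the claimed lower bound would give $\gtrsim\theta\sqrt{n}$.

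The fix is exactly what the paper does: for the lower estimate, first choose an antichain $A$ of $2^{\leqslant n}$ realizing $\|(\lambda_s)\|_{S^2}=(\sum_{s\in A}|\lambda_s|^2)^{1/2}$. Now property~(5) does apply, $\mathcal{I}:=\bigcup_{s\in A}\mathcal{I}_{t_s}\in\mathcal{A}$, and Lemma~\ref{lul2}(ii) together with $\textit{v}_2^2(G_{t_s},\mathcal{I}^{(t_s)})\geq\textit{v}_2^2(G_{t_s},\mathcal{I}_{t_s})-\ee^2>\theta-\ee^2$ gives
\[
\textit{v}_2^2\Big(\sum_{|s|\leq n}\lambda_sG_{t_s},\mathcal{I}\Big)\geq(\theta-\ee^2-2M\ee)\sum_{s\in A}|\lambda_s|^2,
\]
which is precisely the lower $S^2$-estimate. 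So the ingredients you assembled are the right ones; the error is only in applying~(5) beyond antichains.
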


\begin{proof}
Let $(\ee_s)_{s\in 2^{<\nn}}$ be a  family of positive real
numbers such that $\sum_s\ee_s=\ee<\infty$ and
$\theta-(\ee+2M)\ee>0$. As we have already mentioned (see  Remark
\ref{remark9}), there is a dyadic subtree $(t_s)_{s\in 2^{<\nn}}$
of $2^{<\nn}$ such that $(G_{t_s})_{s\in 2^{<\nn}}$ is
$(\ee_s)_{s\in 2^{<\nn}}-$ biorthogonal. We will show that
$(G_{t_s})_{s\in 2^{<\nn}}$ is equivalent to the $S^2$-basis. To
this end, fix
  a sequence of real numbers
 $(\lambda_s)_{|s|\leq n}$.

 First we show the upper
 $S^2$-estimate. Let
   $\mathcal{I}\in\mathcal{A}$.  By  Lemma \ref{lul2} we have  that
\begin{equation}\label{sq1}\textit{v}_2^2\Big(\sum_{|s|\leq n}\lambda_s G_{t_s},\mathcal{I}\Big)\leq
\sum_{|s|\leq
n}|\lambda_s|^2\textit{v}_2^2(G_{t_s},\mathcal{I}^{(t_s)})
+\max_{|s|\leq n}|\lambda_s|^2(2M+\ee)\ee\end{equation}
  By Remark \ref{chain},     we have that
$\sum_n\textit{v}_2^2(G_{t_{\sigma|n}},\mathcal{I}^{(t_{\sigma|n})})\leq
\Lambda+3\epsilon,$ where  $\epsilon=\sum_n\ee_n$. Hence by Lemma
\ref{lus2}, (with $\textit{v}_2^2(G_{t_s},\mathcal{I}^{(t_s)})$
and $|\lambda_s|^2 $ in place of $\alpha_s$ and $|\lambda_s|$
respectively), we obtain an antichain $A\subseteq 2^{\leqslant n}$
such that
\begin{equation}\label{sq2}\sum_{|s|\leq n}|\lambda_s|^2\textit{v}_2^2(G_{t_s},\mathcal{I}^{(t_s)})
\leq (\Lambda+3\epsilon)\sum_{s\in A}|\lambda_s|^2\end{equation}
By (\ref{sq1}) and (\ref{sq2}), we get that
\[\begin{split}\textit{v}_2^2\Big(\sum_{|s|\leq n}\lambda_s G_{t_s},\mathcal{I}\Big)&\leq
(\Lambda+3\epsilon)\sum_{s\in A}|\lambda_s|^2 +\max_{|s|\leq
n}|\lambda_s|^2(2M+\ee)\ee
\\&\leq \Big(\Lambda+3\epsilon+(2M+\ee)\ee\Big)\Big\|\sum_{|s|\leq n}\lambda_s e_s\Big\|^2_{S^2},\end{split}\]
This yields that there is $C>0$ such that  $\|\sum_{|s|\leq
n}\lambda_s G_{t_s}\|_{V_2}\leq C \|\sum_{|s|\leq n}\lambda_s
e_s\|_{S^2}$.

We pass now to show the lower $S^2$- estimate.
 Let $A$ be an antichain
of $2^{\leqslant n}$ such that
\begin{equation}\label{ant0}\big\|\sum_{|s|\leq n}\lambda_se_s\Big\|_{S^2}=
\Big(\sum_{s\in A}|\lambda_s|^2\Big)^{1/2}\end{equation}  Since
$(\mathcal{I}_{t_s})_{s\in A}$ is a disjoint family, we get that
$\mathcal{I}=\bigcup_{s\in A}\mathcal{I}_{t_s}\in\mathcal{A}$. By
Lemma \ref{lul2} and (\ref{ant0}), we have that
\begin{equation}\label{ant33}\begin{split}\textit{v}_2^2\Big(\sum_{|s|\leq n}\lambda_s G_{t_s},\mathcal{I}\Big)&\geq
\sum_{s\in
A}|\lambda_s|^2\textit{v}_2^2(G_{t_s},\mathcal{I}^{(t_s)})-
\max_{|s|\leq n}|\lambda_s|^2\Big(\sum_{|s|\leq
n}\ee_s\Big)2M\\&\geq\sum_{s\in
A}|\lambda_s|^2\textit{v}_2^2(G_{t_s},\mathcal{I}^{(t_s)})-\Big(\sum_{s\in
A}|\lambda_s|^2\Big)2M\ee\end{split}\end{equation} By the
properties of the $S^2$-system, we have that  for every $s\in A$,
\[\theta<\textit{v}_2^2(G_{t_s},\mathcal{I}_{t_s})\leq\textit{v}_2^2(G_{t_s},\mathcal{I})
=\textit{v}_2^2(G_{t_s},\mathcal{I}^{(t_s)})+\textit{v}_2^2(G_{t_s},\mathcal{I}\setminus
\mathcal{I}^{(t_s)})\leq
\textit{v}_2^2(G_{t_s},\mathcal{I}^{(t_s)})+\ee^2\] and therefore
$\textit{v}_2^2(G_{t_s},\mathcal{I}^{(t_s)})\geq \theta-\ee^2$.
Hence by (\ref{ant33}), we obtain that
\[\textit{v}_2^2\Big(\sum_{|s|\leq n}\lambda_s G_{t_s},\mathcal{I}\Big)\geq
\Big(\theta-(\ee+2M) \ee\Big)\Big(\sum_{s\in
A}|\lambda_s|^2\Big),\] which gives that there is $c>0$ such that
$\|\sum_{|s|\leq n}\lambda_s
 G_{t_s}\|_{V_2}\geq c\;
\|\sum_{|s|\leq n}\lambda_s e_s\|_{S^2}$.
\end{proof}

\subsection{$S^2$-generating systems.}
\begin{defn}\label{pres2} An $(\ee, (\ee_n)_n)-$ $S^2$ generating system with constants
$(M,\Lambda, \theta)$ is an $(\ee_n)_n-$ system
$((H_s,\mu_s,\mathcal{J}_s)_{s\in 2^{<\nn}},(\mathcal{P}_n)_{n})$,
with the  same constants  satisfying in addition the following
properties.
\begin{enumerate}
\item[(i)] For every  $m>n\geq 0$, $s\in 2^n$ and
$\{s_0,s_1\}\subseteq 2^m$ such that $s^\smallfrown 0\sqsubseteq
s_0$ and $s^\smallfrown 1\sqsubseteq s_1$ and every
$\mathcal{I}\in\mathcal{F}(\mathcal{P}_n)$, we have that
\begin{equation}\label{s2gen}|\frac{\mu_{s_0}+\mu_{s_1}}{2}(\cup\mathcal{I})-\mu_s(\cup\mathcal{I})|<\ee_n\end{equation}
\item[(ii)] For every $n\geq 1$, the sequence $(H_s)_{s\in 2^n}$
is $(\ee_s)_{s\in 2^n}$-biorthogonal,
 where $\ee=\sum_{s\in 2^n}\ee_s>0$.
\end{enumerate}
\end{defn}
In the following we present a partition of $2^{<\nn}$ in
continuoum many almost disjoint  subtrees. This partition is
induced by the canonical bijection between $2^{<\nn}$ and
$2^{<\nn}\times 2^{<\nn}$.
\begin{defn} \label{lev}Let $s\in 2^{<\nn}$. If $s=\emptyset$ then let $L_\emptyset=\{\emptyset\}$.
If $\emptyset\neq s=(s(1),...,s(n))$ let $L_s=\{t\in
2^{2n}:\;t(2i)=s(i),\;\text{for all}\;1\leq i\leq n\}$.
\end{defn}
\begin{rem}\label{remalmost disjoint} It is easy to see that for each $\sigma\in 2^\nn$,
$T_\sigma=\cup_n L_{\sigma|n}$, is a dyadic subtree of $2^{<\nn}$
and  $(T_\sigma)_{\sigma\in 2^\nn}$ is an  almost disjoint family
and hence their bodies $([T_\sigma])_{\sigma\in 2^\nn}$ are
disjoint.
\end{rem}
 The following properties of $(L_s)_{s\in 2^{<\nn}}$ are easily established.
\begin{enumerate}
\item[(L1)] For all $n\geq 0$ and $s\in 2^{n}$, $L_s\subseteq
2^{2n}$ and $|L_s|=2^n$, where $|L_s|$ is the cardinality of
$L_s$. \item[(L2)] For $s_1\sqsubseteq s_2$, $L_{s_1}=L_{s_2}|
n_1=\{t| n_1:\;t\in L_{s_2}\},$ where $n_1=2|s_1|$. \item[(L3)]
For $s_1\perp s_2$, $L_{s_1}\perp L_{s_2}$. \item[(L4)] For all
$n\geq 0$, $2^{2n}=\cup_{s\in 2^n}L_s$.
\end{enumerate}

Given an an  $(\ee,(\ee_n)_n)-S^2$ generating  system
$((H_s,\mu_s,\mathcal{J}_s)_{s\in 2^{<\nn}},(\mathcal{P}_n)_{n})$,
with constants $(M,\Lambda, \theta)$, we set $G_s=
2^{-n/2}\sum_{t\in L_s}H_t$, $ \nu_s= 2^{-n}\sum_{t\in L_s}\mu_t$,
$ \mathcal{I}_s=\bigcup_{t\in L_s}\mathcal{J}_t$,
$\mathcal{Q}_n=\mathcal{P}_{2n}$ and $\ee_n'=\frac{\theta}{
2^{n/2}}+2^{n/2}\ee_{2n}$.

\begin{prop}\label{gen1} If
$\theta'=\theta-(2M+\ee)\ee>0$ then the system
$((G_s,\nu_s,\mathcal{I}_s)_{s\in 2^{\nn}},(\mathcal{Q}_n)_{n})$
is an $(\ee'_n)_n-S^2$
   system with constants $(M+\ee,\Lambda, \theta')$. \end{prop}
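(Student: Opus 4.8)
The plan is to verify, one at a time, the six conditions of Definition~\ref{system} together with the additional inequality (\ref{s2system}), now for the sequence $(\ee_n')_n$ and the constants $(M+\ee,\Lambda,\theta')$. The guiding picture, fixed once and for all, is that for $n\geq 0$ and $s\in 2^n$ the function $G_s=2^{-n/2}\sum_{t\in L_s}H_t$ and the measure $\nu_s=2^{-n}\sum_{t\in L_s}\mu_t$ are respectively the $\ell_2$- and the $\ell_1$-normalised averages over the $2^n$ nodes of $L_s\subseteq 2^{2n}$ (recall $|L_s|=2^n$ and $\bigcup_{s\in 2^n}L_s=2^{2n}$, i.e.\ (L1) and (L4)), and that $(H_t)_{t\in 2^{2n}}$ is $(\ee_t)_{t\in 2^{2n}}$-biorthogonal with $\sum_{t\in 2^{2n}}\ee_t=\ee$. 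Thus Lemmas~\ref{lprep} and~\ref{lul2} are applicable with $S=2^{2n}$, $F=G=L_s$ and all coefficients equal to $2^{-n/2}$; the factor $2^{-n/2}\cdot 2^n=2^{n/2}$ produced by these averages is exactly what dictates the form $\ee_n'=\theta\,2^{-n/2}+2^{n/2}\ee_{2n}$ and the new norm constant $M+\ee$.

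Conditions (1)--(4) are direct estimates. For (1): $\|\nu_s\|=\nu_s[0,1]=2^{-n}\sum_{t\in L_s}\|\mu_t\|\leq\Lambda$, while Lemma~\ref{lul2}(i) with $\textit{v}_2^2(H_t,\mathcal{I}^{(t)})\leq M^2$ gives $\textit{v}_2^2(G_s,\mathcal{I})\leq M^2+(2M+\ee)\ee=(M+\ee)^2$ for every $\mathcal{I}\in\mathcal{A}$. For (2): $\|G_s\|_\infty\leq 2^{-n/2}\cdot 2^n\max_{t\in 2^{2n}}\|H_t\|_\infty\leq 2^{n/2}\ee_{2n}\leq\ee_n'$. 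For (3): since $\langle\{G_s:s\in 2^n\}\rangle\subseteq\langle\{H_t:t\in 2^{2n}\}\rangle$ and the identification $\sum_s\lambda_s G_s=\sum_t 2^{-n/2}\lambda_{\sigma(t)}H_t$ (where $\sigma(t)$ is the unique $s\in 2^n$ with $t\in L_s$) preserves the $\ell_2$-mass $\sum_s|\lambda_s|^2$, the hypothesis that $\mathcal{P}_{2n}$ $\ee_{2n}$-determines the quadratic variation of $\langle\{H_t:t\in 2^{2n}\}\rangle$ transfers verbatim to $\langle\{G_s:s\in 2^n\}\rangle$ with the same $\ee_{2n}\leq\ee_n'$. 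For (4), given $\mathcal{I}\in\mathcal{F}(\mathcal{Q}_n)=\mathcal{F}(\mathcal{P}_{2n})$, I would chain Lemma~\ref{lul2}(i), then condition (4) of the generating system applied to each $H_t$ with $t\in L_s$ (note $\mathcal{I}^{(t)}\in\mathcal{F}(\mathcal{P}_{2n})$), then the monotonicity $\mu_t(\cup\mathcal{I}^{(t)})\leq\mu_t(\cup\mathcal{I})$, arriving at $\textit{v}_2^2(G_s,\mathcal{I})\leq\nu_s(\cup\mathcal{I})+\ee_{2n}+2^{-n}(2M+\ee)\ee$; here $(2M+\ee)\ee<\theta$ (which is exactly the hypothesis $\theta'>0$) gives that the error is $\leq 2^{n/2}\ee_{2n}+2^{-n/2}\theta=\ee_n'$.

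Condition (5) is combinatorial: if $s\perp u$ then $L_s\perp L_u$ by (L3), and distinct nodes of $L_s$ are $\sqsubseteq$-incomparable, so the disjointness in condition (5) of the generating system yields both $\mathcal{I}_s\in\mathcal{A}$ and that $(\mathcal{I}_s,\mathcal{I}_u)$ is a disjoint pair. For (6) I would take $\mathcal{I}=\mathcal{I}_s=\bigcup_{t\in L_s}\mathcal{J}_t$: since each $\mathcal{J}_t$ is a sub-family of $\mathcal{I}_s$ and $\textit{v}_2^2$ is additive over disjoint partitions, condition (6) of the generating system gives $\textit{v}_2^2(H_t,\mathcal{I}_s)\geq\textit{v}_2^2(H_t,\mathcal{J}_t)>\theta$ for $t\in L_s$, and discarding $\mathcal{I}_s\setminus\mathcal{I}_s^{(t)}$ by Lemma~\ref{lprep}(i) yields $\textit{v}_2^2(H_t,\mathcal{I}_s^{(t)})>\theta-\ee^2$; Lemma~\ref{lul2}(ii) with $F=G=L_s$ and coefficients $2^{-n/2}$ then gives $\textit{v}_2^2(G_s,\mathcal{I}_s)>\sum_{t\in L_s}2^{-n}\textit{v}_2^2(H_t,\mathcal{I}_s^{(t)})-2^{-n}\cdot 2M\ee>\theta-\ee^2-2M\ee=\theta'$.

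The substantive step is the extra $S^2$-condition (\ref{s2system}): given $s\sqsubseteq t$ with $|s|=n$, $|t|=m$ and $\mathcal{I}\in\mathcal{F}(\mathcal{Q}_n)=\mathcal{F}(\mathcal{P}_{2n})$, I would bound $|\nu_t(\cup\mathcal{I})-\nu_s(\cup\mathcal{I})|$ by telescoping along $\nu_{t|n},\nu_{t|(n+1)},\dots,\nu_{t|m}$. The key structural fact, read from Definition~\ref{lev} and property (L2), is that for $n\leq k<m$ and $u\in L_{t|k}$ the two nodes of $L_{t|(k+1)}$ lying above $u$ are $u$ extended by $(0,t(k+1))$ and by $(1,t(k+1))$; hence one of them extends $u^\smallfrown 0$ and the other $u^\smallfrown 1$, so condition (i) of the generating system, i.e.\ inequality (\ref{s2gen}), applies with $u$ in the role of $s$ at level $2k$ --- using that $\mathcal{I}\in\mathcal{F}(\mathcal{P}_{2n})\subseteq\mathcal{F}(\mathcal{P}_{2k})$ since $(\mathcal{P}_n)_n$ is increasing. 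Writing $\nu_{t|(k+1)}=2^{-k}\sum_{u\in L_{t|k}}\tfrac{1}{2}(\mu_{u_0}+\mu_{u_1})$ with $u_0,u_1$ those two nodes, and averaging the $2^k$ estimates $|\tfrac{1}{2}(\mu_{u_0}+\mu_{u_1})(\cup\mathcal{I})-\mu_u(\cup\mathcal{I})|<\ee_{2k}$, one gets $|\nu_{t|(k+1)}(\cup\mathcal{I})-\nu_{t|k}(\cup\mathcal{I})|<\ee_{2k}$, so that $|\nu_t(\cup\mathcal{I})-\nu_s(\cup\mathcal{I})|<\sum_{k=n}^{m-1}\ee_{2k}$, which is $<\ee_n'=\theta\,2^{-n/2}+2^{n/2}\ee_{2n}$ because the sequence $(\ee_n)_n$ may be (and, in the construction of a generating system, will be) taken to decrease fast enough that $\sum_{k\geq n}\ee_{2k}<\ee_n'$ for all $n$. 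The main obstacle is exactly this telescoping bookkeeping --- identifying correctly the two ``children'' of $u$ inside $L_{t|(k+1)}$ so that the averaging hypothesis (\ref{s2gen}) is usable at each stage, and keeping the interval families inside the relevant $\mathcal{F}(\mathcal{P}_{2k})$; everything else is routine.
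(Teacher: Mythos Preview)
Your verification of conditions (1)--(6) of Definition~\ref{system} is correct and essentially matches the paper's argument (your monotonicity step in (4), $\mu_t(\cup\mathcal{I}^{(t)})\leq\mu_t(\cup\mathcal{I})$, is actually cleaner than the paper's use of $\mu_t\leq|L_s|\nu_s$, but both work).

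The gap is in your treatment of (\ref{s2system}). Your telescoping argument is internally correct and yields $|\nu_t(\cup\mathcal{I})-\nu_s(\cup\mathcal{I})|<\sum_{k=n}^{m-1}\ee_{2k}$, but this sum is \emph{not} bounded by $\ee_n'$ under the stated hypotheses of the proposition: nothing beyond $\theta'>0$ is assumed about $(\ee_n)_n$, and the sentence ``the sequence $(\ee_n)_n$ may be taken to decrease fast enough'' imports an assumption that is simply not there. As written, your argument proves a weaker proposition.

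The fix is to exploit the full strength of (\ref{s2gen}): it allows $s_0,s_1$ to live at \emph{any} level $m>n$, not just $n+1$, so no telescoping is needed. Concretely, for $s\sqsubseteq t$ with $|s|=n$, $|t|=m$, one writes $L_t=\bigcup_{w\in L_s}\bigl(w_0^\smallfrown L_u\cup w_1^\smallfrown L_u\bigr)$, where $w_0=w^\smallfrown 0^\smallfrown t(n+1)$, $w_1=w^\smallfrown 1^\smallfrown t(n+1)$ and $u=(t(n+2),\dots,t(m))$. For each $w\in L_s\subseteq 2^{2n}$ and each $v\in L_u$, the pair $(w_0^\smallfrown v,\,w_1^\smallfrown v)$ satisfies $w^\smallfrown 0\sqsubseteq w_0^\smallfrown v$ and $w^\smallfrown 1\sqsubseteq w_1^\smallfrown v$, so (\ref{s2gen}) applied at level $2n$ gives $|(\mu_{w_0^\smallfrown v}+\mu_{w_1^\smallfrown v}-2\mu_w)(\cup\mathcal{I})|<2\ee_{2n}$. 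Since $\nu_t-\nu_s=2^{-m}\sum_{w\in L_s}\sum_{v\in L_u}(\mu_{w_0^\smallfrown v}+\mu_{w_1^\smallfrown v}-2\mu_w)$, averaging over the $2^n\cdot 2^{m-n-1}$ terms yields $|\nu_t(\cup\mathcal{I})-\nu_s(\cup\mathcal{I})|\leq\ee_{2n}\leq\ee_n'$ directly. The point you missed is that the nodes of $L_t$ can be matched in pairs over a \emph{single} $w\in L_s$, rather than stepwise over $L_{t|k}$.
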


\begin{proof}  Let $n\geq 1$ and $s\in 2^n$. Since $L_s\subseteq 2^{2n}$,
 $|L_s|=2^n$ and $(H_t)_{t\in 2^{2n}}$ is $(\ee_t)_{t\in 2^{2n}}$-biorthogonal,
with $\sum_{t\in 2^{2n}}\ee_t=\ee<1$, by Lemma \ref{lul2}, we get
that for every $\mathcal{I}\in\mathcal{A}$,
\[\textit{v}_2^2(G_s,\mathcal{I})< |L_s|^{-1} \sum_{t\in L_s}
 \textit{v}_2^2(H_t,\mathcal{I}^{(t)})+ 2^{-n}(2M+\ee)\ee\leq (M+\ee)^2\]
 and so $\|G_s\|_{V_2^0}\leq M+\ee$.

 Moreover  $\|G_s\|_{\infty}\leq\sqrt{ |L_s|} \ee_{2n}= \sqrt{2^{n}}\ee_{2n}<\ee_n'.$
We  show now that the quadratic variation of $<\{G_s\}_{s\in
2^n}>$ is   $\ee_n'$-determined by $\mathcal{Q}_n$. Let
$\mathcal{I}\in\mathcal{A}$. Since $\mathcal{P}_{2n}$, $\ee_{2n}$-
determines the quadratic variation of $<\{H_t\}_{t\in 2^{2n}}>$,
there is $\widetilde{\mathcal{I}}\preceq \mathcal{I}$  in
$\mathcal{F}(\mathcal{P}_{2n})$ such  that for all $(\mu_t)_{t\in
2^{2n}}$,
\[|\textit{v}_2^2(\sum_{t\in 2^{2n}}\mu_t H_t,\mathcal{I})-\textit{v}_2^2(\sum_{t\in 2^{2n}}\mu_t H_t,
\widetilde{\mathcal{I}})|\leq( \sum_{t\in
2^{2n}}|\mu_t|^2)\ee_{2n}\]  Notice that for every sequence of
scalars  $(\lambda_s)_{s\in 2^n}$,
 $\sum_{s\in 2^n}\lambda_s G_s=\sum_{t\in 2^{2n}}\mu_t H_t,$ where for
 each $t\in 2^{2n}$, $\mu_t=|L_s|^{-1/2}\lambda_s$, for $t\in
 L_s$. Therefore
  \[\begin{split}|\textit{v}_2^2(\sum_{s\in 2^n}\lambda_s G_s, \mathcal{I})-\textit{v}_2^2(\sum_{s\in 2^n}\lambda_s G_s,
   \widetilde{ \mathcal{I}})|&=
 |\textit{v}_2^2(\sum_{t\in 2^{2n}}\mu_t H_t,\mathcal{I})-\textit{v}_2^2(\sum_{t\in 2^{2n}}\mu_t H_t,\widetilde{\mathcal{I}})|
 \\&\leq( \sum_{t\in 2^{2n}}|\mu_t|^2)\ee_{2n}
 \leq(\sum_{s\in 2^n}|\lambda_s|^2)\ee_n'\end{split}\]
We proceed to show property (4) of Definition \ref{system}. Let
$\mathcal{I}\in \mathcal{F}(\mathcal{Q}_n)$. Then $\mathcal{I}\in
\mathcal{F}(\mathcal{P}_{2n})$, $\mathcal{I}=\cup_{t\in
2^{2n}}\mathcal{I}^{(t)}$ and for all $t\in 2^{2n}$, $
\textit{v}_2^2(H_t,\mathcal{I}^{(t)})\leq \mu_t(\cup
\mathcal{I}^{(t)})+\ee_{2n}$. Notice also that for every $t\in
L_s$,  $\mu_t\leq |L_s|\nu_s$. Hence, by Lemma  \ref{lul2}, we get
that \[\begin{split}
\textit{v}_2^2(G_s,\mathcal{I})& 
                            \leq|L_s|^{-1}\sum_{t\in L_s}\mu_t(\cup \mathcal{I}^{(t)})+\ee_{2n}+2^{-n}(2M+\ee)\ee\\
                            &\leq\sum_{t\in L_s}\nu_s(\cup
                            \mathcal{I}^{(t)})+\ee_{2n}+3\theta/4\sqrt{2^n}
                            \leq\nu_s(\cup \mathcal{I})+\ee_n'\end{split}\]
We  show now   (\ref{s2system}) of Definition \ref{S2}. Let $m\geq
1$ and $t=(t(1),...,t(m))\in 2^m$. Let $0\leq n<m$ and $s= t|n$.
  For each $w\in L_s$, let $w_0=w^\smallfrown 0^\smallfrown t(n+1)$ and $w_1=w^\smallfrown 1^\smallfrown t(n+1)$.
 Let also $u=(u(1),...,u(m-n-1))$ where $u(i)=t(n+1+i)$, for all $1\leq i\leq m-n-1$.
 Then $L_t=\bigcup_{w\in L_{s}}(w_0^\smallfrown L_u\cup w_1^\smallfrown L_u)$ and so
  $\nu_t=2^{-m}\sum_{w\in L_s}\sum_{v\in L_u}(\mu_{w_0^\smallfrown v}+\mu_{w_1^\smallfrown v})$. Therefore,
 \[\begin{split}\nu_t-\nu_s &=2^{-m}\sum_{w\in L_s}\sum_{v\in L_u}(\mu_{w_0^\smallfrown v}+
 \mu_{w_1^\smallfrown v})-2^{-n}\sum_{w\in L_s}\mu_w\\
&= 2^{-m}\sum_{w\in L_s}\sum_{v\in L_u}(\mu_{w_0^\smallfrown
v}+\mu_{w_1^\smallfrown v}-2\mu_w)\end{split}\] Moreover by
(\ref{s2gen}), for every
$\mathcal{I}\in\mathcal{F}(\mathcal{Q}_n)=\mathcal{F}(\mathcal{P}_{2n})$,
and all  $v\in L_u$, we have that \[|(\mu_{w_0^\smallfrown
v}+\mu_{w_1^\smallfrown v}-2\mu_w)(\cup \mathcal{I})|\leq
2\ee_{2n}\]  Hence $|\nu_t(\cup \mathcal{I})-\nu_s (\cup
\mathcal{I})| \leq 2^{-m} 2^n 2^{m-n-1}2\ee_{2n}=\ee_{2n}\leq
\ee_n'$.

That the pair $(\mathcal{I}_s, \mathcal{I}_t)$ is disjoint,  is
straightforward by properties (5) of Definition \ref{system} and
(L3) of Definition \ref{lev}. Finally,
$\textit{v}_2^2(H_t,\mathcal{J}_t)>\theta$, implies  that
$\textit{v}_2^2(H_t,\mathcal{I}_s^{(t)})>\theta-\ee^2$, for all
$t\in 2^{<\nn}$. Hence by Lemma \ref{lul2}, we obtain that
\[\textit{v}_2^2(G_s,\mathcal{I}_s)\geq \sum_{t\in
L_s}|L_s|^{-1}\textit{v}_2^2(H_t,\mathcal{I}_s^{(t)})-2^{-n}2M\ee\geq\theta-(\ee+2M)\ee=\theta'\]
\end{proof}

\subsection{The embedding of $S^2$ into $X$ when $\mathcal{M}_{X^{**}}$ is non-separable.}

\begin{lem} \label{fact1}Let $\{\mu_\xi\}_{\xi<\omega_1}$ be a non-separable  subset of $\mathcal{M}^+[0,1]$.
Then there are an uncountable subset $\Gamma$  of $\omega_1$ such
that for every $\xi\in \Gamma$,
 $\mu_\xi=\lambda_\xi+\tau_\xi$  where $\lambda_\xi,\tau_\xi$ are positive Borel measures on $[0,1]$
  satisfying the following properties.
\begin{enumerate}
\item For all $\xi\in \Gamma$,  $\lambda_\xi\perp\tau_\xi$ and
$\|\tau_\xi\|>0$. \item For all $\zeta<\xi$ in $\Gamma$,
$\mu_\zeta\perp \tau_\xi$.
\end{enumerate}
\end{lem}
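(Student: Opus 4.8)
The plan is to produce $\Gamma$ together with the decompositions $\mu_\xi=\lambda_\xi+\tau_\xi$ by a transfinite recursion of length $\omega_1$: at each stage one uses the Lebesgue decomposition theorem to split off, from a carefully chosen $\mu_\xi$, the part that is singular with respect to everything already selected. The only external inputs are the Lebesgue decomposition theorem and the separability of $L^1(N)$ for every finite positive Borel measure $N$ on $[0,1]$ (which follows from the density of $C[0,1]$ in $L^1(N)$); the latter is precisely what prevents the recursion from terminating.

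Here is the recursion step. Suppose that for some $\alpha<\omega_1$ we have chosen a strictly increasing sequence $(\gamma_\beta)_{\beta<\alpha}$ in $\omega_1$ together with positive measures $\lambda_{\gamma_\beta},\tau_{\gamma_\beta}$. Since $\{\gamma_\beta:\beta<\alpha\}$ is countable, pick positive reals $(c_\beta)_{\beta<\alpha}$ with $\sum_{\beta<\alpha}c_\beta(\|\mu_{\gamma_\beta}\|+1)<\infty$ and set $N_\alpha=\sum_{\beta<\alpha}c_\beta\,\mu_{\gamma_\beta}\in\mathcal{M}^+[0,1]$; then a Borel set is $N_\alpha$-null if and only if it is $\mu_{\gamma_\beta}$-null for every $\beta<\alpha$. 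Put $\delta_\alpha=\sup_{\beta<\alpha}\gamma_\beta<\omega_1$. The claim is that there is $\xi$ with $\delta_\alpha<\xi<\omega_1$ and $\mu_\xi$ not absolutely continuous with respect to $N_\alpha$. Indeed, if every such $\mu_\xi$ were absolutely continuous with respect to $N_\alpha$, then, since for a positive measure $\rho$ with $\rho\ll N_\alpha$ one has $\|\rho\|=\|d\rho/dN_\alpha\|_{L^1(N_\alpha)}$, the Radon--Nikodym theorem would isometrically embed $\{\mu_\xi:\delta_\alpha<\xi<\omega_1\}$ into the separable space $L^1(N_\alpha)$; adjoining the countable set $\{\mu_\xi:\xi\le\delta_\alpha\}$ one would conclude that $\{\mu_\xi\}_{\xi<\omega_1}$ is separable, a contradiction. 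Let $\gamma_\alpha$ be the least such $\xi$, let $\mu_{\gamma_\alpha}=\lambda_{\gamma_\alpha}+\tau_{\gamma_\alpha}$ be its Lebesgue decomposition relative to $N_\alpha$ (so $\lambda_{\gamma_\alpha}\ll N_\alpha$ and $\tau_{\gamma_\alpha}\perp N_\alpha$, both positive), and note that $\|\tau_{\gamma_\alpha}\|>0$ precisely because $\mu_{\gamma_\alpha}$ is not absolutely continuous with respect to $N_\alpha$. At limit stages $\delta_\alpha$ remains countable since the cofinality of $\omega_1$ is uncountable, so there is always room above $\delta_\alpha$.

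Running the recursion through all $\alpha<\omega_1$ yields an uncountable set $\Gamma=\{\gamma_\alpha:\alpha<\omega_1\}\subseteq\omega_1$. To conclude, fix $\gamma_\alpha\in\Gamma$. Property (1) holds because $\lambda_{\gamma_\alpha}$ and $\tau_{\gamma_\alpha}$ are the Lebesgue pieces of $\mu_{\gamma_\alpha}$ relative to $N_\alpha$, hence mutually singular, and $\|\tau_{\gamma_\alpha}\|>0$ by construction. For property (2), let $\zeta\in\Gamma$ with $\zeta<\gamma_\alpha$; writing $\zeta=\gamma_\beta$ and using that the $\gamma$'s are increasing we get $\beta<\alpha$, so $\mu_\zeta=\mu_{\gamma_\beta}\ll N_\alpha$ while $\tau_{\gamma_\alpha}\perp N_\alpha$, and therefore $\mu_\zeta\perp\tau_{\gamma_\alpha}$. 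I expect the only genuine obstacle to be the claim embedded in the recursion step — that the next index $\xi$ always exists — which is the single place where the non-separability hypothesis is exploited, through the separability of $L^1$ of a finite Borel measure; everything else is routine measure-theoretic bookkeeping with the Lebesgue decomposition.
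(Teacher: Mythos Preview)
Your proof is correct and follows essentially the same transfinite-recursion strategy as the paper: at each stage form a controlling measure from the previously chosen $\mu_{\gamma_\beta}$'s, invoke the separability of $L^1$ of that measure together with Radon--Nikodym to find a new $\mu_\xi$ not absolutely continuous with respect to it, and take the Lebesgue decomposition. The only cosmetic difference is that the paper first passes to a $\delta$-separated subfamily (so that the set of $\xi$ with $\mu_\xi\ll\nu_\alpha$ is outright countable), whereas you argue directly that not all tail measures can be absolutely continuous; both routes use non-separability at exactly the same point and yield the same conclusion.
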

\begin{proof} We may suppose that for some $\delta>0$,
 $\|\mu_\xi-\mu_\zeta\|>\delta$, for all $0\leq \zeta<\xi<\omega_1$. By transfinite induction we
 construct a strictly increasing sequence $(\xi_\alpha)_{\alpha<\omega_1}$ in $\omega_1$ such that
 for each $\alpha<\omega_1$, $\mu_{\xi_\alpha}=\lambda_{\xi_\alpha}+\tau_{\xi_\alpha}$, with
  $\lambda_{\xi_\alpha}\perp \tau_{\xi_\alpha}$, $\|\tau_{\xi_\alpha}\|>0$ and  $\tau_{\xi_\alpha}\perp \mu_{\xi_\beta}$ for all $\beta<\alpha$.
  The general inductive step of the construction has as follows. Suppose that for some $\alpha<\omega_1$,
$(\xi_\beta)_{\beta<\alpha}$ has been defined. Let $(\beta_n)_n$ an enumeration of $\alpha$ and set
 \[\zeta_\alpha=\sup_n\xi_{\beta_n},\;\;
 \nu_\alpha=\sum_n\mu_{\xi_{\beta_n}}/2^n\;\text{and}\;N_\alpha=\{\xi<\omega_1:\zeta_\alpha<\xi\;\text{and}\;\mu_\xi<<\nu_\alpha\}\]
By Radon- Nikodym theorem, $\{\mu_\xi\}_{\xi\in N_\alpha}$ is
isometrically contained in $L_1([0,1],\nu_\alpha)$ and therefore
it is norm separable. Since we have assumed that
$\|\mu_\xi-\mu_\zeta\|>\delta$, for all $0\leq
\zeta<\xi<\omega_1$, we get that $N_\alpha$ is countable. Hence we
can choose $\xi_\alpha>\sup N_\alpha$. Let
$\mu_{\xi_\alpha}=\lambda_{\xi_\alpha}+\tau_{\xi_\alpha}$ be the
Lebesgue analysis of $\mu_{\xi_\alpha}$ where
$\lambda_{\xi_\alpha}<<\nu_{\alpha} $ and $\tau_{\xi_\alpha}\perp
\nu_{\alpha}$. By the definition of $\nu_{\alpha}$
 and $\xi_\alpha$, we have that $\|\tau_{\xi_\alpha}\|>0$, $\tau_{\xi_\alpha}\perp\mu_{\xi_\beta}$,
 for all $\beta<\alpha$ and the inductive step
 of the construction has
been  completed.\end{proof}

\begin{lem} \label{fact2} Let $\{\tau_\xi\}_{\xi<\omega_1}$ be an uncountable family
of pairwise singular positive Borel measures on $[0,1]$.
 Then for  every finite family  $(\Gamma_i)_{i=1}^k$ of  pairwise disjoint uncountable subsets of $\omega_1$
and every $\ee>0$  there exist a family  $(\Gamma'_i)_{i=1}^k$  with  $\Gamma'_i$  uncountable subset
  of $\Gamma_i$  and  a family $(U_i)_{i=1}^k$ of open and pairwise disjoint subsets of  $[0,1]$ such that
$\tau_\xi([0,1]\setminus U_i)<\ee,$
   for all $1\leq i\leq k$ and  $\xi\in \Gamma_i'$.
\end{lem}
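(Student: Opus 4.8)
My plan is to combine inner regularity of Borel measures with a condensation‑point argument in a suitable hyperspace: first replace each $\tau_\xi$ by a compact set carrying almost all of its mass, then localise uncountably many of these compacta, group by group, near a common limit compactum, and finally use the mutual singularity to push the $k$ limit compacta apart.

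\emph{Reduction.} Fix $\ee>0$. After a preliminary pigeonhole I may assume $\sup_\xi\|\tau_\xi\|<\infty$. Since each $\tau_\xi$ is a finite Borel measure on the compact metric space $[0,1]$, it is inner regular, so I would choose a compact $K_\xi$ with $\tau_\xi([0,1]\setminus K_\xi)<\ee/2$. It now suffices to find uncountable $\Gamma_i'\subseteq\Gamma_i$ and pairwise disjoint open $U_i\subseteq[0,1]$ with $K_\xi\subseteq U_i$ whenever $\xi\in\Gamma_i'$, since then $\tau_\xi([0,1]\setminus U_i)\le\tau_\xi([0,1]\setminus K_\xi)<\ee$ for all $1\le i\le k$ and $\xi\in\Gamma_i'$.

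\emph{Localisation.} For each $m$ partition $[0,1]$ into the $2^m$ dyadic intervals of length $2^{-m}$ and attach to every $\xi$ its footprint at scale $m$ (which of these intervals $K_\xi$ meets) together with the mass vector $\bigl(\tau_\xi(I)\bigr)_I$ rounded to a fixed finite grid. For fixed $m$ there are only finitely many such data, so an iterated pigeonhole — run simultaneously over all $k$ groups and coherently in $m$ — yields decreasing uncountable sets $\Gamma_i^{(m)}\subseteq\Gamma_i$ on which these data are constant; the unions $A_{m,i}$ of the intervals in the common footprint then decrease as $m$ grows to compacta $F_i$, and $K_\xi\subseteq A_{m,i}$ for all $\xi\in\Gamma_i^{(m)}$. (Equivalently, in the hyperspace $\mathcal K[0,1]$ of nonempty compact subsets of $[0,1]$ each family $\{K_\xi:\xi\in\Gamma_i\}$ is uncountable, hence has uncountably many condensation points, and $F_i$ may be taken to be one of them.) If for some single scale $m_0$ the sets $A_{m_0,1},\dots,A_{m_0,k}$ were pairwise disjoint they would be at positive mutual distance, and then sufficiently small open neighbourhoods $U_i\supseteq A_{m_0,i}$ together with $\Gamma_i'=\Gamma_i^{(m_0)}$ would finish the proof.

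\emph{Separating the groups.} The heart of the matter is thus to arrange $F_1,\dots,F_k$ pairwise disjoint, and this is exactly where pairwise singularity is indispensable. If $F_i\cap F_j\neq\varnothing$, pick $x$ in it; then uncountably many $\tau_\xi$ with $\xi\in\Gamma_i$ and uncountably many $\tau_\zeta$ with $\zeta\in\Gamma_j$ carry almost all of their mass in every prescribed neighbourhood of $x$. I would then descend: at each scale at which two groups still overlap, pass to uncountable subgroups by condensing the rounded mass profiles, and argue that this descent terminates — via a Lebesgue‑density / zero‑one type argument showing that two mutually singular measures cannot, at every scale, concentrate their bulk on the same shrinking family of dyadic intervals, while for the degenerate "pinned to a point" case one uses that at most one of the $K_\xi$ can contain any given point of $[0,1]$. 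Reaching a scale $m_0$ at which the $A_{m_0,i}$ are pairwise disjoint closes the argument. I expect this last step — reconciling uncountably many pairwise singular measures with only $k$ groups inside one separable interval — to be the genuinely delicate point; everything else is bookkeeping with pigeonholes and regularity.
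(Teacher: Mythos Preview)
Your reduction step is too strong and cannot be completed. Having fixed, for each $\xi$, a compact $K_\xi$ with $\tau_\xi([0,1]\setminus K_\xi)<\ee/2$ depending only on $\tau_\xi$, nothing prevents all of the $K_\xi$ from coinciding. Concretely, take uncountably many pairwise singular Bernoulli product measures on the standard Cantor set $C$ (the measures with distinct bias parameters $p\in(0,1)$ are pairwise singular); each has topological support equal to $C$, so the only compact set carrying mass at least $\|\tau_\xi\|-\ee/2$ for small $\ee$ is $C$ itself, and then disjoint open $U_i\supseteq K_\xi$ for $\xi$ in different $\Gamma_i'$ are plainly impossible. Your ``Lebesgue--density / zero--one'' heuristic in the separation step fails for exactly this reason: two mutually singular measures may have identical dyadic footprints at every scale (identical supports), so the descent never terminates. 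The parenthetical claim that ``at most one of the $K_\xi$ can contain any given point'' is likewise false. Mutual singularity is a statement about Borel sets, not about topological supports, and your reduction throws away precisely the freedom one needs.

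The paper's argument avoids this by never committing to a compact set attached to a single measure. Instead, for each $\alpha<\omega_1$ it first selects one ordinal $\xi_i^\alpha$ from each $\Gamma_i$, and only then---using the pairwise singularity of just these $k$ measures---chooses pairwise disjoint open sets $U_1^\alpha,\dots,U_k^\alpha$, each a finite union of rational-endpoint intervals, with $\tau_{\xi_i^\alpha}([0,1]\setminus U_i^\alpha)<\ee$. Since there are only countably many such $k$-tuples of open sets, one pigeonhole yields an uncountable $\Gamma$ on which the tuple is constant, and $\Gamma_i'=\{\xi_i^\alpha:\alpha\in\Gamma\}$ works. The essential point is that the separating open sets are chosen \emph{relative to the other $k-1$ measures in the tuple}, not absolutely; your approach fixes $K_\xi$ before any comparison is made, and that is where it breaks.
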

\begin{proof} For every $\alpha<\omega_1$, we choose $(\xi_i^\alpha)_{i=1}^k\in \prod_{i=1}^k{\Gamma_i}$
 such that for every $\alpha\neq \beta$ in $\omega_1$ and every $1\leq i\leq k$, $\xi_i^\alpha\neq \xi_i^\beta$.
For each $0\leq \alpha<\omega_1$  the k-tuple
$(\tau_{\xi^i_\alpha})_{i=1}^k$ consists of pairwise singular
measures and so we may choose a k-tuple $(U^\alpha_i)_{i=1}^k$ of
open subsets of $[0,1]$ with the following properties: (a) For
each $i$, $\tau_{\xi^\alpha_i}([0,1]\setminus U^\alpha_i)<\ee$,
(b) For all $i\neq j$, $U_i^\alpha\cap U_j^\alpha=\emptyset$ and
(c) For each $i$, $U_i^\alpha$ is a finite union of open in
$[0,1]$
 intervals with rational endpoints.

Since the family of all finite unions of  open  intervals with
rational endpoints
 is countable, there is a $k$-tuple $(U_i)_i$ and an uncountable subset $\Gamma$ of $\omega_1$,
  such that for all $1\leq i\leq k$ and all $\alpha\in \Gamma$,
 $U_i^\alpha=U_i$. For each $1\leq i\leq k$, set $\Gamma_i'=\{\xi_i^\alpha:\;\alpha\in \Gamma\}$.  Then
 for each $1\leq i\leq k$ and all  $\xi\in \Gamma_i'$, $\tau_{\xi}([0,1]\setminus U_i)<\ee$.\end{proof}

\begin{lem}\label{lemind}  Let $X$ be a subspace of $V_2^0$ and
suppose that $ X^{**}$ contains an uncountable family
$\mathcal{F}$ such that $D_\mathcal{F}=\cup_{f\in\mathcal{F}}D_f$
is countable and
$\mathcal{M}_{\mathcal{F}}=\{\mu_f\}_{f\in\mathcal{F}}$ is
non-separable. Then there are constants $(M, \Lambda,\theta)$ such
that for every $\ee>0$ and every sequence $(\ee_n)_n$ of positive
scalars there is an $(\ee,(\ee_n)_n)$- $S^2$ generating system
$((H_s,\mu_s,\mathcal{J}_s)_{s\in 2^{<\nn}},(\mathcal{P}_n)_{n})$
 with constants $(M, \Lambda, \theta)$ and $H_s\in X$, for all $s\in 2^{<\nn}$.
\end{lem}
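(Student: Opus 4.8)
Replacing each $f\in\mathcal F$ by $f/\|f\|_{V_2}$ we may assume $\mathcal F\subseteq B_{X^{**}}$, so $\|\mu_f\|\le 1$. Since $\{\mu_f:f\in\mathcal F\}$ is non\-separable, Lemma~\ref{fact1} yields an uncountable subfamily, re\-enumerated $\{f_\xi\}_{\xi<\omega_1}$, with $\mu_{f_\xi}=\lambda_\xi+\tau_\xi$, $\lambda_\xi\perp\tau_\xi$, $\|\tau_\xi\|>0$, and $\mu_{f_\zeta}\perp\tau_\xi$ (hence $\tau_\zeta\perp\tau_\xi$) for $\zeta<\xi$. Thinning once more, fix $\theta>0$ with $\|\tau_\xi\|>4\theta$ for all $\xi$; the conclusion will hold with $M=4$, $\Lambda=32$ and this $\theta$. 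By Lemma~\ref{la}, $\tau_{\max(\xi,\zeta)}\le\mu_{f_\xi-f_\zeta}$ for $\xi\ne\zeta$, so by Theorem~\ref{osc-meas} $dist(f_\xi-f_\zeta,X)\ge\sqrt{\|\mu_{f_\xi-f_\zeta}\|}>2\sqrt\theta$; and since $D_{\{f_\xi\}}$ is countable, Proposition~\ref{ps} and Lemma~\ref{ld} let us, whenever we pass to an uncountable subfamily, also make the differences $f_\eta-f_\zeta$ occurring in it have arbitrarily small $\|f_\eta-f_\zeta\|_\infty$ and $\|\mu^d_{f_\eta-f_\zeta}\|$.

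\textbf{The recursion.} Fix $\ee>0$ and $(\ee_n)_n$; we may assume $\ee_n\downarrow 0$, $\sum_n\ee_n<1$ and $\ee_n<\theta$. We build, for each $n$: an increasing finite $\mathcal P_n\subseteq[0,1]$; for $s\in 2^n$, functions $H_s\in X$, positive measures $\mu_s$, sets $\mathcal J_s\in\mathcal A$; and, auxiliary, pairwise disjoint uncountable reservoirs $\Gamma_s\subseteq\omega_1$ with $\Gamma_s\subseteq\Gamma_{s\mid(n-1)}$ and pairwise disjoint open sets $U_s$ with $U_s\subseteq U_{s\mid(n-1)}$. At step $n$: split each $\Gamma_s$ $(s\in 2^{n-1})$ into disjoint uncountable $\Gamma_{s\smallfrown 0},\Gamma_{s\smallfrown 1}$; apply Lemma~\ref{fact2} to these $2^n$ reservoirs (with a tiny parameter) to get disjoint open $U_t$ $(t\in 2^n)$, $U_t\subseteq U_{t\mid(n-1)}$, with $\tau_\xi([0,1]\setminus U_t)<\ee_{n+1}$ for $\xi$ in the shrunk $\Gamma_t$; then pigeonhole each $\Gamma_t$ so that $\mu_{f_\xi}(\cup\mathcal I)$ and $\tau_\xi(\cup\mathcal I)$ are, for every $\mathcal I\in\mathcal F(\mathcal P_{n-1})$, within $\ee_{n+1}$ of fixed values. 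For $t\in 2^n$ pick $\zeta_t<\eta_t$ in $\Gamma_t$ above all ordinals used so far, put $F_t=f_{\eta_t}-f_{\zeta_t}\in X^{**}\setminus X$; then $\tau_{\eta_t}\le\mu_{F_t}$ (Lemma~\ref{la}), so $\mu_{F_t}(U_t)>4\theta-\ee_{n+1}>\theta$, $\|\mu_{F_t}\|\le 4$, and $\|F_t\|_\infty$, $\|\mu^d_{F_t}\|$ are as small as we wish. By Proposition~\ref{ldom} (with $\delta=\sqrt\theta<dist(F_t,X)$, $V=U_t$, a fast feeding null sequence) and Remark~\ref{Rem5} we obtain $H_t\in X$ and $\mathcal J_t\in\mathcal A$ with $\sqrt\theta<\|H_t\|_{V_2}\le 4$, $\|H_t\|_\infty\le\ee_n$ (since $\|\widetilde{osc}_{[0,1]}F_t\|_\infty\le 2\|F_t\|_\infty$, making $\|F_t\|_\infty$ and the feeding sequence small enough), $v_2^2(H_t,\mathcal I)\le 4\mu_{F_t}(\cup\mathcal I)+\ee_n$ for all $\mathcal I\in\mathcal A$ (absorbing $32\|F_t\|_{V_2}\sqrt{\|\mu^d_{F_t}\|}$ and the feeding term), $\cup\mathcal J_t\subseteq U_t$, and $v_2^2(H_t,\mathcal J_t)>\theta$. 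Finally pick a finite $\mathcal P_n\supseteq\mathcal P_{n-1}$ which is $\delta_n$\-dense for the $\delta_n$ of Proposition~\ref{det} applied to $\langle H_s:s\in 2^n\rangle$ and $\ee_n$, and fine enough to approximately resolve the $U_s$ $(s\in 2^n)$; set $\mathcal Q_n=\mathcal P_n$. Take $\mu_s$ to be a positive measure of norm $\le 32$ which dominates $4\mu_{F_s}$ on $\mathcal F(\mathcal P_n)$ and is chosen, along the tree, so as to form an approximate martingale on the coarse grids (see the last paragraph).

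\textbf{Verification of the system.} With $\mathcal Q_n=\mathcal P_n$, Definition~\ref{system}(1)--(6) hold: (1) since $\|H_s\|_{V_2}\le M=4$ and $\|\mu_s\|\le\Lambda=32$; (2) since $\|H_s\|_\infty\le\ee_n$; (3) by Proposition~\ref{det}; (4) by the choice of $\mu_s$; (5) since $\cup\mathcal J_s\subseteq U_s$ with the $U_s$ disjoint over incomparable $s$; (6) since $v_2^2(H_s,\mathcal J_s)>\theta$. By (2), $\|H_{\sigma\mid n}\|_\infty\to 0$ for every $\sigma\in 2^\nn$, so Proposition~\ref{treebio} gives a dyadic subtree along which the $H$'s are biorthogonal with prescribed constants; passing to that subtree — which leaves all node\-wise and incomparable\-pair conditions intact — yields Definition~\ref{pres2}(ii), as in Remark~\ref{remark9}.

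\textbf{The main point.} What remains, and is the one delicate step, is to choose the $\mu_s$ so as to secure Definition~\ref{pres2}(i), namely
\[\Big|\tfrac{\mu_{s_0}+\mu_{s_1}}{2}(\cup\mathcal I)-\mu_s(\cup\mathcal I)\Big|<\ee_n\]
for $m>n$, $s\in 2^n$, $s_0\sqsupseteq s\smallfrown 0$, $s_1\sqsupseteq s\smallfrown 1$ in $2^m$, and $\mathcal I\in\mathcal F(\mathcal P_n)$. The mechanism is that at the resolution of the coarse grid $\mathcal P_n$ the measures $\mu_{F_t}$ for $t$ below $s$ are essentially the same: the ordinals $\zeta_t<\eta_t$ defining $F_t$ lie in the pigeonholed reservoir $\Gamma_s$, on which $\mu_{f_\xi}(\cup\mathcal I)$ and $\tau_\xi(\cup\mathcal I)$ $(\mathcal I\in\mathcal F(\mathcal P_n))$ are pinned, while the ``new'' mass of $\mu_{F_t}$ (the part above $\tau_{\eta_t}$) is concentrated, up to $\ee_{n+1}$, inside $U_t\subseteq U_{s_0\mid(n+1)}=U_{s\smallfrown 0}$ (resp. $U_{s\smallfrown 1}$), which meets only finitely many $\mathcal P_n$\-cells, so its contribution to $\mu_{F_t}(\cup\mathcal I)$ is a fixed quantity or $0$ according as $\cup\mathcal I$ covers those cells or not. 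One then defines $\mu_s$ — equal to $4\mu_{F_s}$ plus, if necessary, a correction measure of norm $\le\Lambda$ supported off all the $U_t$'s, which disturbs none of (1)--(6) — so that on $\mathcal F(\mathcal P_n)$ one has $\mu_{s\smallfrown 0}+\mu_{s\smallfrown 1}=2\mu_s$ up to $\ee_n$; this is possible because $U_{s\smallfrown 0},U_{s\smallfrown 1}$ split $U_s$ while $\mu_{F_s}$ itself is $\ee_n$\-concentrated in $U_s$. Checking, inductively over the levels, that the finitely many constraints so imposed at each level remain mutually consistent is the hard part; the remainder is the assembly, carried out above, of Lemmas~\ref{fact1}, \ref{fact2}, \ref{ld}, \ref{la}, Propositions~\ref{ps}, \ref{ldom}, \ref{det}, \ref{treebio} and Theorem~\ref{osc-meas}.
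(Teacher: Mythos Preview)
Your outline has the right skeleton --- normalize, apply Lemma~\ref{fact1}, build a Cantor scheme of reservoirs $\Gamma_s$ with localizing open sets via Lemma~\ref{fact2}, produce $H_s$ from differences $F_s=f_{\eta_s}-f_{\zeta_s}$ via Proposition~\ref{ldom} --- and most of Definition~\ref{system} is correctly verified. But the one step you yourself flag as ``the hard part'' is not actually carried out, and your proposed mechanism for it does not work as stated.

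The difficulty is condition~(\ref{s2gen}). You want to set $\mu_s=4\mu_{F_s}$ (plus corrections) and argue that, on the coarse grid $\mathcal P_n$, the descendants' measures average back to the ancestor's. Your justification is that ``the ordinals $\zeta_t<\eta_t$ lie in the pigeonholed reservoir $\Gamma_s$, on which $\mu_{f_\xi}(\cup\mathcal I)$ and $\tau_\xi(\cup\mathcal I)$ are pinned''. But pinning the \emph{individual} measures $\mu_{f_\xi}$ says nothing about the measure $\mu_{F_t}=\mu_{f_{\eta_t}-f_{\zeta_t}}$, since $f\mapsto\mu_f$ is not additive; two pairs $(\eta,\zeta)$ with $\mu_{f_\eta},\mu_{f_\zeta}$ pinned can give wildly different $\mu_{f_\eta-f_\zeta}$. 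Moreover, even if you switched to $\mu_s=8(\mu_{f_{\zeta_s}}+\mu_{f_{\eta_s}})$ (which does dominate $4\mu_{F_s}$ by Proposition~\ref{basineq}), your order of operations blocks the argument: you choose $\zeta_s,\eta_s$ at step $n$ \emph{before} $\mathcal P_n$ is fixed, so when at step $n{+}1$ you pigeonhole $\Gamma_{s^\smallfrown 0}$ on $\mathcal P_n$, there is no reason the common value hit by the pigeonhole is close to $\mu_{f_{\zeta_s}}$ or $\mu_{f_{\eta_s}}$. The ``correction measure'' idea does not evade this: the corrections must be nonnegative, bounded by $\Lambda$, and simultaneously consistent on \emph{all} coarser grids $\mathcal P_k$ ($k\le n$); you give no reason these constraints are satisfiable.

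The paper resolves this circularity with one extra idea you are missing: at each node $s$ one picks the pair $(\xi_s^0,\xi_s^1)$ to be $w^*$-\emph{condensation points} of $\{\mu_\xi:\xi\in\Gamma_s\}$, and defines $\mu_s=8(\mu_{\xi_s^0}+\mu_{\xi_s^1})$. Because the endpoints of $\mathcal P_n$ are chosen in $[0,1]\setminus D_{\mathcal F}$, each set $\{\xi:|(\mu_\xi-\mu_{\xi_s^{j}})(\cup\mathcal I)|<\ee\}$ for $\mathcal I\in\mathcal F(\mathcal P_n)$ is a relative $w^*$-open neighbourhood of $\mu_{\xi_s^{j}}$, hence uncountable; this lets one, \emph{after} $\mathcal P_n$ has been fixed, thin $\Gamma_{s^\smallfrown j}$ so that every future $\mu_{f_\xi}$ is close to $\mu_{\xi_s^{j}}$ on $\mathcal P_n$. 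The averaging identity $(\mu_{s_0}+\mu_{s_1})/2\approx\mu_s$ on $\mathcal P_n$ then follows from four triangle inequalities and no ``corrections'' are needed. A secondary difference: the paper obtains the level-wise biorthogonality of Definition~\ref{pres2}(ii) \emph{during} the construction via Lemma~\ref{bio}, rather than passing to a subtree afterwards; your use of Proposition~\ref{treebio} is not wrong but forces you to re-index the grids $\mathcal P_n$, which you do not address.
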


\begin{proof}  Since for all $f\in V_2$ and  $\lambda\in\rr$,
$\mu_{\lambda f}=\lambda^2\mu_f$, we may assume  that
$\mathcal{F}\subseteq S_{X^{**}}$.
 By Lemma \ref{fact1},  there is  a non-separable  subset   $\{\mu_\xi\}_{\xi<\omega_1}$
 of $\mathcal{M}_{\mathcal{F}}$  such that
 for all $0\leq \xi<\omega_1$,
$\mu_\xi=\lambda_\xi+\tau_\xi$,  $\lambda_\xi\perp \tau_\xi$ and
for all $\zeta<\xi$, $\mu_{\zeta}\perp \tau_{\xi}$. By passing to
a further uncountable subset, we may also assume that there is
$\theta_0>0$ such that $\|\tau_\xi\|>\theta_0$.
 We fix $\ee>0$ and a sequence
$(\ee_n)_n$ of positive real numbers. We will construct the
following objects:
\begin{enumerate}
\item[(1)] A Cantor scheme $(\Gamma_s)_{s}$ of uncountable subsets
of $\omega_1$  (that is for all $s\in 2^{<\nn}$,
$\Gamma_{s^\smallfrown 0}\cup\Gamma_{s^\smallfrown 1}\subseteq
\Gamma_{s}$ and  $\Gamma_{s^\smallfrown
0}\cap\Gamma_{s^\smallfrown 1}=\emptyset$), \item[(2)] A family
$((\xi_s^0,\xi_s^1))_s$ of pairs  with $\xi_s^0<\xi_s^1$ in
$\Gamma_s $, for all $s\in 2^{<\nn}$. \item [(3)] A Cantor scheme
of open subsets $(V_s)_s$ of $[0,1]$, \item [(4)] A family of
functions $(H_s)_s$ in $X$, \item[(5)] An increasing sequence
$(\mathcal{P}_n)_n$ of  finite subsets of $[0,1]\setminus
D_{\mathcal{F}}$, and \item [(6)] A family $(\mathcal{J}_s)_s$ in
$\mathcal{A}$,
\end{enumerate}
such that the following are satisfied.
\begin{enumerate}
\item [(i)] For every $\xi\in\Gamma_s$, $\tau_\xi(V_s)>\theta_0/2$
and $\tau_\xi([0,1]\setminus
V_s)<\big(\sum_{i=0}^{|s|}2^{-(i+2)}\big)\theta_0$. \item[(ii)]
The measures $\mu_{\xi_{s}^0}$ and $\mu_{\xi_{s}^1}$ are
$w^*$-condensation points of $\{\mu_\xi\}_{\xi\in \Gamma_s}$.
\item [(iii)] For every $n\geq 1$, $s\in 2^n$, $\xi\in \Gamma_s$
and $\mathcal{I}\in\mathcal{F}(\mathcal{P}_{n-1})$,
\[(\mu_\xi-\mu_{\xi^{s(n)}_{s^-}})(\cup\mathcal{I})|<\frac{\ee_{n-1}}{16},\;\text{where}\;\; s^-=(s(1),...,s(n-1))\]
 \item [(iv)] $\|H_s\|_{V_2}\leq 2$,  $\cup
\mathcal{J}_s\subseteq V_s$ and
$\textit{v}_2^2(H_s,\mathcal{J}_s)>\theta_0/2$. \item[(v)] For
every $s\in 2^n$, $\|H_s\|_\infty\leq \ee_{n}$ and
$\textit{v}_2^2(H_s,\mathcal{I})\leq
8(\mu_{\xi^0_{s}}+\mu_{\xi^1_s})(\cup\mathcal{I})+\ee_{n}$, for
all $\mathcal{I}\in \mathcal{F}(\mathcal{P}_{n})$.
 \item [(vi)] If $(s_i)_{i=1}^{2^n}$
is the lexicographical enumeration of $\{0,1\}^n$, then
$(H_{s_i})_{i=1}^{2^n}$ is $(\ee/2^i)_{i=1}^{2^n}$-biorthogonal.
\item[(vii)] The set $\mathcal{P}_n$ $\ee_n-$determines the
quadratic variation of $<\{H_s\}_{s\in 2^{<\nn}}>$.
\end{enumerate}
Given the above construction,   we set
$\mu_s=8(\mu_{\xi_s^0}+\mu_{\xi_s^1})$ and we claim that the
family $((H_s,\mu_s,\mathcal{J}_s)_{s\in
2^{<\nn}},(\mathcal{P}_n)_{n})$ is an $(\ee,(\ee_n)_n)$- $S^2$
generating system with constants $(M,\Lambda,\theta)$, where
$M=2$, $\Lambda=16$ and $\theta=\theta_0/2$. We  only verify
condition (\ref{s2gen}) of Definition \ref{pres2}, since the other
conditions are immediate. So let $n<m$, $s\in 2^n$ and $s_0,s_1\in
2^m$, with $s^\smallfrown 0\sqsubseteq s_0$ and $s^\smallfrown
1\sqsubseteq s_1$. Then $\Gamma_{s_0}\subseteq
\Gamma_{s^\smallfrown 0}$, $\Gamma_{s_1}\subseteq
\Gamma_{s^\smallfrown 1}$, and so by (iii), for all
$\mathcal{I}\in\mathcal{F}(\mathcal{P}_{n})$ and $j\in\{0,1\}$, we
get that
\begin{equation}\label{measure1}\max\{|(\mu_{\xi_{s_0}^{j}}-\mu_{\xi_s^{0}})(\cup
\mathcal{I})|,
 |(\mu_{\xi_{s_1}^j}-\mu_{\xi_s^{1}})(\cup \mathcal{I})|\}\leq
 \frac{\ee_{n}}{16}\end{equation}
Since
\[\begin{split}|\frac{\mu_{s_0}+\mu_{s_1}}{2}-\mu_s| &
\leq 4
(|\mu_{\xi_{s_0}^0}-\mu_{\xi_{s}^0}|+|\mu_{\xi_{s_0}^1}-\mu_{\xi_{s}^0}|+
|\mu_{\xi_{s_1}^0}-\mu_{\xi_{s}^1}|+|\mu_{\xi_{s_1}^1}-\mu_{\xi_s^1}|,)
\end{split}\]
 by (\ref{measure1}), we have that   for all $\mathcal{I}\in\mathcal{F}(\mathcal{P}_n)$,
$|(\frac{\mu_{s_0}+\mu_{s_1}}{2}-\mu_s)(\cup\mathcal{I})|\leq\ee_n$.

 We present now the general
inductive step of the construction. Let us suppose that the
construction has been carried out for  all $s\in 2^{<n}$. For
every $s=(s(1),...,s(n))\in 2^{n}$, we define
 \begin{equation}\label{eqcon1} \Gamma_{s}^{(1)}=\{\xi\in\Gamma_{s^-}:
 \;\forall\mathcal{I}\in \mathcal{F}(\mathcal{P}_{n-1}),
|(\mu_{\xi}-\mu_{\xi_{s^-}^{s(n)}})(\cup
\mathcal{I})|<\ee_n/16\}\end{equation}
 Since
$\mathcal{F}(\mathcal{P}_{n-1})$ is a finite subset of
$\mathcal{F}([0,1]\setminus
 D_{X^{**}})$ and  $\mu_\xi(\partial(\cup
 \mathcal{I}))=0$,
 for all $\mathcal{I}\in\mathcal{F}([0,1]\setminus
 D_{X^{**}})$ and all
 $\xi<\omega_1$ (where $\partial(\cup
 \mathcal{I})$ is  the boundary  of $\cup\mathcal{I}$),
 the set $\{\mu_\xi:\xi\in\Gamma_{s}^{(1)}\}$
 is a relatively  weak$^*$-open nbhd of $\mu_{\xi_{s^-}^{s(n)}}$ in
$\{\mu_\xi\}_{\xi\in \Gamma_{s^-}}$. By our inductive assumption,
$\mu_{\xi_{s^-}^0}$ and  $\mu_{\xi_{s^-}^1}$  are
weak$^*$-condensation points of $\{\mu_\xi\}_{\xi\in
\Gamma_{s^-}}$ and therefore for all $s\in 2^n$ the set
$\Gamma_{s}^{(1)}$ is uncountable. Applying  Lemma \ref{fact2} we
obtain a $2^n$-tuple $(U_s)_{s\in 2^n}$ of pairwise disjoint open
subsets of $[0,1]$ and a family $(\Gamma_s^{(2)})_{s\in 2^n}$ such
that for each each $s\in 2^n$, $\Gamma_s^{(2)}$ is an uncountable
subset of $\Gamma_s^{(1)}$ and  for all $\xi\in \Gamma_s^{(2)}$,
\begin{equation}\label{eqcon2}\tau_\xi([0,1]\setminus U_s)<\theta_0/2^{n+2}\end{equation}
For every $s\in 2^n$ we set $V_{s}=U_{s}\cap V_{s^-}$.  Since
$\Gamma^{(2)}_{s}\subseteq \Gamma^{(1)}_{s}\subseteq
\Gamma_{s^-}$, using  (i), we get that for all $s\in 2^n$ and all
$\xi\in \Gamma^{(2)}_s$,
\begin{equation}\label{eqcon4}\tau_\xi([0,1]\setminus V_s)<\big(\sum_{i=0}^{n}2^{-(i+2)}\big)\theta_0\end{equation}
Moreover as $\sum_{i=0}^{n}2^{-(i+2)}<\theta_0/2$ and
$\|\tau_\xi\|>\theta_0$, we get that for all
$\xi\in\Gamma_s^{(2)}$,
\begin{equation}\label{eqcon5}\tau_\xi( V_s)>\theta_0/2\end{equation}
Since for all $\zeta<\xi<\omega_1$, we have $\mu_\xi\geq \tau_\xi$
and $\tau_\xi\perp \mu_\zeta$, by Lemma \ref{la}, we get that
$\mu_{f_{\xi}-f_{\zeta}}\geq\tau_{\xi}$.  Therefore
\begin{equation}\label{eqcon7}\mu_{f_{\xi}-f_{\zeta}}(V_t)>\tau_{\xi}(V_t)>\theta_0/2,\end{equation}
 for all  $s\in 2^n$ and
$\zeta<\xi$ in $\Gamma_s^{(2)}$.

Let $(s_i)_{i=1}^{2^n}$ be  the lexicographical enumeration of
$2^n$. Using Lemma \ref{bio}, Proposition \ref{ps} and a finite
induction on $1\leq i\leq 2^n$, we will choose for every $1\leq
i\leq 2^n$, the set $\Gamma_{s_i}$, the function $H_{s_i}$, the
pair of ordinals $(\xi_{s_i}^0,\xi_{s_i}^1)$ and the family
$\mathcal{J}_{s_i}$ satisfying (ii)-(vi). Suppose that for some
$1\leq k<2^n$, $(H_{s_i})_{i\leq k}$
 have been chosen so that $(H_{s_i})_{i\leq k}$ is an $((\ee_i^k),(\delta_i)_{i=0}^{k-1})$-biorthogonal
 sequence, where $\ee_i^k=(\sum_{r=1}^{k-1+1}2^{-r})\ee/2^i$.
  Then by Lemma \ref{bio} there are  $\delta_m>0$ and $\epsilon>0$ such that for every $H\in V_2^0$ with
  $\|H\|_\infty<\epsilon$, the sequence
  $H_{s_1},..., H_{s_{m-1}},H$ is an $((\ee^{k+1}_i)_{i=1}^{k+1},(\delta_i)_{i=0}^{m})$-biorthogonal sequence.
   Clearly, we  may
  suppose that $\epsilon<\ee_n$. For each $\xi<\omega_1$, let
  $f_\xi\in \mathcal{F}$ such that $\mu_\xi=\mu_{f_\xi}$.
Since $D_\mathcal{F}$ is countable, by Proposition \ref{ps}, we
have that $(\mathcal{F},\|\cdot\|_\infty)$ is separable and so
there is an uncountable subset $\Gamma^{(3)}_{t_{k+1}}$ of
$\Gamma^{(2)}_{t_{k+1}}$ such that for all $\zeta,\xi$ in
$\Gamma^{(3)}_{t_{k+1}}$,
 \begin{equation}\label{eqcon8}\|f_{\xi}-f_{\zeta}\|_\infty<\epsilon/8\end{equation}
Applying also Lemma \ref{ld}, for the family
$\mathcal{F}=\{f_\xi\}_{\xi\in\Gamma^{(3)}_{s_{k+1}}}$
 we pass to a further uncountable subset $\Gamma^{(4)}_{s_{k+1}}$
of $\Gamma^{(3)}_{s_{k+1}}$ such that for  every
$\zeta,\xi\in\Gamma^{(4)}_{s_{k+1}}$,
\begin{equation}\label{eqcon6}\|\mu_{f_{\xi}-f_{\zeta}}^d\|<(\epsilon/128)^2\end{equation}
We set $\Gamma_{s_{k+1}}=\Gamma^{(4)}_{s_{k+1}}$ and we choose
$\xi_{s_{k+1}}^0<\xi_{s_{k+1}}^1$ in $\Gamma_{s_{k+1}}$ such that
$\mu_{\xi_{s_{k+1}}^0}$ and $\mu_{\xi_{s_{k+1}}^1}$ are
weak$^*$-condensation points of the set
$\{\mu_\xi\}_{\xi\in\Gamma_{s_{k+1}}}$.  We put
$F=f_{\xi^0_{s_{k+1}}}- f_{\xi^1_{s_{k+1}}}$.
 Since
for all $\xi<\omega_1$, $\|f_\xi\|_{V_2}=1$, we have that
$\|F\|_\infty\leq 2$. Moreover by (\ref{eqcon7})-(\ref{eqcon6}),
we have that
\begin{equation}\mu_F(V_{s_{k+1}})>\theta_0/2, \;\;\|F \|_\infty<\epsilon/12,\;\;\text{and}
\;\;\|\mu_F^d\|<(\epsilon/128)^2\end{equation} Let $(f_n)_n$ be a
sequence in $X$ pointwise converging to $F$ with
$\|f_n\|_{V_2}\leq \|F\|_{V_2}$ (see Remark \ref{Rem5}). By
Proposition \ref{ldom},  there exist  a convex block sequence
$(h_n)_n$ of $(f_n)_n$ and $\mathcal{J}\in \mathcal{A}$ such that
 setting $H=h_l-h_k$, for sufficiently large $k<l$, we have that
 \begin{enumerate}
 \item[(a)] $\|H\|_{V_2}\leq 2\|F\|_{V_2}\leq 4$ and $\|H\|_\infty\leq
4\|F\|_\infty+\epsilon/2\leq \epsilon$. \item[(b)]
$\cup\mathcal{J}\subseteq V_{{s_{k+1}}}$ and
$\textit{v}_2^2(H,\mathcal{J})>\theta_0/2$. \item[(c)] For all
$\mathcal{I}\in \mathcal{A}$,
\[\textit{v}_2^2(H,\mathcal{I})\leq 4\mu_F(\cup
\mathcal{I})+32\|f\|_{V_2}\sqrt{\|\mu_F^d\|}+\epsilon/2 \leq
8(\mu_{\xi^0_{s_{k+1}}}
+\mu_{\xi^1_{s_{k+1}}})(\cup\mathcal{I})+\epsilon\]\end{enumerate}
We set $H_{s_{k+1}}=H$ and $J_{s_{k+1}}=J$ and the inductive step
 is completed. Finally, using Proposition \ref{det}, we choose a
sufficiently dense finite subset $\mathcal{P}_n\subseteq
[0,1]\setminus D_{\mathcal{F}}$ determining the quadratic
variation of $<\{H_s\}_{s\in 2^n}>$ which completes  the proof of
the inductive step.\end{proof}

Lemma \ref{lemind}, Proposition \ref{gen1} and  Proposition
\ref{corgen1} yield the following.
\begin{prop}\label{last1}  Let $X$ be a subspace of $V_2^0$ and
suppose that $ X^{**}$ contains an uncountable family
$\mathcal{F}$ such that $D_\mathcal{F}=\cup_{f\in\mathcal{F}}D_f$
is countable and
$\mathcal{M}_{\mathcal{F}}=\{\mu_f\}_{f\in\mathcal{F}}$ is
non-separable. Then $X$ contains a subspace isomorphic to the
space $S^2$. \end{prop}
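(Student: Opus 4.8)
The plan is to obtain the conclusion by simply chaining together the three results cited immediately before the statement, the only genuine content being the choice of parameters that makes the chain valid. First I would apply Lemma \ref{lemind} to the family $\mathcal{F}$: it furnishes constants $(M,\Lambda,\theta)$, depending only on $X$ and $\mathcal{F}$, such that for \emph{any} prescribed $\ee>0$ and \emph{any} sequence $(\ee_n)_n$ of positive reals there is an $(\ee,(\ee_n)_n)$-$S^2$ generating system $((H_s,\mu_s,\mathcal{J}_s)_{s\in 2^{<\nn}},(\mathcal{P}_n)_n)$ with constants $(M,\Lambda,\theta)$ and with $H_s\in X$ for all $s\in 2^{<\nn}$. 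The point is that $\ee$ and the sequence $(\ee_n)_n$ are still free, and I would tune them so that the hypotheses of Propositions \ref{gen1} and \ref{corgen1} are met.

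Second, I would fix the parameters before invoking Lemma \ref{lemind}. I choose $\ee>0$ small enough that $\ee<1$ and $\theta':=\theta-(2M+\ee)\ee>0$; this is possible because $\theta-(2M+\ee)\ee\to\theta>0$ as $\ee\to 0^+$. For the sequence $(\ee_n)_n$ I require that it be summable and, crucially, that $\ee_{2n}$ decays fast enough, for instance $\ee_{2n}\le 4^{-n}$, so that the transformed sequence $\ee'_n=\frac{\theta}{2^{n/2}}+2^{n/2}\ee_{2n}$ occurring in Proposition \ref{gen1} is again summable: the first term is a convergent geometric series, and the second is dominated by $2^{n/2}4^{-n}=2^{-3n/2}$. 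With these choices, Proposition \ref{gen1} transforms the generating system into an $(\ee'_n)_n$-$S^2$ system $((G_s,\nu_s,\mathcal{I}_s)_{s\in 2^{<\nn}},(\mathcal{Q}_n)_n)$ with constants $(M+\ee,\Lambda,\theta')$, where $G_s=2^{-n/2}\sum_{t\in L_s}H_t$ for $s\in 2^n$.

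Third, since $(\ee'_n)_n$ is a summable sequence of positive reals, Proposition \ref{corgen1} applies to this $S^2$-system and produces a dyadic subtree $(t_s)_{s\in 2^{<\nn}}$ of $2^{<\nn}$ such that $(G_{t_s})_{s\in 2^{<\nn}}$ is equivalent to the $S^2$-basis. As each $H_t$ lies in $X$ and $X$ is a closed subspace, every $G_s$, being a finite linear combination of the $H_t$, belongs to $X$; hence $\overline{\mathrm{span}}\{G_{t_s}:s\in 2^{<\nn}\}$ is a subspace of $X$ isomorphic to $S^2$, which is exactly the assertion.

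The only step requiring care — and the main obstacle — is the parameter bookkeeping in the second paragraph: one has to commit to $\ee$ and to the decay rate of $(\ee_n)_n$ \emph{before} calling Lemma \ref{lemind}, and verify that a single such choice simultaneously yields $\theta'>0$ (needed for Proposition \ref{gen1}) and the summability of the derived error sequence $(\ee'_n)_n$ (the hypothesis that licenses Proposition \ref{corgen1}). Once this is arranged, the proof is a pure citation of the three results.
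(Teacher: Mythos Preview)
Your proposal is correct and is exactly the paper's approach: the paper's proof is a single sentence citing Lemma \ref{lemind}, Proposition \ref{gen1} and Proposition \ref{corgen1}, and you have supplied precisely the parameter bookkeeping (choosing $\ee<1$ small enough that $\theta'=\theta-(2M+\ee)\ee>0$, and $(\ee_n)_n$ decaying fast enough that $(\ee'_n)_n$ is summable) that makes the chain go through.
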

\begin{lem}\label{subtree}  Let $\mathcal{G}=((g_s),(I_s,J_s))_{s\in 2^{<\nn}}$
be a tree family such that  $TF=\overline{<\{g_s\}_s>}$ and for
every $n\geq 0$, let $K_n=\cup_{s\in 2^n}I_s$. Then for every
$f\in TF^{**}$, $supp \;\mu_f\subseteq K$, where
$K=\cap_{n=0}^\infty K_n$.
\end{lem}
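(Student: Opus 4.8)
The plan is to show that every point $x\in[0,1]\setminus K$ has an open neighbourhood which is $\mu_f$-null; since $K=\bigcap_n K_n$ is closed (each $K_n$ is a finite union of closed intervals, and $K_{n+1}\subseteq K_n$ because $I_{s^\smallfrown i}\subseteq I_s$ in a tree family), this yields $\mu_f([0,1]\setminus K)=0$, i.e. $\operatorname{supp}\mu_f\subseteq K$. So fix $f\in TF^{**}$ and $x\notin K$, choose $n_0$ with $x\notin K_{n_0}$, and, using that $K_{n_0}$ is closed, pick an open interval $U$ with $x\in U$ and $\overline U\cap K_{n_0}=\varnothing$. Then I would split the basis of $TF$ according to $n_0$: ordering $2^{<\nn}$ so that shorter nodes precede longer ones, the family $\{g_t:|t|<n_0\}$ is an initial segment of the basis, so $TF=V\oplus W$ as a topological direct sum, where $V=<\{g_t:|t|<n_0\}>$ is finite dimensional (hence $V\subseteq V_2^0$) and $W=\overline{<\{g_t:|t|\ge n_0\}>}$. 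Writing $P:TF\to V$ and $Q=I-P:TF\to W$ for the associated projections, finite dimensionality of $V$ gives $P^{**}(TF^{**})=V\subseteq V_2^0$ and $Q^{**}(TF^{**})\subseteq W^{**}$, and inside $V_2=(V_2^0)^{**}$ one has $f=P^{**}f+Q^{**}f$.

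The crucial step is that $W^{**}$, viewed as a subspace of $V_2$, consists of functions that vanish identically on $U$. Indeed, for $|t|\ge n_0$ the trapezoid $g_t$ is supported on $I_t\subseteq I_{t\mid n_0}\subseteq K_{n_0}$, so $g_t\equiv 0$ on $U$; hence every element of $<\{g_t:|t|\ge n_0\}>$ vanishes on $U$, and, since $\|\cdot\|_\infty\le\|\cdot\|_{V_2}$, so does every element of its norm closure $W$. Finally $W$ is $w^*$-dense in $W^{**}$ by Goldstine, and on bounded subsets of $V_2$ the $w^*$-topology is the topology of pointwise convergence; a pointwise limit of functions vanishing on $U$ again vanishes on $U$, so $g:=Q^{**}f$ satisfies $g\equiv 0$ on $U$.

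To conclude, write $f=v+g$ with $v=P^{**}f\in V_2^0$ and $g\in W^{**}$, $g\equiv 0$ on $U$. By Proposition \ref{basineq}(iii), $\mu_f=\mu_{v+g}=\mu_g$. For every $\mathcal{P}\subseteq U$ all points of $\mathcal{P}$ lie in $U$, so $\textit{v}_2^2(g,\mathcal{P})=0$; hence $\widetilde{\mu}_g(U)=0$, and by Proposition \ref{pm}(d) (or simply because $\mu_g$ is induced by the outer measure $\widetilde{\mu}_g$) we get $\mu_f(U)=\mu_g(U)=0$, which is exactly what was needed. The only place the hypothesis that $\mathcal{G}$ is a tree family of trapezoids is really used is the structural fact that $\operatorname{supp}g_t=I_t$ with $I_{s^\smallfrown i}\subseteq I_s$, which gives both the monotonicity of $(K_n)_n$ and that the deep generators $g_t$ ($|t|\ge n_0$) are supported inside $K_{n_0}$; everything else is the soft passage $TF=V\oplus W\Rightarrow TF^{**}=V\oplus W^{**}$ together with the invariance $\mu_{v+g}=\mu_g$ for $v\in V_2^0$. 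I expect the only point needing care to be the recognition that one should peel off the finitely many shallow trapezoids — which absorb into $V_2^0$ and therefore contribute nothing to $\mu_f$ — rather than attempting a direct estimate of $\widetilde{\mu}_f(U)$; once that is seen, no quantitative work remains.
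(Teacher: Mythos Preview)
Your argument is correct and is essentially the paper's own proof: both peel off the finite-dimensional span $V=\langle g_t:|t|<n\rangle$ via the basis projection, use that the complementary piece $f-P_n f$ (a $w^*$-limit of functions supported in $K_n$) vanishes off $K_n$, and invoke $\mu_{f}=\mu_{f-P_nf}$ since $P_nf\in V_2^0$. The only cosmetic difference is that the paper shows $\operatorname{supp}\mu_f\subseteq K_n$ for every $n$ directly, whereas you localize at a point $x\notin K$ and produce a $\mu_f$-null neighbourhood; the content is identical.
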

\begin{proof} Let $f\in TF^{**}$ and let
$(f_m)_m$ be a sequence in $<\{g_s\}_s>$ pointwise convergent to
$f$ and such that $\|f_n\|_{V_2}\leq \|f\|_{V_2}$. For each $n\geq
0,$ let $P_n:TF^{**}\to G_n$ be the natural projection onto the
finite dimensional space $G_n=<\{g_s\}_{|s|< n}>$ (where
$G_0=\{0\}$). Let also $h^n_m=f_m-P_n(f_m)$ and $h_n=f- P_n(f)$.
Since  $P_n$ is $w^*-w^*$ continuous, the sequence $(h^n_m)_m$ is
pointwise convergent to $h_n$ and so $supp\;h_n\subseteq K_n$.
Since $\mu_{h_n}=\mu_f$ and $supp\;\mu_{h_n}\subseteq supp\;h_n$,
we conclude that $supp\; \mu_f\subseteq K_n$ for all $n\geq 0$.
\end{proof}

\begin{prop}\label{last3} The set  $\mathcal{M}^c_{TF^{**}}=\{\mu_f: f\in TF^{**}\cap C[0,1]\}$
is a non-separable subset
 of  $\mathcal{M}[0,1]$. Therefore the space $S^2$ is embedded
 into $TF$.
\end{prop}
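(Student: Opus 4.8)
The second assertion is immediate from the first: once $\mathcal{M}^c_{TF^{**}}$ is non-separable, fix the witnessing uncountable family $\mathcal{F}\subseteq TF^{**}\cap C[0,1]$; then, realising $TF$ as the closed linear span of a trapezoidal tree $(g_s)_{s\in 2^{<\nn}}$ inside $V_2^0$, we have $D_\mathcal{F}=\varnothing$ (so $D_\mathcal{F}$ is countable) and $\mathcal{M}_\mathcal{F}$ is non-separable, whence Proposition \ref{last1} gives $S^2\hookrightarrow TF$. So the whole content is the non-separability of $\mathcal{M}^c_{TF^{**}}$. I would fix such a trapezoidal tree, write $I_s$ for the interval supporting $g_s$, and let $K=\bigcap_n\bigcup_{s\in 2^n}I_s$ be its Cantor limit set; by Lemma \ref{subtree}, $\operatorname{supp}\mu_f\subseteq K$ for every $f\in TF^{**}$.

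The plan is to transplant the proof of Theorem \ref{tm} into $TF$. Choose an uncountable family $\{\mu_\theta\}_\theta$ of pairwise mutually singular continuous probability measures on $K$ (for instance distinct Bernoulli-type measures adapted to the dyadic structure of the tree; being non-atomic and singular they satisfy $\|\mu_\theta-\mu_{\theta'}\|=1$). For each $\theta$ form, as in Theorem \ref{tm}, level combinations $H^\theta_n=\sum_{s\in 2^n}\lambda^\theta_s g_s\in TF$ with the coefficients calibrated so that the analogues of Claim 2 hold against $\mu_\theta$: $\|H^\theta_n\|_\infty\to 0$, $\textit{v}_2^2(H^\theta_n,\mathcal{I})$ is dominated by $\mu_\theta(\cup\mathcal{I})$ up to a grid error, and $\textit{v}_2^2(H^\theta_n,\mathcal{P}_n\cap[0,t])$ recovers $\mu_\theta[0,t]$ on the grid $\mathcal{P}_n$ consisting of the leg-endpoints of the $g_s$ with $|s|=n$. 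Then, along a sparse subsequence $(n_i)$ and after biorthogonalising via Proposition \ref{pb}, set $f_\theta=\sum_i H^\theta_{n_i}$: uniform convergence makes $f_\theta$ continuous; the partial sums lie in $TF$ and are $\|\cdot\|_{V_2}$-bounded by Lemma \ref{lul2}; they converge pointwise to $f_\theta$ and $\ell_1$ is not embedded in $V_2^0\supseteq TF$, so $f_\theta\in TF^{**}$, while $\mu_{f_\theta}\ne 0$ forces $f_\theta\notin TF$. The estimates of Theorem \ref{tm} then give $\mu_{f_\theta}=\mu_\theta$ (or at worst $\|\mu_{f_\theta}-\mu_\theta\|<\tfrac14$), so $\{\mu_{f_\theta}\}_\theta$ is an uncountable $\tfrac12$-separated subset of $\mathcal{M}^c_{TF^{**}}$, and $\mathcal{M}^c_{TF^{**}}$ is non-separable.

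The hard point --- the only place where this departs from Theorem \ref{tm} --- is that there the building block is $\sqrt{G_I}$, a tent rising and falling \emph{along} $\mu$, which is precisely what makes $\textit{v}_2^2(H_n,\mathcal{I})\le\mu(\cup\mathcal{I})$ hold exactly, whereas our $g_s$ are rigid piecewise-linear trapezoids that cannot depend on $\theta$. I would resolve this either (a) by matching the data: take the tree self-similar with branching ratio so chosen that, for the self-similar $\mu_\theta$, the crude estimate $\textit{v}_2^2(H^\theta_n,\mathcal{I})=\sum_{s\in 2^n}(\lambda^\theta_s)^2\textit{v}_2^2(g_s,\mathcal{I})\le 2\sum_{s:\,I_s\cap\cup\mathcal{I}\ne\varnothing}(\lambda^\theta_s)^2$ telescopes to $\mu_\theta(\cup\mathcal{I})$ up to a thickening error which, summed over the thinned levels, is kept below $\varepsilon$ by letting $(n_i)$ and the biorthogonality scales $(\delta_i)$ grow fast --- exactly their function in Theorem \ref{tm}; or (b) by replacing each $g_s$ appearing in $H^\theta_n$ by a deep finite combination of sub-trapezoids of the subtree below $s$ that approximates $\sqrt{G^\theta_{I_s}}$ uniformly, legitimate because the span of a sufficiently deep trapezoid subtree is uniformly dense in the relevant slice of $C(I_s)$. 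Either route realises $\mu_\theta$ only approximately, but $\|\mu_{f_\theta}-\mu_\theta\|<\tfrac14$ is all the separation argument needs. (One may instead exploit $c_0\hookrightarrow TF$ directly: start from a continuous $f_0\in TF^{**}\setminus TF$ with $\mu_{f_0}\ne 0$ --- such $f_0$ exists by $c_0\hookrightarrow TF$ and Remark \ref{rem6} --- and use its self-similar transports $\phi_s(f_0)$, which preserve the total mass of the associated measure, as measure-carrying blocks in place of the $g_s$.)
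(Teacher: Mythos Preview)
Your main line of attack --- transplanting Theorem \ref{tm} to produce, for each of uncountably many prescribed continuous measures $\mu_\theta$ on $K$, a function $f_\theta\in TF^{**}\cap C[0,1]$ with $\mu_{f_\theta}\approx\mu_\theta$ --- is plausible in outline but not a proof as written. You correctly isolate the obstruction (the trapezoids $g_s$ are rigid, whereas Theorem \ref{tm} uses the $\mu$-adapted tents $\sqrt{G_I}$), yet both suggested fixes remain programmatic. In (a) you assert that the ``thickening error'' in $\textit{v}_2^2(H^\theta_n,\mathcal{I})\le\mu_\theta(\cup\mathcal{I})+\text{error}$ can be absorbed by sparsifying, but you never establish the basic domination inequality for a single level; with fixed trapezoids and general $\mu_\theta$ this is not automatic, since the contribution of $g_s$ to $\textit{v}_2^2$ on an interval meeting a leg of $g_s$ is governed by the geometry of the leg, not by $\mu_\theta$. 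In (b) you would need to control quadratic variation, not just sup-norm, of the approximants to $\sqrt{G^\theta_{I_s}}$, and that is exactly the delicate point. So the ``hard point'' is acknowledged but not closed.

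The paper avoids all of this by a much shorter and structurally different argument, and in fact your parenthetical aside points in its direction without arriving there. Rather than prescribing measures, the paper takes the almost disjoint family of dyadic subtrees $(T_\sigma)_{\sigma\in 2^\nn}$ from Definition \ref{lev}/Remark \ref{remalmost disjoint}. For each $\sigma$, the restricted family $\mathcal{G}_\sigma=((g_s),(I_s,J_s))_{s\in T_\sigma}$ is again a tree family of trapezoids, so $X_\sigma=\overline{\langle g_s:s\in T_\sigma\rangle}$ is itself an isomorph of $TF$; in particular $c_0\hookrightarrow X_\sigma$, and by Remark \ref{rem6} one may pick $f_\sigma\in (X_\sigma^{**}\setminus X_\sigma)\cap C[0,1]$. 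Lemma \ref{subtree} applied to $X_\sigma$ gives $\operatorname{supp}\mu_{f_\sigma}\subseteq K_\sigma$, the Cantor set of $T_\sigma$. Because the subtrees are almost disjoint, the $K_\sigma$ are pairwise disjoint, so the measures $\mu_{f_\sigma}$ are pairwise singular and hence $\{\mu_{f_\sigma}:\sigma\in 2^\nn\}\subseteq\mathcal{M}^c_{TF^{**}}$ is non-separable. No estimate beyond $\mu_{f_\sigma}\ne 0$ (which follows from $f_\sigma\notin V_2^0$) is needed; the singularity comes for free from the disjoint supports, not from any approximation. The embedding of $S^2$ then follows via Proposition \ref{last1}, as you note.
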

\begin{proof} Let $\mathcal{G}=((g_s),(I_s,J_s))_{s\in 2^{<\nn}}$
be a tree family such that  $TF=\overline{<\{g_s\}_s>}$. Let also
$(T_\sigma)_{\sigma\in 2^{\nn}}$ be the almost disjoint family of
dyadic subtrees in $2^{<\nn}$ defined in Remark \ref{remalmost
disjoint}. For each $\sigma\in 2^{\nn}$, we set
$\mathcal{G}_\sigma=((g_s),(I_s,J_s))_{s\in T_\sigma}$. As we have
already mentioned in the definition of the space $TF$,
$\mathcal{G}_\sigma$ is also a tree family and hence the space
$X_\sigma=\overline{<\{g_s\}_{s\in T_\sigma}>}$ is a copy of $TF$.
Therefore $c_0$ is embedded into $X_\sigma$ which gives that
$(X_\sigma^{**}\setminus X)\cap C[0,1]\neq \emptyset$ (cf. Remark
\ref{rem6}). So for each $\sigma\in 2^{\nn}$, we can pick a
$f_\sigma\in X_\sigma^{**}\setminus X \cap C[0,1].$ Setting
$T_\sigma=(t^\sigma_s)_{s\in 2^\nn}$ and
$K_\sigma=\cap_n\cup_{s\in 2^n}I_{t^\sigma_s}$, by Lemma
\ref{subtree} we have that  $supp\; \mu_{f_\sigma}\subseteq
K_\sigma$. Since $(T_\sigma)_{\sigma\in 2^\nn}$ is an almost
disjoint family, we get that $(K_\sigma)_{\sigma\in 2^\nn}$ is a
disjoint family of compact subsets of $[0,1]$ and so
$\{\mu_{f_\sigma}\}_{\sigma\in 2^\nn}$ consists  of pairwise
singular positive  measures. As $\{f_\sigma\}_{\sigma\in
2^\nn}\subseteq TF^{**}\cap C[0,1],$ we conclude that
$\mathcal{M}^c_{TF^{**}}$ is non-separable. Finally, that $S^2$ is
embedded into $X$, follows by Proposition \ref{last1}, for
$\mathcal{F}=TF^{**}\cap C[0,1]$.
\end{proof}

\begin{prop} \label{last2}Let $X$ be a subspace of $V_2^0$ such that  the space  $S^2$ is embedded into
 $X$. Then the set $\mathcal{M}_{X^{**}}=\{\mu_f: f\in X^{**}\}$ is a non-separable subset of  $\mathcal{M}[0,1]$.
\end{prop}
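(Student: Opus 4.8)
The plan is to produce inside $\mathcal{M}_{X^{**}}$ an uncountable family of nonzero positive measures which is pairwise singular; this suffices, since a pairwise singular family $\{\mu_i\}$ of positive measures with $\inf_i\|\mu_i\|>0$ is uniformly norm separated (if $\mu\perp\nu$ then $\|\mu-\nu\|=\|\mu\|+\|\nu\|$), hence non-separable. It is convenient to split according to whether $X^*$ is separable. If $X^*$ is non-separable the desired family is at hand without even using the hypothesis on $S^2$: by Proposition \ref{pcd} (and Proposition~23 of \cite{AAK}, as invoked in its proof) $X^{**}$ contains an uncountable family $\{h_\xi\}_{\xi<\omega_1}\subseteq V_2^d=\overline{<\{\chi_t:t\in(0,1)\}>}$ whose jump points may be taken pairwise distinct; then by Proposition \ref{discr.} each $\mu_{h_\xi}$ is a nonzero discrete measure carried by the (distinct) jump set of $h_\xi$, with $\|\mu_{h_\xi}\|$ bounded below, so $\{\mu_{h_\xi}\}$ is pairwise singular.

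Assume now $X^*$ separable, so $D_{X^{**}}$ is countable (Proposition \ref{pcd}). Fix an isomorphic embedding $T:S^2\to X$ and put $G_s=Te_s$, a bounded family in $V_2^0$ equivalent to the $S^2$-basis. For $\sigma\in 2^\nn$ the chain $\{\sigma|n:n\geq1\}$ spans a copy of $c_0$ in $S^2$, so $Z_\sigma:=\overline{<\{G_{\sigma|n}:n\geq1\}>}$ is a copy of $c_0$ in $X$. Since $S^2$ does not contain $\ell_1$ its basis is shrinking and $(S^2)^{**}$ is the expected (boundedly complete) completion, containing the branch indicator $\chi_\sigma$; the bounded partial sums $\sum_{n\leq N}G_{\sigma|n}$ converge pointwise (each $\delta_t|_{Z_\sigma}$ is summable, being an $\ell_1$–functional on a $c_0$-copy), so their $w^*$-limit $F_\sigma:=w^*\text{-}\lim_N\sum_{n\leq N}G_{\sigma|n}=T^{**}(\chi_\sigma)\in X^{**}$ is well defined and lies in $Z_\sigma^{**}\setminus Z_\sigma$ (the partial sums of a $c_0$-basis converge neither in norm nor weakly). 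I would then check $\mu_{F_\sigma}\neq0$: if $F_\sigma\in V_2^0$, then, being a $w^*$-limit in $(V_2^0)^{**}=V_2$ of a bounded sequence from $Z_\sigma\subseteq V_2^0$, it would be a weak limit in $V_2^0$ of such, hence lie in the (weakly closed) subspace $Z_\sigma$, a contradiction. Rescaling by $\|\mu_{F_\sigma}\|^{-1/2}$ and using Proposition \ref{basineq}(ii) I may assume $\|\mu_{F_\sigma}\|=1$ for every $\sigma$, and by passing to an uncountable subfamily as in Lemma \ref{ld} I may make $\|\mu^d_{F_\sigma-F_\tau}\|$ as small as desired, so that the discrete corrections in inequality (\ref{basicineq}) are negligible. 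Finally, for $\sigma\neq\tau$ first differing at level $k+1$ one has $\mu_{F_\sigma}=\mu_{F_\sigma^{>k}}$ and $\mu_{F_\tau}=\mu_{F_\tau^{>k}}$ by Proposition \ref{basineq}(iii), where $F_\sigma^{>k}:=F_\sigma-\sum_{n\leq k}G_{\sigma|n}=T^{**}(\chi^{>k}_\sigma)$, $\chi^{>k}_\sigma\in(S^2)^{**}$ being the indicator of $\{\sigma|n:n>k\}$; the supports of $\chi^{>k}_\sigma$ and $\chi^{>k}_\tau$ in the unconditional structure of $(S^2)^{**}$ are disjoint, and $\{\chi^{>k}_\sigma,\chi^{>k}_\tau\}$ spans a $2$-dimensional $\ell_2$.

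The crux — and the step I expect to be the main obstacle — is to show that $\mu_{F_\sigma}\perp\mu_{F_\tau}$ for all $\sigma\neq\tau$, equivalently $\mu_{T^{**}v}\perp\mu_{T^{**}w}$ whenever $v,w\in(S^2)^{**}$ are disjointly supported. I would argue by contradiction: a common part $\tau_0$ with $\mu_{T^{**}v}\wedge\mu_{T^{**}w}\geq\tau_0$ and $\|\tau_0\|>0$ would, after splitting off $\tau_0$-singular pieces and invoking Lemma \ref{la} exactly as in the proof of Lemma \ref{fact1}, force $dist(T^{**}(v\pm w),V_2^0)^2$ to differ from $dist(T^{**}v,V_2^0)^2+dist(T^{**}w,V_2^0)^2$ by a definite amount — via $\mu_{T^{**}(v+w)}\leq2\mu_{T^{**}v}+2\mu_{T^{**}w}$ and Theorem \ref{osc-meas} — contradicting the $\ell_2$-orthogonality that the $S^2$-norm imposes on disjointly supported blocks (made quantitative by following the two chains carrying $v$ and $w$ with a Lemma \ref{lus2}-type antichain count, in the spirit of Lemma \ref{lul2}).

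Once pairwise singularity is established, $\{\mu_{F_\sigma}\}_{\sigma\in2^\nn}$ is an uncountable pairwise singular family of probability measures in $\mathcal{M}_{X^{**}}$, which completes the proof. Should the clean singularity statement resist, a fall-back is to argue directly: were $\mathcal{M}_{X^{**}}$ separable, every $\mu_f$ with $f\in X^{**}$ would be absolutely continuous with respect to one probability measure $\rho$, so $\{\mu_{F_\sigma}\}$ would sit in the separable space $L^1(\rho)$, and one would then seek a contradiction with the structure of $\overline{<\{F_\sigma\}>}$ modulo $\overline{<\{F_\sigma\}>}\cap V_2^0=T(S^2)$, the latter intersection being exactly $T(S^2)$ because $V_2^0$ is separable while the quotient is driven by the branch elements.
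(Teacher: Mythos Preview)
Your proposal has a genuine gap at its central step. You aim to show that the branch measures $\mu_{F_\sigma}$ are pairwise singular, but the sketch you give does not go through. Since $T$ is merely an isomorphic embedding, the $\ell_2$-structure on disjointly supported $v,w\in(S^2)^{**}$ only yields $C^{-1}\|v\pm w\|\leq\|T^{**}(v\pm w)\|_{V_2}\leq C\|v\pm w\|$ with the isomorphism constant $C$, while the measure map satisfies only $\mu_{f+g}\leq2\mu_f+2\mu_g$ with no complementary lower bound. These estimates carry enough slack to absorb a common part $\tau_0\leq\mu_{F_\sigma}\wedge\mu_{F_\tau}$ of any fixed size, so the contradiction you describe does not materialise: the quantities $dist(T^{**}(v\pm w),V_2^0)^2$ and $dist(T^{**}v,V_2^0)^2+dist(T^{**}w,V_2^0)^2$ are only comparable, not equal. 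Indeed there is no reason to expect pairwise singularity for an arbitrary embedding; the singularity in Proposition~\ref{last3} is special to the tree-family embedding, where supports are geometrically disjoint by construction (Lemma~\ref{subtree}). Your fall-back correctly observes that separability of $\mathcal{M}_{X^{**}}$ would place all branch measures in a single $L^1(\rho)$, but then stops short of producing any contradiction.

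The paper argues by contradiction at the level of many branches simultaneously rather than pairs. Assuming $\{\mu_{f_\sigma}:\sigma\in\Sigma\}$ separable, one finds a norm-condensation point $\mu$ and, for each integer $m$, branches $\sigma_1,\dots,\sigma_m$ with $\|\mu_{f_{\sigma_i}}-\mu\|\leq\ee/m$. Working with the tails past the first level where the $\sigma_i$ are pairwise incomparable and applying Proposition~\ref{c011} (the $f_\sigma$ are continuous, from \cite{AMP}), one extracts functions $G^i\in X$ that are $(4,\ee_n)$-dominated by $\mu_{f_{\sigma_i}}$, hence up to error $\ee$ by $\mu$; after a biorthogonal refinement, Lemma~\ref{lul2} yields the uniform upper bound $\|\sum_{i=1}^m G^i\|_{V_2}^2\leq 4\|\mu\|+O(\ee)$. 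On the other side, each $G^i$ is a combination $\sum_{s\in F_i}\lambda_s Te_s$ along a chain $F_i$, the chains being pairwise incomparable; picking the dominant coefficient from each gives an antichain, and since $\|G^i\|_{V_2}>\delta$ forces $|\lambda_{s_i}|\geq\delta/\|T\|$, the $S^2$-lower bound yields $\|\sum_i G^i\|_{V_2}\geq\delta\sqrt m/(\|T\|\,\|T^{-1}\|)$. Letting $m\to\infty$ gives the contradiction. The idea you are missing is precisely this quantitative tension between antichain $\ell_2$-growth and a single dominating measure; it is what your fall-back paragraph gestures toward but never makes explicit.
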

\begin{proof} Let $T$ be  an isomorphic embedding of $S^2$ into $X$
 and let $f_s=T(e_s)$, where $(e_s)_s$ be the usual basis of
$S^2$. From \cite{AMP} we have that  for each $\sigma\in 2^\nn$,
the sequence $(\sum_{k=0}^nf_{\sigma|k})_n$ is pointwise
converging to a function $f_\sigma\in (X^{**}\setminus X)\cap
C[0,1]$.  Hence there exist an uncountable subset $\Sigma\subseteq
2^\nn$ and $\delta>0$ such that for all $\sigma\in \Sigma$,
$dist(f_\sigma, X)>\delta$.
 We will
 show that the set $\{\mu_{f_\sigma}:\sigma\in \Sigma\}\subseteq \mathcal{M}[0,1]$ is
non-separable. Indeed, otherwise, we can choose   a
norm-condensation point $\mu\in\mathcal{M}[0,1]$ of
$\{\mu_\sigma:\sigma\in 2^\nn\}$. Fix also  a positive integer
$m\in\nn$ and $\ee>0$. Then for uncountably many
$\sigma\in\Sigma$, we have that
\begin{equation}\label{mm1}\|\mu_{f_\sigma}-\mu\|\leq
\ee/m\end{equation} Let $\sigma_1,...,\sigma_m\in  \Sigma$
satisfying (\ref{mm1}) and let $n_0\in\nn$ be such that for all
$n\geq n_0$ and $1\leq i< j\leq m$, $\sigma_i|n\perp\sigma_j|n$.
We set $g_{\sigma_i}=\sum_{n\geq n_0}f_{\sigma_i|n}$. Since
$f_{\sigma_i}-g_{\sigma_i}=\sum_{n<n_0}f_{\sigma_i|n}\in X$, we
have that $dist(g_{\sigma_i}, X)=dist(f_{\sigma_i}, X)>\delta$ and
$\mu_{f_{\sigma_i}}=\mu_{g_{\sigma_i}}$.  For every $n\in\nn$, let
$F_n^i=\sum_{k=n_0}^{n_0+n}f_{\sigma_i|k}$. Then
\[\|F_n^i\|_{V_2}\leq
\|T\|\Big\|\sum_{k=n_0}^{n_0+n}e_{\sigma_i|k}\Big\|_{S^2}\leq
\|T\|\] Applying   Proposition \ref{c011}, for the continuous
function $g_{\sigma_i}\in X^{**}\setminus X$,
 the sequence
$(F_n^i)_n$ and $\ee_n=\ee/m 2^n$, we obtain   a convex block
sequence $(h^i_n)_n$ of $(F^i_n)_n$ such that the functions
$G^i_n=h^i_{2n+1}-h^i_{2n}$, satisfy  the following. (i)
$\delta<\|G_n^i\|_{V_2}\leq 2\|T\|$, (ii) $\|G^i_n\|_\infty<\ee/m
2^{2n}$ and (iii) for every $\mathcal{I}\in \mathcal{A}$,
$\textit{v}_2^2(G^i_n,\mathcal{I})\leq 4\mu_{f_{\sigma_i}}(\cup
\mathcal{I})+\ee/m 2^{2n}$.

 By  (ii) and Lemma \ref{bio}, we can
choose $n_1<...<n_m$ such that the finite sequence
$(G_{n_i}^i)_{i=1}^m$ is $\ee/m-$biorthogonal. By the definition
of $G_{n}^i$, we have that
 $G_{n_i}^i=\sum_{s\in F_i}\lambda_s f_s$, where $F_i$ is a finite subset of $\{\sigma_i|n: n\in\nn\}$.
Hence \begin{equation}\label{T1}\delta< \|G_{n_i}^i\|_{V_2}\leq
\|T\|\Big\|\sum_{s\in F_i}\lambda_s e_s\Big\|_{S^2}
\leq\|T\|\max_{s\in F_i}|\lambda_s|\end{equation} Let  $s_i\in
F_i$ be such that $|\lambda_{s_i}|=\max_{s\in F_i}|\lambda_s|$.
Then by (\ref{T1}), $|\lambda_{s_i}|\geq \delta/\|T\|$ and so,
since the set $\{s_i:1\leq i\leq m\}$ is an antichain of
$2^{<\nn}$, we get that
\begin{equation}\label{g1}\Big\|\sum_{i=1}^m G_{n_i}^i\Big\|_{V_2}\geq\frac{1}{ \|T^{-1}\|}
 \Big\|\sum_{i=1}^m\sum_{s\in F_i}\lambda_s e_s\Big\|_{S^2}\geq \frac{1}{\|T^{-1}\|}\sqrt{\frac{m\delta^2}{\|T\|^2}}\geq
\frac{\delta\sqrt{m}}{\|T^{-1}\|\|T\|}\end{equation} By Lemma
\ref{lul2} and (iii), for every $\mathcal{I}\in \mathcal{A}$ we
have that
\[\begin{split}\textit{v}_2^2(\sum_{i=1}^m G^i_{n_i},\mathcal{I})&\leq
\sum_{i=1}^m\textit{v}_2^2(G_{n_i}^i,\mathcal{I}^{(i)})
+(4\|T\|+1)\ee\leq 4\sum_{i=1}^m\mu_{f_{\sigma_i}}(\cup
\mathcal{I}^{(i)})+(4\|T\|+2)\ee\\&\leq 4\mu(\cup
\mathcal{I})+\ee+(4\|T\|+2)\ee\leq
4\|\mu\|+(4\|T\|+3)\ee\end{split}\] Therefore, letting $\ee\to 0$,
$4\|\mu\|\geq \Big\|\sum_{i=1}^m G_{n_i}^i\Big\|_{V_2}$ and so by
(\ref{g1}) , we get a contradiction.\end{proof}

We are finally ready to prove the main results of the paper.
\begin{thm} \label{main1}Let $X$ be a subspace of $V_2^0$.
Then  the space $S^2$ is embedded into $X$ if and only if
$\mathcal{M}_{X^{**}}$ is non-separable.
\end{thm}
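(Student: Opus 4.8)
The plan is simply to assemble the pieces already proved, handling the two directions of the equivalence separately and, in the nontrivial direction, splitting according to the separability of $X^{*}$.

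The implication ``$S^{2}$ embeds into $X$ $\Rightarrow$ $\mathcal{M}_{X^{**}}$ non-separable'' is exactly Proposition \ref{last2}, so nothing further is needed there.

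For the converse, assume $\mathcal{M}_{X^{**}}$ is non-separable. First I would treat the case in which $X^{*}$ is non-separable: by the result of \cite{AAK} recalled in the introduction, such an $X$ contains an isomorphic copy of $TF$, and by Proposition \ref{last3} the space $S^{2}$ embeds into $TF$; hence $S^{2}$ embeds into $X$. (In this case the hypothesis on $\mathcal{M}_{X^{**}}$ is in fact automatic, by Proposition \ref{last2}.) Now suppose $X^{*}$ is separable. By Proposition \ref{pcd} the set $D_{X^{**}}=\bigcup_{f\in X^{**}}D_{f}$ is countable. Moreover the map $f\mapsto\mu_{f}$ sends $X^{**}$ onto $\mathcal{M}_{X^{**}}$, so $X^{**}$ has uncountably many elements. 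Thus the family $\mathcal{F}=X^{**}$ is uncountable, $D_{\mathcal{F}}=D_{X^{**}}$ is countable, and $\mathcal{M}_{\mathcal{F}}=\mathcal{M}_{X^{**}}$ is non-separable; Proposition \ref{last1} then produces a subspace of $X$ isomorphic to $S^{2}$, finishing the proof.

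Since every ingredient is in hand, the only point that needs care is the case split on $X^{*}$: the hypothesis ``$D_{\mathcal{F}}$ countable'' in Proposition \ref{last1} can fail for the choice $\mathcal{F}=X^{**}$ when $X^{*}$ is non-separable (indeed $D_{X^{**}}$ is then uncountable, again by Proposition \ref{pcd}), which is precisely why the detour through $TF$ is used in that case rather than a direct application of Proposition \ref{last1}.
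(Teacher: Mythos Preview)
Your proposal is correct and follows essentially the same route as the paper's own proof: one direction is Proposition~\ref{last2}, and for the converse you split on the separability of $X^{*}$, invoking the $TF$ embedding from \cite{AAK} together with Proposition~\ref{last3} in the non-separable case, and Proposition~\ref{pcd} followed by Proposition~\ref{last1} (with $\mathcal{F}=X^{**}$) in the separable case. The only cosmetic difference is the order of the two cases and your added remark explaining why the case split is necessary.
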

\begin{proof} By Proposition \ref{last2}, if $S^2$ is embedded
into $X$ then $\mathcal{M}_{X^{**}}$ is non-separable. Conversely
suppose that $\mathcal{M}_{X^{**}}$ is non-separable. Then we
distinguish two cases. If $X^*$ is separable then by Proposition
\ref{P1}, the set $D_{X^{**}}$ is countable and hence by
Proposition \ref{last1}, for $\mathcal{F}=X^{**}$, we get that
$S^2$ is embedded into $X$. In the case   $X^*$ is non-separable,
by \cite{AAK}, the space $TF$ is embedded into $X$. By Proposition
\ref{last3}, we have that $S_2$ is embedded into $TF$ and hence
into $X$. \end{proof}

\begin{thm} \label{main2}Let $X$ be a subspace of $V_2^0$.
Then $c_0$ is embedded into $X$ if and only if  $X^{**}$ is non
separable.
\end{thm}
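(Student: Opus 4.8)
The plan is to obtain the theorem by assembling the results already proved; almost all of the work has been done, so the task is to organise the case analysis correctly and to add one soft functional-analytic observation for the easy direction.

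\textbf{Forward implication.} Suppose $c_0$ embeds into $X$, say $Y\subseteq X$ is a closed subspace isomorphic to $c_0$. I would observe that the inclusion $Y\hookrightarrow X$, with $Y$ carrying the induced norm so that the inclusion is isometric, induces a metric surjection $X^*\to Y^*$ by the Hahn--Banach theorem, hence its second adjoint $Y^{**}\to X^{**}$ is an isometric embedding. Since $Y^{**}$ is isomorphic to $c_0^{**}=\ell_\infty$, the bidual $X^{**}$ contains an isomorphic copy of $\ell_\infty$ and is therefore non-separable. No use of the measures $\mu_f$ is needed here.

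\textbf{Converse implication.} Assume $X^{**}$ is non-separable; I would split according to whether $\mathcal{M}_{X^{**}}$ is separable. If $\mathcal{M}_{X^{**}}$ is non-separable, then Theorem \ref{main1} gives an isomorphic copy of $S^2$ inside $X$, and along any infinite chain $C\subseteq 2^{<\nn}$ the closed linear span of $\{e_s:s\in C\}$ in $S^2$ is isomorphic to $c_0$ (as recalled in the Introduction); hence $c_0$ embeds into $S^2$, and so into $X$. If $\mathcal{M}_{X^{**}}$ is separable, I would first check that $X^*$ must be separable: otherwise $TF$ embeds into $X$ by \cite{AAK}, then $S^2$ embeds into $TF$ by Proposition \ref{last3} and hence into $X$, and then Proposition \ref{last2} would force $\mathcal{M}_{X^{**}}$ to be non-separable, a contradiction. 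Thus in this case $X^*$ is separable, $X^{**}$ is non-separable and $\mathcal{M}_{X^{**}}$ is separable, so Proposition \ref{submain} applies directly and gives $c_0\hookrightarrow X$.

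\textbf{Main difficulty.} There is no genuine obstacle at this stage: Theorem \ref{main1}, Proposition \ref{submain}, the characterisation of separability of $X^*$ through $D_{X^{**}}$ (Proposition \ref{pcd}), and Propositions \ref{last1}--\ref{last3} carry all the weight, and the forward direction is entirely elementary. The only point that needs attention is the bookkeeping in the converse --- verifying that the three possibilities ($\mathcal{M}_{X^{**}}$ non-separable; $\mathcal{M}_{X^{**}}$ separable together with $X^*$ separable; and the case $\mathcal{M}_{X^{**}}$ separable with $X^*$ non-separable, which is ruled out) exhaust the hypothesis and that each is dispatched by the appropriate earlier statement.
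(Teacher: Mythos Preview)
Your proposal is correct and uses exactly the same ingredients as the paper's proof (Theorem \ref{main1}, Proposition \ref{submain}, the embedding of $TF$ from \cite{AAK}, and Proposition \ref{last3}); the only difference is organisational. The paper splits first on whether $X^*$ is separable and, when it is not, invokes $TF\hookrightarrow X$ and the fact that $c_0$ embeds into $TF$ directly, whereas you split first on $\mathcal{M}_{X^{**}}$ and, in the separable-measures case, rule out $X^*$ non-separable by a short contradiction through $S^2$ and Proposition \ref{last2}. Both orderings are valid; the paper's is marginally more economical in the $X^*$ non-separable branch since it avoids the detour through $S^2$ and the measure argument, but your route has the virtue of making the dichotomy on $\mathcal{M}_{X^{**}}$ the primary one, which matches the statement of Theorem \ref{main1} more closely.
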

\begin{proof} Suppose that $X^{**}$ is non
separable (the other direction is obvious).  If
 $X^{*}$ is non-separable then as we have already
mentioned the space $TF$ and hence $c_0$ is embedded into $X$. So
assume that $X^*$ is separable. We distinguish the following
cases. If $\mathcal{M}_{X^{**}}$ is non-separable then the result
follows by Theorem \ref{main1}. Otherwise, $\mathcal{M}_{X^{**}}$
is separable and  so by Proposition \ref{submain}, $c_0$ is again
embedded into $X$.
\end{proof}

\end{document}